\newcommand{\Matrix}[4]{\big( \begin{smallmatrix}
		#1&#2\\#3&#4
	\end{smallmatrix}\big)}
\newtheorem{thm}{Theorem}[subsection]
\newtheorem{theorem}[thm]{Theorem}
\newtheorem{conjecture}[thm]{Conjecture}
\newtheorem{corollary}[thm]{Corollary}
\newtheorem{lemma}[thm]{Lemma}
\newtheorem{proposition}[thm]{Proposition}
\theoremstyle{definition}
\newtheorem*{claim*}{Claim}
\newtheorem{convention}[thm]{Convention}
\newtheorem{definition}[thm]{Definition}
\newtheorem{notation}[thm]{Notation}
\newtheorem{remark}[thm]{Remark}
\numberwithin{equation}{section}
\numberwithin{mytheorem}{subsection}
\numberwithin{mytheorem}{subsection}
\numberwithin{myconjecture}{subsection}
\numberwithin{mydefinition}{subsection}
\numberwithin{myremark}{subsection}
\numberwithin{mysituation}{subsection}
\numberwithin{myhypothesis}{subsection}
\numberwithin{myquestion}{subsection}
\numberwithin{mynotation}{subsection}
\numberwithin{myfact}{subsection}
\numberwithin{myexamples}{subsection}
\numberwithin{myexample}{subsection}
\numberwithin{myconstruction}{subsection}
\numberwithin{mycaution}{subsection}
\numberwithin{myproposition}{subsection}
\numberwithin{mylemma}{subsection}
\numberwithin{mycorollary}{subsection}
\def\bbB{\mathbf{B}}
\def\bbC{\mathbf{C}}
\def\bbM{\mathbf{M}}
\def\bbT{\mathbf{T}}
\def\AAA{\mathbb{A}}
\def\CC{\mathbb{C}}
\def\FF{\mathbb{F}}
\def\GG{\mathbb{G}}
\def\NN{\mathbb{N}}
\def\QQ{\mathbb{Q}}
\def\RR{\mathbb{R}}
\def\ZZ{\mathbb{Z}}
\def\calA{\mathcal{A}}
\def\calC{\mathcal{C}}
\def\calD{\mathcal{D}}
\def\calI{\mathcal{I}}
\def\calM{\mathcal{M}}
\def\calN{\mathcal{N}}
\def\calO{\mathcal{O}}
\def\calU{\mathcal{U}}
\def\calW{\mathcal{W}}
\def\calX{\mathcal{X}}
\def\calZ{\mathcal{Z}}
\def\gothd{\mathfrak{d}}
\def\gothl{\mathfrak{l}}
\def\gothm{\mathfrak{m}}
\def\gothn{\mathfrak{n}}
\def\gothp{\mathfrak{p}}
\def\gothD{\mathfrak{D}}
\def\gothR{\mathfrak{R}}
\def\rmH{\mathrm{H}}
\def\rmI{\mathrm{I}}
\def\rmM{\mathrm{M}}
\newcommand{\AL}{\mathrm{AL}}
\def\Ind{\mathrm{Ind}}
\def\Iw{\mathrm{Iw}}
\def\wt{\mathrm{wt}}
\def\Spc{\mathrm{Spc}}
\def\Fp{\FF_p}
\def\wt{\mathrm{wt}}
\def\Nm{\textrm{Nm}}
\def\ula{\underline{\lambda}}
\def\us{\underline{s}}
\def\ualpha{\underline{\alpha}}
\DeclareMathOperator{\End}{End}
\DeclareMathOperator{\GL}{GL}
\DeclareMathOperator{\Hom}{Hom}
\DeclareMathOperator{\Res}{Res}
\begin{document}\large
	
\begin{CJK}{UTF8}{gkai}	
	
	\title{Spectral halo for Hilbert modular forms}

	\author{Rufei Ren}
\address{Department of Mathematics, Fudan University\\220 Handan Rd., Yangpu District, Shanghai 200433, China.}
\email{rufeir@fudan.edu.cn}
	\author{Bin Zhao}
	\address{Bin Zhao, Morningside Center of Mathematics, Academy of Mathematics and Systems Science, Chinese Academy of Sciences, University of the Chinese Academy of Sciences
		Beijing, 100190, China.}
	\email{bin.zhao@amss.ac.cn}
	\date{\today}
	\subjclass[2010]{11T23 (primary), 11L07 11F33 13F35 (secondary).}
	\keywords{Eigenvarieties, slopes of $U_p$-operator, weight spaces, Newton-Hodge decomposition, overconvergent Hilbert modular forms, completed homology}
	\begin{abstract}
		Let $F$ be a totally real field and $p$ be an odd prime which splits completely in $F$. We prove that the eigenvariety associated to a definite quaternion algebra over $F$ satisfies the following property: over a boundary annulus of the weight space, the eigenvariety is a disjoint union of countably infinitely many connected components which are finite over the weight space; on each fixed connected component, the ratios between the $U_\gothp$-slopes of points and the $p$-adic valuations of the $\gothp$-parameters are bounded by explicit numbers, for all primes $\gothp$ of $F$ over $p$. Applying Hansen's $p$-adic interpolation theorem, we are able to transfer our results to Hilbert modular eigenvarieties. In particular, we prove that on every irreducible component of Hilbert modular eigenvarieties, as a point moves towards the boundary, its $U_p$ slope goes to zero. In the case of eigencurves, this completes the proof of Coleman-Mazur's `halo' conjecture.
	\end{abstract}

	\maketitle
	
	\setcounter{tocdepth}{1}
	\tableofcontents

\section{Introduction}
	
The theory of $p$-adic analytic families of modular forms grew out of the study of congruences of modular forms. In \cite{serre1973}, Serre gave the first example of a $p$-adic family of modular forms-an Eisenstein family. After more than one decade, Hida made the first major step towards the construction of $p$-adic families of cuspforms. In \cite{hida1986b} and \cite{hida1986a}, he constructed $p$-adic families of ordinary modular forms. His theory led Mazur to develop his general theory of deformations of Galois representations in \cite{mazur1989}, which turns out to be a crucial ingredient of Wiles' proof of Fermat's last theorem. The theory of $p$-adic families of modular forms reached a culmination in Coleman and Mazur's celebrated work \cite{coleman-mazur} in which they constructed eigencurves. An eigencurve is a rigid analytic curve whose points correspond to finite slopes normalized overconvergent eigenforms. When pursuing a description of the geometry of the eigencurves, Coleman and Mazur raised the following question based on detailed computations for small primes: does the slope always tend to $0$ as one approaches the `boundary' of the weight space? This problem has been made into a much more delicate conjecture based on the computation of Buzzard and Kilford in \cite{buzzard20052}. It is expected that over the boundary of the weight space, the eigencurve is a disjoint union of infinitely many connected components that are finite flat over the weight space. We refer to \cite[Conjecture~1.2]{liu2017eigencurve} for the precise statement of the conjecture. 

\begin{figure}[h]
	\centering
	\includegraphics[width=4cm,height=4cm]{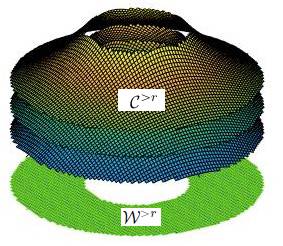}
	\caption{Spectral halo of eigencurves}
\end{figure}
	
For eigencurves associated to a definite quaternion algebra $D$ over $\QQ$, this conjecture was studied by R. Liu, D. Wan, and L. Xiao in \cite{liu2017eigencurve}. To be more precise, we fix an odd prime $p$ and denote by $\Spc_D$ the spectral curve associated to the overconvergent automorphic forms on $D/\QQ$ (with some tame level). It admits a weight map $\wt:\Spc_D\rightarrow \calW$ to the weight space $\calW$, where $\calW$ is the rigid analytification of the Iwasawa algebra $\ZZ_p\llbracket \ZZ_p^\times\rrbracket$ and a slope map $a_p:\Spc_D\rightarrow \GG_m$. For $r\in (0,1)$, we use $\calW^{>r}$ to denote the subspace of $\calW$ where the fixed parameter $T:= [\exp(p)]-1$ satisfies $|T|_p\in (r,1)$ and let $\Spc_D^{>r}=\wt^{-1}(\calW^{>r})$. Under the above notations, Liu-Wan-Xiao proved the following theorem.
\begin{theorem}
	The space $\Spc_D^{>1/p}$ can be  decomposed  into a disjoint union \[X_0\coprod X_{(0,1)}\coprod X_1\coprod X_{(1,2)}\coprod X_2\coprod\cdots\]
	of rigid analytic spaces which are finite and flat over $\calW^{>1/p}$ via $\wt$ such that for each point $x \in X_I$ with $I$ denoting the interval $n=[n,n]$ or $(n, n+1)$, we have $v_p(a_p(x)) \in (p-1)v_p(T_{\wt(x)})\cdot I.$
	
	In particular, as $x$ varies on each irreducible component of $\Spc_D$ with $\wt(x)$ approaching the
	boundary of weight space, i.e. $|T_{\wt(x)}|_p\rightarrow 1^-$, the slope $v_p(a_p(x))\rightarrow 0$. 
\end{theorem}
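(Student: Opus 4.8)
The plan is to realise $\Spc_D$ as the vanishing locus in $\calW\times\GG_m$ of the Fredholm determinant
\[
P(w,u)\;=\;\det\!\bigl(1-u\,U_p\mid M_w^{\dagger}\bigr),
\]
where $M_w^{\dagger}$ is the Banach space of overconvergent automorphic forms on $D/\QQ$ (of the fixed tame level) of weight $w$, and then to analyse the Newton polygon of $u\mapsto P(w,u)$ \emph{uniformly} as $w$ runs over the boundary annulus $\calW^{>1/p}=\{\,0<v_p(T_w)<1\,\}$. The first --- and by far the most substantial --- step is to construct, over the ring $\calO(\calW^{>1/p})$ of bounded rigid functions on this annulus, an explicit orthonormal basis $\{e_0,e_1,e_2,\dots\}$ of the universal $M^{\dagger}$, assembled from a ``power basis'' on the relevant $p$-adic Shimura set together with the canonical (Hodge) filtration, in which the matrix $(a_{ij})_{i,j\ge0}$ of $U_p$ obeys a sharp Hodge-type bound
\[
v_p(a_{ij})\;\ge\;h_i\cdot v_p(T_w)\qquad(i,j\ge0),
\]
with equality precisely along a ``Hodge diagonal''; here $h_0\le h_1\le h_2\le\cdots$ is an explicit, purely combinatorial sequence of Hodge slopes. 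From such an estimate one deduces that the Newton polygon $\mathrm{NP}_w$ of $P(w,-)$ lies on or above the Hodge polygon with vertices $\bigl(n,\ v_p(T_w)(h_0+\cdots+h_{n-1})\bigr)$, and that the two polygons meet at every vertex of the latter as soon as $v_p(T_w)<1$.

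Granting this, the rest is essentially formal. Because $\mathrm{NP}_w$ meets the Hodge polygon at each of its vertices, the Newton--Hodge decomposition --- equivalently, relative slope factorisation of $p$-adic Fredholm series over $\calO(\calW^{>1/p})$ --- factors $P(w,u)=\prod_I P_I(w,u)$, the product running over the intervals $I=[n,n]$ and $(n,n+1)$, $n\ge0$, that index respectively the slopes occurring at the vertices of the Hodge polygon and those strictly between two consecutive such values. Each $P_I\in\calO(\calW^{>1/p})[u]$ is a polynomial in $u$ of degree independent of $w$, all of whose Newton slopes lie in $(p-1)v_p(T_w)\cdot I$, the normalising factor $p-1$ coming from the chosen parameter $T=[\exp(p)]-1$ on $\calW$. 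Setting $X_I=\{P_I=0\}\subseteq\Spc_D^{>1/p}$ then produces the decomposition of $\Spc_D^{>1/p}$ into pairwise disjoint open and closed pieces; each $X_I$ is cut out by a $u$-polynomial of constant degree, hence is finite and flat over $\calW^{>1/p}$ via $\wt$, and for $x\in X_I$ the quantity $v_p(a_p(x))$ is one of the Newton slopes of $P_I$ at the point $\wt(x)$, so $v_p(a_p(x))\in(p-1)v_p(T_{\wt(x)})\cdot I$.

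The ``halo'' assertion is then immediate. Let $Z$ be an irreducible component of $\Spc_D$ and let $x_n\in Z$ be a sequence with $|T_{\wt(x_n)}|_p\to1^-$, i.e.\ $v_p(T_{\wt(x_n)})\to0$. For $n$ large one has $x_n\in\Spc_D^{>1/p}$, and since the $X_I$ are pairwise disjoint, open and closed, while $Z$ is irreducible, $Z\cap\Spc_D^{>1/p}$ meets only a single $X_I$; hence $v_p(a_p(x_n))\le(p-1)\,v_p(T_{\wt(x_n)})\cdot\sup I\to0$. Transferring the decomposition and the slope bounds from the spectral curve to the eigencurve is then a standard consequence of the finiteness of the coherent sheaf of Hecke algebras over $\Spc_D$ (together with Jacquet--Langlands), and in the general totally real setting this is the point at which Hansen's $p$-adic interpolation theorem intervenes, exactly as in the body of the present paper.

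I expect the genuine obstacle to be the first step: exhibiting a basis in which the $U_p$-matrix satisfies the sharp Hodge bound above --- with equality along the Hodge diagonal and with \emph{exactly} computed Hodge slopes $h_i$ --- and, harder still, controlling the off-diagonal error terms \emph{uniformly over the whole annulus} $\calW^{>1/p}$ rather than merely near a single boundary point. This is precisely where the radius $1/p$ is forced: the estimates governing the interaction of the $U_p$-correspondence with the canonical filtration degrade once $|T_w|_p\le1/p$, so that both the Hodge bound and the matching of $\mathrm{NP}_w$ with the Hodge polygon at every vertex can be guaranteed only on $\calW^{>1/p}$.
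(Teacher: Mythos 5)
There is a genuine gap, and it is exactly at the point the paper identifies as ingredient \textbf{I}(b) in its sketch of Liu--Wan--Xiao's argument. You correctly set up ingredient \textbf{I}(a): working in a modified Mahler-type basis, the matrix of $U_p$ obeys a lower bound $v_p(a_{ij})\ge \lambda_i\cdot v_p(T_w)$ of Hodge type (this is Proposition~\ref{P:Liu-Wan-Xiao's computation on matrix coefficients of Mahler basis} in the present paper, lifted from \cite{liu2017eigencurve}), and this yields that the Newton polygon of the Fredholm series lies on or above the Hodge polygon over the whole annulus $\calW^{>1/p}$. But you then assert that ``the two polygons meet at every vertex of the latter as soon as $v_p(T_w)<1$,'' justifying it by positing equality ``precisely along a Hodge diagonal.'' This does not follow from the Hodge estimate, and it is not how the proof actually goes: the entrywise bound only gives a lower bound for the Newton polygon, and even exact equality on a single diagonal of the $U_p$-matrix does not control the characteristic power series (the Fredholm coefficients are sums over permutations, and nothing forces the diagonal terms to dominate or prevents cancellation).

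What is actually needed --- and what you omit --- is the second, independent input: at explicitly chosen locally algebraic weights in the boundary annulus (weights with wild character of conductor $p^2$ and $v_p(T_w)=\tfrac1{p-1}$), the space of classical automorphic forms is a finite-dimensional Hecke-stable subspace of known dimension, and the Atkin--Lehner involution pairs its $U_p$-slopes with those of a dual space so that each pair sums to $k-1$. Summing over these pairs and comparing with the Hodge lower bound forces equality, i.e.\ the Newton polygon is \emph{pinched} against the Hodge polygon at the dimensions of the classical subspaces. This is precisely the mechanism recalled (and generalized) in \S\ref{section:spaces of  classical automorphic forms} of the present paper, in particular Proposition~\ref{P:Atkin-Lehner map and Up operator} and Proposition~\ref{P:pairing of Newton slopes by Atkin-Lehner}, and it is this that produces the vertices at which the Newton--Hodge factorization can be performed uniformly over $\calW^{>1/p}$. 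Without it, you have no touching points, and the rest of your argument (slope factorization, disjoint decomposition, finite flatness) has nothing to hang on.

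Once the touching vertices are in hand, the remainder of your sketch --- factoring the Fredholm series at those vertices, cutting $\Spc_D^{>1/p}$ into open-and-closed pieces that are finite and flat of fixed degree over $\calW^{>1/p}$, and reading off the slope intervals, then concluding the halo statement on irreducible components --- is essentially correct and matches the cited argument.
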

By $p$-adic family version of the Jacquet-Langlands correspondence (see \cite[Theorem~3]{chenevier2005correspondance}),
they also deduce the similar results for most of the components of the original Coleman-Mazur eigencurves.

The goal of this paper is to generalize the result in \cite{liu2017eigencurve}
to the eigenvarieties associated to ($p$-adic) overconvergent automorphic forms for a definite quaternion algebra over a totally real field $F$ in which $p$ splits completely. Combining with the $p$-adic family versions of base change and Jacquet-Langlands correspondence, this result allows us to determine the boundary behavior for the entire Coleman-Mazur eigencurves, and hence answers the first part of  original question raised by Coleman and Mazur (see \cite[Conjecture~1.2(1)]{liu2017eigencurve}) in complete generality.

We set up a few notations before stating our main result. Let $F$ be a totally real field of degree $g$ and $p$ be an odd prime number which splits completely in $F$.  Let $\calO_F$ be the ring of integers of $F$ and set $\calO_p:= \calO_F\otimes_{\ZZ}\ZZ_p$. Let $I:=\Hom(F,\bar{\QQ}_p)$ and for each $i\in I$, we choose a uniformizer $\pi_i$ of the completion $F_{\gothp_i}$ of $F$ at the prime $\gothp_i$ induced by $i$. 
 The weight space $\calW$ is the rigid analytification of $\ZZ_p\llbracket \calO_p^\times\times\ZZ_p^\times \rrbracket$. It has a full set of parameters $\big\{(T_i:= [\exp(\pi_i),1]-1)_{i\in I},T:= [1,\exp(p)]-1 \big\}$. For $r\in (0,1)$, we use $\calW^{>r}$ to denote the subspace of $\calW$ where $|T_i|_p\in (r,1)$ for all $i\in I$. Let $D/F$ be a totally definite quaternion division algebra over $F$, which is split at all places over $p$. Fix a tame level structure. Let $\calX_D$ (resp. $\calZ_D$) be the eigenvariety (resp. spectral variety) associated to the overconvergent automorphic forms for $D^\times$ constructed in \cite[Part~$\uppercase\expandafter{\romannumeral3}$]{buzzard320eigenvarieties}. We put $w:\calX_D\rightarrow \calW$  be the weight map  and  denote by $\calX_D^{>r}$ the preimage $w^{-1}(\calW^{>r})$. For each $x\in \calX_D(\CC_p)$, it corresponds to a $\CC_p$-valued system of eigenvalues (we refer to \cite[\S5]{buzzard320eigenvarieties} for the precise definition). For each $i\in I$, there is a Hecke operator $U_{\pi_i}$ acting on the space of overconvergent automorphic forms and we use $a_i(x)$ to denote the eigenvalue of the $U_{\pi_i}$-operator for $x$. The constructions of the eigenvarieties and Hecke operators will be carefully recalled in  \S\ref{subsection: Hecke operators} and \S\ref{subsection:the spectral varieties and eigenvarieties}.

$$
\xymatrix @=1.5cm{
	\calX_D \ar[r]^{(a_i)_{i\in I}} \ar[d]^{w} & (\GG_m)^g  \\
	\calW & 
}
$$

Under the above notations, we have the following theorem.
\begin{theorem}\label{T:spectral halo for eigenvarieties for D}
	We denote by $\Sigma$ the subset $\{0 \}\bigcup \{1+2k|k\in \ZZ_{\geq 0} \}$ of $\ZZ$. The eigenvariety $\calX_D^{>1/p}$ is a disjoint union 
	\[
	\calX_D^{>1/p}=\bigsqcup_{l\in \Sigma^I,\ \sigma\in \{\pm\}^I}\calX_{l,\sigma}
	\]
	of (possibly empty) rigid analytic spaces which are finite over $\calW^{>1/p}$ via $w$, such that for each closed point $x\in \calX_{l,\sigma}(\CC_p)$ with $l=(l_i)_{i\in I}\in \Sigma^I$ and $\sigma=(\sigma_i)_{i\in I}\in\{\pm \}^I$, we have
	\[\begin{cases}
	v_p(a_i(x)) = (p-1)v_p(T_{i,w(x)})\cdot l_i,&\textrm{for}\ \sigma_i=-,\\
	v_p(a_i(x)) \in (p-1)v_p(T_{i,w(x)})\cdot (l_i,l_i+2),&\textrm{for}\ \sigma_i=+ \textrm{~and~} l_i\neq 0,\\
	v_p(a_i(x)) \in (p-1)v_p(T_{i,w(x)})\cdot (0,1),&\textrm{for}\ \sigma_i=+ \textrm{~and~} l_i=0,
	\end{cases}\]
	for all $i\in I$. 
	
	In particular, as $x$ varies on each irreducible component of $\calX_D$ with $w(x)$ approaching the boundary of the weight space, i.e. $v_p(T_{i,w(x)})\rightarrow 0$, the slopes $v_p(a_i(x))\rightarrow 0$ for all $i\in I$.
\end{theorem}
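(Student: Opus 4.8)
\section*{Proof proposal}

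The plan is to follow the strategy of Liu--Wan--Xiao, upgraded to the multivariable setting forced by the $g$ places of $F$ above $p$, and to prove the statement first on the spectral variety $\calZ_D$ and then transport it to $\calX_D$. Since $p$ splits completely, $\calO_p\cong\ZZ_p^I$ and the Hecke action at $p$ factors through the pairwise commuting operators $U_{\pi_i}$, $i\in I$. First I would set up an explicit model for the space of overconvergent automorphic forms for $D^\times$ over a product of wide open annuli in $\calW$: after fixing a tame level this space is a finite direct sum (indexed by a double coset set of $D^\times$) of spaces of $\calO_p$-analytic functions (of a suitable radius) on $\calO_p^\times\cong\prod_{i\in I}\ZZ_p^\times$, and it carries an explicit orthonormalizable basis built from binomial-type functions $\prod_{i\in I}\binom{z_i}{n_i}$ twisted by the universal weight character. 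In this basis one writes down the matrix of each $U_{\pi_i}$ and, crucially, extracts lower bounds for the $p$-adic valuations of its entries as functions on $\calW$; these bounds become sharp precisely over $\calW^{>1/p}$, where $|T_i|_p$ is close to $1$.

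The analytic heart of the argument is then to determine, from these entry estimates, the Newton polygon over $\calW^{>1/p}$ of the characteristic power series $\det(1-X\,U_{\pi_i})$ on the space of overconvergent forms. Because the $U_{\pi_i}$ commute and act essentially in separate variables up to controlled error, I would treat one prime $\gothp_i$ at a time, holding the remaining weight coordinates fixed: the matrix of $U_{\pi_i}$ is, modulo entries of strictly larger valuation, upper triangular with a diagonal whose valuations form the expected ``Hodge'' sequence scaled by $v_p(T_i)$, so that the Newton polygon over the annulus $|T_i|_p\in(1/p,1)$ is a union of line segments whose break points occur at the abscissae predicted by $\Sigma$, with slopes $(p-1)v_p(T_{i,w})\cdot l_i$ at the vertices and slopes in the open intervals $(p-1)v_p(T_{i,w})\cdot(l_i,l_i+2)$ along the edges (the edge over $l_i=0$ being truncated to $(0,1)$). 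A Newton--Hodge type decomposition --- Weierstrass preparation applied slab by slab, legitimate because these Newton polygon data are locally constant on $\calW^{>1/p}$ --- then factors the characteristic series, for each $i$, into polynomial factors over $\calW^{>1/p}$. Taking the joint decomposition over all $i\in I$ exhibits $\calZ_D^{>1/p}$ as a disjoint union $\bigsqcup_{l,\sigma}\calZ_{l,\sigma}$ of pieces finite and flat over $\calW^{>1/p}$ on which $v_p(a_i)$ obeys the asserted (in)equalities.

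To pass from $\calZ_D$ to $\calX_D$ I would use that $\calX_D^{>1/p}$ is finite over $\calZ_D^{>1/p}$ (this is how the eigenvariety is produced from the Fredholm/spectral theory in \cite{buzzard320eigenvarieties}), so the decomposition of the base $\calZ_D^{>1/p}$ pulls back to the desired decomposition $\calX_D^{>1/p}=\bigsqcup_{l,\sigma}\calX_{l,\sigma}$ with each $\calX_{l,\sigma}$ finite over $\calW^{>1/p}$; the slope identities transport verbatim since $a_i(x)$ is read off from the spectral datum of $x$. Finally, for the ``in particular'' statement: each $\calX_{l,\sigma}$ is open and closed in $\calX_D^{>1/p}$, hence every irreducible component $C$ of $\calX_D$ meets $\calX_D^{>1/p}$ in a union of subsets of finitely many $\calX_{l,\sigma}$; on any such subset $v_p(a_i(x))\le(p-1)\,v_p(T_{i,w(x)})\cdot(l_i+2)$, and since points of $C$ whose weight approaches the boundary eventually lie in $\calX_D^{>1/p}$, we conclude $v_p(a_i(x))\to 0$ as $v_p(T_{i,w(x)})\to 0$ for all $i\in I$.

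The main obstacle is the middle step: producing matrix-entry estimates for the $U_{\pi_i}$ that are simultaneously (i) uniform over the whole boundary region $\calW^{>1/p}$, (ii) sharp enough to pin the Newton polygon down to the level of individual vertices and the half-open edge intervals encoded by $\Sigma$ and $\{\pm\}$, and (iii) compatible with the multivariable weight, so that the factorizations at distinct primes $\gothp_i$ can be carried out independently and then merged. Controlling the cross terms between the actions at different places above $p$, and verifying that the finitely many tame-level summands do not perturb the relevant portion of the Newton polygon, is where the genuinely new work beyond the $F=\QQ$ case of Liu--Wan--Xiao lies.
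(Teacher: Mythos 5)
Your plan founders on the central difficulty the paper explicitly identifies and that you have deferred as ``the genuinely new work.'' You propose to determine, for each place $\gothp_i$ separately, the Newton polygon of $\det(1-X\,U_{\pi_i})$ on the space of overconvergent forms over $\calW^{>1/p}$, and then take a ``joint decomposition'' of $\calZ_D^{>1/p}$ over all $i$. But for $g=[F:\QQ]\geq 2$ the individual operator $U_{\pi_i}$ is \emph{not} compact on $S_\kappa^D(K^p\Iw_\pi,r)$ --- only the product $U_\pi=\prod_{i\in I}U_{\pi_i}$ is --- so there is no Fredholm series for $U_{\pi_i}$ alone, and the spectral variety $\calZ_D$ records only the eigenvalues of $U_\pi$. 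You therefore cannot decompose $\calZ_D^{>1/p}$ by the data $(l_i,\sigma_i)_{i\in I}$; the decomposition has to be produced upstairs on $\calX_D$ or, as in the paper, by constructing a filtration of the underlying module. Worse, the step where you ``pin the Newton polygon down to individual vertices'' by matching with classical forms breaks for the same reason Liu--Wan--Xiao's argument does not extend naively: there is no orthonormal basis of $\calC(\calO_p,\CC_p)$ whose first $\prod_{i}(k_i-1)$ elements span the classical polynomial subspace $\calC(\calO_p,\CC_p)^{\deg\leq k-2}$ for all $k\in\ZZ_{\geq 2}^I$. So the Hodge lower bound for $U_\pi$ does not touch the actual Newton polygon, and the ``known points'' from classicality are simply not available in the form you need.

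The paper's resolution, which your proposal does not anticipate, is to embed the space of integral $p$-adic automorphic forms into a space of \emph{generalized} automorphic forms that behaves like automorphic forms at one chosen place and like (the continuous dual of) completed homology at the others. The dual of this larger space is a free module over a noncommutative Iwasawa ring $A\llbracket P'_{J^c}\rrbracket$, on which a single $U_{\pi_j}$ acts by an explicit $\underline{\lambda}$-Hodge-bounded matrix, and a new Newton--Hodge decomposition theorem over such noncommutative rings (Theorem~\ref{T:Newton-Hodge decomposition with Hodge bound restriction for infinite matrices over noncommutative rings}) produces the filtration. Touching vertices are supplied not by an orthonormal-basis trick but by an Atkin--Lehner pairing of slopes at a single place (Propositions~\ref{P:Atkin-Lehner map and Up operator} and \ref{P:pairing of Newton slopes by Atkin-Lehner}), using weights nontrivial at that place and a specialization where $v_p(T_j)=1/(p-1)$. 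The filtrations are then shown to be stable under the remaining $U_{\pi_{j'}}$-operators (Proposition~\ref{P:filtration with respect to one Up operator is stable under the other Up operators}), and the construction is iterated through $j_1,\ldots,j_g$. Without some substitute for this machinery --- a concrete mechanism that separates the places while preserving enough structure to control cross-terms and to locate the Newton polygon at the level of $\Sigma$ --- the middle step of your proposal is a genuine gap, not merely a detail to be checked.
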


\begin{remark}
	\begin{enumerate}
		\item In our main theorem, the decomposition of the eigenvariety $\calX_D^{>1/p}$ is characterized by all the $U_{\pi_i}$-operators ($i\in I$). We cannot work solely with the spectral varieties since only the eigenvalues of $U_{\pi}:=\prod_{i\in I}U_{\pi_i}$ can be read from the spectral varieties. This is a significant difference to the eigencurve case. 
		\item There have been several generalizations of Liu-Wan-Xiao's results to more general eigenvarieties. In \cite{Lynnelle2019slopesunitary}, Ye generalized Liu-Wan-Xiao's estimation on slopes in eigencurves to eigenvarieties for definite unitary groups of arbitrary rank. Ye gave a lower bound and upper bound of the Newton polygon of characteristic power series for the $U_p$-operator, but they do not match at any point on the Newton polygon. So she cannot prove a similar result as in Theorem~\ref{T:spectral halo for eigenvarieties for D} for eigenvarieties associated to definite unitary groups.
		
		There are also generalizations to Hilbert modular eigenvarieties. Let $F$ and $p$ be as above. In \cite{johansson2018parallel}, Johansson and Newton defined a one-dimensional  `partial' eigenvarieties interpolating Hilbert modular forms over $F$ with weight varying
		only at a single place above $p$. They proved that over the boundary annulus of the weight space, the partial eigenvarieties decompose as a disjoint union of components that are finite over weight space, and the components have a similar property as described in the main result of Liu-Wan-Xiao. Our result agrees with theirs when restricting to their `partial' eigenvarieties.
		\item The assumption that $p$ splits completely in $F$ is essential in our argument. In fact, the philosophical analogy between the Artin--Schreier--Witt tower and the Igusa tower of Hilbert modular Shimura varieties explained in \cite{ren2018slopes} and the main theorem there suggests that further modifications are needed in order to formulate a  
		`reasonable' conjecture that generalizes Theorem~\ref{T:spectral halo for eigenvarieties for D} to general $F$. We refer to \S\ref{sub:1.3} for more discussion towards the (conjectural) generalization of our theorem.
	\end{enumerate}	
\end{remark}
\subsection{Idea of the proof of Theorems~\ref{T:spectral halo for eigenvarieties for D}}

We now explain how we deduce our main result.
\begin{enumerate}
	\item[\textbf{I.}]	Going back to Liu-Wan-Xiao's work in \cite{liu2017eigencurve}, the way they deduce their main results over $\QQ$ relies crucially on two key ingredients: 
	\begin{enumerate}
		\item a sharp estimate of the action of the $U_p$-operator on the space of overconvergent automorphic forms, in the form of providing a lower bound of the Hodge polygon, and 
		\item  the observation that, at classical weights, the subspaces of classical automorphic forms provide  known points of the corresponding Newton polygon which happens to lie on the Hodge polygon.
	\end{enumerate}
	\item[\textbf{II.}]
	While we choose to work with automorphic forms associated to a definite quaternion algebra over $F$ as opposed to the usual overconvergent modular forms, we circumvent the complication of the geometry of the Hilbert modular Shimura varieties. We will define the spaces of integral $p$-adic automorphic forms. It contains the spaces of overconvergent automorphic forms and have the same characteristic power series of the $U_\pi$-operator. 
	An important observation is that the $U_\pi$-operator on the spaces of integral $p$-adic automorphic forms can be written reasonably explicitly, as explained in  \S\ref{subsection: explicit expression of Up operator on the space of automorphic forms}.  This was inspired by the thesis \cite{jacobs2003slopes} of D. Jacobs  (a former student of Buzzard), and the generalization in  \cite{liu2017eigencurve}.
	\item[\textbf{III.}]
	The major difficulty we encounter here is that the action of each individual $U_{\pi_i}$-operator on the space of overconvergent automorphic forms is not compact, whereas the action of their product is. So if we generalize the method in \cite{liu2017eigencurve} naively, one would have to work with the operator $U_\pi=\prod\limits_{i\in I}U_{\pi_i}$. It is possible to give a lower bound of the corresponding Hodge polygon similar to $\textbf{I}(a)$, which is associated to inequalities regarding the sum of all the $U_{\pi_i}$-slopes. But for $\textbf{I}(b)$, the space of classical forms are characterized by the properties of the type: the $U_{\pi_i}$-slope is less than or equal to $k_i-1$, for every individual $i\in I$. This does not provide a known 
	point on the Newton polygon for the action of $U_\pi$, because the ordering mechanism of bases are incompatible. More precisely, there exists no orthonormal basis, in the sense of \cite[Definition~\textbf{I}.1.3]{colmez2010fonctions}, of the space of continuous function $\calC(\calO_p,\CC_p)$, such that the first $d:= \prod\limits_{i\in I}(k_i-1)$ elements of $\Omega$, gives a basis of $\calC(\calO_p,\CC_p)^{\deg\leq k-2}$ for all $k=(k_i)_{i\in I}\in \ZZ_{\geq 2}^I$, where $\calC(\calO_p,\CC_p)^{\deg\leq k-2}$ is the $\CC_p$-subspace of $\calC(\calO_p,\CC_p)$ spanned by the polynomial functions $\prod\limits_{i\in I}z_i^{l_i}$, for all $0\leq l_i\leq k_i-2$, $i\in I$, and $z_i:\calO_p=\prod\limits_{i\in I}\calO_{\gothp_i}\rightarrow \CC_p$ is the projection to the $i$th component. Therefore, the lower bound of the Hodge polygon of the $U_\pi$-operator does not touch its actual Newton polygon in general when the degree $g=[F:\QQ]\geq 2$.
	\item[\textbf{IV.}]
	To circumvent this difficulty, we work with the space of generalized integral $p$-adic automorphic forms. In the introduction, we explain our idea in the simplest case when $[F:\QQ]=2$. Let $\gothp_1,\gothp_2$ be the two places of $F$ over $p$. 
	
	Let $A$ be a topological ring in which $p$ is topologically nilpotent. For a continuous homomorphism $\kappa:\calO_p^\times\times \calO_p^\times\rightarrow A^\times$, we define the space of integral $p$-adic automorphic forms $S_{\kappa,I}^D(K^p,A)$ and the space of generalized $p$-adic automorphic forms $S_{\kappa,1}^D(K^p,A)$. The latter space consists of generalized automorphic forms that are like automorphic forms at the place $\gothp_1$, but are like the continuous dual of the completed homology at the place $\gothp_2$. We have explicit isomorphisms of these spaces: $$S_{\kappa,I}^D(K^p,A)\cong \bigoplus\limits_{k=0}^{s-1}\calC(\calO_p,A)\textrm{~and~}S_{\kappa,1}^D(K^p,A)\cong \bigoplus\limits_{k=0}^{s-1}\calC_1(\kappa,A),$$ where $\calC_1(\kappa,A)$ is a suitably defined subspace of $\calC(\calO_{\gothp_1}\times \Iw_{\pi_2},A)$ that contains $\calC(\calO_p,A)$. We use $S_{\kappa,I}^D(K^p,A)^\vee$ (resp. $S_{\kappa,1}^D(K^p,A)^\vee$) to denote the continuous $A$-dual space of $S_{\kappa,I}^D(K^p,A)$ (resp. $S_{\kappa,1}^D(K^p,A)$). Our argument can be exhibited in the following diagram:
	$$
	\xymatrix @=1.2cm{
		S_{\kappa,1}^D(K^p,A)^\vee \ar@{-}[r]^{dual} \ar[d] & 
		S_{\kappa,1}^D(K^p,A) & \curvearrowleft U_{\pi_1} \\
		S_{\kappa,I}^D(K^p,A)^\vee \ar@{-}[r]^{dual}& S_{\kappa,I}^D(K^p,A) \ar[u] & \curvearrowleft U_{\pi_1}, U_{\pi_2}.
	}
	$$
	Here the right vertical arrow is an embedding $S_{\kappa,I}^D(K^p,A)\hookrightarrow S_{\kappa,1}^D(K^p,A)$, which identifies $S_{\kappa,I}^D(K^p,A)$ as the invariant subspace of $S_{\kappa,1}^D(K^p,A)$ under an action of the Borel subgroup $B(\calO_{\gothp_2})$ of $\GL_2(\calO_{\gothp_2})$; and the left vertical arrow is the dual of this embedding, i.e. taking the $B(\calO_{\gothp_2})$-coinvariants of $S_{\kappa,1}^D(K^p,A)^\vee$.

	We first embed $S_{\kappa,I}^D(K^p,A)$ into the larger space $S_{\kappa,1}^D(K^p,A)$. The latter space carries an extra structure of right $A\llbracket P'_2\rrbracket$-modules, where $P'_2\subset \mathrm{SL}_2(\calO_{\gothp_2})$ is some explicit  open compact pro-$p$-subgroup. The sacrifice of doing this is that there is only $U_{\pi_1}$-operator, but no $U_{\pi_2}$-operator defined on $S_{\kappa,1}^D(K^p,A)$. Thus the space $S_{\kappa,1}^D(K^p,A)^\vee$ is an infinite free left $A\llbracket P'_2\rrbracket$-module  and the induced $U_{\pi_1}$-operator on it is $A\llbracket P'_2\rrbracket$-linear. Under a suitable chosen basis of $S_{\kappa,1}^D(K^p,A)^\vee$, we verify that the infinite matrix $M$ corresponding to the $U_{\pi_1}$-operator admits a similar estimation as obtained in \cite{liu2017eigencurve}. On the other hand, we have a characterization of the image of the spaces of classical automorphic forms in $S_{\kappa,1}^D(K^p,A)$ (when $A=\CC_p$ and $\kappa$ is locally algebraic). An analogous argument of $\textbf{I}(b)$ provides us known points on the Newton polygon of $M$ modulo the augmentation ideal of $A\llbracket P'_2\rrbracket$. In \S\ref{section: Newton-Hodge decomposition over certain noncommutative rings}, we prove a Newton-Hodge decomposition theorem for infinite matrices over certain noncommutative rings, which is a generalization of the Newton-Hodge decomposition theorem over valuation rings. We apply this theorem to the matrix $M$, and obtain a filtration $\{\tilde{F}_{\alpha} \}$ of the space $S_{\kappa,1}^D(K^p,A)^\vee$. Under the surjective map $S_{\kappa,1}^D(K^p,A)^\vee\rightarrow S_{\kappa,I}^D(K^p,A)^\vee$, we obtain a filtration $\{F_\alpha \}$ of $S_{\kappa,I}^D(K^p,A)^\vee$. We will show that this filtration is stable under the $U_{\pi_1}$ and $U_{\pi_2}$-operators on $S_{\kappa,I}^D(K^p,A)^\vee$. When $\kappa:\calO_p^\times\times \calO_p^\times\rightarrow A^\times$ is associated to a point $x\in \calW^{>1/p}(\CC_p)$, the graded pieces of this filtration are characterized by the condition in Theorem~\ref{T:spectral halo for eigenvarieties for D} for the single place $i=\gothp_1$. We run the above argument to the graded pieces of the filtration $\{F_\alpha \}$ and the $U_{\pi_2}$-operator on it. We get a filtration for every graded piece, and the graded pieces of all these filtrations are characterized by the desired property in Theorem~\ref{T:spectral halo for eigenvarieties for D}. Our main theorem follows form the existence of such filtrations.
\end{enumerate}

\subsection{Applications}
Let $F$ and $p$ be as above and $\gothn$ be an ideal of $F$ prime to $p$. We use $\calX_{\GL_{2/F}}(\gothn)$ to denote the Hilbert modular eigenvariety of tame level $\gothn$, which admits a weight map $w:\calX_{\GL_{2/F}}(\gothn)\rightarrow \calW$. Similar as before, we can define $\calX_{\GL_{2/F}}(\gothn)^{>1/p}$. 
An important consequence of Theorem~\ref{T:spectral halo for eigenvarieties for D} is the following description of the full Hilbert modular eigenvarieties over the boundary of the weight space.

\begin{theorem}
	The eigenvariety $\calX_{\GL_{2/F}}(\gothn)^{>1/p}$ is a disjoint union 
	\[
	\calX_{\GL_{2/F}}(\gothn)^{>1/p}=\bigsqcup_{l\in \Sigma^I, \sigma\in \{\pm\}^I}\calX_{l,\sigma}
	\]
	of (possibly empty) rigid analytic spaces which are finite over $\calW^{>1/p}$ via $w$, such that for each closed point $x\in \calX_{l,\sigma}(\CC_p)$ with $l=(l_i)_{i\in I}\in \Sigma^I$ and $\sigma=(\sigma_i)_{i\in I}\in\{\pm \}^I$, we have
	\[\begin{cases}
	v_p(a_i(x)) = (p-1)v_p(T_{i,w(x)})\cdot l_i,&\textrm{for}\ \sigma_i=-,\\
	v_p(a_i(x)) \in (p-1)v_p(T_{i,w(x)})\cdot (l_i,l_i+2),&\textrm{for}\ \sigma_i=+ \textrm{~and~} l_i\neq 0,\\
	v_p(a_i(x)) \in (p-1)v_p(T_{i,w(x)})\cdot (0,1),&\textrm{for}\ \sigma_i=+ \textrm{~and~} l_i=0,
	\end{cases}\]
	for all $i\in I$. 
\end{theorem}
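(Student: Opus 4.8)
The plan is to derive this statement about the full Hilbert modular eigenvariety $\calX_{\GL_{2/F}}(\gothn)^{>1/p}$ from Theorem~\ref{T:spectral halo for eigenvarieties for D} by a $p$-adic interpolation / transfer argument, exactly as announced in the abstract and in the discussion around Jacquet--Langlands and base change. The first step is to invoke the $p$-adic family version of the Jacquet--Langlands correspondence (and, where $F \neq \QQ$, base change) to compare $\calX_{\GL_{2/F}}(\gothn)$ with the quaternionic eigenvariety $\calX_D$ for a suitable totally definite quaternion algebra $D/F$ split at all places above $p$. Concretely, one uses Hansen's interpolation theorem (\cite{chenevier2005correspondance}-style, as cited) to produce a closed immersion — or at least a correspondence that is an isomorphism onto a union of irreducible components — $\calX_D \hookrightarrow \calX_{\GL_{2/F}}(\gothn)$ compatible with the weight maps $w$ and with all the Hecke operators, in particular with each $U_{\pi_i}$ (equivalently each $U_{\gothp_i}$), so that the slope functions $v_p(a_i(-))$ and the weight parameters $v_p(T_{i,w(-)})$ match under the comparison.

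The second step is to check that this correspondence is in fact surjective onto the part of $\calX_{\GL_{2/F}}(\gothn)^{>1/p}$ that matters, i.e. that every point of the Hilbert modular eigenvariety over the boundary annulus $\calW^{>1/p}$ is (after base change if necessary) in the image. This is where one uses that, over a totally real field, every cuspidal automorphic representation of $\GL_2$ is (potentially) a Jacquet--Langlands transfer from the definite quaternion algebra — there is no obstruction at the archimedean places since $D$ is totally definite and we are looking at cohomological weights $\geq 2$ — together with the classicality/density of classical points on both eigenvarieties, so that an isomorphism on a Zariski-dense set of classical points propagates to an isomorphism of the reduced rigid spaces. Once the two spaces (restricted to $\calW^{>1/p}$) are identified compatibly with $w$ and the $(a_i)_{i\in I}$, the decomposition $\calX_D^{>1/p} = \bigsqcup_{l,\sigma}\calX_{l,\sigma}$ and the slope inequalities of Theorem~\ref{T:spectral halo for eigenvarieties for D} transport verbatim to $\calX_{\GL_{2/F}}(\gothn)^{>1/p}$, giving exactly the claimed three-case description with the same $\Sigma$, $l \in \Sigma^I$, $\sigma \in \{\pm\}^I$.

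One technical wrinkle to handle carefully is that the Jacquet--Langlands/base change transfer is most cleanly stated over a finite extension or only identifies $\calX_D$ with a union of components of $\calX_{\GL_{2/F}}(\gothn)$ — there could be components of the $\GL_2$-eigenvariety with no quaternionic counterpart (e.g. coming from non-cuspidal contributions or from representations not discrete at the relevant finite places), or conversely the tame level $\gothn$ on the $\GL_2$-side has to be matched to the tame level $K^p$ on the $D$-side. I would address this by choosing the tame levels to correspond under the local Jacquet--Langlands dictionary and by noting that the slope/weight inequalities are insensitive to which component a point lies on: since the decomposition $\bigsqcup_{l,\sigma}\calX_{l,\sigma}$ is defined purely in terms of the values of $v_p(a_i(x))$ and $v_p(T_{i,w(x)})$, it suffices that \emph{every} boundary point $x$ satisfies one of the three numerical conditions, and that the loci where it does form a clopen partition finite over $\calW^{>1/p}$ — both of which follow from the corresponding facts on $\calX_D^{>1/p}$ plus finiteness of the transfer map.

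The main obstacle I expect is precisely this last comparison: making the $p$-adic interpolation of Jacquet--Langlands (and base change) sufficiently precise that it is an isomorphism — not merely a finite correspondence — onto a \emph{clopen} union of components over the boundary annulus, with exact compatibility of all the individual $U_{\gothp_i}$-eigenvalues and of the weight parameters $T_i$. In the eigencurve case ($F = \QQ$) this is the content of \cite{chenevier2005correspondance} and is what Liu--Wan--Xiao invoke; for general totally real $F$ one additionally needs the $p$-adic base change of Newton--Thorne-type or Johansson--Newton-type results, and one must ensure the correspondence respects the finer structure (each place $\gothp \mid p$ separately, not just the product $U_p$) — this is genuinely more delicate than in the $\GG_m$-over-$\QQ$ setting, and is the step I would expect to occupy most of the proof.
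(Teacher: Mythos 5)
Your high-level strategy — transfer Theorem~\ref{T:spectral halo for eigenvarieties for D} to the Hilbert modular eigenvariety via Hansen's interpolation theorem, using $p$-adic Jacquet--Langlands and base change — is the correct one, and matches the paper's approach in §\ref{section:Application to Hilbert modular eigenvarieties}. However, there is a genuine gap in the step where you address coverage.

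You correctly flag the issue that the Jacquet--Langlands map $i_D:\calX_D(\gothn)\hookrightarrow\calX_{\GL_{2/F}}(\gothn\gothd)$ need not be surjective, since representations not discrete at the ramified finite places of $D$ are not in the image. But your proposed remedy — that ``the slope/weight inequalities are insensitive to which component a point lies on'' and that coverage ``follows from the corresponding facts on $\calX_D^{>1/p}$ plus finiteness of the transfer map'' — does not actually close this gap. Finiteness of a closed immersion does not give surjectivity, and the numerical conditions on $v_p(a_i(x))$ are proved in Theorem~\ref{T:spectral halo for eigenvarieties for D} only for points of $\calX_D$; you have no mechanism to establish them for points of $\calX_{\GL_{2/F}}(\gothn)$ lying outside the image of $i_D$. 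The paper's actual resolution is a case split on the parity of $g=[F:\QQ]$: when $g$ is even, one may take $\gothd=1$, i.e.\ $D$ split at all finite places, and then $i_D$ \emph{is} an isomorphism (Theorem~\ref{T:p-adic Jacquet-Langlands correspondence}), so the transfer is immediate. When $g$ is odd, no such $D$ exists, and one instead passes to a quadratic totally real extension $F'/F$ in which $p$ splits completely, so that $[F':\QQ]$ is even; one proves the theorem for $\calX_{\GL_{2/F'}}(\gothn')$ by the even case, constructs the base-change morphism $i_{F'/F}:\calX_{\GL_{2/F}}(\gothn)\to\calX_{\GL_{2/F'}}(\gothn')$ (Theorem~\ref{T:p-adic base change}) via Hansen interpolation, and pulls the decomposition back. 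You mention base change, but only as a secondary ``technical wrinkle'' rather than as the essential mechanism making the odd-degree case go through.

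A second, smaller issue: for the pullback along $i_{F'/F}$ to transfer the numerical conditions correctly, one must compute what happens to the slope functions and the weight coordinates. Since $p$ splits completely in $F'$, every $i\in I$ splits into $i_1',i_2'\in I'$, and the key compatibilities are $a_{i_1'}(i_{F'/F}(x))=a_{i_2'}(i_{F'/F}(x))=a_i(x)$ (coming from the explicit description of $\sigma:\bbT'\to\bbT$ on split primes) and $T_{i_1',\jmath(w(x))}=T_{i_2',\jmath(w(x))}=T_{i,w(x)}$ (coming from the norm map $\Nm_{F'/F}$ on $\calO_p'^\times$ and the resulting closed immersion $\jmath:\calW\to\calW'$). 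Your proposal does not spell out these checks, but they are what allow the three-case slope bounds at each place $i\in I$ to be read off from the corresponding bounds at $i_1'$ or $i_2'$ over $F'$.
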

This theorem will be proved in \S\ref{section:Application to Hilbert modular eigenvarieties}. The main tool of our proof is Hansen's $p$-adic interpolation theorem (\cite[Theorem~5.1.6]{hansen2017universal}). When the degree $[F:\QQ]$ is even, there is an isomorphism between $\calX_{\GL_{2/F}}(\gothn)$ and the eigenvariety $\calX_D(\gothn)$ for the totally definite quaternion algebra $D$ over $F$ with discriminant $1$. The  theorem above follows directly from Theorem~\ref{T:spectral halo for eigenvarieties for D}. When $[F:\QQ]$ is odd, we take a quadratic extension $F'/F$ such that $F'$ is totally real and $p$ splits completely in $F'$. Set $\gothn'=\gothn\calO_F$. We show that there exists a morphism $\calX_{\GL_{2/F}}(\gothn)\rightarrow \calX_{\GL_{2/F'}}(\gothn')$ that interpolates the quadratic base change from $F$ to $F'$ on non-critical classical points. Then the theorem for $\calX_{\GL_{2/F}}(\gothn)$ follows from that for $\calX_{\GL_{2/F'}}(\gothn')$.

 A notable consequence of the above theorem is that every irreducible component of the (full) Hilbert modular eigenvarieties
contains a classical point of parallel weight $2$. This result has been proven for most irreducible components by Johansson-Newton in \cite{johansson2018parallel}. They use this result to prove the parity conjecture for Hilbert modular forms unconditionally when the degree $[F:\QQ]$ is even, and when the degree $[F:\QQ]$ is odd, they need to impose the assumption that the automorphic representation corresponding to the Hilbert modular form is not principal series at some place prime to $p$. The last assumption is to guarantee that the Hilbert modular form corresponds to an automorphic form for a definite quaternion algebra over $F$ under the Jacquet-Langlands correspondence. This assumption can be removed now.

\subsection{Further questions}\label{sub:1.3}
Theorem~\ref{T:spectral halo for eigenvarieties for D} is not a complete description of the boundary behavior of the eigenvariety $\calX_D$. Inspired by Coleman-Mazur-Buzzard-Kilford conjecture (\cite[Conjecture~1.2]{liu2017eigencurve}), we make the following conjecture.
\begin{conjecture}\label{C:refined halo conjecture}
	When $r\in (0,1)$ is sufficiently close to $1^-$, there exists a sequence of rational numbers $\Sigma_i=\{\alpha_{i,0}\leq \alpha_{i,1}\leq \dots \}$ for every $i\in I$, such that if we denote $\Sigma:=\prod\limits_{i\in I}\Sigma_i$, then the eigenvariety $\calX_D^{>r}$ is a disjoint union 
	\[
	\calX_D^{>r}=\bigsqcup_{\alpha=(\alpha_i)_{i\in I}\in \Sigma} \calX_\alpha
	\]
	of (possibly empty) rigid analytic spaces which are finite over $\calW^{>r}$ via $w$. For every $\alpha\in \Sigma$ and each closed point $x\in \calX_\alpha(\CC_p)$, we have $v_p(a_i(x))=(p-1)v_p(T_{i,w(x)})\cdot \alpha_i$ for all $i\in I$. Moreover, each sequence $\Sigma_i$ is a disjoint union of finitely many arithmetic progressions, counted with multiplicities.
\end{conjecture}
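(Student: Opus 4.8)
The plan is to promote the two-sided estimates underlying Theorem~\ref{T:spectral halo for eigenvarieties for D} to an \emph{exact} determination of the $U_{\pi_i}$-Newton polygon over a sufficiently thin boundary annulus. In the proof of Theorem~\ref{T:spectral halo for eigenvarieties for D}, for each place $\gothp_i$ one compares a lower bound for the Hodge polygon of $U_{\pi_i}$ (on a graded piece of the filtration produced by the Newton--Hodge decomposition) with the Newton points supplied by classical forms; these match only at the finitely many vertices indexed by $\Sigma$, which is exactly why one obtains intervals rather than single slopes. To reach Conjecture~\ref{C:refined halo conjecture} one must instead pin down the \emph{entire} Newton polygon of $U_{\pi_i}$ over $\calW^{>r}$ and show that, after dividing by $(p-1)v_p(T_{i,w})$, its slopes are constant in the weight and form finitely many arithmetic progressions counted with multiplicity.

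\textbf{Step 1: reduce to one place at a time.} I would keep the inductive architecture sketched in part IV of the introduction. Suppose one knew the exact $\gothp_1$-slopes of $U_{\pi_1}$ over $\calW^{>r}$ acting on $S_{\kappa,1}^D(K^p,A)^\vee$, viewed as a module over the noncommutative Iwasawa algebra $A\llbracket P_2'\rrbracket$. The Newton--Hodge decomposition over that ring then yields a $U_{\pi_1},U_{\pi_2}$-stable filtration $\{F_\alpha\}$ of $S_{\kappa,I}^D(K^p,A)^\vee$ whose $\gothp_1$-slopes are exactly $(p-1)v_p(T_{1,w})\cdot \alpha_{1,n}$. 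Applying the same input to $U_{\pi_2}$ on each graded piece, and then iterating over $i\in I$ with the analogous $P_i'$, refines $\{F_\alpha\}$ to a filtration whose graded pieces realise all of $\Sigma=\prod_i\Sigma_i$; finiteness of the resulting $\calX_\alpha$ over $\calW^{>r}$ follows as in the proof of Theorem~\ref{T:spectral halo for eigenvarieties for D}, since each graded piece is finite free over $\calW^{>r}$. Thus the conjecture is reduced to a single-place statement: the boundary Newton polygon of the infinite matrix $M$ for $U_{\pi_i}$, modulo the augmentation ideal of $A\llbracket P_i'\rrbracket$, has the asserted shape.

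\textbf{Step 2: a ghost series at each place.} The single-place statement is precisely a ``halo'' instance of the ghost philosophy of Bergdall--Pollack. I would try to write down explicitly, for each $i\in I$, a ghost power series $G_i(w,t)\in\calO(\calW^{>r})\langle t\rangle$ whose coefficients vanish exactly at the classical weights where the pertinent space of classical forms changes dimension, and then prove: (i) over the boundary annulus the Newton polygon of the characteristic series of $U_{\pi_i}$ coincides with that of $G_i$, via an abstract comparison in the spirit of \cite{liu2017eigencurve} that forces a power series whose Newton vertices are pinned to a dense set of classical points and which satisfies the matching local bounds on both sides to equal the ghost; and (ii) the combinatorics of the zero loci of the coefficients of $G_i$ make the slope sequence $\Sigma_i$ a finite union of arithmetic progressions. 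The classical-forms input needed for (i) is exactly the characterisation, already used for Theorem~\ref{T:spectral halo for eigenvarieties for D}, of the image of classical automorphic forms inside the space of generalized $p$-adic automorphic forms.

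\textbf{Main obstacle.} Step 2 is the essential difficulty, and it is genuinely hard: even for $F=\QQ$ it amounts to the still largely open ghost conjecture, and the present paper deliberately proves only the coarser estimates that sidestep it. A realistic intermediate target would be to establish Conjecture~\ref{C:refined halo conjecture} conditionally on a place-by-place ghost conjecture for the quaternionic eigenvariety, or to prove the bare decomposition of $\calX_D^{>r}$ into components finite over $\calW^{>r}$ (without the arithmetic-progression refinement) for $r$ sufficiently close to $1$ by a spreading-out and compactness argument bootstrapped from the case $r=1/p$ already covered by Theorem~\ref{T:spectral halo for eigenvarieties for D}.
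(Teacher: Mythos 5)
This statement is labelled \emph{Conjecture}~\ref{C:refined halo conjecture} in the paper and is not proved there; the paper proves only the weaker Theorem~\ref{T:spectral halo for eigenvarieties for D}, in which the slope $v_p(a_i(x))$ is localised to an interval of length $2(p-1)v_p(T_{i,w(x)})$ rather than pinned to an exact value. So there is no ``paper's own proof'' to compare against, and your text is, as you yourself acknowledge, a strategy sketch with a gap in Step 2 rather than a proof.

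A factual correction to your framing of the obstacle: you write that even for $F=\QQ$ the boundary statement ``amounts to the still largely open ghost conjecture.'' This is not right, and the paper says so explicitly in the sentence following the conjecture. For $F=\QQ$ the boundary halo conjecture \emph{is} a theorem of Liu--Wan--Xiao (their Theorem 1.5), proved not via any ghost series but by a direct analysis of the characteristic power series $\mathrm{char}(U_p)$, exploiting the integrality of its coefficients in the Iwasawa algebra $\ZZ_p\llbracket \ZZ_p^\times\rrbracket$ to pin down the boundary Newton polygon exactly. The Bergdall--Pollack ghost conjecture concerns the Newton polygon over the \emph{entire} weight space including the center, which is a much stronger and genuinely open statement; conflating the two inflates the difficulty of the single-variable input and points the attack in a less promising direction. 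The actual obstruction the authors identify for the Hilbert case is the multi-variable / noncommutative bookkeeping: the argument of Liu--Wan--Xiao relies on the coefficients of $\mathrm{char}(U_p)$ lying in the one-variable Iwasawa algebra and on an integrality argument that does not carry over once one must track several commuting $U_{\pi_i}$'s through the noncommutative ring $A\llbracket P'_{J^c}\rrbracket$.

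Two further remarks. First, your Step 1 (iterating the Newton--Hodge filtration one place at a time) correctly mirrors the architecture of the paper's proof of Theorem~\ref{T:spectral halo for eigenvarieties for D}, so that reduction is sound. Second, your proposed ``realistic intermediate target'' --- the bare decomposition of $\calX_D^{>r}$ into pieces finite over $\calW^{>r}$, without the exact-slope and arithmetic-progression refinements --- is already established by Theorem~\ref{T:spectral halo for eigenvarieties for D} for every $r\geq 1/p$, so it is not an open intermediate step; what remains open is precisely the sharpening of the interval $(l_i,l_i+2)$ to a single exact slope $\alpha_i$ and the arithmetic-progression structure of $\Sigma_i$.
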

When $F=\QQ$, Conjecture~\ref{C:refined halo conjecture} has been proved for the eigenvariety $\calX_D^{>r}$ for some (explicit) rational number $r\in (\frac{1}{p},1)$ in \cite[Theorem~1.5]{liu2017eigencurve}. Their proof is based on a careful analysis of the characteristic power series of the $U_p$-operator and the fact that the coefficients of the characteristic power series belong to the Iwasawa algebra $\Lambda=\ZZ_p\llbracket \ZZ_p^\times\rrbracket$. It seems to us that more ideas are needed in order to generalize their result to the case of Hilbert modular eigenvarieties. In \cite[Conjecture~1]{birkbeck2019slopes}, Birkbeck made a similar conjecture on the $p$-adic slopes of overconvergent Hilbert modular forms without any assumption on the totally real field $F$. Our Conjecture~\ref{C:refined halo conjecture} makes the assumption that $p$ splits in $F$, and hence gives a more precise characterization of the decomposition of the eigenvarieties. We also refer to Birkbeck's thesis
\cite[Chapter~8]{birkbeck2017eigenvarieties} for numerical evidence towards Conjecture~\ref{C:refined halo conjecture}. 

Finally, we make a comment about the case for $p=2$. Most of our arguments  work for $p=2$ with mild modifications. However, there is one subtle place in Notation~\ref{N:J component of integer rings, matrix groups and characters} that the oddness of $p$ is crucial. We explain the case for $F=\QQ$ as an example and refer to Notation~\ref{N:J component of integer rings, matrix groups and characters} for details. Let $\Iw_p=\Matrix{\ZZ_p^\times}{\ZZ_p}{p\ZZ_p}{\ZZ_p^\times}$ be the Iwahori subgroup of $\GL_2(\ZZ_p)$ and $D(\ZZ_p^\times)=\left\{ \Matrix \alpha 00\alpha |\alpha\in \ZZ_p^\times\right\}$	be the subgroup of $\Iw_p$ consisting of scalar matrices. When $p>2$, we show that the inclusion map $D(\ZZ_p)\hookrightarrow \Iw_p$ has a section, i.e. there exists a subgroup $P$ of $\Iw_p$ such that the multiplication map $D(\ZZ_p)\times P\rightarrow \Iw_p$ is an isomorphism. We encourage the audience to explore how to modify the argument for $p=2$.
	
\subsection{Structure of the paper}

The \S\ref{section: Newton-Hodge decomposition over certain noncommutative rings} is devoted to prove a Newton-Hodge decomposition theorem for infinite matrices over certain noncommutative rings. This section is technical and the readers can assume the main result Theorem~\ref{T:Newton-Hodge decomposition with Hodge bound restriction for infinite matrices over noncommutative rings} and skip its lengthy proof at first. In \S\ref{section:Automorphic forms for definite quaternion algebras and completed homology}, we first recall the notions of overconvergent automorphic forms, Hecke operators and eigenvarieties. Then we construct a space of generalized integral $p$-adic automorphic forms and explain its relation with the spaces of classical automorphic forms.  In \S\ref{section:A filtration on the space of integral $p$-adic automorphic forms}, we give an explicit expression of the Hecke operator $U_{\pi_i}$ on the space of generalized integral $p$-adic automorphic forms, and use the Newton-Hodge decomposition Theorem~\ref{T:Newton-Hodge decomposition with Hodge bound restriction for infinite matrices over noncommutative rings} to obtain a filtration of this space, whose graded piece has a nice description in terms of the $U_{\pi_i}$-action, for a fixed $i\in I$. We complete the proof of Theorem~\ref{T:spectral halo for eigenvarieties for D} in \S\ref{section:Proof of the main theorem}. The idea is to inductively apply the argument in the previous section to all places of $F$ over $p$, and then get a filtration of the space of integral $p$-adic automorphic forms, whose graded pieces can be described in terms of all the $U_{\pi_i}$-operators. The decomposition of the eigenvarieties follows from the existence of such a filtration. In \S\ref{section:Application to Hilbert modular eigenvarieties}, we use Hansen's $p$-adic interpolation theorem to translate our results to Hilbert modular eigenvarieties.

\subsection{Acknowledgment}

We thank Liang Xiao for sharing his ideas and answering many questions on this topic. We would also like to thank Yiwen Ding, Yongquan Hu and Daqing Wan for helpful comments and conversations. Finally we would like to thank the anonymous referees for their impressively helpful report that significantly improved the exposition of this paper.

\subsection{Notations}

For every prime $p$, we fix an embedding $\iota_p:\bar{\QQ}\rightarrow \CC_p$.
We use $v_p(\cdot)$ (resp. $|\cdot|_p$) to denote the $p$-adic valuation (resp. $p$-adic norm) on $\CC_p$, normalized by $v_p(p)=1$ (resp. $|p|_p=p^{-1}$).

For two topological spaces $X$ and $Y$, we use $\calC(X,Y)$ to denote the set of continuous maps from $X$ to $Y$.

\section{Newton-Hodge decomposition over certain noncommutative rings }
\label{section: Newton-Hodge decomposition over certain noncommutative rings}

\subsection{Notations}\label{subsection: notations in N-H decomposition}

\begin{itemize}
	\item Let $\NN=\{1,2,\dots\}$. For an integer $n>0$, we denote by $[n]:= \{1,2,\dots, n \}\subset \NN$ and $[\infty]:=\NN\cup \{\infty \}$.
	\item For an increasing sequence $\ula=(\lambda_1,\lambda_2,\dots )$ of real numbers, we denote by  $\ula^{[n]}:=(\lambda_1,\dots ,\lambda_n )$ the $n$-th truncated subsequence of $\ula$. More generally, for two integers $0<m<n$, we put $\ula^{(m,n]}:=(\lambda_{m+1},\dots, \lambda_n)$.
	\item For $m,n\in [\infty]$  and a ring $R$, we denote by $\rmM_{m\times n}(R)$ the set of  $m\times n$ matrices with entries in $R$ and abbreviate it by $\rmM_n(R)$ when $m=n$.
	\item For any  $n\in [\infty]$, $M\in \rmM_n(R)$ and   $I,J$ two subsets of $[n]$ with cardinalities $|I|=k$, $|J|=l$, we denote by $M_{I,J}$ the $k\times l$ matrix consisting of entries of $M$ with row indices in $I$ and column indices in $J$, and abbreviate it by $M_I$
	when $I=J$. 
	\item We denote by $v_T(-)$ the $T$-adic valuation of the formal power series ring $\Fp\llbracket T \rrbracket$ normalized by $v_T(T)=1$.
	\item We denote by $\bbC$ an algebraic closure of the fractional field $\FF_p(\!(T)\!)$ of $\FF_p\llbracket T\rrbracket$. The valuation $v_T(-)$ extends uniquely to $\bbC$, which is still denoted by $v_T(-)$.
\end{itemize}

\subsection{Newton-Hodge decomposition for matrices in $ M_n(\Fp\llbracket T\rrbracket)$}

\begin{definition}\label{D:Hodge and Newton functions and polygons}

	\begin{enumerate}
		\item The Newton  and Hodge functions of a matrix $M\in M_n(\Fp\llbracket T \rrbracket )$ are defined by
		\begin{equation}\label{E:Newton function }
		N(M, k):= v_T\left(\sum\limits_{N : \textrm{~principal~} k\times k \textrm{~minor of~} M} \det(N)\right)
		\end{equation}
		and 
		\begin{equation}\label{E:Hodge function}
		H(M, k):=\min\{v_T(\det(N))\;|\; N\ \textrm{is a}\ k\times k\ \textrm{minor of}\ M\}
		\end{equation}
		for every $k\in [n]$.
		\item Moreover, we set $H(M,0):=0$ and $N(M,0):=0$. The Hodge polygon (resp. Newton polygon) of $M$ is the lower convex hull of the points $\{(k,H(M,k))\}_{k=0}^n$ (resp. $\{(k, N(M,k))\}_{k=0}^n$).
	\end{enumerate}
\end{definition}

\begin{remark}\label{R:relation between slopes of Newton polygon and eigenvalues}
	Let $\alpha_1,\dots,\alpha_n$ be the eigenvalues of $M$ in $\bbC$ (counted with multiplicities) such that $v_T(\alpha_1)\leq\dots\leq v_T(\alpha_n)$. Then the slopes of the Newton polygon are exactly $v_T(\alpha_1),\dots, v_T(\alpha_n)$.
\end{remark}

Let $\ula=(\lambda_1, \lambda_2, \dots )$ be a (not necessarily strictly) increasing sequence of nonnegative real numbers such that 
$
\lim\limits_{n\rightarrow +\infty}\lambda_n=+\infty.
$
For every integer $n>0$, we denote by $D(\ula^{[n]})$ (resp. $D(\ula)$) the diagonal matrix $\mathrm{Diag}(T^{\lambda_1},\dots,T^{\lambda_n})\in \rmM_n(\Fp\llbracket T \rrbracket)$ (resp. $\mathrm{Diag}(T^{\lambda_1},\dots)\in \rmM_\infty(\Fp\llbracket T \rrbracket)$). For every two integers $0<m<n$, we denote by $D(\ula^{(m,n]})$ the diagonal matrix $\mathrm{Diag}(T^{\lambda_{m+1}},\dots,T^{\lambda_n})\in \rmM_{n-m}(\Fp\llbracket T \rrbracket)$.

\begin{definition}\label{D:lambda Hodge bounded and lambda stable}
	For  $\ell\in \NN$ and $n\in \NN$, 
	\begin{enumerate}
		\item  a matrix $M=(m_{i,j})_{1\leq i\leq n, 1\leq j\leq \ell}\in \rmM_{n\times \ell}(\Fp\llbracket T \rrbracket)$ is called $\ula^{[n]}$-Hodge bounded if $M=D(\ula^{[n]})M'$ for some $M'\in \rmM_{n\times \ell}(\Fp\llbracket T \rrbracket)$, or equivalently, $v_T(m_{ij})\geq \lambda_i$ for all $1\leq i\leq n$ and $ 1\leq j\leq \ell$. When $\ell=n$ and $M'\in\GL_n(\Fp\llbracket T \rrbracket)$, we call $M$ as \emph{strictly $\ula^{[n]}$-Hodge bounded}.
		\item A matrix $M\in \rmM_\infty (\Fp\llbracket T \rrbracket)$ is called \emph{$\ula$-Hodge bounded} if $M=D(\ula)M'$ for some $M'\in \rmM_{\infty}(\Fp\llbracket T \rrbracket)$.
		\item  A matrix $A\in \rmM_n(\Fp\llbracket T \rrbracket)$ is called \emph{$\ula^{[n]}$-stable} if for every $\ula^{[n]}$-Hodge bounded matrix $B\in \rmM_n(\Fp\llbracket T \rrbracket)$, $AB$ is also $\ula^{[n]}$-Hodge bounded.
		\item A matrix $A\in \rmM_\infty(\Fp\llbracket T \rrbracket)$ is called \emph{$\ula$-stable} if for every $\ula$-Hodge bounded matrix $B\in \rmM_\infty(\Fp\llbracket T \rrbracket)$, $AB$ is also $\ula$-Hodge bounded.
	\end{enumerate}
\end{definition}

\begin{remark}
	\begin{enumerate}
		\item For $A,B\in \rmM_n(\Fp\llbracket T \rrbracket)$ if $B$ is $\ula^{[n]}$-Hodge bounded, then $BA$ is also $\ula^{[n]}$-Hodge bounded.
		\item Let $M\in \rmM_\infty(\Fp\llbracket T \rrbracket)$ be a $\ula$-Hodge bounded matrix. Since $\lambda_i\xrightarrow{i\to \infty}  \infty$, the functions \eqref{E:Newton function } and \eqref{E:Hodge function} are well-defined on $M$, which are called the \emph{Newton and Hodge functions} for $M$, respectively.
		Moreover, we define the Newton and Hodge polygons for $M$ in a similar way to the ones for matrices of finite dimensions. 
	\end{enumerate}
\end{remark}

\begin{lemma}\label{L:criterion of lambda-stable}
	For $n\in[\infty]$ and a matrix $A=(a_{ij})\in \rmM_n(\Fp\llbracket T \rrbracket)$, the following statements are equivalent:
	\begin{enumerate}
		\item $A$ is $\ula^{[n]}$-stable;
		\item $D(\ula^{[n]})^{-1}AD(\ula^{[n]})\in \rmM_n(\Fp\llbracket T \rrbracket)$;
		\item $v_T(a_{ij})\geq \lambda_i-\lambda_j$ for every $1\leq i\leq n$ and $1\leq j\leq n$. 
	\end{enumerate}
\end{lemma}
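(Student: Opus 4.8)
The plan is to show that conditions (1), (2) and (3) are each equivalent to the entrywise bound in (3), by establishing (2)$\Leftrightarrow$(3) and (1)$\Leftrightarrow$(3) through direct computation. Throughout I would treat the cases $n<\infty$ and $n=\infty$ uniformly (writing $\ula^{[\infty]}$ for $\ula$), and I would freely use the characterisation recorded in Definition~\ref{D:lambda Hodge bounded and lambda stable}, namely that a matrix $M=(m_{ij})$ is $\ula^{[n]}$-Hodge bounded if and only if $v_T(m_{ij})\geq\lambda_i$ for all $i,j$.

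For (2)$\Leftrightarrow$(3) one simply observes that the $(i,j)$-entry of $D(\ula^{[n]})^{-1}AD(\ula^{[n]})$ is $T^{\lambda_j-\lambda_i}a_{ij}$, which lies in $\Fp\llbracket T\rrbracket$ precisely when $v_T(a_{ij})\geq\lambda_i-\lambda_j$; imposing this for all $i,j$ is exactly (3). For (3)$\Rightarrow$(1): given a $\ula^{[n]}$-Hodge bounded $B=(b_{jk})$, so $v_T(b_{jk})\geq\lambda_j$, I would first note that since $a_{ij}\in\Fp\llbracket T\rrbracket$ has $v_T(a_{ij})\geq 0$, the products satisfy $v_T(a_{ij}b_{jk})\geq\lambda_j\to\infty$, so each entry $(AB)_{ik}=\sum_j a_{ij}b_{jk}$ converges in $\Fp\llbracket T\rrbracket$ (this point is vacuous when $n<\infty$); then (3) gives $v_T(a_{ij}b_{jk})\geq(\lambda_i-\lambda_j)+\lambda_j=\lambda_i$, hence $v_T((AB)_{ik})\geq\lambda_i$ and $AB$ is $\ula^{[n]}$-Hodge bounded. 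Since $B$ is arbitrary, $A$ is $\ula^{[n]}$-stable.

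For (1)$\Rightarrow$(3): fix $j$ and test $\ula^{[n]}$-stability against the matrix $B^{(j)}$ whose $(j,j)$-entry is $T^{\lambda_j}$ and all of whose other entries vanish; it is $\ula^{[n]}$-Hodge bounded because $v_T(T^{\lambda_j})=\lambda_j$. One computes $(AB^{(j)})_{ij}=a_{ij}T^{\lambda_j}$ and $(AB^{(j)})_{ik}=0$ for $k\neq j$, so $\ula^{[n]}$-stability forces $v_T(a_{ij})+\lambda_j\geq\lambda_i$, i.e.\ $v_T(a_{ij})\geq\lambda_i-\lambda_j$; letting $j$ range over $[n]$ yields (3).

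The whole argument is bookkeeping with $T$-adic valuations; the only ingredient that is not purely formal is the convergence of the (possibly infinite) matrix product in (3)$\Rightarrow$(1), which follows at once from $v_T(a_{ij})\geq 0$ together with $\lambda_j\to+\infty$, so I do not expect any serious obstacle.
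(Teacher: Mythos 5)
Your proof is correct and follows essentially the same approach as the paper: direct $T$-adic valuation bookkeeping after testing stability against a specific $\ula^{[n]}$-Hodge bounded matrix. The only (cosmetic) difference is that you prove $(1)\Rightarrow(3)$ directly by testing against the rank-one matrices $B^{(j)}$, whereas the paper tests once against $D(\ula^{[n]})$ to get $(1)\Rightarrow(2)$ and then reads off $(3)$; your $(3)\Rightarrow(1)$ argument matches the paper's verbatim.
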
 
\begin{proof}
	$(1)\Rightarrow (2)$ Since $A$ is $\ula^{[n]}$-stable and $D(\ula^{[n]})$ is $\ula^{[n]}$-Hodge bounded, $AD(\ula^{[n]})$ is also  $\ula^{[n]}$-Hodge bounded, and hence $AD(\ula^{[n]})=D(\ula^{[n]})B$ for some $B\in\rmM_n(\Fp\llbracket T \rrbracket)$. This proves $D(\ula^{[n]})^{-1}AD(\ula^{[n]})\in \rmM_n(\Fp\llbracket T \rrbracket)$.
	
	$(2)\Rightarrow (3)$ Suppose that we have $AD(\ula^{[n]})=D(\ula^{[n]})B$ for some $B=(b_{ij})\in\rmM_n(\Fp\llbracket T \rrbracket)$. Comparing the $(i,j)$-entries of the matrices on the two sides of this equality, we have $T^{\lambda_j}a_{ij}=T^{\lambda_i}b_{ij}$, and hence $$v_T(a_{ij})=v_T(b_{ij})+\lambda_i-\lambda_j\geq \lambda_i-\lambda_j.$$
	
	$(3)\Rightarrow (1)$ For every $\ula^{[n]}$-Hodge bounded  matrix $M=(m_{ij})\in \rmM_n(\Fp\llbracket T \rrbracket)$, we have $v_T(m_{ij})\geq \lambda_i$ for all $1\leq i\leq n$ and $1\leq j\leq n$. Then the $(i,j)$-entry of $AM$ is 
	$
	\sum\limits_{k=1}^n a_{ik}m_{kj}.
	$
	Since $v_T(a_{ik})\geq \lambda_i-\lambda_k$, we have $v_T(a_{ik}m_{kj})\geq \lambda_i$, and hence
	$
	v_T(\sum\limits_{k=1}^n a_{ik}m_{kj})\geq \lambda_i.
	$
	Therefore, we conclude that $AM$ is $\ula^{[n]}$-Hodge bounded and $A$ is $\ula^{[n]}$-stable.
\end{proof}

\begin{lemma}\label{L:second criterion of lambda-stable}
	Let $M\in \rmM_n(\Fp\llbracket T \rrbracket)$ be strictly  $\ula^{[n]}$-Hodge bounded. For a matrix  $A\in \rmM_n(\Fp\llbracket T \rrbracket)$ if $AM$ is $\ula^{[n]}$-Hodge bounded, then $A$ is $\ula^{[n]}$-stable.
\end{lemma}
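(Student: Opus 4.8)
The plan is to reduce the statement to Lemma~\ref{L:criterion of lambda-stable} by conjugating everything by $D(\ula^{[n]})$. Write $\widetilde{A}:=D(\ula^{[n]})^{-1}AD(\ula^{[n]})$ and $\widetilde{M}:=D(\ula^{[n]})^{-1}M$; a priori these have entries only in the fraction field $\FF_p(\!(T)\!)$, and my goal is exactly to show $\widetilde{A}\in\rmM_n(\FF_p\llbracket T\rrbracket)$, since by Lemma~\ref{L:criterion of lambda-stable} (equivalence of (1) and (2)) this is precisely the assertion that $A$ is $\ula^{[n]}$-stable. Since $M$ is strictly $\ula^{[n]}$-Hodge bounded we have $M=D(\ula^{[n]})M'$ with $M'\in\GL_n(\FF_p\llbracket T\rrbracket)$, so $\widetilde{M}=M'\in\GL_n(\FF_p\llbracket T\rrbracket)$; in particular $\widetilde{M}$ is invertible over $\FF_p\llbracket T\rrbracket$.

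Now I use the hypothesis that $AM$ is $\ula^{[n]}$-Hodge bounded. By definition this means $AM=D(\ula^{[n]})C$ for some $C\in\rmM_n(\FF_p\llbracket T\rrbracket)$, i.e. $D(\ula^{[n]})^{-1}AM\in\rmM_n(\FF_p\llbracket T\rrbracket)$. Inserting $D(\ula^{[n]})D(\ula^{[n]})^{-1}$ in the middle, compute
\[
D(\ula^{[n]})^{-1}AM \;=\; \bigl(D(\ula^{[n]})^{-1}AD(\ula^{[n]})\bigr)\bigl(D(\ula^{[n]})^{-1}M\bigr)\;=\;\widetilde{A}\,\widetilde{M}.
\]
Thus $\widetilde{A}\,\widetilde{M}\in\rmM_n(\FF_p\llbracket T\rrbracket)$. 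Since $\widetilde{M}=M'\in\GL_n(\FF_p\llbracket T\rrbracket)$, its inverse $\widetilde{M}^{-1}=(M')^{-1}$ also lies in $\rmM_n(\FF_p\llbracket T\rrbracket)$, and therefore
\[
\widetilde{A}\;=\;\bigl(\widetilde{A}\,\widetilde{M}\bigr)\widetilde{M}^{-1}\;\in\;\rmM_n(\FF_p\llbracket T\rrbracket).
\]
By the implication $(2)\Rightarrow(1)$ of Lemma~\ref{L:criterion of lambda-stable}, $A$ is $\ula^{[n]}$-stable, which completes the proof.

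I do not anticipate a genuine obstacle here; the only point requiring a little care is the bookkeeping that all matrix manipulations take place inside $\rmM_n(\FF_p(\!(T)\!))$ where $D(\ula^{[n]})$ is invertible (each $T^{\lambda_i}$ being a unit in the fraction field), and that the conclusion $\widetilde{A}\in\rmM_n(\FF_p\llbracket T\rrbracket)$ genuinely matches condition (2) of Lemma~\ref{L:criterion of lambda-stable}. The essential input is that ``strictly Hodge bounded'' upgrades $M'$ from merely being in $\rmM_n(\FF_p\llbracket T\rrbracket)$ to being in $\GL_n(\FF_p\llbracket T\rrbracket)$, so that right-multiplication by $\widetilde{M}^{-1}$ preserves integrality — without invertibility of $M'$ the argument fails, as one would expect.
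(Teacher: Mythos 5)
Your proof is correct and is essentially the same argument the paper gives: both factor $M = D(\ula^{[n]})M'$ with $M'$ invertible, use $\ula^{[n]}$-Hodge boundedness of $AM$ to write $D(\ula^{[n]})^{-1}AM$ as an integral matrix, and then cancel $M'$ to conclude $D(\ula^{[n]})^{-1}AD(\ula^{[n]})\in\rmM_n(\Fp\llbracket T\rrbracket)$, invoking Lemma~\ref{L:criterion of lambda-stable}. Your write-up just makes the conjugation bookkeeping a bit more explicit than the paper's one-line manipulation.
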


\begin{proof}
	Since $M$ is strictly  $\ula^{[n]}$-Hodge bounded and $AM$ is $\ula^{[n]}$-Hodge bounded, there are matrices $M'\in \GL_n(\Fp\llbracket T \rrbracket)$ and $B\in \rmM_n(\Fp\llbracket T \rrbracket)$ such that $M=D(\ula^{[n]})M'$ and $AM=D(\ula^{[n]})B$. Therefore, we have $A=D(\ula^{[n]})BM'^{-1}D(\ula^{[n]})^{-1}$. Combined with Lemma~\ref{L:criterion of lambda-stable}, this equality implies that $A$ is $\ula^{[n]}$-stable.
\end{proof}

\begin{lemma}\label{L:Newton function determines Hodge function for finite matrix}
	For $n\in\NN$ and $M\in M_n(\Fp\llbracket T \rrbracket)$ if $M$ is $\ula^{[n]}$-Hodge bounded and satisfies $N(M,n)=\sum\limits_{i=1}^n\lambda_i$, then
	$
	H(M,k)=\sum\limits_{i=1}^k\lambda_i \text{~for every~} 1\leq k\leq n,
	$
	i.e. the slopes of the Hodge polygon of $M$ are $\lambda_1,\lambda_2,\dots,\lambda_n$. In particular, $M$ is strictly $\ula^{[n]}$-Hodge bounded.
\end{lemma}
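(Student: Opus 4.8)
The plan is to exploit the two-sided squeeze coming from the general inequalities relating Hodge and Newton polygons together with the hypothesis that $M$ is $\ula^{[n]}$-Hodge bounded. First I would record the elementary fact that since $M$ is $\ula^{[n]}$-Hodge bounded, say $M=D(\ula^{[n]})M'$ with $M'\in\rmM_n(\Fp\llbracket T\rrbracket)$, every $k\times k$ minor $N$ of $M$ is a sum of terms each of which picks up at least $k$ of the factors $T^{\lambda_i}$ from distinct rows; more precisely, if $N$ uses the rows indexed by $I=\{i_1<\dots<i_k\}$ then $v_T(\det N)\ge \lambda_{i_1}+\dots+\lambda_{i_k}\ge \lambda_1+\dots+\lambda_k$. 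This gives the lower bound $H(M,k)\ge\sum_{i=1}^k\lambda_i$ for every $k$, i.e. the Hodge polygon of $M$ lies on or above the polygon with slopes $\lambda_1,\dots,\lambda_n$.

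Next I would invoke the standard comparison $N(M,k)\ge H(M,k)$ (the $T$-adic valuation of a sum is at least the minimum of the valuations of the summands, and each principal $k\times k$ minor is in particular a $k\times k$ minor), so the Newton polygon lies on or above the Hodge polygon, which in turn lies on or above the $\ula^{[n]}$-polygon. Now compare the right endpoints: by hypothesis $N(M,n)=\sum_{i=1}^n\lambda_i$, which equals the right endpoint of the $\ula^{[n]}$-polygon. Since all three polygons share the same left endpoint $(0,0)$ and the same right endpoint $(n,\sum\lambda_i)$, and the Hodge polygon is sandwiched between the $\ula^{[n]}$-polygon (below) and the Newton polygon (above) while the Newton polygon is itself above the $\ula^{[n]}$-polygon — wait, more carefully: we have $\sum_{i=1}^k\lambda_i\le H(M,k)\le N(M,k)$ for all $k$, with equality at $k=0$ and $k=n$. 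I would then argue that a convex polygon (the Hodge polygon, being the lower convex hull of the points $(k,H(M,k))$) that lies weakly above the straight convex polygon through $\{(k,\sum_{i\le k}\lambda_i)\}$ and agrees with it at both endpoints must coincide with it at every vertex; hence $H(M,k)=\sum_{i=1}^k\lambda_i$ for all $0\le k\le n$. Here one uses that the $\ula^{[n]}$-polygon is already convex, so it equals its own lower convex hull, and convexity forces the squeezed polygon down onto it.

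Finally, for the "in particular" clause, I would combine the conclusion $H(M,n)=\sum_{i=1}^n\lambda_i$ with $\ula^{[n]}$-Hodge boundedness: writing $M=D(\ula^{[n]})M'$, the Cauchy–Binet expansion gives $v_T(\det M)=\sum_{i=1}^n\lambda_i+v_T(\det M')$, and since $H(M,n)\le v_T(\det M)$ with equality forced by what precedes, we get $v_T(\det M')=0$, i.e. $\det M'\in\Fp\llbracket T\rrbracket^\times$, so $M'\in\GL_n(\Fp\llbracket T\rrbracket)$ and $M$ is strictly $\ula^{[n]}$-Hodge bounded. The main obstacle is the middle step: making rigorous that the convexity of the Hodge polygon, pinned at both endpoints and bounded below by the already-convex $\ula^{[n]}$-polygon, forces pointwise equality — this is where one must be careful not to merely compare the polygons at their endpoints but to use that the lower convex hull of points lying weakly above a convex polygon, sharing its endpoints, cannot dip strictly above it without violating convexity at the far end. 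I would isolate this as a small self-contained convex-geometry observation and apply it.
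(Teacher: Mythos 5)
Your first and last paragraphs are fine: the lower bound $H(M,k)\geq\sum_{i=1}^k\lambda_i$ from $\ula^{[n]}$-Hodge boundedness, the comparison $H(M,n)\leq N(M,n)$, and the Cauchy--Binet deduction that $M'\in\GL_n(\Fp\llbracket T\rrbracket)$ once $v_T(\det M)=\sum\lambda_i$ are all correct. The middle step, however, is where the whole weight of the lemma rests, and your convexity argument there is false.

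You claim that a convex polygon which lies weakly above the $\ula^{[n]}$-polygon and shares both endpoints with it must coincide with it. This is not true: two convex piecewise-linear curves with the same endpoints can differ on the interior, with one strictly above the other. Concretely, take $n=2$, $\lambda_1=0$, $\lambda_2=1$; the $\ula$-polygon has vertices $(0,0),(1,0),(2,1)$, while the straight segment from $(0,0)$ to $(2,1)$ is convex, agrees at both endpoints, and is strictly above the $\ula$-polygon at $x=1$. So being ``pinned at both endpoints and bounded below by a convex polygon'' does not force coincidence; convexity provides only the \emph{upper} bound $H(M,k)\leq\frac{k}{n}\sum_{i=1}^n\lambda_i$, and because $\ula$ is increasing the target value $\sum_{i=1}^k\lambda_i$ is generally \emph{below} this chord, so there is genuine room for $H(M,k)$ to be too large without violating anything you have invoked. (There is also a secondary issue: even if the Hodge \emph{polygon}, i.e.\ the lower convex hull, were shown to equal the $\ula$-polygon, that would not by itself give $H(M,k)=\sum_{i\leq k}\lambda_i$ at every $k$, since the hull at an interior $k$ may lie strictly below the raw value $H(M,k)$.)

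The missing idea, which the paper uses, is a Laplace expansion of $\det M$ along the first $k$ rows. If one assumes for contradiction that $H(M,k)>\sum_{i=1}^k\lambda_i$ for some $k$, then every $k\times k$ minor $M_{[k],J}$ from the first $k$ rows has $v_T(\det M_{[k],J})\geq H(M,k)>\sum_{i\leq k}\lambda_i$, while the complementary minor $M_{[k],J}^{\mathrm{comp}}$ draws its rows from $k+1,\dots,n$ and so, by $\ula^{[n]}$-Hodge boundedness, satisfies $v_T(\det M_{[k],J}^{\mathrm{comp}})\geq\sum_{i>k}\lambda_i$. Summing over $J$ in the Laplace expansion gives $v_T(\det M)>\sum_{i=1}^n\lambda_i=N(M,n)$, a contradiction. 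This is the step that supplies the \emph{upper} bound on $H(M,k)$; it uses the Hodge boundedness of the complementary rows in an essential way, which the endpoint-plus-convexity comparison cannot reproduce. You should replace your middle paragraph with this (or an equivalent) argument.
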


\begin{proof}
	Since $M$ is $\ula^{[n]}$-Hodge bounded,  for every $1\leq k\leq n$ we have 
	$
	H(M,k)\geq \sum\limits_{i=1}^k \lambda_i.
	$
	Suppose 
	$
	H(M,k)>\sum\limits_{i=1}^k \lambda_i \text{~for some~} k.
	$
	Let $I:=[k]$. By Laplace expansion, we have
	\[
	\det(M)=\sum_{J\subset [n], |J|=k} \mathrm{sgn}(I,J)\det(M_{I,J})\det(M_{I,J}^{comp}),
	\]
	where $\mathrm{sgn}(I,J)=(-1)^{\sum_{i\in I}i+\sum_{j\in J}j}$ is the signature of the permutation determined by $I$ and $J$, and $M_{I,J}^{comp}$ is the complement $(n-k)\times (n-k)$ minor of $M_{I,J}$. 
	
	Since $M$ is $\ula^{[n]}$-Hodge bounded, we have
	\[
	v_T(\det(M_{I,J}))\geq H(M,k)>\sum\limits_{i=1}^k \lambda_i \textrm{\ and\ } v_T(\det(M_{I,J}^{comp}))\geq \sum_{i=k+1}^n\lambda_i,
	\]
	which together imply
	$ N(M,n)=v_T(\det(M))>\sum_{i=1}^n\lambda_i,$ a contradiction. Therefore we have 
	$
	H(M,k)=\sum\limits_{i=1}^k\lambda_i, \text{~for all~} 1\leq k\leq n.
	$
	
	Since $M$ is $\ula^{[n]}$-Hodge bounded, $M=D(\ula^{[n]})M'$ for some $M'\in \rmM_n(\Fp\llbracket T \rrbracket)$. Taking determinants and then $T$-adic valuations on both sides of this equality, we have $v_T(\det(M'))=0$, and hence $M'\in\GL_n(\Fp\llbracket T \rrbracket)$. So $M$ is strictly $\ula^{[n]}$-Hodge bounded.
\end{proof}

\begin{corollary}\label{P:Newton function determines Hodge function for infinite matrix}
	Let $M\in M_{\infty}(\Fp\llbracket T \rrbracket)$ be $\ula$-Hodge bounded. If there exists an strictly increasing infinite sequence $\us=(s_1, s_2,\dots)$ of positive integers such that 
	$
	N(M,s_n)=\sum\limits_{i=1}^{s_n}\lambda_i \text{~for every~} n\geq 1,
	$
	then for every $k\geq 1$ we have
	$
	H(M,k)=\sum\limits_{i=1}^k\lambda_i.
	$
\end{corollary}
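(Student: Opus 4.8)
The plan is to bootstrap the finite-dimensional Lemma~\ref{L:Newton function determines Hodge function for finite matrix} up to the infinite matrix $M$ by extracting, for each index $s_n$, a single $s_n\times s_n$ principal minor of $M$ to which that lemma directly applies. I would begin by disposing of the easy inequality. Since $M=D(\ula)M'$ for some $M'\in\rmM_\infty(\Fp\llbracket T\rrbracket)$, any $k\times k$ minor of $M$ with row indices $i_1<\cdots<i_k$ has determinant of $T$-valuation at least $\lambda_{i_1}+\cdots+\lambda_{i_k}\ge\lambda_1+\cdots+\lambda_k$, because $\ula$ is non-decreasing and $i_\ell\ge\ell$. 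Hence $H(M,k)\ge\sum_{i=1}^k\lambda_i$ for every $k\ge 1$, and only the reverse inequality requires work.

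Next I would unwind the hypothesis for a fixed $n$. By definition $N(M,s_n)$ is the $T$-valuation of the (convergent, since $\lambda_i\to\infty$) sum $\sum_{R}\det(M_R)$ over all $s_n$-element principal minors $M_R$ of $M$. Each term satisfies $v_T(\det M_R)\ge\sum_{i=1}^{s_n}\lambda_i$ by the bound above, these valuations lie in $\ZZ_{\ge 0}$, and the whole sum has valuation exactly $\sum_{i=1}^{s_n}\lambda_i$; so by the ultrametric inequality at least one principal minor $M_R$, with $R=\{r_1<\cdots<r_{s_n}\}$, achieves $v_T(\det M_R)=\sum_{i=1}^{s_n}\lambda_i$. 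Viewing this $M_R$ as an honest finite matrix in $\rmM_{s_n}(\Fp\llbracket T\rrbracket)$: its $(\ell,m)$-entry $m_{r_\ell r_m}$ has $v_T\ge\lambda_{r_\ell}\ge\lambda_\ell$, so $M_R$ is $\ula^{[s_n]}$-Hodge bounded, and $N(M_R,s_n)=v_T(\det M_R)=\sum_{i=1}^{s_n}\lambda_i$. Lemma~\ref{L:Newton function determines Hodge function for finite matrix} then yields $H(M_R,k)=\sum_{i=1}^k\lambda_i$ for all $1\le k\le s_n$.

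Finally I would let $n$ vary to finish. Given any $k\ge 1$, choose $n$ with $s_n\ge k$; the previous step produces a $k\times k$ minor of $M_R$ whose determinant has $T$-valuation $\sum_{i=1}^k\lambda_i$, and such a minor is in particular a $k\times k$ minor of $M$ (selecting $k$ of the rows $R$ and $k$ of the columns $R$). Therefore $H(M,k)\le\sum_{i=1}^k\lambda_i$, which together with the easy inequality gives $H(M,k)=\sum_{i=1}^k\lambda_i$, as desired.

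The only step with any real content is the extraction in the second paragraph: the hypothesis only controls the valuation of a \emph{sum} of infinitely many principal minors, and the key observation is that, in the presence of the uniform Hodge lower bound, this is already enough to pin down the valuation of one individual principal minor — which one then recognizes as a finite matrix satisfying exactly the hypotheses of Lemma~\ref{L:Newton function determines Hodge function for finite matrix}. The remaining ingredients are just the definition of $\ula^{[n]}$-Hodge boundedness and the trivial fact that a minor of a minor of $M$ is a minor of $M$.
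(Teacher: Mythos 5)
Your proof is correct and follows essentially the same route as the paper's: establish the easy lower bound from $\ula$-Hodge boundedness, extract an $s_n\times s_n$ principal minor $M_1$ with $v_T(\det M_1)=\sum_{i=1}^{s_n}\lambda_i$ from the hypothesis on $N(M,s_n)$, apply Lemma~\ref{L:Newton function determines Hodge function for finite matrix} to $M_1$, and observe that any $k\times k$ minor of $M_1$ is a $k\times k$ minor of $M$. The only difference is that you spell out the ultrametric argument justifying the extraction of such a minor, which the paper leaves implicit.
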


\begin{proof}
	Since $M$ is $\ula$-Hodge bounded, for every $k\geq 1$ we have
	$
	H(M,k)\geq \sum\limits_{i=1}^k\lambda_i.
	$	
	For a fixed integer $k\geq 1$, we can choose $n\geq 1$ such that $s_n>k$. From our hypotheses that $N(M,s_n)=\sum\limits_{i=1}^{s_n}\lambda_i$ and that $M$ is 	$\ula$-Hodge bounded, there exists an $s_n\times s_n$ principal minor $M_1$ of $M$ such that \begin{equation}\label{u1}
	v_T(\det(M_1))=\sum\limits_{i=1}^{s_n}\lambda_i.
	\end{equation} 
	Since $\ula$ is increasing, $M_1$ is $\ula^{[s_n]}$-Hodge bounded. Combined with \eqref{u1} and Lemma~\ref{L:Newton function determines Hodge function for finite matrix} for $M_1$, this implies $H(M_1,k)=\sum\limits_{i=1}^k\lambda_i$. Note that every $k\times k$ minor of $M_1$ is also a $k\times k$ minor of $M$. We have $H(M,k)\leq H(M_1,k)=\sum\limits_{i=1}^k\lambda_i$, and hence
	$
	H(M,k)=\sum\limits_{i=1}^k\lambda_i.
	$
	Since we choose $k$ arbitrarily, the last equality completes the proof.
\end{proof}

\begin{definition}\label{D:touching vertex}
	For every $n\in [\infty]$ and $M\in \rmM_n(\Fp\llbracket T \rrbracket)$ we call an integer $k$ a \emph{touching vertex} of $M$ if it satisfies
	\begin{itemize}
		\item $(k,N(M,k))$ is a vertex of the Newton polygon of $M$, and
		\item $N(M,k)=H(M,k)$.
	\end{itemize}
\end{definition}

\begin{theorem}[\cite{kedlaya2010p} Theorem~4.3.11]\label{T:Newton-Hodge decomposition}
	For every $n\in\NN$ and $M\in \rmM_n(\Fp\llbracket T \rrbracket)$ if $1\leq k<n$ is a touching vertex of $M$, then there exists an invertible matrix $P\in \GL_n(\Fp\llbracket T \rrbracket)$ such that 
	\[
	PMP^{-1}=
	\quad
	\Biggl[\mkern-5mu
	\begin{tikzpicture}[baseline=-.65ex]
	\matrix[
	matrix of math nodes,
	column sep=1ex,
	] (m)
	{
		M_{11} & M_{12} \\
		0 & M_{22} \\
	};
	\draw[dotted]
	([xshift=0.5ex]m-1-1.north east) -- ([xshift=2ex]m-2-1.south east);
	\draw[dotted]
	(m-1-1.south west) -- (m-1-2.south east);
	\node[above,text depth=1pt] at ([xshift=3.5ex]m-1-1.north) {$\scriptstyle k$};  
	\node[left,overlay] at ([xshift=-1.2ex,yshift=-2ex]m-1-1.west) {$\scriptstyle k$};
	\end{tikzpicture}\mkern-5mu
	\Biggr],
	\]
	where $M_{11}$ and $M_{22}$ are $k\times k$ and $(n-k)\times (n-k)$ matrices such that the matrix $M_{11}$ accounts for the first $k$ slopes of the Hodge and Newton polygon of $M$, while $M_{22}$ accounts for the others. 
\end{theorem}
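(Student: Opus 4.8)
I would prove this by first performing a slope decomposition over the fraction field $\FF_p(\!(T)\!)$, then descending it to a basis of the lattice $\Fp\llbracket T\rrbracket^n$; the Newton part then comes for free, while the Hodge part is where the genuine work lies.

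\textbf{Step 1 (slope decomposition over $K:=\FF_p(\!(T)\!)$).} Since $(k,N(M,k))$ is a \emph{vertex} of the Newton polygon, the $k$-th and $(k+1)$-st slopes of $M$ are distinct; fix a rational $\mu$ strictly between them. By Remark~\ref{R:relation between slopes of Newton polygon and eigenvalues} the characteristic polynomial of $M$ splits over $\bbC$, and the factors $f_1:=\prod_{v_T(\alpha_i)\le\mu}(t-\alpha_i)$ and $f_2:=\prod_{v_T(\alpha_i)>\mu}(t-\alpha_i)$ are each stable under $\Gal(\bbC/K)$, because $v_T$ is $\Gal(\bbC/K)$-invariant; hence $f_1,f_2\in K[t]$ with $\deg f_1=k$. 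As $f_1,f_2$ are coprime, $K^n=V_1\oplus V_2$ with $V_1:=\Ker f_1(M)$ of dimension $k$ and $M$-stable, carrying the $k$ smallest slopes, and $V_2:=\Ker f_2(M)$ carrying the remaining ones.

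\textbf{Step 2 (descent to a lattice basis; Newton part).} Set $L:=\Fp\llbracket T\rrbracket^{\,n}$ and $L_1:=L\cap V_1$. Then $L_1$ is $M$-stable, free of rank $k$, and \emph{saturated} in $L$, since $L/L_1$ embeds in the torsion-free module $K^n/V_1$; therefore $0\to L_1\to L\to L/L_1\to 0$ splits over $\Fp\llbracket T\rrbracket$. Any basis of $L$ adapted to this splitting, with its first $k$ vectors spanning $L_1$, represents $M$ by a block upper-triangular matrix $\Matrix{M_{11}}{M_{12}}{0}{M_{22}}$ with $M_{11}\in\rmM_k(\Fp\llbracket T\rrbracket)$, $M_{22}\in\rmM_{n-k}(\Fp\llbracket T\rrbracket)$; the change-of-basis matrix is the desired $P\in\GL_n(\Fp\llbracket T\rrbracket)$. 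The Newton claim is then immediate: $\det(tI-M)=\det(tI-M_{11})\det(tI-M_{22})$ and, by construction, $M_{11}$ carries exactly the $k$ eigenvalues of smallest $v_T$, hence the first $k$ Newton slopes, while $M_{22}$ carries the rest.

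\textbf{Step 3 (Hodge part --- the main obstacle).} The key auxiliary fact is that $H(-,k)$ is invariant under $\GL_n(\Fp\llbracket T\rrbracket)$-conjugation: writing $(QMQ^{-1})_{I,J}=Q_{I,\bullet}M_{\bullet,\bullet}(Q^{-1})_{\bullet,J}$ and applying the Cauchy--Binet formula twice gives $v_T(\det((QMQ^{-1})_{I,J}))\ge H(M,|I|)$ for all $I,J$, and symmetrically, so $H(QMQ^{-1},k)=H(M,k)$ (equivalently, $H(M,\cdot)$ records the elementary divisors of $M$ as a map of $\Fp\llbracket T\rrbracket$-lattices). Thus the Hodge polygon of $M$ equals that of $N:=\Matrix{M_{11}}{M_{12}}{0}{M_{22}}$. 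For such a block-triangular $N$ one checks, using that $\det M_{11}$ times any minor of $M_{22}$ is again a minor of $N$ (and that minors of $M_{11}$ are minors of $N$), that the Hodge polygon of $N$ lies on or below the concatenation of the Hodge polygons of $M_{11}$ and $M_{22}$, with coinciding endpoints. The remaining point --- promoting this to an equality so that $M_{11}$ carries precisely the first $k$ Hodge slopes --- is exactly where the touching hypothesis $N(M,k)=H(M,k)$ enters: together with the Newton vertex at $k$ it pins $v_T(\det M_{11})=H(M,k)$ to the common height of both polygons at abscissa $k$, which rigidifies the comparison; this is the delicate bookkeeping I expect to be the crux, and it is essentially Lemma~\ref{L:Newton function determines Hodge function for finite matrix} applied to $M_{11}$. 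An alternative route, which produces both polygons at once, is the successive-approximation argument of \cite{kedlaya2010p}: after twisting by a suitable power of $T$ so that $M_{11}$ becomes contracting and $M_{22}$ expanding, one annihilates the lower-left block by iterating a Riccati-type correction whose convergence in $\rmM_n(\Fp\llbracket T\rrbracket)$ is forced by the Newton gap.
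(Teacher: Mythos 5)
The paper's own ``proof'' of this theorem is a one-line citation to Kedlaya, so there is no internal argument to compare against; your proposal is a genuine from-scratch attempt. Steps~1 and~2 correctly establish the Newton half of the statement: the Newton vertex at $k$ yields a Galois-stable eigenspace decomposition $K^n=V_1\oplus V_2$, the saturation $L_1=L\cap V_1$ is a free $M$-stable direct summand of $L$, and the resulting block upper-triangular conjugate $N$ visibly splits the Newton polygon at abscissa $k$.

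The Hodge half, however, has a genuine gap. You show, correctly, that $H(N,j)\le H(M_{11},j)$ for $j\le k$, that $H(N,j)\le v_T(\det M_{11})+H(M_{22},j-k)$ for $j\ge k$, and that the touching hypothesis forces $H(N,k)=v_T(\det M_{11})$. But the theorem requires the reverse inequalities --- every $j\times j$ minor of $N$, including the ``mixed'' minors that involve rows and columns from both blocks and hence entries of $M_{12}$, must have valuation at least $H(M_{11},j)$ for $j\le k$ --- and nothing in your sketch controls those mixed minors. Having $\sum_{i\le j}h_i\le\sum_{i\le j}a_i$ for $j\le k$ with equality at $j=0$ and $j=k$ is strictly weaker than $h_i=a_i$: two nondecreasing sequences can share endpoints and satisfy a one-sided domination of partial sums without coinciding. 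Your appeal to Lemma~\ref{L:Newton function determines Hodge function for finite matrix} does not close this gap, because that lemma requires $M_{11}$ to be $\ula^{[k]}$-Hodge bounded for some increasing $\ula$ --- a row-by-row divisibility condition on the entries of $M_{11}$ --- and the lattice splitting in Step~2 gives no such control. Tellingly, in the paper's strengthened Theorem~\ref{T:Newton-Hodge decomposition with Hodge bound restriction for finite matrices} this Hodge boundedness enters as a separate standing hypothesis, and even there the control on the $M_{12}$ block has to be extracted by an explicit Cramer's-rule computation. The ``alternative route'' you mention --- the successive-approximation iteration that kills the lower-left block using the Newton gap --- is Kedlaya's actual argument, and alluding to it is not carrying it out; that is exactly the proof the paper chooses to cite rather than reproduce.
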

\begin{proof}
	See \cite[Theorem~4.3.11]{kedlaya2010p}.
\end{proof}

\begin{lemma}\label{L:conjugation of a matrix to be lambda-Hodge bounded}
	For every $n\in \NN$ and $M\in \rmM_n(\Fp\llbracket T \rrbracket)$ if $\mu_1\leq \mu_2\leq\dots\leq \mu_n$ are the slopes of the Hodge polygon of some matrix, then there exists an invertible matrix $P\in \GL_n(\Fp\llbracket T \rrbracket)$ such that $PMP^{-1}$ is $\underline\mu$-Hodge bounded, where $\underline\mu=(\mu_1, \mu_2, \dots, \mu_n)$.
\end{lemma}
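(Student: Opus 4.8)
The plan is to deduce the lemma from the Smith normal form of $M$ over the discrete valuation ring $\Fp\llbracket T\rrbracket$ (a principal ideal domain with uniformizer $T$), reading $\mu_1\leq\dots\leq\mu_n$ as the slopes of the Hodge polygon of $M$ itself. The invertible conjugating matrix $P$ will simply be the inverse of the left Smith factor.

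First I would invoke the existence of the Smith normal form: there are $U,V\in\GL_n(\Fp\llbracket T\rrbracket)$ and a nondecreasing sequence $0\leq b_1\leq\dots\leq b_n\leq\infty$ (the value $\infty$ encoding a zero diagonal entry in case $M$ is singular) such that $M=U\Delta V$ with $\Delta=\mathrm{Diag}(T^{b_1},\dots,T^{b_n})$. I then want to match the $b_j$ with the $\mu_j$. For each $k$, multiplication by the invertible matrices $U$ and $V$ does not change, up to a unit, the greatest common divisor of the $k\times k$ minors (apply the Cauchy--Binet formula to $U(\Delta V)$ to see this gcd divides the one for $\Delta V$, and to $U^{-1}M$ for the reverse divisibility; similarly for $V$ on the right); and for the diagonal matrix $\Delta$ this gcd is $T^{b_1+\dots+b_k}$ up to a unit, because $b_1\leq\dots\leq b_n$. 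Since in a DVR the $T$-adic valuation of a gcd is the minimum of the valuations, Definition~\ref{D:Hodge and Newton functions and polygons} gives $H(M,k)=b_1+\dots+b_k$ for every $k$. As $(b_j)$ is nondecreasing, the points $(k,b_1+\dots+b_k)$ are already the vertices of a convex polygon, so the slopes of the Hodge polygon of $M$ are exactly $b_1,\dots,b_n$; by hypothesis these are $\mu_1,\dots,\mu_n$, whence $\Delta=D(\underline\mu)$.

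Finally I would put $P:=U^{-1}\in\GL_n(\Fp\llbracket T\rrbracket)$ and compute
\[
PMP^{-1}=U^{-1}\bigl(U\Delta V\bigr)U=\Delta VU=D(\underline\mu)\,(VU).
\]
Since $VU\in\rmM_n(\Fp\llbracket T\rrbracket)$, this exhibits $PMP^{-1}$ as $\underline\mu$-Hodge bounded in the sense of Definition~\ref{D:lambda Hodge bounded and lambda stable}(1), which is the assertion. (In the singular case the condition $v_T(m_{ij})\geq\mu_i=\infty$ merely says $m_{ij}=0$, and indeed the corresponding rows of $D(\underline\mu)$ vanish, so the conclusion persists.)

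I do not expect a genuine obstacle here: the whole content is the Smith normal form over a DVR. The only step requiring any care is the second one, namely identifying the slopes of the Hodge polygon (defined through $k\times k$ minors) with the exponents of the elementary divisors of $M$; within that, the one easily overlooked point is the remark that a nondecreasing sequence of partial sums is automatically in convex position, which is what makes ``the Hodge slopes are $b_1,\dots,b_n$'' literally true rather than merely ``the Hodge polygon passes through those points''.
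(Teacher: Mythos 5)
Your argument is correct and is essentially the paper's proof: the paper also invokes the Smith normal form over $\Fp\llbracket T\rrbracket$ (via \cite[Theorem~4.3.4]{kedlaya2010p}) to write $UMV=D(\underline\mu)$ with $U,V\in\GL_n(\Fp\llbracket T\rrbracket)$, and then conjugates by $U$ to obtain $UMU^{-1}=D(\underline\mu)V^{-1}U^{-1}$, which is $\underline\mu$-Hodge bounded. The only difference is that you spell out the identification of the Hodge slopes with the exponents of the elementary divisors (and the degenerate singular case), whereas the paper folds that into the citation.
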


\begin{proof}
	By \cite[Theorem~4.3.4]{kedlaya2010p}, there exist matrices $U,V\in \GL_n(\Fp\llbracket T \rrbracket)$ such that
	$UMV=D(\underline\mu).$
	Right-multiplying $V^{-1}U^{-1}$ to both sides of this equality, we prove that $UMU^{-1}=D(\underline\mu)V^{-1}U^{-1}$ is $\underline\mu$-Hodge bounded.
\end{proof}

\begin{theorem}\label{T:Newton-Hodge decomposition with Hodge bound restriction for finite matrices}
	Let $M\in M_n(\Fp\llbracket T \rrbracket)$ be a $\ula^{[n]}$-Hodge bounded matrix. Assume that $1<s<n$ is a touching vertex of $M$ and 
	$
	H(M,n)=\sum\limits_{i=1}^n\lambda_i.
	$
	Then there exists a matrix $W\in \GL_n(\Fp\llbracket T \rrbracket)$ such that $WMW^{-1}$ is $\ula^{[n]}$-Hodge bounded and a block upper triangular matrix of the form
	\[
	WMW^{-1}=
	\quad
	\Biggl[\mkern-5mu
	\begin{tikzpicture}[baseline=-.65ex]
	\matrix[
	matrix of math nodes,
	column sep=1ex,
	] (m)
	{
		M_{11} & M_{12} \\
		0 & M_{22} \\
	};
	\draw[dotted]
	([xshift=0.5ex]m-1-1.north east) -- ([xshift=2ex]m-2-1.south east);
	\draw[dotted]
	(m-1-1.south west) -- (m-1-2.south east);
	\node[above,text depth=1pt] at ([xshift=3.5ex]m-1-1.north) {$\scriptstyle s$};  
	\node[left,overlay] at ([xshift=-1.2ex,yshift=-2ex]m-1-1.west) {$\scriptstyle s$};
	\end{tikzpicture}\mkern-5mu
	\Biggr].
	\]
	Moreover, $W$ is $\ula^{[n]}$-stable.
\end{theorem}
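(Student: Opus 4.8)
The goal is to upgrade the Newton–Hodge decomposition of Theorem~\ref{T:Newton-Hodge decomposition} to a statement compatible with the Hodge-bounded structure: not only can $M$ be block-triangularized at the touching vertex $s$, but the conjugating matrix $W$ can be chosen to be $\ula^{[n]}$-stable, so that $WMW^{-1}$ remains $\ula^{[n]}$-Hodge bounded. First I would record the hypotheses: $M$ is $\ula^{[n]}$-Hodge bounded, so $M=D(\ula^{[n]})M'$ for some $M'\in\rmM_n(\Fp\llbracket T\rrbracket)$; and $H(M,n)=\sum_{i=1}^n\lambda_i$, which by Lemma~\ref{L:Newton function determines Hodge function for finite matrix} (applied with the roles reversed, or by taking determinants) forces $v_T(\det M')=0$, i.e. $M'\in\GL_n$, so $M$ is in fact \emph{strictly} $\ula^{[n]}$-Hodge bounded. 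This is the key point that makes the $\ula$-stability available through Lemma~\ref{L:second criterion of lambda-stable}.

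\textbf{Key steps.} I would proceed as follows. (1) Apply Theorem~\ref{T:Newton-Hodge decomposition} at the touching vertex $s$ to get $P\in\GL_n(\Fp\llbracket T\rrbracket)$ with $PMP^{-1}$ block upper triangular with diagonal blocks $M_{11}$ ($s\times s$) and $M_{22}$ ($(n-s)\times(n-s)$) accounting for the first $s$ and last $n-s$ Hodge/Newton slopes respectively; in particular $H(M_{11},s)=\sum_{i=1}^s\lambda_i$ and $H(M_{22},n-s)=\sum_{i=s+1}^n\lambda_i$. (2) These individual blocks need not be $\ula$-Hodge bounded, but their Hodge slopes are exactly $\lambda_1,\dots,\lambda_s$ and $\lambda_{s+1},\dots,\lambda_n$; so by Lemma~\ref{L:conjugation of a matrix to be lambda-Hodge bounded} there exist $Q_1\in\GL_s$ and $Q_2\in\GL_{n-s}$ conjugating $M_{11}$, $M_{22}$ into matrices that are $\ula^{[s]}$-Hodge bounded and $\ula^{(s,n]}$-Hodge bounded respectively. (3) Let $Q=\mathrm{Diag}(Q_1,Q_2)$ (block diagonal) and set $W:=QP$. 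Then $WMW^{-1}=Q(PMP^{-1})Q^{-1}$ is still block upper triangular of the displayed shape, with diagonal blocks now $\ula^{[s]}$- and $\ula^{(s,n]}$-Hodge bounded. (4) The remaining task is to arrange that the whole matrix $WMW^{-1}$ — including the off-diagonal block $M_{12}$ — is $\ula^{[n]}$-Hodge bounded, i.e. that the $i$-th row has $T$-adic valuation $\geq\lambda_i$ for all $i\leq s$. Since $\lambda_i\leq\lambda_j$ for $i\le s<j$, the rows indexed by $1,\dots,s$ automatically clear the bound $\lambda_i$ on \emph{all} columns once the $M_{11}$ block is $\ula^{[s]}$-Hodge bounded — so $M_{12}$ causes no problem, and likewise the rows indexed $s{+}1,\dots,n$ are zero in the first $s$ columns and $\ula^{(s,n]}$-Hodge bounded in the rest. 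Hence $WMW^{-1}$ is $\ula^{[n]}$-Hodge bounded. (5) Finally, for the $\ula^{[n]}$-stability of $W$: from $WMW^{-1}$ being $\ula^{[n]}$-Hodge bounded we get $WM=(WMW^{-1})W$, and since $W\in\GL_n(\Fp\llbracket T\rrbracket)$ and $M$ is strictly $\ula^{[n]}$-Hodge bounded, $WM$ is $\ula^{[n]}$-Hodge bounded; Lemma~\ref{L:second criterion of lambda-stable} then yields that $W$ is $\ula^{[n]}$-stable.

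\textbf{Main obstacle.} The delicate point is step (4): after conjugating the two diagonal blocks separately by $Q_1$ and $Q_2$, one must be sure the off-diagonal block $M_{12}$ does not violate the global Hodge bound. The resolution is the monotonicity of $\ula$ together with the block-triangular shape — the potentially "bad" entries in $M_{12}$ sit in rows $i\le s$, where the bound $\lambda_i$ is the weakest, so the bound forced on $M_{11}$ already dominates. One must also check that conjugating the block-triangular matrix by the block-diagonal $Q$ preserves the vanishing of the lower-left block (clear, since $Q$ is block diagonal of the same shape) and does not destroy Hodge-boundedness of the blocks (it is exactly what $Q_1,Q_2$ were chosen for). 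A secondary technical check is that the touching-vertex hypothesis together with $H(M,n)=\sum\lambda_i$ really does imply the strictness needed to invoke Lemma~\ref{L:second criterion of lambda-stable}; this is where the equality case of Lemma~\ref{L:Newton function determines Hodge function for finite matrix} is used. No deep new idea is required beyond assembling Theorems~\ref{T:Newton-Hodge decomposition} and Lemmas~\ref{L:second criterion of lambda-stable}, \ref{L:conjugation of a matrix to be lambda-Hodge bounded} in the right order.
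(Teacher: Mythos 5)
Your steps (1)–(3) and (5) match the paper's proof exactly, but step (4) contains a genuine gap, and it is precisely the crux of the theorem.

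You assert that once $M_{11}$ is $\ula^{[s]}$-Hodge bounded, the off-diagonal block $M_{12}$ ``causes no problem'' because the bound $\lambda_i$ on row $i\le s$ is ``the weakest.'' This is not an argument. After replacing $P_1MP_1^{-1}$ by its conjugate under $Q=\mathrm{Diag}(Q_1,Q_2)$, the new $(1,2)$-block is $M_{12}=Q_1M_{12}'Q_2^{-1}$, and nothing about $M_{11}$ being $\ula^{[s]}$-Hodge bounded, nor about the monotonicity of $\ula$, forces $v_T((M_{12})_{ij})\ge\lambda_i$. Concretely, take $n=2$, $s=1$, $\underline\lambda=(\lambda_1,\lambda_2)$: the block $M_{12}$ is a single entry $c\in\Fp\llbracket T\rrbracket$ which a priori may have valuation $0<\lambda_1$; the bound on $M_{11}=(T^{\lambda_1}u)$ says nothing about $c$. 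So one really must prove that $M_{12}$ is $\ula^{[s]}$-Hodge bounded, and this is where the touching-vertex hypothesis enters in an essential, non-automatic way.

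The paper's proof handles this with a Cramer's rule argument you have omitted. Write $N=Q(P_1MP_1^{-1})Q^{-1}$, with columns $v_1,\dots,v_s$ of $M_{11}$ and $w_1,\dots,w_{n-s}$ of $M_{12}$. Since $\det M_{11}\ne0$, each $w_k=\sum_j a_j v_j$ with $a_j=\det(M_{11,j})/\det(M_{11})$, where $M_{11,j}$ replaces a column of $M_{11}$ by $w_k$. Because $M_{11,j}$ is an $s\times s$ minor of $N$ and $v_T(\det M_{11})=\sum_{i\le s}\lambda_i=H(N,s)$ (this equality is exactly where the touching-vertex/strict Hodge bound is used), one gets $v_T(a_j)\ge0$, hence $M_{12}=M_{11}\cdot(\text{integral matrix})$ inherits $\ula^{[s]}$-Hodge boundedness from $M_{11}$. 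Without this step your proof does not establish that $WMW^{-1}$ is $\ula^{[n]}$-Hodge bounded, and then step (5) (which presupposes that $WM$ is $\ula^{[n]}$-Hodge bounded) also collapses. You should replace the ``monotonicity'' hand-wave in step (4) with this determinant argument.
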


\begin{proof}
	By Lemma~\ref{L:Newton function determines Hodge function for finite matrix}, for every $1\leq k\leq n$ we have
	$
	H(M,k)=\sum\limits_{i=1}^{k}\lambda_i.
	$
	By Theorem~\ref{T:Newton-Hodge decomposition}, there exists a matrix $P_1\in \GL_n(\Fp\llbracket T \rrbracket)$ such that
	\[
	P_1MP_1^{-1}=
	\quad
	\Biggl[\mkern-5mu
	\begin{tikzpicture}[baseline=-.65ex]
	\matrix[
	matrix of math nodes,
	column sep=1ex,
	] (m)
	{
		M'_{11} & M'_{12} \\
		0 & M'_{22} \\
	};
	\draw[dotted]
	([xshift=0.5ex]m-1-1.north east) -- ([xshift=2ex]m-2-1.south east);
	\draw[dotted]
	(m-1-1.south west) -- (m-1-2.south east);
	\node[above,text depth=1pt] at ([xshift=3.5ex]m-1-1.north) {$\scriptstyle s$};  
	\node[left,overlay] at ([xshift=-1.2ex,yshift=-2ex]m-1-1.west) {$\scriptstyle s$};
	\end{tikzpicture}\mkern-5mu
	\Biggr],
	\]
	and $M'_{11}$ accounts for the first $s$-th slopes of the Newton and Hodge polygons of $M$, while $M'_{22}$ accounts for the others.
	
	By Lemma~\ref{L:conjugation of a matrix to be lambda-Hodge bounded}, there exist matrices $Q_1\in \GL_s(\Fp\llbracket T \rrbracket)$ and $Q_2\in \GL_{n-s}(\Fp\llbracket T \rrbracket)$ such that $M_{11}=Q_1M'_{11}Q_1^{-1}$ and $M_{22}=Q_2M'_{22}Q_2^{-1}$ are $\ula^{[s]}$ and $\ula^{(s,n]}$-Hodge bounded, respectively. Set
	\[
	P_2:= \begin{bmatrix}
	Q_1&0\\
	0&Q_2\end{bmatrix} \in \GL_n(\Fp\llbracket T \rrbracket)\textrm{\ and \ } N:= P_2(P_1MP_1^{-1})P_2^{-1}=
	\quad
	\Biggl[\mkern-5mu
	\begin{tikzpicture}[baseline=-.65ex]
	\matrix[
	matrix of math nodes,
	column sep=1ex,
	] (m)
	{
		M_{11} & M_{12} \\
		0 & M_{22} \\
	};
	\draw[dotted]
	([xshift=0.5ex]m-1-1.north east) -- ([xshift=2ex]m-2-1.south east);
	\draw[dotted]
	(m-1-1.south west) -- (m-1-2.south east);
	\node[above,text depth=1pt] at ([xshift=3.5ex]m-1-1.north) {$\scriptstyle s$};  
	\node[left,overlay] at ([xshift=-1.2ex,yshift=-2ex]m-1-1.west) {$\scriptstyle s$};
	\end{tikzpicture}\mkern-5mu
	\Biggr],
	\]
	where $M_{12}=Q_1M'_{12}Q_2^{-1}$.
	
	Let $v_1,\dots,v_s$ be the column vectors of $M_{11}$ and $w_1,\dots,w_{n-s}$ be the column vectors of $M_{12}$. From $\det(M_{11})\neq 0$ we know that  $\{v_1,\dots,v_s\}$ forms a basis of the vector space $(\FF_p(\!(T)\!))^s$. Hence, for each integer $1\leq k\leq n-s$ there exist a set of scalars $\{a_1,\dots, a_s\}$ in  $ \FF_p(\!(T)\!)$ such that
	$
	w_k=\sum\limits_{j=1}^s a_jv_j.
	$
	By Cramer's rule, we have $a_j=\frac{\det(M_{11,k})}{\det(M_{11})}$, where $M_{11,k}$ is the $s\times s$ matrix obtained by replacing the $k$-th column vector of $M_{11}$ by $w_k$. Since $M_{11,k}$ is an $s\times s$ minor of $N$, by definition of Hodge functions, we have
	$v_T(\det(M_{11,k}))\geq v_T(\det(M_{11})),$ and hence $v_T(a_{j})\geq 0$ for every $j=1,\dots, s,$ which implies that $M_{12}$ is $\ula^{[s]}$-Hodge bounded. Let $W=P_2P_1\in \GL_n(\Fp\llbracket T \rrbracket)$. From the above computation, we obtain that $WMW^{-1}$ is $\ula^{[n]}$-Hodge bounded, and hence so is $WM$. 
	Combined with Lemma~\ref{L:second criterion of lambda-stable} and that $M$ is strictly $\ula^{[n]}$-Hodge bounded, this implies that $W$ is $\ula^{[n]}$-stable. 
\end{proof}

\begin{lemma}\label{L:valuations of eigenvalues bound Hodge functions}
	Fix an integer $n\geq 1$. Let $A\in M_n(\Fp\llbracket T \rrbracket)$ and $\ualpha:= \{\alpha_1,\dots,\alpha_n\}$ be the set of eigenvalues of $A$ in $\bbC$. Then there exists $b\in \RR$ such that 
	\[
	H(A^k,1)\geq ks+b \text{~and ~} H(A^{-k},1)\geq -kS+b
	\]
	for every $k\geq 1$, where
	\[
	S:=\max_{1\leq i\leq n}\{v_T(\alpha_i) \}\textrm{\ and \ } s:=\min_{1\leq i\leq n}\{v_T(\alpha_i) \}.
	\]
\end{lemma}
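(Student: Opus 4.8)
The plan is to estimate the $T$-adic valuations of the individual entries of $A^{\pm k}$ by passing to the Jordan normal form of $A$ over the algebraically closed field $\bbC$. I would first record the elementary fact that for matrices $M,N$ with entries in $\bbC$ one has $H(MN,1)\ge H(M,1)+H(N,1)$, since $v_T\big(\sum_r m_{ir}n_{rj}\big)\ge\min_r\big(v_T(m_{ir})+v_T(n_{rj})\big)\ge H(M,1)+H(N,1)$; here $H(M,1)$ is just the minimum of $v_T$ over the entries of $M$. Since $A^{-k}$ appears in the statement, $A$ is invertible over $\FF_p(\!(T)\!)$, so every eigenvalue $\alpha_i$ lies in $\bbC^{\times}$; being a root of the monic characteristic polynomial of $A$, which has coefficients in $\Fp\llbracket T \rrbracket$, each $\alpha_i$ is integral over $\Fp\llbracket T \rrbracket$, whence $0\le v_T(\alpha_i)<\infty$ and therefore $0\le s\le S<\infty$.

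Next I would fix $Q\in\GL_n(\bbC)$ with $A=Q^{-1}JQ$ and $J$ in Jordan normal form, and set $c:=H(Q,1)+H(Q^{-1},1)$, a real number with $c\le H(QQ^{-1},1)=0$. Then $A^{\pm k}=Q^{-1}J^{\pm k}Q$ together with the submultiplicativity above gives $H(A^{\pm k},1)\ge c+H(J^{\pm k},1)$, so it suffices to bound $H(J^{\pm k},1)$ from below, and as $J$ is block diagonal it suffices to treat a single Jordan block $\alpha I+N$ of size $d\le n$, where $N^d=0$ and $N$ has entries in $\{0,1\}$. Since $\alpha I$ and $N$ commute and $N^j=0$ for $j\ge d$, the binomial theorem gives $(\alpha I+N)^k=\sum_{j=0}^{d-1}\binom{k}{j}\alpha^{k-j}N^j$, and as $N$ is nilpotent $(\alpha I+N)^{-k}=\alpha^{-k}\sum_{j=0}^{d-1}\binom{-k}{j}\alpha^{-j}N^j$ for every $k\ge1$. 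The coefficients $\binom{k}{j},\binom{-k}{j}\in\ZZ$ reduce into $\FF_p\subset\bbC$, so they and the entries of $N^j$ have $v_T\ge0$; using $0\le v_T(\alpha)\le S$ one gets, for $0\le j\le n-1$, both $v_T(\alpha^{k-j})=(k-j)v_T(\alpha)\ge kv_T(\alpha)-(n-1)v_T(\alpha)\ge ks-(n-1)S$ and $v_T(\alpha^{-k-j})=-(k+j)v_T(\alpha)\ge-(k+n-1)S\ge-kS-(n-1)S$. Hence $H(J^k,1)\ge ks-(n-1)S$ and $H(J^{-k},1)\ge -kS-(n-1)S$ for all $k\ge1$.

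Putting these together, $H(A^k,1)\ge ks+b$ and $H(A^{-k},1)\ge -kS+b$ for all $k\ge1$ with the single choice $b:=c-(n-1)S$, which proves the lemma. I do not expect a genuine obstacle: the content is simply that in the ultrametric world the growth of $A^{k}$ is governed by the ``spectral radius'' $p^{-s}$ up to a bounded Jordan-block factor, together with the integrality $v_T(\alpha_i)\ge0$. The points that require attention are that the conjugating matrix $Q$ lives over $\bbC$ rather than over $\FF_p(\!(T)\!)$ — which is harmless, because $H(-,1)$ and the submultiplicativity estimate make sense over $\bbC$ — and keeping the bookkeeping between $s$ and $S$ straight, since one constant $b$ must serve both inequalities and all $k$ simultaneously. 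If one prefers to avoid passing to $\bbC$ for the conjugation, an equivalent route writes $A^k=\sum_{j=0}^{n-1}u_j^{(k)}A^j$ by Cayley--Hamilton, where the scalar sequences $(u_j^{(k)})_k\subset\Fp\llbracket T \rrbracket$ satisfy the linear recurrence whose characteristic polynomial is that of $A$; its general solution over $\bbC$ is $\sum_i p_{i,j}(k)\alpha_i^k$ with the $p_{i,j}$ of degree and coefficient-valuation bounded independently of $k$, and for $A^{-k}$ one uses instead the reciprocal polynomial, whose roots are the $\alpha_i^{-1}$, so the same estimates go through.
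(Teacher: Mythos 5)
Your proof is correct and follows essentially the same route as the paper: pass to Jordan form over $\bbC$, use submultiplicativity of $H(\cdot,1)$, expand $(\alpha I+N)^{\pm k}$ binomially, and absorb the conjugation cost and the Jordan-block correction into the constant $b$. The only cosmetic differences are that you work block-by-block and explicitly invoke integrality of the eigenvalues over $\Fp\llbracket T\rrbracket$ to obtain the concrete constant $b=H(Q,1)+H(Q^{-1},1)-(n-1)S$, whereas the paper applies the $D+N$ decomposition to the whole of $J$ at once and leaves $b_1,b_2$ implicit.
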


\begin{proof}
	There exists a matrix $P\in \GL_n(\bbC)$ such that $J=PAP^{-1}$ is in Jordan canonical form with diagonal entries $\alpha_1,\dots,\alpha_n$. Let $D\in\rmM_n(\bbC)$ be the diagonal matrix with $\alpha_1,\dots,\alpha_n$ on the diagonal and set $N:=J-D$. Note that $N$ is nilpotent and  commutes with $D$. Let $t:= \min\{m|N^m=0 \}$. From the decomposition 
	$$(D+N)^k=\sum_{i=0}^{k}\binom{k}{i} D^iN^{k-i},$$ 
	we have
	the following estimations
	\[
	H(J^k,1)\geq \begin{cases}
	sk &\text{~if~} H(N,1)\geq s,\\
	s(k-t+1)+(t-1)H(N,1)& \text{~if~} H(N,1)< s.
	\end{cases}
	\]
	Hence there exists $b_1\in \RR$ such that $H(J^k,1)\geq ks+b_1$ for every $k\geq 1$. Similarly, there exists $b_2\in \RR$ such that $H(J^{-k},1)\geq -kS+b_2$ for every $k\geq 1$.
	
	From $A^k=P^{-1}J^kP$ and $A^{-k}=P^{-1}J^{-k}P$, we have
	\[
	H(A^k,1)\geq H(P^{-1},1)+H(P,1)+H(J^k,1)\geq  H(P^{-1},1)+H(P,1)+b_1+ks,
	\]
	and 
	\[
	H(A^{-k},1)\geq  H(P^{-1},1)+H(P,1)+H(J^{-k},1)\geq  H(P^{-1},1)+H(P,1)+b_2-kS.
	\]
	Setting $b:=\min(b_1,b_2)+ H(P^{-1},1)+H(P,1)$, we complete the proof.
\end{proof}

\begin{lemma}\label{L:comparison of valuations of eigenvalues of (1,1)-block and (2,2)-block}
	Let  $1\leq s<n$ be two integers and \[M=\quad
	\Biggl[\mkern-5mu
	\begin{tikzpicture}[baseline=-.65ex]
	\matrix[
	matrix of math nodes,
	column sep=1ex,
	] (m)
	{
		A & B \\
		C & D \\
	};
	\draw[dotted]
	([xshift=0.5ex]m-1-1.north east) -- ([xshift=0.5ex]m-2-1.south east);
	\draw[dotted]
	(m-1-1.south west) -- (m-1-2.south east);
	\node[above,text depth=1pt] at ([xshift=2.5ex]m-1-1.north) {$\scriptstyle s$};  
	\node[left,overlay] at ([xshift=-1.2ex,yshift=-2ex]m-1-1.west) {$\scriptstyle s$};
	\end{tikzpicture}\mkern-5mu
	\Biggr]\in  M_n(\Fp\llbracket T \rrbracket)
	\] be a $\ula^{[n]}$-Hodge bounded matrix. 
	Assume that
	\begin{enumerate}
		\item $s$ is a touching vertex of $M$;
		\item $H(C,1)-\lambda_s>0$;
		\item $A$ is strictly $\ula^{[s]}$-Hodge bounded.
	\end{enumerate}
	Then we have
	\begin{equation}\label{E: inequality in comparison of valuations of eigenvalues of (1,1)-block and (2,2)-block}
	\max\{v_T(\alpha)\;|\; \alpha \text{~is an eigenvalue of ~} A \}<
	\min\{v_T(\beta)\;|\; \beta \text{~is an eigenvalue of ~} D \}.
	\end{equation}
\end{lemma}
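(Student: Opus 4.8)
The plan is to extract a Newton–Hodge–type decomposition at the level of the full matrix $M$ and compare the Hodge (equivalently, Newton) slopes of the two diagonal blocks. First I would invoke Lemma~\ref{L:Newton function determines Hodge function for finite matrix}: since $M$ is $\ula^{[n]}$-Hodge bounded with $H(M,n)=\sum_{i=1}^n\lambda_i$, we automatically get $N(M,n)=\sum_{i=1}^n\lambda_i$ as well (the total Newton and Hodge areas agree), hence $H(M,k)=\sum_{i=1}^k\lambda_i$ for all $k$, and in particular the Hodge and Newton polygons of $M$ coincide on $[0,n]$. Because $s$ is a touching vertex, the first $s$ Newton slopes of $M$ are exactly $\lambda_1\le\cdots\le\lambda_s$ and the remaining $n-s$ Newton slopes are $\lambda_{s+1}\le\cdots\le\lambda_n$; thus it suffices to show that every eigenvalue of $A$ has $v_T\le\lambda_s$ while every eigenvalue of $D$ has $v_T\ge\lambda_{s+1}$, and since $\lambda_s\le\lambda_{s+1}$ this would be slightly too weak if $\lambda_s=\lambda_{s+1}$ — so the strict inequality in hypothesis (2), $H(C,1)>\lambda_s$, must be what closes the gap. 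The idea is that $C$ is "subdiagonal-small", so $M$ is, up to a controlled error, block upper triangular, and the spectral separation comes from a perturbation/iteration argument rather than from the polygons alone.

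The key technical step I would carry out is a characteristic-power-series estimate for the two blocks. Hypothesis (3) says $A$ is strictly $\ula^{[s]}$-Hodge bounded, so by Lemma~\ref{L:Newton function determines Hodge function for finite matrix} (applied to $A$) the Hodge slopes of $A$ are exactly $\lambda_1,\dots,\lambda_s$, hence $\det A$ has $v_T$-valuation $\sum_{i=1}^s\lambda_i$ and $A$ is invertible over $\FF_p(\!(T)\!)$; I would then show $A^{-1}$ is $(-\lambda_s,\dots,-\lambda_1)$-type bounded in the appropriate sense, so every eigenvalue $\alpha$ of $A$ satisfies $v_T(\alpha)\le\lambda_s$ — in fact via Lemma~\ref{L:valuations of eigenvalues bound Hodge functions} applied to $A$ and to $A^{-1}$ one controls $H(A^{\pm k},1)$ linearly, pinning $\max_i v_T(\alpha_i)\le \lambda_s$. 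For $D$: I would write down the $n-s$ remaining Newton slopes of $M$ as $\lambda_{s+1},\dots,\lambda_n$ and argue that $D$ carries them. The cleanest route is to compare $\det(M-X\cdot\mathrm{Id})$, expanded via the Schur complement $\det(A-X)\cdot\det\big((D-X)-C(A-X)^{-1}B\big)$, against $\det(A-X)\det(D-X)$: the correction term $C(A-X)^{-1}B$ has all entries of $v_T$-valuation $\ge H(C,1)-\lambda_s+H(B,1)>H(B,1)\ge \lambda_{s+1}$ (using Hodge-boundedness of $B$ and strict positivity in (2)), which is strictly larger than the valuations governing the $D$-block slopes, so the Newton polygon of $\det\big((D-X)-C(A-X)^{-1}B\big)$ agrees with that of $\det(D-X)$ on the relevant range. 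Hence the slopes $\ge\lambda_{s+1}$ are exactly the $v_T$-valuations of the eigenvalues of $D$, giving $\min_\beta v_T(\beta)\ge\lambda_{s+1}>\lambda_s\ge\max_\alpha v_T(\alpha)$ — where the crucial strict inequality is manufactured from hypothesis (2) when $\lambda_s=\lambda_{s+1}$, and is automatic otherwise.

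The step I expect to be the main obstacle is making the Schur-complement/perturbation argument rigorous over $\FF_p(\!(T)\!)$ uniformly in the spectral parameter $X$: one must argue that the valuation estimate on $C(A-X)^{-1}B$ holds not just $T$-adically but compatibly as $X$ ranges over (the algebraic closure and) the region of interest, and that this forces the Newton polygon of the Schur complement's determinant to match that of $\det(D-X)$ rather than merely bounding it. An alternative, possibly cleaner, approach to this obstacle is to avoid $X$ entirely: apply Theorem~\ref{T:Newton-Hodge decomposition} (the touching-vertex decomposition) to get $PMP^{-1}$ genuinely block upper triangular with blocks $M_{11},M_{22}$ whose Hodge slopes are $\lambda_1,\dots,\lambda_s$ and $\lambda_{s+1},\dots,\lambda_n$ respectively, observe that the eigenvalues of $M_{11}$ (resp. $M_{22}$) are those of $A$ (resp. $D$) — this needs hypothesis (2) to ensure the conjugation doesn't mix the diagonal blocks' spectra, which is exactly where $H(C,1)>\lambda_s$ enters — and then read the inequality \eqref{E: inequality in comparison of valuations of eigenvalues of (1,1)-block and (2,2)-block} directly from Remark~\ref{R:relation between slopes of Newton polygon and eigenvalues}. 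Either way, the heart is the same: hypothesis (2) is what upgrades the non-strict polygon comparison to the strict spectral gap.
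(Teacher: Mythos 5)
Your broad intuition is correct: $\max_\alpha v_T(\alpha)\le\lambda_s$ for $A$ and $\min_\beta v_T(\beta)\ge\lambda_{s+1}$ for $D$ follow from Hodge-boundedness plus Newton-above-Hodge, and hypothesis~(2) is exactly what must force the strict separation when $\lambda_s=\lambda_{s+1}$. But both routes you sketch for exploiting (2) have genuine gaps, and a side claim along the way is false ($s$ being a touching vertex fixes $N(M,s)$ and the vertex property, but does not force the individual Newton slopes of $M$ on $[0,s]$ to equal $\lambda_1,\dots,\lambda_s$; only the partial sums agree at $k=0$ and $k=s$).

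The Schur-complement route fails where you predict: the bound you assert on $C(A-X)^{-1}B$ requires $H((A-X)^{-1},1)\ge-\lambda_s$ \emph{uniformly in $X$}, and this is simply false when $X$ is close to an eigenvalue of $A$, which is precisely the regime that matters for detecting whether the spectra of $A$ and $D$ collide. You acknowledge this obstacle but do not close it, and there is no easy fix while you continue to reason about eigenvalues rather than about valuations of characteristic-polynomial coefficients. Your alternative via Theorem~\ref{T:Newton-Hodge decomposition} has a sharper flaw: after conjugating to $PMP^{-1}=\Matrix{M_{11}}{M_{12}}{0}{M_{22}}$, the spectrum of $M$ is preserved but there is no a priori relation between the spectrum of $M_{11}$ and that of $A$; the identification ``eigenvalues of $M_{11}$ are those of $A$'' is false in general and is essentially the content one is trying to establish, so invoking it with the remark that hypothesis~(2) ``ensures the conjugation doesn't mix the spectra'' is circular.

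The paper's actual argument sidesteps eigenvalues after the initial observation $a\le\lambda_s\le\lambda_{s+1}\le b$ and argues by contradiction. If $a=b$, then $a=b=\lambda_s=\lambda_{s+1}$. Set $M_0=\Matrix{A}{B}{0}{D}$ (so its characteristic polynomial factors as $\det(A-X)\det(D-X)$), and observe its Newton polygon has a segment of slope $\lambda_s$ with $s$ strictly in its interior, say with $x$-coordinates $k_1<s<k_2$. The key step is a coefficient-by-coefficient valuation comparison: expanding the sum of principal $k\times k$ minors $F(M,k)-F(M_0,k)$, every term touching an entry of the $C$-block has $T$-adic valuation strictly greater than $\sum_{i=1}^{k}\lambda_i$ because $H(C,1)>\lambda_s$, so $N(M,k_t)=N(M_0,k_t)$ for $t=1,2$. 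But then $(s,N(M,s))$ lies on the interior of a line segment of the Newton polygon of $M$ and therefore cannot be a vertex, contradicting hypothesis~(1). This is the missing idea: replace the (hopeless) uniform bound on $(A-X)^{-1}$ by a direct Newton-function comparison of $M$ against the $C$-zeroed-out matrix $M_0$, which is where hypothesis~(2) can be used cleanly.
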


\begin{proof}
	Write $$a:=\max\{v_T(\alpha)\;|\; \alpha \text{~is an eigenvalue of ~} A \} \textrm{\ and\ } b:=\min\{v_T(\beta)\;| \;\beta \text{~is an eigenvalue of ~} D \}.$$  From Remark~\ref{R:relation between slopes of Newton polygon and eigenvalues}, $a$ (resp.$b$) is the largest slope (resp. smallest slope) of the Newton polygon of the matrix $A$ (resp. $D$). Since Newton polygon always lies on or above Hodge polygon, we have $a\leq \lambda_s$ and $b\geq \lambda_{s+1}\geq \lambda_s$.

	Set \[M_0:=\quad
	\Biggl[\mkern-5mu
	\begin{tikzpicture}[baseline=-.65ex]
	\matrix[
	matrix of math nodes,
	column sep=1ex,
	] (m)
	{
		A & B \\
		0 & D \\
	};
	\draw[dotted]
	([xshift=0.5ex]m-1-1.north east) -- ([xshift=0.5ex]m-2-1.south east);
	\draw[dotted]
	(m-1-1.south west) -- (m-1-2.south east);
	\node[above,text depth=1pt] at ([xshift=2.5ex]m-1-1.north) {$\scriptstyle s$};  
	\node[left,overlay] at ([xshift=-1.2ex,yshift=-2ex]m-1-1.west) {$\scriptstyle s$};
	\end{tikzpicture}\mkern-5mu
	\Biggr]\in \rmM_n(\Fp\llbracket T\rrbracket).
	\]

	Suppose that \eqref{E: inequality in comparison of valuations of eigenvalues of (1,1)-block and (2,2)-block} fails.  Then we have $a=b=\lambda_s$. Denote by $k_1$ and $k_2$ with $k_1<k_2$ the $x$-coordinates of two endpoints of the line segment with the slope $\lambda_{s}$ on the Newton polygon of $M_0$. Then we have $k_1<s<k_2$, $\lambda_i=\lambda_s$ for $k_1+1\leq i\leq k_2$ and  
	\begin{equation}\label{NH}
	N(M_0, k_t)=H(M, k_t)=\sum_{i=1}^{k_t}\lambda_i \textrm{\ for}\ t=1,2,
	\end{equation}
	where the second equality follows from the hypothesis~(2).
	Now we prove \begin{equation}\label{v1}
	N(M,k_t)=N(M_0,k_t) \textrm{\ for\ } t=1,2. 
	\end{equation}

	For an integer $m\in [n]$, we denote by $F(M,m)$ the sum of all the principal $m\times m$ minors of $M$ and define $F(M_0,m)$ in the same way. We write $M=(m_{i,j})_{1\leq i,j\leq n}$ and have
	\begin{equation}\label{expansion}
	F(M,k_t)=	\sum_{\substack{I\subset[n]\\|I|=k_t}}\sum_{\sigma\in S_{k_t}}\mathrm{Sgn}(\sigma)\prod_{i\in I}m_{i,\sigma(i)}.
	\end{equation}
	Note that we have a similar description for $F(M_0,k_t)$. From our construction of $M_0$, the difference $F(M,k_t)-F(M_0,k_t)$ is the sum of terms $\mathrm{Sgn}(\sigma)\prod\limits_{i\in I}m_{i,\sigma(i)}$ such that at least one of the entries $\{m_{i,\sigma(i)}|i\in I \}$ belonging to the block $C$. Combined with
	the hypothesis~(2) and
	that $M$ is $\ula^{[n]}$-Hodge bounded, this implies
	$$v_T(F(M,k_t)-F(M_0,k_t))>\sum\limits_{i=1}^{k_t}\lambda_i=H(M,k_t). $$
	Together with \eqref{NH}, this inequality proves \eqref{v1}.
	
	Therefore, the point $(s,N(M,s))$ lies on the line segment with endpoints  $(k_t,N(M_0,k_t))$ for $t=1,2$, and hence cannot be a vertex of the Newton polygon of $M$, which is a contradiction to our  hypothesis~(1). 
\end{proof}

\begin{lemma}\label{L:estimation of Hodge functions of negative powers of a strictly Hodge bounded matrix}
	For every integer $n\geq 1$ and $A\in M_n(\Fp\llbracket T \rrbracket)$ if $A$ is strictly $\ula^{[n]}$-Hodge bounded, then for every integer $m\geq 1$ we have $H(A^{-m},1)\geq -m\lambda_n$.
\end{lemma}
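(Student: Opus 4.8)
The plan is to reduce the estimate to a single computation of the ``poles'' of $A^{-1}$ and then iterate multiplicatively; there is essentially no obstacle, so below is really the proof in outline.

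First I would unwind the hypothesis. Since $A$ is strictly $\ula^{[n]}$-Hodge bounded, write $A=D(\ula^{[n]})M'$ with $M'\in\GL_n(\Fp\llbracket T\rrbracket)$. Inverting, $A^{-1}=M'^{-1}D(\ula^{[n]})^{-1}$. Here $M'^{-1}\in\GL_n(\Fp\llbracket T\rrbracket)$, so every entry of $M'^{-1}$ has $v_T\geq 0$, while $D(\ula^{[n]})^{-1}=\mathrm{Diag}(T^{-\lambda_1},\dots,T^{-\lambda_n})$. Hence the $(i,j)$-entry of $A^{-1}$ equals $(M'^{-1})_{ij}\,T^{-\lambda_j}$, so $v_T\big((A^{-1})_{ij}\big)\geq-\lambda_j\geq-\lambda_n$, the last inequality because $\ula$ is increasing and $j\leq n$. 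Thus every entry of $A^{-1}$ has $T$-adic valuation at least $-\lambda_n$.

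Next I would record the trivial ultrametric fact that if $X_1,\dots,X_m$ are $n\times n$ matrices over the relevant valued field, each of whose entries has $v_T\geq-c$ with $c\geq 0$, then every entry of $X_1\cdots X_m$ has $v_T\geq-mc$: each entry of the product is a sum over $j_1,\dots,j_{m-1}\in[n]$ of terms $(X_1)_{ij_1}(X_2)_{j_1j_2}\cdots(X_m)_{j_{m-1}j}$, a product of $m$ entries and hence of valuation $\geq-mc$, and a sum of such terms still has valuation $\geq-mc$. Applying this to $A^{-m}=(A^{-1})^m$ with $c=\lambda_n$ gives $v_T\big((A^{-m})_{ij}\big)\geq-m\lambda_n$ for all $i,j$. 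Since $H(A^{-m},1)$ is, by Definition~\ref{D:Hodge and Newton functions and polygons}, the minimum of $v_T$ over the $1\times1$ minors of $A^{-m}$, i.e.\ over its entries, this is exactly the claimed inequality $H(A^{-m},1)\geq-m\lambda_n$.

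The only points deserving a moment's attention are that it is \emph{strict} Hodge-boundedness that makes $M'$, hence $M'^{-1}$, a genuine matrix over $\Fp\llbracket T\rrbracket$ (so its entries are $T$-adically integral), and that $\lambda_n$ dominates all $\lambda_j$ with $j\leq n$, which is the standing monotonicity of $\ula$. The resulting bound is far from sharp --- it simply spreads the largest Hodge slope $\lambda_n$ over all $m$ factors --- but this crude control is all that is needed in the sequel.
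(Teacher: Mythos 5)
Your argument is correct. It takes a genuinely different route from the paper's. The paper works through the adjugate: it writes $A^{-1}=\det(A)^{-1}A^{\mathrm{ad}}$, observes that each entry of $A^{\mathrm{ad}}$ is $\pm$ an $(n-1)\times(n-1)$ minor so that $H(A^{\mathrm{ad}},1)\geq H(A,n-1)=\sum_{i=1}^{n-1}\lambda_i$ (the equality uses Lemma~\ref{L:Newton function determines Hodge function for finite matrix} via the strict Hodge bound), applies superadditivity of $H(\cdot,1)$ to the $m$-th power of $A^{\mathrm{ad}}$, and finally subtracts $m\,v_T(\det A)=m\sum_{i=1}^n\lambda_i$. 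You instead factor $A^{-1}=M'^{-1}D(\ula^{[n]})^{-1}$ directly from the definition of strict Hodge-boundedness, read off that the $j$-th column of $A^{-1}$ has entries with $v_T\geq-\lambda_j\geq-\lambda_n$, and then iterate the trivial submultiplicative bound on entries through the $m$-fold product. Your route is more elementary in that it bypasses determinants, minors, and the Cramer/adjugate machinery entirely, and it incidentally yields the slightly finer columnwise information $v_T((A^{-1})_{ij})\geq-\lambda_j$, though both approaches land on the same final bound $-m\lambda_n$. The paper's route has the mild advantage of staying entirely in the language of Hodge functions that the surrounding lemmas are phrased in, but substantively there is nothing wrong with, or missing from, your argument.
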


\begin{proof}
	We denote by $A^{ad}=(a'_{k,l})\in M_n(\Fp\llbracket T \rrbracket)$ the adjoint matrix of $A$, i.e. $a'_{k,l}=(-1)^{k+l}\det(A_{k,l})$, where $A_{k,l}$ is the $(n-1)\times (n-1)$ minor of $A$ by removing the $k$-th row and $l$-th column. So we have $H(A^{ad},1)\geq H(A, n-1)$ and hence for every $m\geq 1$, $$H((A^{ad})^m,1)\geq mH(A^{ad},1)\geq mH(A,n-1)=m\sum\limits_{i=1}^{n-1}\lambda_i.$$
	Combined with $A^{-1}=\det(A)^{-1}A^{ad}$, this inequality implies  \[H(A^{-m},1)=H((A^{ad})^m,1)-mv_T(\det(A))\geq m\sum\limits_{i=1}^{n-1}\lambda_i-m\sum\limits_{i=1}^{n}\lambda_i=-m\lambda_n.\qedhere\]
\end{proof}

\begin{lemma}\label{new1}
	For every $n\in \NN$ if $A\in \GL_n(\FF_p[\![T]\!])$ and $B\in M_n(\FF_p[\![T]\!])$ such that $H(B,1)>0$, then $A+B\in \GL_n(\FF_p[\![T]\!])$.
\end{lemma}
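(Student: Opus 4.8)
The plan is to show $A+B$ is invertible over $\FF_p\llbracket T\rrbracket$ by factoring it as $A$ times a matrix congruent to the identity modulo the maximal ideal $(T)$, and then invoking the standard fact that a matrix over a local ring (here $\FF_p\llbracket T\rrbracket$) is invertible if and only if its reduction mod the maximal ideal is invertible. Concretely, write $A+B = A(I + A^{-1}B)$. Since $A\in \GL_n(\FF_p\llbracket T\rrbracket)$, every entry of $A^{-1}$ lies in $\FF_p\llbracket T\rrbracket$, i.e. has $T$-adic valuation $\geq 0$; the hypothesis $H(B,1)>0$ means every entry of $B$ has $v_T\geq H(B,1)>0$, hence (since valuations are integers) $v_T\geq 1$, so every entry of $B$ lies in $(T)$. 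Therefore every entry of $A^{-1}B$ lies in $(T)$, which gives $A^{-1}B \equiv 0 \pmod T$ and $I + A^{-1}B \equiv I \pmod T$.

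Next I would note that $\det(I+A^{-1}B) \in \FF_p\llbracket T\rrbracket$ and $\det(I+A^{-1}B) \equiv \det(I) = 1 \pmod T$, so $\det(I+A^{-1}B)$ is a unit in $\FF_p\llbracket T\rrbracket$ (a power series with nonzero constant term), hence $I + A^{-1}B \in \GL_n(\FF_p\llbracket T\rrbracket)$. Consequently $A+B = A(I+A^{-1}B)$ is a product of two elements of $\GL_n(\FF_p\llbracket T\rrbracket)$, so $A+B \in \GL_n(\FF_p\llbracket T\rrbracket)$, as claimed.

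There is no real obstacle here: the only mild point to be careful about is that $H(B,1)>0$ forces $v_T$ of each entry to be $\geq 1$ rather than merely positive, which is where integrality of the $T$-adic valuation on $\FF_p\llbracket T\rrbracket$ is used — without it one would only get entries in the fraction field with positive valuation, which would still suffice after clearing, but the integrality makes the statement $B \in \rmM_n(T\cdot\FF_p\llbracket T\rrbracket)$ immediate. Alternatively, one can avoid determinants entirely and construct the inverse of $I + A^{-1}B$ as the convergent Neumann series $\sum_{k\geq 0} (-A^{-1}B)^k$, which converges $T$-adically since $A^{-1}B$ has all entries in $(T)$ and entrywise $v_T$ of $(A^{-1}B)^k$ tends to infinity; this is perhaps the cleanest route and avoids any appeal to local-ring generalities.
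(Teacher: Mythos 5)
Your proof is correct. Your primary route — factoring $A+B = A(I+A^{-1}B)$, noting that $A^{-1}B$ has entries in $(T)$ so $\det(I+A^{-1}B)\equiv 1\pmod T$ is a unit, and invoking the local-ring criterion for invertibility — differs from the paper's, which skips the determinant entirely and directly exhibits the inverse $C=\bigl(\sum_{i\ge 0}(-A^{-1}B)^i\bigr)A^{-1}$ and checks $(A+B)C=C(A+B)=I_n$. That Neumann-series construction is exactly the alternative you sketch at the end of your argument, so you've in fact identified both approaches. The determinant route is slightly shorter to state once one accepts the standard local-ring fact; the series route is more self-contained and, as the paper's surrounding material shows, generalizes more readily to the noncommutative and infinite-matrix settings used later (Lemma~\ref{L:any lift of an invertible matrix is invertible}, Remark~\ref{remark:infinite product of infinite matrices}), where determinants are unavailable.
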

\begin{proof}
	Let \[C:=\left(\sum_{i=0}^\infty (-A^{-1}B)^{i}\right)A^{-1}. \] From the hypothesis $H(B,1)>0$, we know that $C$ is well-defined and \[(A+B)C=C(A+B)=I_n.\qedhere\]
\end{proof}

\begin{lemma}\label{L:conjugating a 'good' matrix by lower triangular matrix into an upper triangular matrix}
	Let  $1\leq s<n$ be two integers and \[M=\quad
	\Biggl[\mkern-5mu
	\begin{tikzpicture}[baseline=-.65ex]
	\matrix[
	matrix of math nodes,
	column sep=1ex,
	] (m)
	{
		A & B \\
		C & D \\
	};
	\draw[dotted]
	([xshift=0.5ex]m-1-1.north east) -- ([xshift=0.5ex]m-2-1.south east);
	\draw[dotted]
	(m-1-1.south west) -- (m-1-2.south east);
	\node[above,text depth=1pt] at ([xshift=2.5ex]m-1-1.north) {$\scriptstyle s$};  
	\node[left,overlay] at ([xshift=-1.2ex,yshift=-2ex]m-1-1.west) {$\scriptstyle s$};
	\end{tikzpicture}\mkern-5mu
	\Biggr]\in  M_n(\Fp\llbracket T \rrbracket).
	\] be a $\ula^{[n]}$-Hodge bounded matrix. 
	
	Assume that $M$ satisfies all the three hypotheses in Lemma~\ref{L:comparison of valuations of eigenvalues of (1,1)-block and (2,2)-block}.
	Then there exists a matrix $X\in \rmM_{(n-s)\times s}(\Fp\llbracket T \rrbracket)$ with $H(X,1)\geq H(C,1)-\lambda_{s}$ such that 
	\[
	\begin{bmatrix}
	I_{s}&0\\
	X& I_{n-s}\end{bmatrix} \begin{bmatrix}
	A&B\\
	C& D\end{bmatrix}  \begin{bmatrix}
	I_{s}&0\\
	-X& I_{n-s}\end{bmatrix}=
	\
	\Biggl[\mkern-5mu
	\begin{tikzpicture}[baseline=-.65ex]
	\matrix[
	matrix of math nodes,
	column sep=1ex,
	] (m)
	{
		A' & B' \\
		0\ & D' \\
	};
	\draw[dotted]
	([xshift=0.5ex]m-1-1.north east) -- ([xshift=0.5ex]m-2-1.south east);
	\draw[dotted]
	(m-1-1.south west) -- (m-1-2.south east);
	\node[above,text depth=1pt] at ([xshift=2.5ex]m-1-1.north) {$\scriptstyle s$};  
	\node[left,overlay] at ([xshift=-1.2ex,yshift=-2ex]m-1-1.west) {$\scriptstyle s$};
	\end{tikzpicture}\mkern-5mu
	\Biggr],
	\]
	and $\begin{bmatrix}
	I_{s}&0\\
	X& I_{n-s}\end{bmatrix}$ is $\ula^{[n]}$-stable.
\end{lemma}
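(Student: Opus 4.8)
The plan is to realize the desired conjugation as the solution of a Riccati‑type matrix equation and to construct that solution by a $T$‑adically convergent iteration, the convergence being driven by the \emph{strict} spectral separation furnished by Lemma~\ref{L:comparison of valuations of eigenvalues of (1,1)-block and (2,2)-block}. A direct block multiplication shows that for any $X\in\rmM_{(n-s)\times s}(\Fp\llbracket T\rrbracket)$ the lower‑left block of $\begin{bmatrix}I_{s}&0\\X&I_{n-s}\end{bmatrix}\begin{bmatrix}A&B\\C&D\end{bmatrix}\begin{bmatrix}I_{s}&0\\-X&I_{n-s}\end{bmatrix}$ equals $XA-DX-XBX+C$, while the diagonal blocks become $A-BX$ and $XB+D$ and the upper‑right block stays $B$. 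Hence it suffices to produce $X\in\rmM_{(n-s)\times s}(\Fp\llbracket T\rrbracket)$ with
\[
XA-DX=XBX-C,
\]
subject to $H(X,1)\geq H(C,1)-\lambda_{s}$ and $v_{T}(X_{ab})\geq\lambda_{s+a}-\lambda_{b}$ for all $a\in[n-s]$, $b\in[s]$; by Lemma~\ref{L:criterion of lambda-stable} the latter family of inequalities is exactly what makes $\begin{bmatrix}I_{s}&0\\X&I_{n-s}\end{bmatrix}$ be $\ula^{[n]}$‑stable, so only the Riccati equation and these two Hodge‑type bounds remain.

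\emph{The Sylvester operator.} Since $A$ is strictly $\ula^{[s]}$‑Hodge bounded it is invertible over $\FF_{p}(\!(T)\!)$, and the operator $\calS(Y):=YA-DY$ on $\rmM_{(n-s)\times s}$ has the formal inverse $\calS^{-1}(E)=\sum_{k\geq0}D^{k}EA^{-k-1}$. Writing $S_{A}:=\max\{v_{T}(\alpha):\alpha\in\mathrm{spec}\,A\}$ and $s_{D}:=\min\{v_{T}(\beta):\beta\in\mathrm{spec}\,D\}$, Lemma~\ref{L:comparison of valuations of eigenvalues of (1,1)-block and (2,2)-block} gives $S_{A}<s_{D}$, and convexity of the Hodge polygon gives $S_{A}\leq\lambda_{s}\leq\lambda_{s+1}\leq s_{D}$. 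I would then combine Lemma~\ref{L:valuations of eigenvalues bound Hodge functions} (applied to $D$ and to $A^{-1}$, yielding $H(D^{k},1)\geq ks_{D}+O(1)$ and $H(A^{-k-1},1)\geq-(k+1)S_{A}+O(1)$) with the cruder estimates $H(D^{k},1)\geq0$, $H(A^{-k-1},1)\geq-(k+1)\lambda_{s}$ and $v_{T}((A^{-k-1})_{lb})\geq-\lambda_{b}-k\lambda_{s}$ (from $A=D(\ula^{[s]})A'$ with $A'\in\GL_{s}(\Fp\llbracket T\rrbracket)$ and Lemma~\ref{L:estimation of Hodge functions of negative powers of a strictly Hodge bounded matrix}) and the row‑wise bound $v_{T}(C_{al})\geq\max(\lambda_{s+a},H(C,1))$ coming from $\ula^{[n]}$‑Hodge boundedness of $M$, to prove four facts: the series for $\calS^{-1}(E)$ converges $T$‑adically for every $E$; $\calS^{-1}$ sends $\rmM_{(n-s)\times s}(\Fp\llbracket T\rrbracket)$ into itself on inputs that are row‑wise $\lambda_{s+\bullet}$‑Hodge bounded (in particular on $-C$ and on any product $X'BX''$ with $X',X''$ satisfying the stability bound above); $\calS^{-1}$ preserves the bound $v_{T}(Y_{ab})\geq\lambda_{s+a}-\lambda_{b}$; and $H(\calS^{-1}(E),1)\geq H(E,1)-\lambda_{s}$ on such inputs, with a strict gain on the genuinely small ones.

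\emph{Iteration and conclusion.} Put $X_{0}:=0$ and $X_{k+1}:=\calS^{-1}(X_{k}BX_{k}-C)$. Using $H(B,1)\geq\lambda_{1}\geq0$ and the preceding step, I would show by induction that each $X_{k}$ lies in $\rmM_{(n-s)\times s}(\Fp\llbracket T\rrbracket)$, satisfies $H(X_{k},1)\geq H(C,1)-\lambda_{s}$ and $v_{T}((X_{k})_{ab})\geq\lambda_{s+a}-\lambda_{b}$, and that $H(X_{k+1}-X_{k},1)\to\infty$; here the increment equals $\calS^{-1}\bigl(X_{k}B(X_{k}-X_{k-1})+(X_{k}-X_{k-1})BX_{k-1}\bigr)$, whose $H(\cdot,1)$ exceeds that of $X_{k}-X_{k-1}$ by the gain built into $\calS^{-1}$ on small inputs together with the positivity $H(C,1)-\lambda_{s}>0$ of hypothesis~(2). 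Completeness of $\FF_{p}(\!(T)\!)$ then gives a limit $X:=\lim_{k}X_{k}\in\rmM_{(n-s)\times s}(\Fp\llbracket T\rrbracket)$, which solves the Riccati equation by continuity and inherits both displayed bounds; feeding $X$ into the block computation yields the asserted upper‑triangular form $\begin{bmatrix}A'&B'\\0&D'\end{bmatrix}$, and Lemma~\ref{L:criterion of lambda-stable} makes $\begin{bmatrix}I_{s}&0\\X&I_{n-s}\end{bmatrix}$ be $\ula^{[n]}$‑stable.

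\emph{Main obstacle.} The delicate point is the \emph{flat case} $\lambda_{s+1}=\lambda_{s}$: there the naive recursion $X_{k+1}=(DX_{k}+X_{k}BX_{k}-C)A^{-1}$ need not contract for the $v_{T}$‑metric when one uses Hodge‑boundedness alone, and one must genuinely exploit the \emph{strict} spectral gap $S_{A}<s_{D}$ of Lemma~\ref{L:comparison of valuations of eigenvalues of (1,1)-block and (2,2)-block} (through Lemma~\ref{L:valuations of eigenvalues bound Hodge functions}) to force $H(D^{k}EA^{-k-1},1)\to\infty$, while simultaneously keeping every partial sum of $\calS^{-1}(E)$ integral and within the prescribed Hodge and stability bounds by means of the cruder estimates — reconciling this ``spectral'' decay with the ``Hodge'' integrality uniformly in $k$ is where essentially all the work of the proof lies.
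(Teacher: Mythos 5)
Your proposal is correct, but it follows a genuinely different route from the paper's, and the contrast is instructive. The paper never solves the Riccati equation $XA-DX=XBX-C$ directly. Instead it iterates \emph{conjugation}: at step $k$ it solves only the linear Sylvester equation $X_kA_k-D_kX_k+C_k=0$ (by the same series $-\sum_iD_k^iC_kA_k^{-i-1}$ you use to define $\calS^{-1}$), forms $M_{k+1}:=Y_kM_kY_k^{-1}$ so that $A_{k+1}=A_k-B_kX_k$, $D_{k+1}=D_k+X_kB_k$, $C_{k+1}=-X_kB_kX_k$ are all updated, and shows $H(C_k,1)-\lambda_s\ge k\epsilon$ with $\epsilon:=H(C,1)-\lambda_s>0$; the final $X$ is $\sum_kX_k$ and $Y=\prod_kY_k$. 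Your scheme keeps $A,D$ (hence $\calS^{-1}$) fixed and iterates $X_{k+1}=\calS^{-1}(X_kBX_k-C)$, proving $H(X_{k+1}-X_k,1)\ge(k+1)\epsilon$ instead. Both routes rest on the same two inputs: the strict spectral separation $S_A<s_D$ (via Lemma~\ref{L:valuations of eigenvalues bound Hodge functions}) to make the Sylvester series converge — indispensable in the flat case $\lambda_{s+1}=\lambda_s$, as you correctly flag — and the factorisation $\calS^{-1}(E)\cdot B=\sum_iD^iEA^{-i}\cdot(A^{-1}B)$ with $A^{-1}B$ integral, yielding the sharpened bound $H(\calS^{-1}(E)B,1)\ge H(E,1)$. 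What your route buys is that you dispense with item~(iv) of the paper's Claim, i.e.\ re-verifying at each step (with the help of Lemma~\ref{new1}) that $M_{k+1}$ still satisfies the spectral and strict-Hodge hypotheses of Lemma~\ref{L:comparison of valuations of eigenvalues of (1,1)-block and (2,2)-block}, since your $A$ and $D$ never change. What you must be careful about is that the contraction estimate genuinely needs the sharpened product bound together with the fact that each $X_k$ lies in the image of $\calS^{-1}$: the naive estimate $H(X_kB,1)\ge H(X_k,1)+H(B,1)\ge\epsilon+\lambda_1$ can fall below $\lambda_s+\epsilon$ when $\lambda_1<\lambda_s$, and applying $\calS^{-1}$ would then lose a $\lambda_s$ without recovering it. One related imprecision in your sketch: the ``strict gain'' is not built into $\calS^{-1}$ itself, which only satisfies $H(\calS^{-1}(E),1)\ge H(E,1)-\lambda_s$; the per-step gain of $\epsilon$ comes from the increment's input $X_kB(X_k-X_{k-1})+(X_k-X_{k-1})BX_{k-1}$ picking up an extra $T^\epsilon$ from $H(X_k,1),H(X_{k-1},1)\ge\epsilon>0$ (hypothesis~(2)) on top of the sharpened product bound — it is the input that is small, not $\calS^{-1}$ that is contracting. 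With that reading of the contraction, your argument closes.
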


\begin{proof}
	We first prove the following claim.
	
	\begin{claim*}
		Let $\epsilon:= H(C,1)-\lambda_{s}$. 
		For any $\ula^{[n]}$-Hodge bounded matrix $
	M_k:=\ 	\Biggl[\mkern-5mu
		\begin{tikzpicture}[baseline=-.65ex]
			\matrix[
			matrix of math nodes,
			column sep=1ex,
			] (m)
			{
				A_k & B_k \\
				C_k & D_k\\
			};
			\draw[dotted]
			([xshift=0.5ex]m-1-1.north east) -- ([xshift=0.5ex]m-2-1.south east);
			\draw[dotted]
			(m-1-1.south west) -- (m-1-2.south east);
			\node[above,text depth=1pt] at ([xshift=2.5ex]m-1-1.north) {$\scriptstyle s$};  
			\node[left,overlay] at ([xshift=-1.2ex,yshift=-2ex]m-1-1.west) {$\scriptstyle s$};
		\end{tikzpicture}\mkern-5mu
		\Biggr]
$		
 that satisfies all three hypotheses in Lemma~\ref{L:comparison of valuations of eigenvalues of (1,1)-block and (2,2)-block} and $H(C_k,1)-\lambda_{s}\geq k\epsilon$ for some integer $k>0$, there exists a matrix $X_k\in \rmM_{(n-s)\times s}(\Fp\llbracket T \rrbracket)$ such that if we set
		\[
		M_{k+1}:=\ 
		\Biggl[\mkern-5mu
		\begin{tikzpicture}[baseline=-.65ex]
		\matrix[
		matrix of math nodes,
		column sep=1ex,
		] (m)
		{
			A_{k+1} & B_{k+1} \\
			C_{k+1} & D_{k+1} \\
		};
		\draw[dotted]
		([xshift=0.5ex]m-1-1.north east) -- ([xshift=0.5ex]m-2-1.south east);
		\draw[dotted]
		(m-1-1.south west) -- (m-1-2.south east);
		\node[above,text depth=1pt] at ([xshift=2.5ex]m-1-1.north) {$\scriptstyle s$};  
		\node[left,overlay] at ([xshift=-1.2ex,yshift=-2ex]m-1-1.west) {$\scriptstyle s$};
		\end{tikzpicture}\mkern-5mu
		\Biggr]
		=
		\begin{bmatrix}
		I_{s}&0\\
		X_k& I_{n-s}\end{bmatrix} M_k \begin{bmatrix}
		I_{s}&0\\
		-X_k& I_{n-s}\end{bmatrix},
		\]
		then
		\begin{enumerate}[(i)]
			\item $H(X_k,1)\geq k\epsilon$;
			\item $H(C_{k+1},1)-\lambda_{s}\geq (k+1)\epsilon$;
			\item the matrix $ \begin{bmatrix}
			I_{s}&0\\
			X_k& I_{n-s}\end{bmatrix}$ is $\ula^{[n]}$-stable;
			\item the matrix $M_{k+1}$ also satisfies all the three hypotheses in Lemma~\ref{L:comparison of valuations of eigenvalues of (1,1)-block and (2,2)-block}.   
		\end{enumerate}	
	\end{claim*} 
	We now prove that $X_k:=-\sum\limits_{i=0}^\infty D_k^iC_kA_k^{-i-1}$ satisfies all desired properties.	
	We first check that $X_k$ is well-defined. Let
	\[
	S_{A_k}:=\max\{v_T(\alpha)\;|\; \alpha \text{~is an eigenvalue of ~} A_k\} \text{~and~} s_{D_k}:=\min\{v_T(\beta)\;|\; \beta \text{~is an eigenvalue of ~} D_k\}.
	\]
	By Lemmas~\ref{L:valuations of eigenvalues bound Hodge functions} and ~\ref{L:comparison of valuations of eigenvalues of (1,1)-block and (2,2)-block}, we have $S_{A_k}<s_{D_k}$ and that there exists $b_k\in\RR$ such that $$H(A_k^{-i},1)\geq -iS_{A_k}+b_k \textrm{\ and\ } H(D_k^i,1)\geq is_{D_k}+b_k \textrm{\ for every\ }  i\geq 0.$$ Combining them, we have
	\begin{multline}\label{v2}
	H(D_k^iC_kA_k^{-i-1},1)\geq H(D_k^i,1)+H(C_k,1)+H(A_k^{-i-1},1)\\
	\geq is_{D_k}+b_k+\lambda_{s}+k\epsilon-(i+1)S_{A_k}+b_k\xrightarrow{i\rightarrow\infty} \infty.
	\end{multline}
	Therefore, the series $X_k=-\sum\limits_{i=0}^\infty D_k^iC_kA_k^{-i-1}$ converges and hence $X_k$ is well-defined. 
	
	By Lemma~\ref{L:estimation of Hodge functions of negative powers of a strictly Hodge bounded matrix}, we have $H(A_k^{-i-1},1)\geq -(i+1)\lambda_{s}$. Since $H(C_k,1)\geq \lambda_{s}+k\epsilon$ and $H(D_k,1)\geq \lambda_{s+1}\geq \lambda_{s}$, we have $$H(D_k^iC_kA_k^{-i-1},1)\geq i\lambda_{s}+\lambda_{s}+k\epsilon-(i+1)\lambda_{s}=k\epsilon,$$ and hence $H(X_k,1)\geq k\epsilon$, which proves $(\romannumeral1)$.
	
	From $X_kA_k+C_k-D_kX_k=0$, we have
	\[
	M_{k+1}=\begin{bmatrix}
	A_k-B_kX_k&B_k\\
	-X_kB_kX_k& D_k+X_kB_k\end{bmatrix}.
	\]
	Since $A_k$ is strictly $\ula^{[s]}$-Hodge bounded and $B_k$ is $\ula^{[s]}$-Hodge bounded, we have $A_k^{-1}B_k\in \rmM_{s\times(n-s)}(\Fp\llbracket T \rrbracket)$. Combined with $X_kB_k=-\sum\limits_{i=0}^{\infty}D_k^iC_kA_k^{-i}(A_k^{-1}B_k)$ and $$H(D_k^iC_kA_k^{-i}(A_k^{-1}B_k),1)\geq i\lambda_{s}+\lambda_{s}+k\epsilon-i\lambda_{s}+0=\lambda_{s}+k\epsilon \textrm{\ for every\ } i\geq 0,$$ this implies $$H(X_kB_k,1)\geq \lambda_{s}+k\epsilon,$$ and hence $$H(C_{k+1},1)=H(-X_kB_kX_k,1)\geq \lambda_{s}+k\epsilon+\epsilon=\lambda_{s}+(k+1)\epsilon.$$ This completes the proof of $(\romannumeral2)$.
	
	By Lemma~\ref{L:criterion of lambda-stable}, to show that the matrix $\begin{bmatrix}
	I_{s}&0\\
	X_k& I_{n-s}\end{bmatrix}$ is $\ula^{[n]}$-stable, it is enough to show that the matrix
	\begin{multline*}
	\begin{bmatrix}
	D(\ula^{[s]})&0\\
	0& D(\ula^{(s,n]})\end{bmatrix}^{-1}
	\begin{bmatrix}
	I_{s}&0\\
	X_k& I_{n-s}\end{bmatrix} 
	\begin{bmatrix}
	D(\ula^{[s]})&0\\
	0& D(\ula^{(s,n]})\end{bmatrix} 
	=
	\begin{bmatrix}
	I_{s}&0\\
	D(\ula^{(s,n]})^{-1}X_kD(\ula^{[s]})& I_{n-s}\end{bmatrix} 
	\end{multline*}
	belongs to $\rmM_n(\Fp\llbracket T\rrbracket)$, or equivalently, 
	\[
	D\left(\ula^{(s,n]}\right)^{-1}X_kD\left(\ula^{[s]}\right)\in \rmM_{(n-s)\times s}(\Fp\llbracket T \rrbracket).
	\]
	Note that \begin{equation}\label{v3}
	D(\ula^{(s,n]})^{-1}X_kD(\ula^{[s]})=-\sum\limits_{i=0}^\infty D(\ula^{(s,n]})^{-1}D_k^iC_kA_k^{-i}A_k^{-1}D(\ula^{[s]}).
	\end{equation}
	%
	Since $A_k$ is strictly $\ula^{[s]}$-Hodge bounded, we have $$A_k^{-1}D(\ula^{[s]})\in \GL_{s}(\Fp\llbracket T\rrbracket),\quad  (\textrm{i.e. }H(A_k^{-1}D(\ula^{[s]}),1)\geq 0). $$
	Combined with $$H(D_k^{i-1}C_kA_k^{-i},1)\geq (i-1)\lambda_{s}+\lambda_{s}+k\epsilon-i\lambda_{s}=k\epsilon>0 \textrm{\ for every\ } i\geq 1$$ and
	$$H(D(\ula^{(s,n]})^{-1}D_k,1)\geq 0,$$
	we have \begin{equation}\label{new2}
		D(\ula^{(s,n]})^{-1}D_k^iC_kA_k^{-i}A_k^{-1}D(\ula^{[s]})\in \rmM_{(n-s)\times s}(\Fp\llbracket T\rrbracket) \textrm{\ for every\ } i\geq 1.
	\end{equation}
	Note that by $H(D(\ula^{(s,n]})^{-1}C_k,1)\geq 0 $,  the belonging relation \eqref{new2} holds for $i=0$ as well. 
	Therefore,  further combining it with \eqref{v3}, we complete the proof of $(\romannumeral3)$.
	
	Since $M_k$ satisfies the hypothesis~(1) and
	conjugating by matrices in $\GL_n(\Fp\llbracket T \rrbracket)$ does not change either its Newton or Hodge polygons, we prove the hypothesis~(1) for $M_{k+1}$. 
	
	The hypothesis~(2) for $M_{k+1}$ follows directly from $(\romannumeral2)$. 
	
	Now we are left to prove hypothesis~(3) for $M_{k+1}$. Note that $$A_{k+1}=A_k+B_kX_k=A_k(I_{s}+A_k^{-1}B_kX_k). $$
	From our previous discussion, we know that $A_k^{-1}B_k\in \rmM_{s\times (n-s)}(\Fp\llbracket T\rrbracket)$. Combined with hypothesis~(2) on $M_k$ and Lemma~\ref{new1}, this implies $H(A_k^{-1}B_kX_k,1)\geq \epsilon>0$ and that $I_{s}+A_k^{-1}B_kX_k$ is invertible. Therefore, $A_{k+1}$ is strictly $\ula^{[s]}$-Hodge bounded, which proves $(\romannumeral4)$.
	
	Note that the matrix $M_1:=M$ satisfies all the conditions in our claim for $k=1$. Therefore, inductively using this claim, we obtain two infinite sequences of matrices $\underline X\in \rmM_{(n-s)\times s}(\Fp\llbracket T \rrbracket)$ and $\underline M\in \rmM_{(n-s)\times n}(\Fp\llbracket T \rrbracket)$ such that
	\begin{itemize}
		\item 	$H(X_k,1)\geq k\epsilon$;
		\item $M_k:=\quad\Biggl[\mkern-5mu
		\begin{tikzpicture}[baseline=-.65ex]
		\matrix[
		matrix of math nodes,
		column sep=1ex,
		] (m)
		{
			A_k & B_k\\
			C_k & D_k \\
		};
		\draw[dotted]
		([xshift=0.5ex]m-1-1.north east) -- ([xshift=0.5ex]m-2-1.south east);
		\draw[dotted]
		(m-1-1.south west) -- (m-1-2.south east);
		\node[above,text depth=1pt] at ([xshift=3.5ex]m-1-1.north) {$\scriptstyle s$};  
		\node[left,overlay] at ([xshift=-1.2ex,yshift=-2ex]m-1-1.west) {$\scriptstyle s$};
		\end{tikzpicture}\mkern-5mu
		\Biggr]=Y_kM_{k-1}Y_k^{-1}$, where $Y_k:= \begin{bmatrix}
		I_{s}&0\\
		X_k& I_{n-s}\end{bmatrix}$;
		\item $Y_k$ is $\ula^{[n]}$-stable;
		\item $H(C_k,1)\geq \lambda_{s}+k\epsilon$ for all $k\geq 1$.
	\end{itemize}
	Notice that $$Y_kY_{k-1}\dots Y_1= \begin{bmatrix}
	I_{s}&0\\
	\sum\limits_{i=1}^k X_i& I_{n-s}\end{bmatrix}.$$ Since $H(X_k,1)\geq k\epsilon$, the series $\sum\limits_{k=1}^\infty X_k$ converges to a matrix $X\in \rmM_{(n-s)\times s}(\Fp\llbracket T\rrbracket)$. From the above construction, we see that $X$ satisfies all the required properties.
\end{proof}

Now we state the Newton-Hodge decomposition theorem for matrices in $\rmM_\infty(\Fp\llbracket T\rrbracket)$.

\begin{theorem}\label{T:Newton-Hodge decomposition with Hodge bound restriction for infinite matrices}
	Let $M\in M_\infty(\Fp\llbracket T \rrbracket)$ be a $\ula$-Hodge bounded matrix and $\Omega=\{0=s_0<s_1<\cdots \}$ be an infinite subset of touching vertices of $M$.
	If 
	\[
	H(M,s)=\sum_{i=1}^{s}\lambda_i \text{~for all~} s\in \Omega,
	\]
	then there exists $W\in \GL_\infty(\Fp\llbracket T \rrbracket)$ such that 
	\begin{enumerate}
		\item $W$ is $\ula$-stable, and in particular $WMW^{-1}$ is $\ula$-Hodge bounded.
		\item $WMW^{-1}$ is a block upper triangular matrix of the form
		\[
		WMW^{-1}=\quad\Biggl[\mkern-5mu
		\begin{tikzpicture}[baseline=-.65ex]
		\matrix[
		matrix of math nodes,
		column sep=1ex,
		] (m)
		{
			N_{11} & N_{12}\\
			\ 0\ \  & N_{22} \\
		};
		\draw[dotted]
		([xshift=0.5ex]m-1-1.north east) -- ([xshift=0.5ex]m-2-1.south east);
		\draw[dotted]
		(m-1-1.south west) -- (m-1-2.south east);
		\node[above,text depth=1pt] at ([xshift=3.5ex]m-1-1.north) {$\scriptstyle s_1$};  
		\node[left,overlay] at ([xshift=-1.2ex,yshift=-2ex]m-1-1.west) {$\scriptstyle s_1$};
		\end{tikzpicture}\mkern-5mu
		\Biggr].
		\]
	\end{enumerate}

\end{theorem}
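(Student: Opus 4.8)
The plan is to reduce to the finite-dimensional Newton--Hodge decomposition (Theorem~\ref{T:Newton-Hodge decomposition with Hodge bound restriction for finite matrices}) applied to growing finite truncations of $M$, and then to pass to a limit. The first step is to pin down the Hodge polygon of $M$ itself: since every $s_n\in\Omega$ is a touching vertex of $M$ one has $N(M,s_n)=H(M,s_n)=\sum_{i=1}^{s_n}\lambda_i$, so Corollary~\ref{P:Newton function determines Hodge function for infinite matrix} applied with $\us=(s_1,s_2,\dots)$ yields $H(M,k)=\sum_{i=1}^{k}\lambda_i$ for \emph{every} $k\ge 1$; equivalently, for each $k$ there is a $k\times k$ minor of $M$ of $T$-valuation exactly $\sum_{i=1}^{k}\lambda_i$.

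Next I would fix, for each large $n$, a finite principal submatrix $M^{(n)}$ of $M$ of size $m_n$ (with $m_n\ge s_n$ and $m_n\to\infty$) which is strictly $\ula^{[m_n]}$-Hodge bounded and for which $s_1$ is still a touching vertex, and apply Theorem~\ref{T:Newton-Hodge decomposition with Hodge bound restriction for finite matrices} to it with the touching vertex $s_1$. This produces $W_n\in\GL_{m_n}(\Fp\llbracket T\rrbracket)$, which is $\ula^{[m_n]}$-stable, such that $W_nM^{(n)}W_n^{-1}$ is $\ula^{[m_n]}$-Hodge bounded and block upper triangular with a leading $s_1\times s_1$ block. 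I expect the construction of $M^{(n)}$ to be the main obstacle: the obvious choice $M_{[s_n]}$ need not be strictly $\ula^{[s_n]}$-Hodge bounded when $\ula$ has repetitions (the minors realizing $H(M,k)$ may be forced to use rows and columns outside $[s_n]$), so one has to argue --- using that $\lambda_i\to\infty$, so only finitely many indices are relevant for each fixed $k$, and that by the first step the Newton and Hodge polygons of an initial truncation agree with those of $M$ up to a fixed abscissa once the truncation is large enough --- that after possibly replacing $M$ by a single finite $\ula$-stable conjugate supplied by the finite theory, the truncations $M_{[s_n]}$ (or suitable slightly larger ones) genuinely meet the hypotheses of Theorem~\ref{T:Newton-Hodge decomposition with Hodge bound restriction for finite matrices}.

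Finally I would pass to the limit. The $W_n$ are not canonical, but they can be chosen compatibly: because $W_{n+1}$ and the padded matrix $\Matrix{W_n}{0}{0}{I}$ both conjugate $M^{(n+1)}$ into block upper triangular form modulo a tail whose entries have $T$-valuation tending to $\infty$ with $n$, their ratio can be arranged to be $\ula^{[m_{n+1}]}$-stable, block upper triangular, and congruent to the identity modulo an ever larger power of $T$; hence, after this normalization, the $W_n$ converge entrywise to a matrix $W$, and so do the $W_n^{-1}$, so that $W\in\GL_\infty(\Fp\llbracket T\rrbracket)$. Each $W_n$ satisfies $v_T\big((W_n)_{ij}\big)\ge\lambda_i-\lambda_j$ by Lemma~\ref{L:criterion of lambda-stable}, a bound preserved in the limit, so $W$ is $\ula$-stable, again by Lemma~\ref{L:criterion of lambda-stable}; in particular $WMW^{-1}$ is $\ula$-Hodge bounded, and passing to the limit in $W_nM^{(n)}W_n^{-1}$ shows that it is block upper triangular with leading $s_1\times s_1$ block. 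This gives both assertions.

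Two remarks on strategy. The hard work is concentrated in the second step --- putting the finite truncations into the form required by Theorem~\ref{T:Newton-Hodge decomposition with Hodge bound restriction for finite matrices} --- and in checking that the $W_n$ can be chosen to converge to a genuinely invertible, $\ula$-stable limit rather than merely an entrywise limit; both are precisely the places where the possible repetitions of $\ula$ must be handled with care. An alternative to truncation is to attempt the decomposition directly, solving the nonlinear equation $X(A-BX)-DX=-C$ for the lower-left block $X$ of the conjugating matrix by the iteration of Lemma~\ref{L:conjugating a 'good' matrix by lower triangular matrix into an upper triangular matrix} in the infinite setting; this runs into the same difficulty, since the spectral gap $H(C,1)-\lambda_{s_1}>0$ needed to start the iteration can fail when $\lambda_{s_1}=\lambda_{s_1+1}$, so a preliminary finite normalization is again required. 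The two approaches are essentially equivalent.
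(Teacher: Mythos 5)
Your first step (pinning down the Hodge polygon via Corollary~\ref{P:Newton function determines Hodge function for infinite matrix}) matches the paper, and your concluding aside --- normalize once by a finite conjugate, then solve for the lower-left block $X$ by the iteration of Lemma~\ref{L:conjugating a 'good' matrix by lower triangular matrix into an upper triangular matrix} directly on the infinite matrix --- is in fact exactly the route the paper takes: it first uses Lemma~\ref{L:permute an infinite matrix and compare Newton polygons of infinite matrix and its truncation}(1) and Lemma~\ref{L:conjugating a lambda Hodge bounded matrix to make lower left block closer to 0} to conjugate by a $\ula$-stable $Q_0$ that kills the lower-left block up to row $s$ (with $\lambda_s>\lambda_{s_1}$), so the residual block has valuation at least $\lambda_s$, and then Lemma~\ref{L:inductively conjugating a lambda Hodge bounded matrix to make lower left block closer to 0} improves this valuation by $\Delta=\lambda_s-\lambda_{s_1}>0$ at each step, producing a convergent infinite product of $\ula$-stable unipotent lower-triangular perturbations as in Remark~\ref{remark:infinite product of infinite matrices}. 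That argument is complete and requires no compatibility choices.

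Your \emph{primary} route (growing truncations, decompose each, glue) has two genuine gaps. First, you assert that ``after possibly replacing $M$ by a single finite $\ula$-stable conjugate,'' all truncations $M_{[s_n]}$ become strictly $\ula^{[s_n]}$-Hodge bounded; but the permutation in Lemma~\ref{L:permute an infinite matrix and compare Newton polygons of infinite matrix and its truncation}(1) depends on $n$ (it only rearranges the finite block of indices where $\lambda_i=\lambda_{s_n}$), so one conjugate repairs one truncation, not all of them, and no argument is offered that a single conjugate can do all simultaneously. Second, and more seriously, the claim that the conjugating matrices $W_n$ ``can be arranged'' so that $W_{n+1}\cdot(\text{padded }W_n)^{-1}$ is $\ula$-stable, block upper triangular and congruent to the identity modulo an increasing power of $T$ is exactly what one would need to prove and is the crux of the whole theorem; the finite Newton--Hodge decomposition gives existence but no canonical choice, so compatibility across $n$ is not free. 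The paper's proof never has to glue two independently constructed finite decompositions --- it builds a single infinite product converging in $\rmM_\infty(\FF_p\llbracket T\rrbracket)$ --- which is precisely why it works and why the ``alternative'' you dismiss as ``essentially equivalent'' is actually the cleaner and in fact the only one fully carried out in the paper. To repair your primary route you would effectively have to re-prove Lemma~\ref{L:inductively conjugating a lambda Hodge bounded matrix to make lower left block closer to 0}, at which point you may as well work on the infinite matrix directly.
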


\begin{remark}\label{remark1}
	\begin{enumerate}
		\item Although the conclusion of Theorem~\ref{T:Newton-Hodge decomposition with Hodge bound restriction for infinite matrices} only refers to the touching vertex $s_1$, the assumption that there exist infinitely many touching vertices of $M$ is necessary, at least according to our proof. The reason is because in the Newton-Hodge Decomposition Theorem~\ref{T:Newton-Hodge decomposition} for a finite matrix $M\in \rmM_n(\FF_p\llbracket T\rrbracket)$, we have an automatic condition $H(M,n)=N(M,n)$, i.e. $(n,N(M,n))$ is always a touching vertex of $M$, while  we have no such condition for infinite matrix $M\in \rmM_\infty(\FF_p\llbracket T\rrbracket)$. Therefore we have to impose extra condition on the infinite $M$ to obtain the desired decomposition. Here we assume that $M$ has infinitely many touching vertices, which holds in our applications. It is an interesting question whether this assumption can be weakened.
		\item Since $\lim\limits_{n\rightarrow\infty}\lambda_n=\infty$, replacing $\Omega$ by a subset if necessary, we can assume that $\lambda_{s_1}<\lambda_{s_2}$. We will make this assumption throughout the proof of Theorem~\ref{T:Newton-Hodge decomposition with Hodge bound restriction for infinite matrices}.
	\end{enumerate}
	
\end{remark}

\begin{remark}\label{remark:infinite product of infinite matrices}
	One must be careful when compute the infinite product of infinite matrices. It is not true in general that the product of an countable infinitely many invertible matrices is invertible, even if the product is well-defined. For example, we denote by $Q_n\in \GL_\infty(\Fp\llbracket T\rrbracket)$ the infinite matrix obtained by switching the $1$-st and $n$-th rows of the identity matrix $\rmI_\infty$. In particular, set $Q_1=\rmI_\infty$. Then the sequence $(Q_n\cdots Q_1\;|\;n\geq 1)$ converges to the infinite matrix $ \begin{bmatrix}
	0&0\\
	\rmI_\infty&0\end{bmatrix}$, which is obviously not invertible. What we will use is the following result:
	
	Let $\{Q_n \}_{n\geq 1}$ be a family of matrices in $\GL_\infty(\Fp\llbracket T\rrbracket)$. Assume that there exists a sequence of integers $(\alpha_n)_{n\geq 1}$ such that $\lim\limits_{n\rightarrow \infty}\alpha_n=\infty$ and $Q_n-\rmI_\infty\in T^{\alpha_n}\rmM_\infty(\Fp\llbracket T\rrbracket)$. Then the sequence $(Q_n\cdots Q_1\;|\;n\geq 1)$ converges to a matrix $Q\in \GL_\infty(\Fp\llbracket T\rrbracket)$. In fact, the condition on $Q_n$'s implies that $Q_n^{-1}-\rmI_\infty\in T^{\alpha_n}\rmM_\infty(\Fp\llbracket T\rrbracket)$ and hence $(Q_1^{-1}\cdots Q_n^{-1}|n\geq 1 )$ converges to a matrix which gives the two-sided inverse of $Q$. Moreover, if $Q_n$ is $\ula$-stable for all $n\geq 1$, the matrix $Q$ is also $\ula$-stable.
\end{remark}

\begin{lemma}\label{L:permute an infinite matrix and compare Newton polygons of infinite matrix and its truncation}
	Let $M$ be the matrix in Theorem~\ref{T:Newton-Hodge decomposition with Hodge bound restriction for infinite matrices}. Then 
	\begin{enumerate}
		\item for every $k\geq 1$ there exists a $\ula$-stable matrix $P_k\in \GL_\infty(\Fp\llbracket T \rrbracket)$ such that if we write
		\[
		P_kMP_k^{-1}=\quad\Biggl[\mkern-5mu
		\begin{tikzpicture}[baseline=-.65ex]
		\matrix[
		matrix of math nodes,
		column sep=1ex,
		] (m)
		{
			M_{11} & M_{12}\\
			M_{21} & M_{22} \\
		};
		\draw[dotted]
		([xshift=0.5ex]m-1-1.north east) -- ([xshift=0.5ex]m-2-1.south east);
		\draw[dotted]
		(m-1-1.south west) -- (m-1-2.south east);
		\node[above,text depth=1pt] at ([xshift=3.5ex]m-1-1.north) {$\scriptstyle s_k$};  
		\node[left,overlay] at ([xshift=-1.2ex,yshift=-2ex]m-1-1.west) {$\scriptstyle s_k$};
		\end{tikzpicture}\mkern-5mu
		\Biggr],
		\]
		then 
		$
		v_T(\det(M_{11}))=\sum\limits_{i=1}^{s_k}\lambda_i.
		$
		\item There exists an integer $\ell_M$ such that for every $n\geq \ell_M$ and every $\ula$-stable matrix $P\in \GL_\infty(\FF_p[\![T]\!]) $ the Newton polygon of $(PMP^{-1})_{[n]}$ coincides with the Newton polygon of $M$ when one restricts the range of the $x$-coordinates to $[0,s_2]$. In particular, $s_1$ is a touching vertex of $M_{[n]}$.
	\end{enumerate}
\end{lemma}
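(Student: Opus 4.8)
The plan is to prove \emph{(1)} by exhibiting a single permutation matrix that realises the decomposition, and \emph{(2)} by combining a conjugation–invariance property of the characteristic power series with a truncation estimate, after which the conclusion reduces to a convexity argument.

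\emph{Part (1).} First I would locate a principal minor of minimal valuation. Since $s_k$ is a touching vertex and, by hypothesis, $H(M,s_k)=\sum_{i=1}^{s_k}\lambda_i$, Definition~\ref{D:touching vertex} gives $N(M,s_k)=\sum_{i=1}^{s_k}\lambda_i$. As $M$ is $\ula$-Hodge bounded, every principal $s_k\times s_k$ minor $M_{J'}$ satisfies $v_T(\det M_{J'})\geq\sum_{j\in J'}\lambda_j\geq\sum_{i=1}^{s_k}\lambda_i$, and because $\lim\lambda_n=\infty$ only finitely many $J'$ realise equality in the first inequality while the remaining ones have valuation bounded strictly above $\sum_{i=1}^{s_k}\lambda_i$. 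Hence the $T$-adically convergent sum $\sum_{J'}\det M_{J'}$, which has valuation $\sum_{i=1}^{s_k}\lambda_i$, must contain a summand $\det M_J$, $J=\{j_1<\dots<j_{s_k}\}$, with $v_T(\det M_J)=\sum_{i=1}^{s_k}\lambda_i$; this forces $\{\lambda_{j_1},\dots,\lambda_{j_{s_k}}\}=\{\lambda_1,\dots,\lambda_{s_k}\}$ as multisets. Using $\lim\lambda_n=\infty$ once more (so each value of $\ula$ is attained on a finite set), I can choose a permutation $\pi$ of $\NN$ preserving every level set $\{i:\lambda_i=c\}$ and with $\pi(\{1,\dots,s_k\})=J$. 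Let $P_k$ be the permutation matrix of $\pi$: since $(P_k)_{ij}\neq0$ forces $\lambda_i=\lambda_j$, Lemma~\ref{L:criterion of lambda-stable} shows $P_k$ is $\ula$-stable, so $P_kMP_k^{-1}$ is $\ula$-Hodge bounded, and by construction its top-left $s_k\times s_k$ block is $M_J$ with rows and columns reordered by one common permutation; therefore $v_T(\det M_{11})=v_T(\det M_J)=\sum_{i=1}^{s_k}\lambda_i$.

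\emph{Part (2), first step.} Put $N':=PMP^{-1}$; since $P$ is $\ula$-stable, $N'$, $PM$ and $MP^{-1}$ are all $\ula$-Hodge bounded. For $\ula$-Hodge bounded $X$ write $c_k(X):=\sum_{|I|=k}\det X_I$, a $T$-adically convergent element of $\Fp\llbracket T\rrbracket$ with $v_T(c_k(X))=N(X,k)$. I would first show $c_k(N')=c_k(M)$ for all $k$: expanding by the infinite Cauchy–Binet formula with $A:=P$, $B:=MP^{-1}$ one has $\det((AB)_I)=\sum_{|L|=k}\det A_{I,L}\det B_{L,I}$ and $\det((BA)_L)=\sum_{|I|=k}\det B_{L,I}\det A_{I,L}$; using that $B$ is $\ula$-Hodge bounded and that $P$ is $\ula$-stable one checks $v_T(\det A_{I,L}\det B_{L,I})\geq\max\bigl(\sum_{l\in L}\lambda_l,\ \sum_{i\in I}\lambda_i\bigr)$, so the family $\{\det A_{I,L}\det B_{L,I}\}$ is summable and the two iterated sums agree, giving $c_k(AB)=c_k(BA)$, i.e.\ $c_k(N')=c_k(M)$. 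Consequently $N(N',k)=N(M,k)$ for all $k$, so $N'$ and $M$ have the same Newton polygon, with the points of $\Omega$ as vertices at heights $\sum_{i=1}^{s}\lambda_i$; and Corollary~\ref{P:Newton function determines Hodge function for infinite matrix} applied to $M$ gives $H(M,j)=\sum_{i=1}^{j}\lambda_i$ for all $j$, so the Newton polygon of $M$ lies on or above the polygon with slopes $\lambda_1,\lambda_2,\dots$, and in particular its incoming slope at the vertex $s_2$ is $\leq\lambda_{s_2}$.

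\emph{Part (2), truncation.} For $k\leq n$ one has $c_k(N')-c_k(N'_{[n]})=\sum_{|I|=k,\ I\not\subseteq[n]}\det N'_I$, and every such $I$ contains an index $>n$, so its $T$-adic valuation is $\geq\lambda_{n+1}$. Choose $\ell_M$ so that $\lambda_{\ell_M}$ exceeds both $\sum_{i=1}^{s_2}\lambda_i$ and all of the finitely many finite values $N(M,k)$ with $0\leq k\leq s_2$. Then for $n\geq\ell_M$ and $k\leq s_2$: if $N(M,k)<\infty$ the congruence forces $N(N'_{[n]},k)=N(M,k)$, and if $N(M,k)=\infty$ then $N(N'_{[n]},k)\geq\lambda_{n+1}>\sum_{i=1}^{s_2}\lambda_i\geq$ (the height of the Newton polygon of $M$ at $k$). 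Since $N'_{[n]}$ is $\ula^{[n]}$-Hodge bounded, $N(N'_{[n]},k)\geq\sum_{i=1}^{k}\lambda_i$ for every $k\leq n$; together with $\lambda_{s_2+1}\geq\lambda_{s_2}\geq$ (the incoming slope at $s_2$), this shows the points $(k,N(N'_{[n]},k))$ with $s_2<k\leq n$ lie on or above the rightward prolongation of the Newton polygon of $M$ over $[0,s_2]$. Hence the lower convex hull of $\{(k,N(N'_{[n]},k)):0\leq k\leq n\}$, restricted to $[0,s_2]$, coincides with the Newton polygon of $M$ on $[0,s_2]$, which is the first assertion of (2). For the last assertion take $P=\rmI_\infty$: then $N(M_{[n]},s_1)=N(M,s_1)=\sum_{i=1}^{s_1}\lambda_i$, while $\sum_{i=1}^{s_1}\lambda_i\leq H(M_{[n]},s_1)\leq N(M_{[n]},s_1)$ (the left inequality since $M_{[n]}$ is $\ula^{[n]}$-Hodge bounded), forcing $H(M_{[n]},s_1)=N(M_{[n]},s_1)$; since $s_1$ is a vertex of the Newton polygon of $M_{[n]}$ by the first assertion, $s_1$ is a touching vertex of $M_{[n]}$.

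\emph{Main obstacle.} The step I expect to be hardest is the identity $c_k(N')=c_k(M)$: one must make the infinite Cauchy–Binet rearrangement rigorous and keep careful track of the valuations of the various tails, which is exactly where $\ula$-stability of $P$ enters essentially. Once that is settled, the remainder is the convexity bookkeeping above, whose only nontrivial input beyond elementary valuation estimates is Corollary~\ref{P:Newton function determines Hodge function for infinite matrix}. A secondary subtlety worth flagging is that the multiset argument in Part~(1) genuinely relies on $\lim\lambda_n=\infty$, which makes the level sets of $\ula$ finite.
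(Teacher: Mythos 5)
Your proof is correct and follows essentially the same route as the paper's: for Part (1) both proofs locate a minimal-valuation principal $s_k\times s_k$ minor via the multiset identity $\{\lambda_{j_1},\dots,\lambda_{j_{s_k}}\}=\{\lambda_1,\dots,\lambda_{s_k}\}$ and realize it by a $\ula$-stable permutation matrix, and for Part (2) both combine conjugation-invariance of the Newton polygon with a truncation-tail estimate of valuation $\geq\lambda_{n+1}$ and a convexity argument around the vertex $s_2$. The one place you go beyond the paper is that you actually prove the invariance $c_k(PMP^{-1})=c_k(M)$ via an infinite Cauchy--Binet rearrangement with the summability estimate $v_T(\det A_{I,L}\det B_{L,I})\geq\max\bigl(\sum_{l\in L}\lambda_l,\sum_{i\in I}\lambda_i\bigr)$, whereas the paper asserts this invariance without comment; this is a worthwhile precision and your identification of $\ula$-stability of $P$ as the crucial input for the rearrangement is exactly right.
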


\begin{proof}
	\noindent(1) Since $s_k$ is a touching vertex of $M$, we have 
	$$
	N(M,s_k)=H(M,s_k)=\sum\limits_{i=1}^{s_k}\lambda_i.
	$$
	Let
	$m_1:=\min \{j\;|\;\lambda_j=\lambda_{s_k} \}$ and $m_2:=\max \{j\;|\;\lambda_j=\lambda_{s_k} \}.$
	Since $M$ is $\ula$-Hodge bounded, there exists a principal $s_k\times s_k$ minor $N$ of $M$ such that
	$
	v_T(\det(N))=\sum\limits_{i=1}^{s_k}\lambda_i.
	$
	Moreover, $N$ contains $M_{[m_1-1]}$ as a principal minor and is contained in the principal minor $M_{[m_2]}$. So there exists a permutation matrix $P'\in \GL_{m_2-m_1}(\Fp\llbracket T \rrbracket)$ (that is, a matrix obtained by permuting the row vectors of the identity matrix) such that if we set 
	\[
	N':=\begin{bmatrix}
	I_{m_1}&0\\
	0&P'\end{bmatrix} M_{[m_2]} \begin{bmatrix}
	I_{m_1}&0\\
	0&P'\end{bmatrix}^{-1},
	\]
	then $N'_{[s_k]}=N$. 
	Set
	\[
	P:=\begin{bmatrix}
	I_{m_1}&0& 0\\
	0&P'&0\\
	0&0& I_\infty\end{bmatrix}.
	\]
	By Lemma~\ref{L:criterion of lambda-stable}, $P$ is $\ula$-stable. Clearly, the matrix $P$ has the desired property.
	
	\noindent(2) 
	From our hypothesis 
	$
	\lim\limits_{n\rightarrow\infty}\lambda_n=\infty
	$, there exists an integer $\ell_M> s_2$ such that 
	$\lambda_{\ell_M}> \sum\limits_{i=1}^{s_2}\lambda_i$, and
	$
	(\ell_M, \sum\limits_{i=1}^{\ell_M} \lambda_i)
	$
	lies above the straight line determined by the line segment in the Hodge polygon of $M$ with slope $\lambda_{s_2}$. Combined with that for every $\ula$-stable matrix $P\in \GL_\infty(\FF_p[\![T]\!]) $, $PMP^{-1}$ has the same Newton polygon to $M$  and  is $\ula$-Hodge bounded, these hypotheses on $\ell_M$ implies  that for every $n\geq \ell_M$ and  for every $0\leq j<s_2$
	we have 
	\[N((PMP^{-1})_{[n]}, j)\begin{cases}
	=N(PMP^{-1}, j)=N(M, j), & \textrm{if\ } N(M, j)\leq \sum_{i=1}^{s_2}\lambda_i, \\
	>\sum_{i=1}^{s_2}\lambda_i, &\textrm{else,}
	\end{cases}\]
	and
	$$N((PMP^{-1})_{[n]}, s_2)=N(PMP^{-1}, s_2)=N(M, s_2)=\sum_{i=1}^{s_2}\lambda_i,$$
	and hence complete the proof.
\end{proof}

\begin{notation}
	From now on, we denote by $s$ a fixed vertex of the Newton polygon of $M$ such that $s\geq \ell_M$ as in Lemma~\ref{L:permute an infinite matrix and compare Newton polygons of infinite matrix and its truncation}. We also set $\Delta:=\lambda_s-\lambda_{s_1}$.
\end{notation}

\begin{lemma}\label{L:conjugating a lambda Hodge bounded matrix to make lower left block closer to 0}
	Under the notations and conditions in Theorem~\ref{T:Newton-Hodge decomposition with Hodge bound restriction for infinite matrices}, there exists a $\ula$-stable matrix $Q\in \GL_\infty(\Fp\llbracket T\rrbracket )$ such that if we set 
	\[
	QMQ^{-1}:=
	\quad\Biggl[\mkern-5mu
	\begin{tikzpicture}[baseline=-.65ex]
	\matrix[
	matrix of math nodes,
	column sep=1ex,
	] (m)
	{
		A_{11} & A_{12}\\
		A_{21} & A_{22} \\
	};
	\draw[dotted]
	([xshift=0.5ex]m-1-1.north east) -- ([xshift=0.5ex]m-2-1.south east);
	\draw[dotted]
	(m-1-1.south west) -- (m-1-2.south east);
	\node[above,text depth=1pt] at ([xshift=3.5ex]m-1-1.north) {$\scriptstyle s_1$};  
	\node[left,overlay] at ([xshift=-1.2ex,yshift=-2ex]m-1-1.west) {$\scriptstyle s_1$};
	\end{tikzpicture}\mkern-5mu
	\Biggr],
	\]
	then we have $H(A_{21},1)\geq \lambda_{s}$ and $A_{11}$ is strictly $\lambda^{[s_1]}$-Hodge bounded.
\end{lemma}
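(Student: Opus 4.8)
The plan is to reduce the statement to the \emph{finite} Newton--Hodge decomposition Theorem~\ref{T:Newton-Hodge decomposition with Hodge bound restriction for finite matrices} applied to a suitably large truncation of $M$, and then to extend the resulting finite conjugator block-diagonally to $\GL_\infty(\Fp\llbracket T\rrbracket)$. The key observation will be that, once $M$ has been conjugated so that a large truncation is block-upper-triangular, the only surviving lower-left contribution lives in the rows below the truncation index and is therefore forced to be highly $T$-divisible, by $\ula$-Hodge boundedness.

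First I would fix a touching vertex $s_K\in\Omega$ large enough that $s_K\geq\ell_M$, $s_K>s_2$, and $\lambda_{s_K+1}-\lambda_{s_1}\geq\lambda_s$; this is possible because $\Omega$ is infinite and $\lambda_n\to\infty$. (I assume $s_1\geq2$, so that Theorem~\ref{T:Newton-Hodge decomposition with Hodge bound restriction for finite matrices} applies at the index $s_1$; the case $s_1=1$ is entirely similar, with Theorem~\ref{T:Newton-Hodge decomposition} used in its place.) By Lemma~\ref{L:permute an infinite matrix and compare Newton polygons of infinite matrix and its truncation}(1) I may choose a $\ula$-stable $P\in\GL_\infty(\Fp\llbracket T\rrbracket)$ such that, setting $M':=PMP^{-1}$ and $N:=M'_{[s_K]}$, one has $v_T(\det N)=\sum_{i=1}^{s_K}\lambda_i$. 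Since conjugation by a $\ula$-stable matrix preserves $\ula$-Hodge boundedness (if $M=D(\ula)M''$ then $M'=D(\ula)\big(D(\ula)^{-1}PD(\ula)\big)M''P^{-1}$), the truncation $N$ is $\ula^{[s_K]}$-Hodge bounded; combined with the determinant equality, Lemma~\ref{L:Newton function determines Hodge function for finite matrix} shows $N$ is strictly $\ula^{[s_K]}$-Hodge bounded with $H(N,j)=\sum_{i=1}^{j}\lambda_i$ for all $j\leq s_K$.

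Next I would verify that $N$ satisfies the hypotheses of Theorem~\ref{T:Newton-Hodge decomposition with Hodge bound restriction for finite matrices} at the index $s_1$. The equality $H(N,s_K)=\sum_{i\leq s_K}\lambda_i$ is in hand, so it remains to check that $s_1$ is a touching vertex of $N$. Applying Lemma~\ref{L:permute an infinite matrix and compare Newton polygons of infinite matrix and its truncation}(2) with the $\ula$-stable matrix $P$ and $n=s_K\geq\ell_M$, the Newton polygon of $N=(PMP^{-1})_{[s_K]}$ agrees with that of $M$ on $[0,s_2]$; since $s_1<s_2$ is a touching vertex of $M$ and $H(N,s_1)=\sum_{i\leq s_1}\lambda_i$, this forces $N(N,s_1)=H(N,s_1)=\sum_{i\leq s_1}\lambda_i$ with $(s_1,N(N,s_1))$ a vertex of the Newton polygon of $N$. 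The theorem then furnishes a $\ula^{[s_K]}$-stable $W\in\GL_{s_K}(\Fp\llbracket T\rrbracket)$ with $WNW^{-1}=\Matrix{N_{11}}{N_{12}}{0}{N_{22}}$ ($\ula^{[s_K]}$-Hodge bounded, with $N_{11}$ of size $s_1$), whose top-left block $N_{11}$ accounts for the first $s_1$ Hodge slopes $\lambda_1,\dots,\lambda_{s_1}$ of $N$ and is therefore strictly $\ula^{[s_1]}$-Hodge bounded.

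Finally I would put $\widetilde W:=\Matrix{W}{0}{0}{I_\infty}\in\GL_\infty(\Fp\llbracket T\rrbracket)$ (blocks of sizes $s_K$ and $\infty$). Since $W$ and $W^{-1}$ are $\ula^{[s_K]}$-stable ($\det W$ being a unit of $\Fp\llbracket T\rrbracket$), $\widetilde W$ is $\ula$-stable by Lemma~\ref{L:criterion of lambda-stable}, hence so is $Q:=\widetilde W P$. Writing $M'$ in size-$s_K$ block form as $\Matrix{N}{B}{C}{D}$, one computes $QMQ^{-1}=\widetilde W M'\widetilde W^{-1}=\Matrix{WNW^{-1}}{WB}{CW^{-1}}{D}$; reading this matrix instead in size-$s_1$ block form $\Matrix{A_{11}}{A_{12}}{A_{21}}{A_{22}}$, one sees that $A_{11}=N_{11}$ is strictly $\ula^{[s_1]}$-Hodge bounded, while $A_{21}$ vanishes in rows $s_1+1,\dots,s_K$ (the lower-left corner of the block-upper-triangular $WNW^{-1}$) and, for $i>s_K$ and $j\leq s_1$, has $(i,j)$-entry $\sum_{l=1}^{s_K}M'_{il}(W^{-1})_{lj}$ of $T$-valuation at least $\lambda_{s_K+1}-\lambda_{s_1}\geq\lambda_s$: indeed $v_T(M'_{il})\geq\lambda_i\geq\lambda_{s_K+1}$ since $M'$ is $\ula$-Hodge bounded, and $v_T((W^{-1})_{lj})\geq\lambda_l-\lambda_j\geq-\lambda_{s_1}$ since $W^{-1}$ is $\ula^{[s_K]}$-stable. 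Thus $H(A_{21},1)\geq\lambda_s$, which is what was wanted. The only genuinely delicate points are the two verifications of the preceding paragraph --- that after the permutation normalization the truncation $N$ really satisfies both hypotheses of the finite theorem --- together with the choice of $s_K$ far enough out that the ``tail'' of $A_{21}$ in the rows below $s_K$ (which no finite-dimensional conjugation can remove) is pushed below $T^{\lambda_s}$; everything else is routine bookkeeping with $\ula$-stable matrices.
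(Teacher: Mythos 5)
Your proposal is correct and follows the paper's own argument: conjugate by the $\ula$-stable permutation from Lemma~\ref{L:permute an infinite matrix and compare Newton polygons of infinite matrix and its truncation}(1) so that the $s$-truncation has determinant $\sum_{i\leq s}\lambda_i$, apply the finite Newton--Hodge decomposition Theorem~\ref{T:Newton-Hodge decomposition with Hodge bound restriction for finite matrices} to that truncation, and extend the resulting conjugator by $I_\infty$; you are in fact more explicit than the paper in checking via Lemma~\ref{L:permute an infinite matrix and compare Newton polygons of infinite matrix and its truncation}(2) that $s_1$ really is a touching vertex of the truncation, which the paper leaves implicit. One small simplification: your condition $\lambda_{s_K+1}-\lambda_{s_1}\geq\lambda_s$ and the bound $v_T((W^{-1})_{lj})\geq-\lambda_{s_1}$ are unnecessarily weak, since $W^{-1}\in\GL_{s_K}(\Fp\llbracket T\rrbracket)$ has entries of nonnegative $T$-valuation and one can simply take $s_K$ to be the paper's fixed $s$, so that each surviving entry of $A_{21}$ already has valuation $\geq\lambda_{s+1}\geq\lambda_s$.
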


\begin{proof}
	By  Lemma~\ref{L:permute an infinite matrix and compare Newton polygons of infinite matrix and its truncation}(1), we may assume $v_T(\det(M_{[s]}))=\sum_{i=1}^s \lambda_i.$
	Therefore, $M_{[s]}$ satisfies all the conditions in Theorem~\ref{T:Newton-Hodge decomposition with Hodge bound restriction for finite matrices}. By Theorem~\ref{T:Newton-Hodge decomposition with Hodge bound restriction for finite matrices},  there exists a $\ula^{[s]}$-stable matrix $P$ such that $PM_{[s]}P^{-1}$ is of the form
	\[
	PM_{[s]}P^{-1}=
	\quad
	\Biggl[\mkern-5mu
	\begin{tikzpicture}[baseline=-.65ex]
	\matrix[
	matrix of math nodes,
	column sep=1ex,
	] (m)
	{
		B_{11} & B_{12} \\
		0 & B_{22} \\
	};
	\draw[dotted]
	([xshift=0.5ex]m-1-1.north east) -- ([xshift=2ex]m-2-1.south east);
	\draw[dotted]
	(m-1-1.south west) -- (m-1-2.south east);
	\node[above,text depth=1pt] at ([xshift=3.5ex]m-1-1.north) {$\scriptstyle s_1$};  
	\node[left,overlay] at ([xshift=-1.2ex,yshift=-2ex]m-1-1.west) {$\scriptstyle s_1$};
	\end{tikzpicture}\mkern-5mu
	\Biggr],
	\]
	and $B_{11}$ is strictly $\lambda^{[s_1]}$-Hodge bounded.
	Let $Q:=\begin{bmatrix}
	P&0\\
	0&I_\infty\end{bmatrix}$. Clearly, $Q$ is $\ula$-stable as $P$ is $\ula^{[s]}$-stable. Since $M$ is $\ula$-Hodge bounded, the constructed matrix $Q$ satisfies the required properties.
\end{proof}

\begin{lemma}\label{L:inductively conjugating a lambda Hodge bounded matrix to make lower left block closer to 0}
	Let $M:=\quad\Biggl[\mkern-5mu
	\begin{tikzpicture}[baseline=-.65ex]
	\matrix[
	matrix of math nodes,
	column sep=1ex,
	] (m)
	{
		M_{11} & M_{12}\\
		M_{21} & M_{22} \\
	};
	\draw[dotted]
	([xshift=0.5ex]m-1-1.north east) -- ([xshift=0.5ex]m-2-1.south east);
	\draw[dotted]
	(m-1-1.south west) -- (m-1-2.south east);
	\node[above,text depth=1pt] at ([xshift=3.5ex]m-1-1.north) {$\scriptstyle s_1$};  
	\node[left,overlay] at ([xshift=-1.2ex,yshift=-2ex]m-1-1.west) {$\scriptstyle s_1$};
	\end{tikzpicture}\mkern-5mu
	\Biggr]\in M_\infty(\Fp\llbracket T \rrbracket)$ be the matrix in Theorem~\ref{T:Newton-Hodge decomposition with Hodge bound restriction for infinite matrices}. 
	If $M_{11}$ is strictly $\lambda^{[s_1]}$-Hodge bounded and $H(M_{21},1)\geq \lambda_{s}+i\Delta$ for some integer $i\geq 0$, then there exists a $\ula$-stable matrix $Q:=\begin{bmatrix}
	I_{s_1}&0\\
	Q'&I_\infty\end{bmatrix}\in \GL_\infty(\Fp\llbracket T \rrbracket)$ such that
	\begin{itemize}
		\item  $H(Q',1)\geq (i+1)\Delta$, and
		\item  if we set $$QMQ^{-1}:=\quad\Biggl[\mkern-5mu
		\begin{tikzpicture}[baseline=-.65ex]
		\matrix[
		matrix of math nodes,
		column sep=1ex,
		] (m)
		{
			A_{11} & A_{12}\\
			A_{21} & A_{22} \\
		};
		\draw[dotted]
		([xshift=0.5ex]m-1-1.north east) -- ([xshift=0.5ex]m-2-1.south east);
		\draw[dotted]
		(m-1-1.south west) -- (m-1-2.south east);
		\node[above,text depth=1pt] at ([xshift=3.5ex]m-1-1.north) {$\scriptstyle s_1$};  
		\node[left,overlay] at ([xshift=-1.2ex,yshift=-2ex]m-1-1.west) {$\scriptstyle s_1$};
		\end{tikzpicture}\mkern-5mu
		\Biggr],$$
		then $H(A_{21},1)\geq \lambda_s+(i+1)\Delta$.
	\end{itemize} 
\end{lemma}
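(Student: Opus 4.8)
The plan is to carry out one step of a Newton--Hodge-type iteration that pushes the lower-left block closer to $0$, modelled on the finite-dimensional Lemma~\ref{L:conjugating a 'good' matrix by lower triangular matrix into an upper triangular matrix} (and the inductive Claim inside its proof). First I would unwind what the hypotheses give: since $M$ is $\ula$-Hodge bounded, $M_{12}$ is $\lambda^{[s_1]}$-Hodge bounded, the $a$-th row of $M_{22}$ has all entries of $T$-adic valuation $\geq\lambda_{s_1+a}$, and the $a$-th row of $M_{21}$ has all entries of valuation $\geq\max\{\lambda_{s_1+a},\lambda_s+i\Delta\}$ for every $a\geq 1$; and since $M_{11}$ is strictly $\lambda^{[s_1]}$-Hodge bounded, Lemma~\ref{L:estimation of Hodge functions of negative powers of a strictly Hodge bounded matrix} gives $H(M_{11}^{-m},1)\geq -m\lambda_{s_1}$ for all $m\geq 1$, while $M_{11}^{-1}M_{12}$ has entries in $\Fp\llbracket T\rrbracket$ (a strictly $\lambda^{[s_1]}$-Hodge bounded matrix times a $\lambda^{[s_1]}$-Hodge bounded one).

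Next I would set $Q':=-\sum_{j\geq 0}M_{22}^{j}M_{21}M_{11}^{-j-1}$ and $Q:=\left[\begin{smallmatrix}I_{s_1}&0\\ Q'&I_\infty\end{smallmatrix}\right]$, so that $Q^{-1}=\left[\begin{smallmatrix}I_{s_1}&0\\ -Q'&I_\infty\end{smallmatrix}\right]$. Granting that this series converges in $\rmM_\infty(\Fp\llbracket T\rrbracket)$ (the delicate point, discussed below), the estimates above show that the $j$-th summand has Hodge bound $\geq j\lambda_{s_1+1}+(\lambda_s+i\Delta)-(j+1)\lambda_{s_1}=j(\lambda_{s_1+1}-\lambda_{s_1})+(i+1)\Delta\geq(i+1)\Delta$ (using $\lambda_s-\lambda_{s_1}=\Delta$), so $H(Q',1)\geq(i+1)\Delta$; and since in fact the $a$-th row of $Q'$ has valuation $\geq\lambda_{s_1+a}-\lambda_{s_1}\geq\lambda_{s_1+a}-\lambda_b$ for $1\le b\le s_1$, Lemma~\ref{L:criterion of lambda-stable} shows that $Q$ (and, having the same shape, $Q^{-1}$) is $\ula$-stable.

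Then I would read off the conjugate. The series telescopes to the Sylvester identity $Q'M_{11}-M_{22}Q'=-M_{21}$, and a block multiplication gives $QMQ^{-1}=\left[\begin{smallmatrix}M_{11}-M_{12}Q'&M_{12}\\ -Q'M_{12}Q'&M_{22}+Q'M_{12}\end{smallmatrix}\right]$; in particular $A_{21}=-Q'M_{12}Q'$. Writing $Q'M_{12}=Q'M_{11}\cdot(M_{11}^{-1}M_{12})=(M_{22}Q'-M_{21})(M_{11}^{-1}M_{12})$ and using that $M_{11}^{-1}M_{12}$ is integral, together with $H(M_{22}Q',1)\geq H(M_{22},1)+H(Q',1)\geq\lambda_{s_1}+(i+1)\Delta$ and $H(M_{21},1)\geq\lambda_s+i\Delta=\lambda_{s_1}+(i+1)\Delta$, I get $H(Q'M_{12},1)\geq\lambda_{s_1}+(i+1)\Delta$, hence $H(A_{21},1)=H(Q'M_{12}Q',1)\geq\lambda_{s_1}+2(i+1)\Delta\geq\lambda_{s_1}+(i+2)\Delta=\lambda_s+(i+1)\Delta$, the middle inequality holding because $2(i+1)\geq i+2$ for $i\geq 0$. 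One also checks that $A_{11}=M_{11}-M_{12}Q'$ is again strictly $\lambda^{[s_1]}$-Hodge bounded (since $H(M_{12}Q',1)\geq(i+1)\Delta>0$, by Lemma~\ref{new1}), which is what lets Theorem~\ref{T:Newton-Hodge decomposition with Hodge bound restriction for infinite matrices} feed the output of this lemma back into its hypotheses.

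The hard part is the convergence of the series defining $Q'$, i.e.\ that $H(M_{22}^{j}M_{21}M_{11}^{-j-1},1)\to\infty$: the crude Hodge bounds only give $j(\lambda_{s_1+1}-\lambda_{s_1})+(i+1)\Delta$, which need not grow when $\lambda_{s_1+1}=\lambda_{s_1}$. The point is that $s_1$ is a \emph{vertex} of the Newton polygon of $M$, so there is a genuine gap between the largest slope of the Newton polygon of $M_{11}$ and the smallest slope of the Newton polygon of the lower-right block; feeding this gap into Lemma~\ref{L:valuations of eigenvalues bound Hodge functions} (applied to $M_{11}$ and to the finite strictly Hodge bounded blocks produced by the preceding lemmas) is what forces the summands to die $T$-adically. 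Keeping the tail contributions to these Hodge estimates under control, and the bookkeeping with $\Delta=\lambda_s-\lambda_{s_1}$ straight, is the most technical part of the argument.
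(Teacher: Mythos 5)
Your plan takes a genuinely different route from the paper's, and the difference is exactly where your argument has a gap. You attempt a one-shot $2\times 2$ elimination, setting $Q':=-\sum_{j\geq 0}M_{22}^{j}M_{21}M_{11}^{-j-1}$ and invoking the Sylvester identity. The block $M_{22}$ here is an \emph{infinite} matrix. The convergence of this series is the whole content of the lemma, and your argument does not establish it: the Hodge-bound estimate $H(M_{22}^{j}M_{21}M_{11}^{-j-1},1)\geq j(\lambda_{s_1+1}-\lambda_{s_1})+(i+1)\Delta$ collapses whenever $\lambda_{s_1+1}=\lambda_{s_1}$, and the only tool in the paper that would give a genuine spectral gap, Lemma~\ref{L:valuations of eigenvalues bound Hodge functions}, is a statement about matrices in $M_n(\Fp\llbracket T\rrbracket)$ with $n$ finite --- it is proved via Jordan canonical form and simply does not apply to $M_{22}\in M_\infty(\Fp\llbracket T\rrbracket)$. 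You flag this as ``the hard part,'' but the remedy you sketch (``feeding this gap into Lemma~\ref{L:valuations of eigenvalues bound Hodge functions}, applied to $M_{11}$ and to the finite strictly Hodge bounded blocks produced by the preceding lemmas'') does not identify which finite block controls $H(M_{22}^{j},1)$ or how its eigenvalue bound would propagate to the full infinite $M_{22}^{j}$. As it stands this is a missing argument, not a bookkeeping detail.

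The paper sidesteps the issue entirely by working with a $3\times 3$ block decomposition at sizes $s_1<s<\infty$. The infinite tail, rows beyond $s$, is eliminated by a \emph{single} conjugation $P_1=\left[\begin{smallmatrix}I_{s_1}&0&0\\0&I_{s'}&0\\-M_{31}M_{11}^{-1}&0&I_\infty\end{smallmatrix}\right]$, so no infinite-dimensional series ever appears; the price is that the lower-left block is only pushed up by one extra $\Delta$ rather than killed. The middle block of size $s'=s-s_1$ is finite, so the full Sylvester-series machinery of Lemma~\ref{L:conjugating a 'good' matrix by lower triangular matrix into an upper triangular matrix} (which does use Lemmas~\ref{L:valuations of eigenvalues bound Hodge functions} and \ref{L:comparison of valuations of eigenvalues of (1,1)-block and (2,2)-block} legitimately, on finite matrices) applies and produces the $X$-block. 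The composite $Q=P_2P_1$ then has the required shape with $Q'=\left[\begin{smallmatrix}X\\-M_{31}M_{11}^{-1}\end{smallmatrix}\right]$. If you want to salvage the $2\times2$ route you would first need an infinite-matrix analogue of Lemma~\ref{L:valuations of eigenvalues bound Hodge functions}, or equivalently a proof that the first Newton slope of $M_{22}$ strictly exceeds the last Newton slope of $M_{11}$ in a form that controls $H(M_{22}^{j},1)$; that is a nontrivial new lemma the paper does not have and deliberately avoids needing. The rest of your estimates (the row-wise bound showing $Q$ is $\ula$-stable, the computation $A_{21}=-Q'M_{12}Q'$, the arithmetic $2(i+1)\geq i+2$, and the observation that $A_{11}=M_{11}-M_{12}Q'$ stays strictly $\ula^{[s_1]}$-Hodge bounded via Lemma~\ref{new1}) are sound, but they all rest on the unproved convergence.
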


\begin{proof}
	We set $s':=s-s_1$ and rewrite $M$ as
	\[
	M:=
	\quad\Biggl[\mkern-5mu
	\begin{tikzpicture}[baseline=-.65ex]
	\matrix[
	matrix of math nodes,
	column sep=1ex,
	] (m)
	{
		M_{11} & M_{12} &M_{13}\\
		M_{21} & M_{22} &M_{23} \\
		M_{31} & M_{32} &M_{33}\\
	};
	\draw[dotted]
	([xshift=0.5ex]m-1-1.north east) -- ([xshift=0.5ex]m-3-1.south east);
	\draw[dotted]
	([xshift=0.5ex]m-1-2.north east) -- ([xshift=0.5ex]m-3-2.south east);
	\draw[dotted]
	(m-1-1.south west) -- (m-1-3.south east);
	\draw[dotted]
	(m-2-1.south west) -- (m-2-3.south east);
	\node[above,text depth=1pt] at ([xshift=3.5ex]m-1-1.north) {$\scriptstyle s_1$};  
	\node[above,text depth=1pt] at ([xshift=3.5ex]m-1-2.north) {$\scriptstyle s$}; 
	\node[left,overlay] at ([xshift=-1.2ex,yshift=-2ex]m-1-1.west) {$\scriptstyle s_1$};
	\node[left,overlay] at ([xshift=-1.2ex,yshift=-2ex]m-2-1.west) {$\scriptstyle s$};
	\end{tikzpicture}\mkern-5mu
	\Biggr].
	\]
	Let 
	\[
	P_1:=
	\begin{bmatrix}
	I_{s_1} & 0 &0\\
	0& I_{s'} & 0 \\
	-M_{31}M_{11}^{-1} & 0 & I_\infty
	\end{bmatrix}.
	\]
	Then we have 
	\begin{multline*}
	M':=	\begin{bmatrix}
	M'_{11} & M'_{12} &M'_{13}\\
	M'_{21} & M'_{22} &M'_{23} \\
	M'_{31} & M'_{32} &M'_{33}
	\end{bmatrix}= P_1MP_1^{-1}
	\\
	=
	\begin{bmatrix}
	M_{11}+M_{13}M_{31}M_{11}^{-1} & M_{12} &M_{13}\\
	M_{21}+M_{23}M_{31}M_{11}^{-1} & M_{11} &M_{23} \\
	(M_{33}-M_{31}M_{11}^{-1}M_{13})M_{31}M_{11}^{-1} & M_{32}-M_{31}M_{11}^{-1}M_{12} & M_{33}-M_{31}M_{11}^{-1}M_{13}
	\end{bmatrix}.
	\end{multline*}
	
	From our hypotheses that $M_{11}$ is strictly $\ula^{[s_1]}$-Hodge bounded and that $M_{12}$, $M_{13}$ are $\ula^{[s_1]}$-Hodge bounded, we have 
	$M_{11}^{-1}M_{12}\in \rmM_{s_1\times s}(\Fp\llbracket T\rrbracket)$ and $M_{11}^{-1}M_{13}\in \rmM_{s_1\times \infty}(\Fp\llbracket T\rrbracket)$. 
	Combining $H(M_{31},1)\geq \lambda_{s}+i\Delta$ with Lemma~\ref{L:estimation of Hodge functions of negative powers of a strictly Hodge bounded matrix}, we have $$H(M_{31}M_{11}^{-1},1)\geq \lambda_{s}+i\Delta-\lambda_{s_1}=(i+1)\Delta.$$
	Therefore, $M'$ is $\ula$-Hodge bounded; and we have the estimations $$H(	M_{21}+M_{23}M_{31}M_{11}^{-1},1)\geq \lambda_{s}+i\Delta$$ and \begin{multline*}
	H(M'_{31},1)=H((-M_{31}M_{11}^{-1}M_{13}+M_{33})M_{31}M_{11}^{-1},1)\\
	\geq H(-M_{31}M_{11}^{-1}M_{13}+M_{33},1)+H(M_{31}M_{11}^{-1},1)
	\geq \lambda_{s}+(i+1)\Delta. 
	\end{multline*}
	
	By Lemmas~\ref{new1} and \ref{L:permute an infinite matrix and compare Newton polygons of infinite matrix and its truncation}(2), the matrix $M'_{[s]}= \begin{bmatrix}
	M'_{11}&M'_{12}\\
	M'_{21}&M'_{22}\end{bmatrix}$ satisfies all the conditions in Lemma~\ref{L:conjugating a 'good' matrix by lower triangular matrix into an upper triangular matrix} with $n:=s$ and  $s:=s_1$. Therefore, there exists a matrix $X\in \rmM_{s\times s_1}(\Fp\llbracket T\rrbracket)$ such that
	\begin{enumerate}
		\item $H(X,1)\geq H(M'_{21},1)-\lambda_{s_1}\geq\lambda_{s}+i\Delta-\lambda_{s_1}=(i+1)\Delta$; 
		\item The matrix $ \begin{bmatrix}
		I_{s_1}&0\\
		X&I_{s}\end{bmatrix}$ is $\ula^{[s]}$-stable;
		\item $\begin{bmatrix}
		I_{s_1}&0\\
		X&I_{s}\end{bmatrix} 
		\begin{bmatrix}
		M'_{11}&M'_{12}\\
		M'_{21}&M'_{22}\end{bmatrix}
		\begin{bmatrix}
		I_{s_1}&0\\
		-X&I_{s}\end{bmatrix}= \begin{bmatrix}
		M''_{11}&M''_{12}\\
		0&M''_{22}\end{bmatrix}$.
	\end{enumerate}
	Set $P_2:=
	\begin{bmatrix}
	I_{s_1} & 0 &0\\
	X& I_{s} &0 \\
	0 & 0 & I_{\infty}
	\end{bmatrix}.
	$
	Then $P_2\in\rmM_\infty(\Fp\llbracket T\rrbracket)$ is $\ula$-stable and 
	\[P_2MP^{-1}_2:=\begin{bmatrix}
	M''_{11} & M''_{12} &M'_{13}\\
	0& M''_{22} & M'_{23}+XM'_{33} \\
	M'_{31}-XM'_{32} & M'_{32} & M'_{33}.
	\end{bmatrix}.\]
	
	Combining the inequalities $$H(XM'_{32},1)\geq H(X,1)+H(M'_{32},1)\geq (i+1)\Delta+\lambda_{s}\textrm{\ and\ }H(M'_{31},1)\geq \lambda_{s}+(i+1)\Delta,$$ we have $H(M'_{31}-XM'_{32},1)\geq \lambda_{s}+(i+1)\Delta$.
	
	 Setting
	$$Q:= P_2P_1=\begin{bmatrix}
	I_{s_1} & 0 &0\\
	X& I_{s} &0 \\
	-M_{31}M_{11}^{-1} & 0 & I_{\infty}
	\end{bmatrix},$$ from the above discussion we know that $Q$ satisfies all the required properties.
\end{proof}

Now we can prove Theorem~\ref{T:Newton-Hodge decomposition with Hodge bound restriction for infinite matrices}, which is an easy consequence of the above lemmas. 

\begin{proof}[Proof of Theorem~\ref{T:Newton-Hodge decomposition with Hodge bound restriction for infinite matrices}]

	By Lemma~\ref{L:conjugating a lambda Hodge bounded matrix to make lower left block closer to 0}, there exists a $\ula$-stable matrix $Q_0\in\GL_\infty(\Fp\llbracket T\rrbracket)$ such that if we write 
	\[
	M_0:= Q_0MQ_0^{-1}=
	\quad\Biggl[\mkern-5mu
	\begin{tikzpicture}[baseline=-.65ex]
	\matrix[
	matrix of math nodes,
	column sep=1ex,
	] (m)
	{
		M_{0,11} & M_{0,12}\\
		M_{0,21} & M_{0,22} \\
	};
	\draw[dotted]
	([xshift=0.5ex]m-1-1.north east) -- ([xshift=0.5ex]m-2-1.south east);
	\draw[dotted]
	(m-1-1.south west) -- (m-1-2.south east);
	\node[above,text depth=1pt] at ([xshift=3.5ex]m-1-1.north) {$\scriptstyle s_1$};  
	\node[left,overlay] at ([xshift=-1.2ex,yshift=-2ex]m-1-1.west) {$\scriptstyle s_1$};
	\end{tikzpicture}\mkern-5mu
	\Biggr],
	\]
	then $H(M_{0,21},1)\geq \lambda_{s}=\lambda_{s}+0\cdot \Delta$ and $M_{0,11}$ is strictly $\lambda^{[s_1]}$-Hodge bounded. Then we can apply Lemma~\ref{L:inductively conjugating a lambda Hodge bounded matrix to make lower left block closer to 0} inductively on $k$, and get a sequence of $\ula$-stable matrices 
	\[
	\left(Q_k= \begin{bmatrix}
	I_{s_1}&0\\
	Q_k'&I_{\infty}\end{bmatrix}\in \GL_\infty(\Fp\llbracket T\rrbracket)\; \Big| \; k\geq 1\right)
	\]
	with $H(Q_k',1)\geq (k+1)\Delta$ such that  we write 
	\[
	M_k:=(Q_k\dots Q_1Q_0)M(Q_k\dots Q_1Q_0)^{-1}=
	\quad\Biggl[\mkern-5mu
	\begin{tikzpicture}[baseline=-.65ex]
	\matrix[
	matrix of math nodes,
	column sep=1ex,
	] (m)
	{
		M_{k,11} & M_{k,12}\\
		M_{k,21} & M_{k,22} \\
	};
	\draw[dotted]
	([xshift=0.5ex]m-1-1.north east) -- ([xshift=0.5ex]m-2-1.south east);
	\draw[dotted]
	(m-1-1.south west) -- (m-1-2.south east);
	\node[above,text depth=1pt] at ([xshift=3.5ex]m-1-1.north) {$\scriptstyle s_1$};  
	\node[left,overlay] at ([xshift=-1.2ex,yshift=-2ex]m-1-1.west) {$\scriptstyle s_1$};
	\end{tikzpicture}\mkern-5mu
	\Biggr],
	\]
	then $H(M_{k,21},1)\geq \lambda_{s}+k\Delta$. By Remarks~\ref{remark1} and ~\ref{remark:infinite product of infinite matrices}, the product $Q_k\dots Q_1Q_0$ converges as $k\rightarrow \infty$, and we denote by $W$  this limit. Then $W\in \GL_\infty(\Fp\llbracket T\rrbracket)$ is $\ula$-stable and 
	\[
	WMW^{-1}=
	\quad
	\Biggl[\mkern-5mu
	\begin{tikzpicture}[baseline=-.65ex]
	\matrix[
	matrix of math nodes,
	column sep=1ex,
	] (m)
	{
		N_{11} & N_{12} \\
		0 & N_{22} \\
	};
	\draw[dotted]
	([xshift=0.5ex]m-1-1.north east) -- ([xshift=2ex]m-2-1.south east);
	\draw[dotted]
	(m-1-1.south west) -- (m-1-2.south east);
	\node[above,text depth=1pt] at ([xshift=3.5ex]m-1-1.north) {$\scriptstyle s_1$};  
	\node[left,overlay] at ([xshift=-1.2ex,yshift=-2ex]m-1-1.west) {$\scriptstyle s_1$};
	\end{tikzpicture}\mkern-5mu
	\Biggr].\qedhere
	\]

\end{proof}

\subsection{Newton-Hodge decomposition for matrices over certain non-commutative rings.}

In this subsection,	we will denote by  $R$ a ring with unit but not necessarily commutative. Let $\gothm\subset R$ be a two-sided ideal, and $\tilde{T}\in R$ a central element which is not a zero divisor of $R$. We assume that
\begin{enumerate}
	\item $R$ is complete under the $(\gothm,\tilde{T})$-adic topology, where $(\gothm,\tilde{T})$ is the (two-sided) ideal of $R$ generated by $\gothm$ and $\tilde{T}$, i.e. we have 
	$
	R\cong \varprojlim\limits_{n\to \infty} R/(\gothm, \tilde T)^n.
	$
	\item There exists a surjective ring homomorphism $\pi:R\rightarrow \Fp\llbracket T \rrbracket$ with $\ker(\pi)=\gothm$ and $\pi(\tilde{T})=T$.
	\item For $x\in R\setminus \{0\}$, we define $v_{\tilde{T}}(x):=\sup\{n|x\in \tilde{T}^nR \}$ and $v_{\tilde{T}}(0):=\infty$. Then for each $\bar{x}\in \Fp\llbracket T \rrbracket$, there exists $x\in R$ such that $\pi(x)=\bar{x}$ and $v_{\tilde{T}}(x)=v_T(\bar{x})$. The element $x$ with such properties is called a special lift of $\bar{x}$.
\end{enumerate} 

\begin{remark}
	\begin{enumerate}
		\item The number $v_{\tilde{T}}(x)=\sup\{n|x\in \tilde{T}^nR \}$ is finite for $x\neq 0$, and we have the inequality $v_{\tilde{T}}(x)\leq v_T(\bar{x})$ with $\bar{x}=\pi(x)$.
		\item Since $\tilde{T}$ is central and not a zero divisor of $R$, we can consider the localization $R[\frac{1}{\tilde{T}}]$ of $R$ with respect to the multiplicative set $\{\tilde{T}^n|n\geq 0 \}$. The natural homomorphism $R\rightarrow R[\frac{1}{\tilde{T}}]$ is injective.
		\item Since $\tilde{T}$ is not a zero divisor of $R$, if $x\in \tilde{T}^k R$, then there exists a unique $x'\in R$ with $x=\tilde{T}^kx'$. We write $x'=\tilde{T}^{-k}x$. 
	\end{enumerate}
\end{remark}	

\begin{lemma}\label{L:T-divisible property in gothm}
	Let $x\in \tilde{T}^kR$ for some $k\geq 1$. If $x\in \gothm$, then $\tilde{T}^{-k}x\in \gothm$.
\end{lemma}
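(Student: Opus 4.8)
The plan is to transport the relation defining $\tilde T^{-k}x$ through the ring homomorphism $\pi$ and then use that the target ring $\Fp\llbracket T\rrbracket$ is an integral domain. First, since $\tilde T$ is central and not a zero divisor of $R$, the element $x':=\tilde T^{-k}x$ is by definition the unique element of $R$ with $x=\tilde T^k x'$, so this notation is unambiguous. Next I would apply $\pi$ to the identity $x=\tilde T^k x'$; using $\pi(\tilde T)=T$, this yields $\pi(x)=T^k\pi(x')$ in $\Fp\llbracket T\rrbracket$. By hypothesis $x\in\gothm=\ker(\pi)$, so the left-hand side is $0$, whence $T^k\pi(x')=0$. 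Because $\Fp\llbracket T\rrbracket$ has no zero divisors and $T^k\neq 0$, it follows that $\pi(x')=0$, i.e. $x'\in\ker(\pi)=\gothm$, which is exactly the assertion $\tilde T^{-k}x\in\gothm$.

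I do not expect a genuine obstacle here. The only two points requiring care are: (i) that $x'=\tilde T^{-k}x$ really is a well-defined element of $R$ — this is precisely the content of Remark~(3) accompanying the standing hypotheses, and it relies on $\tilde T$ being a non-zero-divisor so that the factorization $x=\tilde T^k x'$ is unique; and (ii) that one may apply the ring homomorphism $\pi$ to this factorization, which is immediate. Observe that neither the $(\gothm,\tilde T)$-adic completeness of $R$ nor the existence of special lifts enters this particular argument, although both will be needed in the subsequent constructions.
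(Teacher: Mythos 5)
Your proof is correct and follows essentially the same route as the paper: apply $\pi$ to the factorization $x=\tilde T^k x'$, use $x\in\ker(\pi)$ to get $T^k\pi(x')=0$, and conclude $\pi(x')=0$ since $\Fp\llbracket T\rrbracket$ is a domain. The remarks on well-definedness and on which standing hypotheses are actually used are accurate but not part of the paper's proof.
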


\begin{proof}
	We write $x=\tilde{T}^ky$ for some $y\in R$. Applying the homomorphism $\pi$ on both sides of this equality, we get $\pi(x)=T^k\pi(y)=0$, and hence $\pi(y)=0$. This implies $y\in \gothm$. 
\end{proof}

\begin{definition}\label{D:special lift of matrices}
	For a matrix $\bar{A}=(\bar{a}_{ij})\in \rmM_{m\times n}(\Fp\llbracket T\rrbracket)$, we call $A=(a_{ij})\in \rmM_{m\times n}(R)$ a \emph{special lift} of $\bar{A}$ if $a_{ij}$ is a special lift of $\bar{a}_{ij}$ for every $1\leq i\leq m$ and $1\leq j\leq n$.
\end{definition}

We denote by  $D_R(\ula^{[n]})$ (resp. $D_R(\ula)$)  the diagonal matrix $\mathrm{Diag}(\tilde{T}^{\lambda_1},\dots,\tilde{T}^{\lambda_n})\in \rmM_n(R)$ (resp. $\mathrm{Diag}(\tilde{T}^{\lambda_1},\dots)\in \rmM_\infty(R)$). For two integers  $m<n$ we denote by  $D_R(\ula^{(m,n]})$  the diagonal matrix $\mathrm{Diag}(\tilde{T}^{\lambda_{m+1}},\dots,\tilde{T}^{\lambda_n})\in \rmM_{n-m}(R)$.
Similar to Definition~\ref{D:lambda Hodge bounded and lambda stable}, we define $\ula$-Hodge bounded matrices and $\ula$-stable matrices over $R$ as follows.

\begin{definition}\label{D:lambda Hodge bounded and lambda stable over noncommutative rings}
	Let $m,n$ be two positive integers.
	\begin{enumerate}
		\item  A matrix $M\in \rmM_{n\times m}(R)$ is called $\ula^{[n]}$-Hodge bounded with respect to $\tilde{T}\in R$ if $M=D_R(\ula^{[n]})M'$ for some $M'\in \rmM_{n\times m}(R)$. More generally, for two integers $0\leq n_1<n_2$ a matrix $M\in \rmM_{(n_2-n_1)\times m}(R)$ is called \emph{$\ula^{(n_1,n_2]}$-Hodge bounded} with respect to $\tilde{T}\in R$ if $M=D_R(\ula^{(n_1,n_2]})M'$ for some $M'\in \rmM_{(n_2-n_1)\times m}(R)$.
		\item A matrix $M\in \rmM_\infty (R)$ is called \emph{$\ula$-Hodge bounded} with respect to $\tilde{T}\in R$ if $M=D_R(\ula)M'$ for some $M'\in \rmM_{\infty}(R)$.
		\item  A matrix $A\in \rmM_n(R)$ is called \emph{$\ula^{[n]}$-stable} with respect to $\tilde{T}\in R$ if for every $\ula^{[n]}$-Hodge bounded matrix $B\in \rmM_n(R)$, $AB$ is also $\ula^{[n]}$-Hodge bounded.
		\item A matrix $A\in \rmM_\infty(R)$ is called \emph{$\ula$-stable} with respect to $\tilde{T}\in R$ if for every $\ula$-Hodge bounded matrix $B\in \rmM_\infty(R)$, $AB$ is also $\ula$-Hodge bounded.
	\end{enumerate}
\end{definition}	

\begin{convention}
	In the rest of this section, we will fix an element $\tilde{T}\in R$. When we say that a matrix is $\ula^{[n]}$-Hodge bounded or $\ula^{[n]}$-stable, we mean that it is so with respect to $\tilde{T}$. 
\end{convention}	

Since determinants do not behave well for matrices over noncommutative rings, we do not define general Newton functions and Hodge functions for matrices over $R$. But we make the following.
\begin{definition}\label{D:Hodge function for matrices over noncommutative rings}
	For a nonzero matrix $A=(a_{i,j})_{1\leq i\leq m,1\leq j\leq n}\in \rmM_{m\times n}(R)$, we set $$H(A,1):=\min \left\{ v_{\tilde{T}}(a_{i,j})\;|\;1\leq i\leq m,1\leq j\leq n \right\},$$ which is a well-defined integer. If $H(A,1)\geq k$ for some $k\in\ZZ_{\geq 0}$, we denote by  $\tilde{T}^{-k}A$  the (unique) matrix $N$ in $\rmM_{m\times n}(R)$ satisfying $\tilde{T}^k N=A$.
\end{definition}

We also have a similar criterion of $\ula$-stability as Lemma~\ref{L:criterion of lambda-stable}:
\begin{lemma}\label{L:criterion of lambda-stable over noncommutative rings}
	For $n\in [\infty]$ and  a matrix $A=(a_{ij})\in \rmM_n(R)$, the following statements are equivalent:
	\begin{enumerate}
		\item $A$ is $\ula^{[n]}$-stable.
		\item There exists a matrix $B\in \rmM_n(R)$ such that  $AD_R(\ula^{[n]})=D_R(\ula^{[n]})B$.
		\item For all $i>j$, we have $v_{\tilde{T}}(a_{ij})\geq \lambda_i-\lambda_j$, i.e. $a_{ij}\in \tilde{T}^{\lambda_i-\lambda_j}R$.
	\end{enumerate}
\end{lemma}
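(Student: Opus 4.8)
The plan is to mirror the proof of the commutative counterpart, Lemma~\ref{L:criterion of lambda-stable}, establishing the cyclic chain $(1)\Rightarrow(2)\Rightarrow(3)\Rightarrow(1)$; the only new ingredients over the commutative case are that $\tilde T$ is central (so that powers of $\tilde T$ slide freely past matrix entries) and that it is not a zero divisor of $R$ (so that divisibility by powers of $\tilde T$ can be cancelled and quotients by $\tilde T^{k}$ are unique). For $(1)\Rightarrow(2)$: the matrix $D_R(\ula^{[n]})$ is itself $\ula^{[n]}$-Hodge bounded (take $M'=\rmI_n$ in Definition~\ref{D:lambda Hodge bounded and lambda stable over noncommutative rings}), so if $A$ is $\ula^{[n]}$-stable then $AD_R(\ula^{[n]})$ is $\ula^{[n]}$-Hodge bounded, i.e. $AD_R(\ula^{[n]})=D_R(\ula^{[n]})B$ for some $B\in\rmM_n(R)$. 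For $(2)\Rightarrow(3)$: writing $B=(b_{ij})$ and comparing the $(i,j)$-entries of $AD_R(\ula^{[n]})=D_R(\ula^{[n]})B$ gives $a_{ij}\tilde T^{\lambda_j}=\tilde T^{\lambda_i}b_{ij}$; since $\tilde T$ is central this reads $\tilde T^{\lambda_j}\!\left(a_{ij}-\tilde T^{\lambda_i-\lambda_j}b_{ij}\right)=0$ whenever $i>j$ (so that $\lambda_i-\lambda_j\ge 0$, as $\ula$ is increasing), and since $\tilde T$ is not a zero divisor we conclude $a_{ij}=\tilde T^{\lambda_i-\lambda_j}b_{ij}\in\tilde T^{\lambda_i-\lambda_j}R$.

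For $(3)\Rightarrow(1)$, suppose first that $n$ is finite and let $M=D_R(\ula^{[n]})M'$ be $\ula^{[n]}$-Hodge bounded, with $M'=(m'_{ij})$, so $m_{ij}=\tilde T^{\lambda_i}m'_{ij}$. I would show that each entry $(AM)_{ij}=\sum_{k} a_{ik}m_{kj}$ lies in $\tilde T^{\lambda_i}R$ by inspecting the summands. If $k\le i$, then $(3)$ gives $a_{ik}=\tilde T^{\lambda_i-\lambda_k}a'_{ik}$ for a unique $a'_{ik}\in R$ (trivially so when $k=i$), whence $a_{ik}m_{kj}=\tilde T^{\lambda_i-\lambda_k}a'_{ik}\tilde T^{\lambda_k}m'_{kj}=\tilde T^{\lambda_i}a'_{ik}m'_{kj}$ by centrality of $\tilde T$. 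If $k>i$, then $\lambda_k\ge\lambda_i$ and $a_{ik}m_{kj}=\tilde T^{\lambda_k}a_{ik}m'_{kj}\in\tilde T^{\lambda_k}R\subseteq\tilde T^{\lambda_i}R$. Summing the finitely many terms, $(AM)_{ij}\in\tilde T^{\lambda_i}R$, and since $\tilde T$ is not a zero divisor there is a unique $n_{ij}\in R$ with $(AM)_{ij}=\tilde T^{\lambda_i}n_{ij}$; then $AM=D_R(\ula^{[n]})N$ with $N=(n_{ij})\in\rmM_n(R)$, so $AM$ is $\ula^{[n]}$-Hodge bounded and $A$ is $\ula^{[n]}$-stable.

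The only step requiring more than a transcription of the commutative argument is the case $n=\infty$, where one must first make sense of the products. Since $M$ is $\ula$-Hodge bounded, $m_{kj}\in(\gothm,\tilde T)^{\lambda_k}$ and $\lambda_k\to\infty$, so $a_{ik}m_{kj}\to 0$ and the series $\sum_k a_{ik}m_{kj}$ converges in the $(\gothm,\tilde T)$-adically complete ring $R$, defining the $(i,j)$-entry of $AM$. Writing each summand as $\tilde T^{\lambda_i}c_{ikj}$ exactly as above, one checks $c_{ikj}\in\tilde T^{\lambda_k-\lambda_i}R$ for $k>i$, so $c_{ikj}\to 0$ and $n_{ij}:=\sum_k c_{ikj}$ converges in $R$; by continuity of multiplication by $\tilde T^{\lambda_i}$ we get $(AM)_{ij}=\tilde T^{\lambda_i}n_{ij}$, i.e. $AM=D_R(\ula)N$ with $N=(n_{ij})\in\rmM_\infty(R)$, as wanted. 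I do not expect a serious obstacle here: non-commutativity never intervenes, since every power of $\tilde T$ that appears is central, and the non-zero-divisor hypothesis together with completeness supply precisely the cancellations, unique quotients, and convergence that the commutative proof obtained automatically inside $\Fp\llbracket T\rrbracket$.
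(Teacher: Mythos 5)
Your proof is correct and is essentially the transcription of the commutative proof of Lemma~\ref{L:criterion of lambda-stable} that the paper itself invokes when it writes ``the proof is almost identical\ldots so we omit the proof here.'' Your care in checking the three places where the generalization could conceivably fail --- sliding powers of $\tilde T$ past matrix entries (centrality), cancelling $\tilde T^{\lambda_j}$ in $(2)\Rightarrow(3)$ (non-zero-divisor), and convergence of the infinite sums when $n=\infty$ (completeness in the $(\gothm,\tilde T)$-adic topology together with $\lambda_k\to\infty$) --- is exactly the ``almost'' that the authors had in mind, and your treatment of those points is accurate.
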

The proof is almost identical to that of Lemma~\ref{L:criterion of lambda-stable} so we omit the proof here, and it has the following direct consequence.
\begin{corollary}\label{C:special lift of lambda stable is lambda stable}
	Let $\bar{A}\in \rmM_n(\Fp\llbracket T\rrbracket)$ be a $\ula^{[n]}$-stable matrix. If $A\in \rmM_n(R)$ is a special lift of $\bar{A}$, then $A$ is $\ula^{[n]}$-stable.
\end{corollary}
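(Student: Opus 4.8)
The plan is to reduce the statement to the two entrywise criteria for $\ula^{[n]}$-stability already established, namely Lemma~\ref{L:criterion of lambda-stable} over $\Fp\llbracket T\rrbracket$ and Lemma~\ref{L:criterion of lambda-stable over noncommutative rings} over $R$. Both lemmas say that a matrix is $\ula^{[n]}$-stable precisely when its $(i,j)$-entry has (the appropriate) valuation at least $\lambda_i-\lambda_j$, and the defining property of a special lift is exactly that it preserves valuations; so the two criteria match up entry by entry and the corollary falls out.

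In detail, write $\bar A=(\bar a_{ij})$ and $A=(a_{ij})$. First, since $\bar A$ is $\ula^{[n]}$-stable, the implication $(1)\Rightarrow(3)$ of Lemma~\ref{L:criterion of lambda-stable} gives $v_T(\bar a_{ij})\geq \lambda_i-\lambda_j$ for all $1\leq i,j\leq n$. Next, because $A$ is a special lift of $\bar A$, Definition~\ref{D:special lift of matrices} together with property~(3) of the standing assumptions on $R$ yields $v_{\tilde T}(a_{ij})=v_T(\bar a_{ij})$ for every $i,j$ (with the convention $v_{\tilde T}(0)=\infty$ when the entry vanishes). Combining these, $v_{\tilde T}(a_{ij})\geq \lambda_i-\lambda_j$ for all $i,j$, and in particular $a_{ij}\in \tilde T^{\lambda_i-\lambda_j}R$ whenever $i>j$; this is condition~(3) of Lemma~\ref{L:criterion of lambda-stable over noncommutative rings}. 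Applying $(3)\Rightarrow(1)$ of that lemma shows $A$ is $\ula^{[n]}$-stable.

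There is essentially no obstacle here: all the content has been packaged into the two stability criteria and the valuation-preserving nature of special lifts, so the proof is a two-line bookkeeping argument. The only point worth a remark is the apparent mismatch between ``for all $i,j$'' in Lemma~\ref{L:criterion of lambda-stable}(3) and ``for $i>j$'' in Lemma~\ref{L:criterion of lambda-stable over noncommutative rings}(3); this causes no difficulty, since $\ula$ is increasing, so for $i\leq j$ one has $\lambda_i-\lambda_j\leq 0$ and the inequality on $v_{\tilde T}(a_{ij})$ is automatic.
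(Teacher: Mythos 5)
Your proof is correct and follows exactly the route the paper intends: the paper states the corollary as a ``direct consequence'' of Lemma~\ref{L:criterion of lambda-stable over noncommutative rings}, and your argument — pass from $\ula^{[n]}$-stability of $\bar A$ to the entrywise bound via Lemma~\ref{L:criterion of lambda-stable}(1)$\Rightarrow$(3), transfer the bound to $A$ via the valuation-preserving property $v_{\tilde T}(a_{ij})=v_T(\bar a_{ij})$ of a special lift, and conclude by Lemma~\ref{L:criterion of lambda-stable over noncommutative rings}(3)$\Rightarrow$(1) — is precisely that deduction, including the correct observation that the restriction to $i>j$ in the noncommutative criterion is harmless.
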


\begin{lemma}\label{L:any lift of an invertible matrix is invertible}
	Let $n\in[\infty]$ and $\bar{P}\in \GL_n(\Fp\llbracket T\rrbracket)$. Then any lift matrix $P\in \rmM_n(R)$ of $\bar{P}$ is invertible.
\end{lemma}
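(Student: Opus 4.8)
The plan is to reduce to the case where $n$ is finite and then bootstrap to $n = \infty$ using the $(\gothm, \tilde T)$-adic completeness of $R$. First suppose $n$ is finite. Let $\bar P \in \GL_n(\Fp\llbracket T\rrbracket)$ with inverse $\bar Q = \bar P^{-1}$, and let $P \in \rmM_n(R)$ be any lift of $\bar P$. Choose any lift $Q_0 \in \rmM_n(R)$ of $\bar Q$. Then $\pi(PQ_0) = \bar P\bar Q = I_n$, so $PQ_0 = I_n + N$ for some $N \in \rmM_n(\gothm)$, where $\gothm = \ker(\pi)$ is the two-sided ideal in question. Here is the key point: since $R$ is complete for the $(\gothm,\tilde T)$-adic topology and $\gothm \subseteq (\gothm,\tilde T)$, the ideal $\rmM_n(\gothm)$ is topologically nilpotent in the sense that $I_n + N$ is invertible in $\rmM_n(R)$ — more precisely, one must check that the Neumann series $\sum_{k\ge 0}(-N)^k$ converges. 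This requires knowing that powers of $\gothm$ tend to zero $(\gothm,\tilde T)$-adically; since $\gothm^k \subseteq (\gothm,\tilde T)^k$, the partial sums form a Cauchy sequence entrywise, and completeness gives a limit $S$ with $(I_n+N)S = S(I_n+N) = I_n$. Hence $PQ_0$ has a right inverse; similarly $Q_0 P = I_n + N'$ with $N' \in \rmM_n(\gothm)$ is invertible, so $P$ has both a left inverse $(Q_0P)^{-1}Q_0$ and a right inverse $Q_0(PQ_0)^{-1}$, and therefore $P \in \GL_n(R)$.

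For the case $n = \infty$, the subtlety flagged in Remark~\ref{remark:infinite product of infinite matrices} is that infinite matrices need not be invertible even when they "look" invertible. So I would not argue by truncation; instead I would run the same Neumann-series argument directly over $\rmM_\infty(R)$, provided one checks that the relevant infinite sums of infinite matrices converge. Write $\bar Q = \bar P^{-1} \in \GL_\infty(\Fp\llbracket T\rrbracket)$ and pick a lift $Q_0 \in \rmM_\infty(R)$. Then $PQ_0 = I_\infty + N$ with $N \in \rmM_\infty(\gothm)$, and the series $\sum_{k\ge 0}(-N)^k$: each entry of $(-N)^k$ lies in $\gothm^k$, and — because matrix multiplication of infinite matrices only makes sense when the entrywise sums converge, which holds here since $N$'s entries become highly $(\gothm,\tilde T)$-divisible off any fixed finite block — the partial sums converge entrywise to a matrix $S \in \rmM_\infty(R)$ with $(I_\infty+N)S = S(I_\infty+N) = I_\infty$. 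Symmetrically $Q_0P$ is invertible, and combining the one-sided inverses gives $P \in \GL_\infty(R)$.

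The main obstacle, and the place I would spend the most care, is the convergence bookkeeping for the infinite-matrix case: one must argue that $N \in \rmM_\infty(\gothm)$ (where $N = PQ_0 - I_\infty$) actually has the property that its powers converge entrywise, i.e. that for each fixed pair $(i,j)$ the sum $\sum_k ((-N)^k)_{ij}$ is $(\gothm,\tilde T)$-adically Cauchy. Naively $((-N)^k)_{ij} = \sum (-N)_{i a_1}(-N)_{a_1 a_2}\cdots(-N)_{a_{k-1}j}$ is itself an infinite sum over intermediate indices, so one needs a suitable summability/topological-nilpotence hypothesis on the matrix rings being used — this is presumably guaranteed by the conventions on which infinite matrices are admitted (the same conventions that make $\GL_\infty(\Fp\llbracket T\rrbracket)$ and the products in Remark~\ref{remark:infinite product of infinite matrices} behave well). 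I would state explicitly that $\bar P$ invertible over $\Fp\llbracket T\rrbracket$ forces, entrywise, the needed decay on $N$ after lifting, and then the $(\gothm,\tilde T)$-completeness of $R$ closes the argument exactly as in the finite case.
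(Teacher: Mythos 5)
Your proof follows exactly the paper's argument: take any lift $Q_0$ of $\bar{P}^{-1}$, observe that $I_n - PQ_0 \in \rmM_n(\gothm)$, invert via the Neumann series using $(\gothm,\tilde T)$-adic completeness of $R$, and combine the two one-sided inverses. The convergence concern you flag for $n=\infty$ is genuine but is also elided in the paper's one-line claim that the series converges; in the paper's actual applications the lift is always a special lift of a $\ula$-stable matrix (e.g.\ in the proof of Theorem~\ref{T:Newton-Hodge decomposition with Hodge bound restriction for infinite matrices over noncommutative rings}), whose columns decay $\tilde T$-adically, which is what makes the entrywise products and the Neumann series converge.
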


\begin{proof}
	From $\bar{P}\in \GL_n(\Fp\llbracket T\rrbracket)$, there exists $\bar{Q}\in \GL_n(\Fp\llbracket T\rrbracket)$ such that $\bar{P}\bar{Q}=\bar{Q}\bar{P}=\rmI_n$. Let $Q$ be an arbitrary lift of $\bar{Q}$. Then $B:=\rmI_n-PQ\in \rmM_n(\gothm)$ and the series $B':=\sum\limits_{i=0}^\infty B^i$ converges. By a direct computation, we have $PQB'=\rmI_n$. Similarly, let $C:=\rmI_n-QP\in \rmM_n(\gothm)$ and the series $C':=\sum\limits_{i=0}^\infty C^i$ converges. So we have $C'QP=\rmI_n$ and hence $P$ is invertible in $\rmM_n(R)$.
\end{proof}

\begin{proposition}\label{P:conjugating a 'good' matrix by lower triangular matrix into an upper triangular matrix over noncommutative rings}
	Let $n_1<n$ be two positive integers, $M:=\quad
	\Biggl[\mkern-5mu
	\begin{tikzpicture}[baseline=-.65ex]
	\matrix[
	matrix of math nodes,
	column sep=1ex,
	] (m)
	{
		A & B \\
		C & D \\
	};
	\draw[dotted]
	([xshift=0.5ex]m-1-1.north east) -- ([xshift=0.5ex]m-2-1.south east);
	\draw[dotted]
	(m-1-1.south west) -- (m-1-2.south east);
	\node[above,text depth=1pt] at ([xshift=2.5ex]m-1-1.north) {$\scriptstyle n_1$};  
	\node[left,overlay] at ([xshift=-1.2ex,yshift=-2ex]m-1-1.west) {$\scriptstyle n_1$};
	\end{tikzpicture}\mkern-5mu
	\Biggr]\in \rmM_n(R)$ be a $\ula^{[n]}$-Hodge bounded matrix, and $\bar{M}\in \rmM_n(\Fp\llbracket T\rrbracket)$ be its reduction by $\gothm$. Fix an integer $\alpha\geq \lambda_{n_1}$. Assume that 
	\begin{enumerate}
		\item  $H(C,1)\geq \alpha$ and  $C\equiv 0\pmod \gothm$;
		\item $n_1$ is a touching vertex of $\bar{M}$, and $\bar{A}$ is strictly $\ula^{[n_1]}$-Hodge bounded.
	\end{enumerate}
	Then there exists a matrix $X\in \rmM_{(n-n_1)\times n_1}(\gothm)$ with $H(X,1)\geq \alpha-\lambda_{n_1}$ such that if we denote $Y:= \begin{bmatrix}
	I_{n_1}&0\\
	X&I_{n-n_1}\end{bmatrix} $, then $Y$ is $\ula^{[n]}$-stable and $YMY^{-1}$ is of the form
	\[
	YMY^{-1}=
	\quad
	\Biggl[\mkern-5mu
	\begin{tikzpicture}[baseline=-.65ex]
	\matrix[
	matrix of math nodes,
	column sep=1ex,
	] (m)
	{
		A' & B' \\
		0\  & D' \\
	};
	\draw[dotted]
	([xshift=0.5ex]m-1-1.north east) -- ([xshift=0.5ex]m-2-1.south east);
	\draw[dotted]
	(m-1-1.south west) -- (m-1-2.south east);
	\node[above,text depth=1pt] at ([xshift=2.5ex]m-1-1.north) {$\scriptstyle n_1$};  
	\node[left,overlay] at ([xshift=-1.2ex,yshift=-2ex]m-1-1.west) {$\scriptstyle n_1$};
	\end{tikzpicture}\mkern-5mu
	\Biggr].
	\]
\end{proposition}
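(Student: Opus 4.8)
The plan is to imitate the proof of Lemma~\ref{L:conjugating a 'good' matrix by lower triangular matrix into an upper triangular matrix}, replacing its convergence mechanism — which exploited the strict inequality between the eigenvalue valuations of the two diagonal blocks over the algebraically closed field $\bbC$ — by a convergence argument in the $(\gothm,\tilde{T})$-adic topology of $R$. Since $C\equiv 0\pmod{\gothm}$, the reduction $\bar{M}$ is already block upper triangular, so the proposition is purely a lifting statement, and the iteration will be a successive approximation along the powers of $\gothm$. First I would assemble the basic estimates over $R$. Because $\bar{A}$ is strictly $\ula^{[n_1]}$-Hodge bounded and $A$ is $\ula^{[n_1]}$-Hodge bounded, the matrix $D_R(\ula^{[n_1]})^{-1}A$ lies in $\rmM_{n_1}(R)$ and is a lift of the invertible matrix $D(\ula^{[n_1]})^{-1}\bar{A}$, hence is invertible over $R$ by Lemma~\ref{L:any lift of an invertible matrix is invertible}; since $\tilde{T}$ is central this gives $A^{-m}\in\tilde{T}^{-m\lambda_{n_1}}\rmM_{n_1}(R)$ for all $m\geq 0$, the analogue over $R$ of Lemma~\ref{L:estimation of Hodge functions of negative powers of a strictly Hodge bounded matrix}. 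The $\ula^{[n]}$-Hodge boundedness of $M$ further gives $H(D^i,1)\geq i\lambda_{n_1+1}$ and $A^{-1}B\in\rmM_{n_1\times(n-n_1)}(R)$, and (by Lemma~\ref{L:criterion of lambda-stable over noncommutative rings}) makes $M$ itself $\ula^{[n]}$-stable, so that conjugation by any $\ula^{[n]}$-stable matrix preserves $\ula^{[n]}$-Hodge boundedness.

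The heart of the argument is a \emph{Sylvester solver over $R$}: given $E\in\rmM_{(n-n_1)\times n_1}(\gothm)$ with $H(E,1)\geq\alpha$, produce $X$ with $DX-XA=E$ such that $H(X,1)\geq\alpha-\lambda_{n_1}$, the unipotent matrix $\Matrix{I_{n_1}}{0}{X}{I_{n-n_1}}$ is $\ula^{[n]}$-stable, and $X\in\gothm^k$ whenever $E\in\gothm^k$. Over $\bbC$ (equivalently over $\Fp(\!(T)\!)$) one takes $X=-\sum_{i\geq 0}D^iEA^{-i-1}$, which converges because the hypothesis that $n_1$ is a touching vertex of $\bar{M}$, combined with Lemmas~\ref{L:valuations of eigenvalues bound Hodge functions} and \ref{L:comparison of valuations of eigenvalues of (1,1)-block and (2,2)-block}, shows that the eigenvalue valuations of $\bar{A}$ are all strictly smaller than those of $\bar{D}$; the Hodge bound and the $\ula^{[n]}$-stability of the associated unipotent are then read off exactly as in the proof of Lemma~\ref{L:conjugating a 'good' matrix by lower triangular matrix into an upper triangular matrix}. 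Over $R$ this series need not converge — note $\pi(\tilde{T})=T\neq 0$, so $\tilde{T}\notin\gothm$, and in the degenerate case $\lambda_{n_1}=\lambda_{n_1+1}$ the $\tilde{T}$-adic estimates alone do not force decay — so instead I would build $X=\lim_k X^{(k)}$ with $X^{(k)}\equiv X^{(k-1)}\pmod{\gothm^k}$, the increment $X^{(k)}-X^{(k-1)}\in\gothm^k$ being obtained by solving $DZ-ZA\equiv E\pmod{\gothm^{k+1}}$. Reducing modulo $\gothm^{k+1}$ turns this into a linear equation over the finitely generated $\Fp\llbracket T\rrbracket$-module $\gothm^k/\gothm^{k+1}$ with coefficient matrices $\bar{A},\bar{D}$, whose solvability and Hodge bound follow from the same eigenvalue gap after factoring out the relevant powers of $\tilde{T}$ by means of (an iteration of) Lemma~\ref{L:T-divisible property in gothm}; the $(\gothm,\tilde{T})$-adic completeness of $R$ then assembles the $X^{(k)}$ into $X$.

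With the solver in hand the proposition follows by the iteration of Lemma~\ref{L:conjugating a 'good' matrix by lower triangular matrix into an upper triangular matrix}. Put $M_0:=M$, and at stage $k$ let $X_k$ be the solver applied to the lower-left block $C_{k-1}$ of $M_{k-1}$ (so $X_k\in\gothm^k$ and $H(X_k,1)\geq\alpha-\lambda_{n_1}$), set $Y_k:=\Matrix{I_{n_1}}{0}{X_k}{I_{n-n_1}}$ and $M_k:=Y_kM_{k-1}Y_k^{-1}$. By construction $X_kA_{k-1}+C_{k-1}-D_{k-1}X_k=0$, so the lower-left block of $M_k$ is $-X_kB_{k-1}X_k$, which lies in $\gothm^{2k}\subseteq\gothm^{k+1}$ and, using $A_{k-1}^{-1}B_{k-1}\in\rmM(R)$ to see $H(X_kB_{k-1},1)\geq\alpha$, satisfies $H(-X_kB_{k-1}X_k,1)\geq\alpha$; meanwhile $A_k=A_{k-1}(I_{n_1}+A_{k-1}^{-1}B_{k-1}X_k)$ remains invertible over $R$ by the argument of Lemma~\ref{new1}, i.e. strictly $\ula^{[n_1]}$-Hodge bounded, and $M_k$ stays $\ula^{[n]}$-Hodge bounded because $Y_k$ is $\ula^{[n]}$-stable; so stage $k+1$ is set up. Since $X_k\in\gothm^k$ one has $Y_k\cdots Y_1=\Matrix{I_{n_1}}{0}{X_1+\cdots+X_k}{I_{n-n_1}}$, which converges in $\rmM_n(R)$ to $W:=\Matrix{I_{n_1}}{0}{X}{I_{n-n_1}}$ with $X:=\sum_{k\geq 1}X_k\in\rmM_{(n-n_1)\times n_1}(\gothm)$ and $H(X,1)\geq\alpha-\lambda_{n_1}$; $W$ is manifestly invertible, is $\ula^{[n]}$-stable as a limit of $\ula^{[n]}$-stable matrices (the condition of Lemma~\ref{L:criterion of lambda-stable over noncommutative rings}(3) being closed), and $WMW^{-1}=\lim_k M_k$ has lower-left block $\lim_k(-X_kB_{k-1}X_k)=0$, which is the assertion.

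I expect the Sylvester solver to be the main obstacle: it forces one to work with both the $\tilde{T}$-adic (Hodge) and the $\gothm$-adic filtrations simultaneously, and these are a priori unrelated, so in the case $\lambda_{n_1}=\lambda_{n_1+1}$ one cannot solve the equation directly over $R$ and must descend to the graded pieces $\gothm^k/\gothm^{k+1}$ — finite-rank $\Fp\llbracket T\rrbracket$-modules, possibly with $T$-torsion — where the genuine eigenvalue gap of $\bar{A}$ and $\bar{D}$ is available, and then control the $\tilde{T}$-divisibility needed to lift back, which requires extending Lemma~\ref{L:T-divisible property in gothm} from $\gothm$ to its powers. A secondary point demanding care is propagating through the whole iteration the property that $A_k$ is strictly $\ula^{[n_1]}$-Hodge bounded over $R$, since this is exactly what allows the solver to be re-applied at each stage.
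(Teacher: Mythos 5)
Your outer iteration is structurally the same as the paper's, but the inner step — your ``Sylvester solver over $R$'' — is where the argument breaks, and it breaks at exactly the point you flag as the main obstacle. You want to solve $DX-XA=E$ \emph{exactly} over $R$ by a nested $\gothm$-adic approximation: at step $k$ of the inner loop, solve the linearized equation over $\gothm^k/\gothm^{k+1}$ and then, to get the Hodge bound on the lift, ``factor out the relevant powers of $\tilde T$ by means of (an iteration of) Lemma~\ref{L:T-divisible property in gothm}''. That iteration does not exist: Lemma~\ref{L:T-divisible property in gothm} is proved by observing that $\ker\pi=\gothm$, which only controls divisibility modulo $\gothm$ itself, not modulo $\gothm^k$. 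The authors say this out loud in the remark following Lemma~\ref{L:conjugating a lambda Hodge bounded matrix to make lower left block closer to 0 over noncommutative rings}: it is \emph{unknown} whether $\tilde T$ is a non-zero-divisor on $R/\gothm^k$ for $k>1$ — equivalently whether $\tilde T^a R\cap\gothm^k=\tilde T^a\gothm^k$ — so the graded piece $\gothm^k/\gothm^{k+1}$ may have $T$-torsion and your ``factoring out'' step has no justification. Once the exact solver is gone, the pivotal identity $X_kA_{k-1}+C_{k-1}-D_{k-1}X_k=0$ in your outer loop is unjustified as well.

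The paper's Lemma~\ref{L:conjugating a lambda Hodge bounded matrix to make lower left block closer to 0 over noncommutative rings} sidesteps the problem by \emph{not} solving the Sylvester equation exactly. It forms the series $X_k:=-\bigl(C+\sum_{i\geq 1}D\,D(i-1)\,C\,A(-i)\bigr)\tilde A^{-1}$, where $D(i)$ and $A(-i)$ are \emph{special lifts} (Definition~\ref{D:special lift of matrices}) of $\bar D^i$ and of a $\tilde T$-power times $\bar A^{-i}$: a special lift has, by construction, the same $\tilde T$-adic Hodge bound as its reduction. This makes the series converge in $\rmM(R)$, with the rate of decay coming from the eigenvalue gap of $\bar A$ and $\bar D$ furnished by Lemmas~\ref{L:valuations of eigenvalues bound Hodge functions} and~\ref{L:comparison of valuations of eigenvalues of (1,1)-block and (2,2)-block} — the same gap you invoke, but applied downstairs over $\FF_p\llbracket T\rrbracket$ and then transported upward through the special lift, rather than applied directly over $R$. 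The price is that $D(i),A(-i)$ are not true powers in $R$, so this $X_k$ solves the Sylvester equation only modulo $\gothm$; but the residual $C'=X_kA+C-DX_k-X_kBX_k$ is then shown to satisfy $H(C',1)\geq\alpha$ together with $\tilde T^{-\alpha}C'\equiv 0\pmod{\gothm^{k+1}}$, where $\tilde T^{-\alpha}C'$ is tracked throughout as an honest element of $\rmM(R)$ — no division by $\tilde T$ inside $\gothm^{k+1}$ is ever attempted. Hence a single outer loop suffices, there is no inner solver, and the $T$-torsion question for $\gothm^k/\gothm^{k+1}$ never arises. To repair your plan you would essentially need to import this special-lift device, at which point you have rederived the paper's lemma.
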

First we need the following lemma.

\begin{lemma}\label{L:conjugating a lambda Hodge bounded matrix to make lower left block closer to 0 over noncommutative rings}
	Let $M\in \rmM_n(R)$ be the matrix in Proposition~\ref{P:conjugating a 'good' matrix by lower triangular matrix into an upper triangular matrix over noncommutative rings}. Assume further that $\tilde{T}^{-\alpha}C\equiv 0 \pmod {\gothm^k}$ for some integer $k\geq 1$. Then there exists a matrix $X_k\in \rmM_{(n-n_1)\times n_1}(R)$ such that  
	\begin{enumerate}
		\item $H(X_k,1)\geq \alpha-\lambda_{n_1}$ and $\tilde{T}^{\lambda_{n_1}-\alpha}X_k\equiv 0 \pmod {\gothm^k}$.
		\item The matrix $Y_k:=  \begin{bmatrix}
		I_{n_1}&0\\
		X_k&I_{n-n_1}\end{bmatrix}$ is $\ula^{[n]}$-stable.
		\item If we write 
		\[
		Y_kMY_k^{-1}:=
		\quad
		\Biggl[\mkern-5mu
		\begin{tikzpicture}[baseline=-.65ex]
		\matrix[
		matrix of math nodes,
		column sep=1ex,
		] (m)
		{
			A' & B' \\
			C' & D' \\
		};
		\draw[dotted]
		([xshift=0.5ex]m-1-1.north east) -- ([xshift=0.5ex]m-2-1.south east);
		\draw[dotted]
		(m-1-1.south west) -- (m-1-2.south east);
		\node[above,text depth=1pt] at ([xshift=2.5ex]m-1-1.north) {$\scriptstyle n_1$};  
		\node[left,overlay] at ([xshift=-1.2ex,yshift=-2ex]m-1-1.west) {$\scriptstyle n_1$};
		\end{tikzpicture}\mkern-5mu
		\Biggr],
		\]
		then $H(C',1)\geq \alpha$ and $\tilde{T}^{-\alpha}C'\equiv 0 \pmod {\gothm^{k+1}}$.
	\end{enumerate}
\end{lemma}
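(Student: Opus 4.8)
The plan is to transplant the inductive step in the proof of Lemma~\ref{L:conjugating a 'good' matrix by lower triangular matrix into an upper triangular matrix} to the present noncommutative setting, with the Hodge function $H(-,1)$ over $\Fp\llbracket T\rrbracket$ replaced by the pair consisting of $v_{\tilde T}(-)$ and reduction modulo the powers of $\gothm$. Since $\bar A$ is strictly $\ula^{[n_1]}$-Hodge bounded, Lemma~\ref{L:any lift of an invertible matrix is invertible} shows $A=D_R(\ula^{[n_1]})A'$ with $A'\in\GL_{n_1}(R)$, so that $A^{-1}=A'^{-1}D_R(\ula^{[n_1]})^{-1}\in\rmM_{n_1}(R[\tfrac{1}{\tilde T}])$ is defined and $H(A^{-m},1)\geq -m\lambda_{n_1}$ for every $m\geq1$. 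I would then set
\[
X_k:=-\sum_{i\geq0}D^iCA^{-i-1},
\]
the formal solution of the Sylvester-type equation $X_kA-DX_k=-C$, and verify that it is well defined and has the three asserted properties.

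For well-definedness I would argue layer by layer modulo the powers of $\gothm$. Because $C\equiv0\pmod\gothm$, the reduction $\bar M$ is block upper triangular, so Lemma~\ref{L:comparison of valuations of eigenvalues of (1,1)-block and (2,2)-block} applies to $\bar M$ — its hypotheses hold trivially, since $n_1$ is a touching vertex of $\bar M$, $H(\bar C,1)-\lambda_{n_1}=+\infty>0$, and $\bar A$ is strictly $\ula^{[n_1]}$-Hodge bounded — and gives the strict separation
\[
\max\{v_T(\alpha)\mid\alpha\in\Spec\bar A\}<\min\{v_T(\beta)\mid\beta\in\Spec\bar D\}.
\]
Combined with Lemma~\ref{L:valuations of eigenvalues bound Hodge functions} applied to $\bar A$ and $\bar D$, this forces the $\Fp\llbracket T\rrbracket$-contribution of the $i$-th summand $D^iCA^{-i-1}$ to grow like $i\bigl(\min v_T\,\Spec\bar D-\max v_T\,\Spec\bar A\bigr)\to\infty$ on each graded piece $\gothm^j/\gothm^{j+1}$; since every summand already lies in $\tilde T^{\alpha}\rmM(R)\cap\rmM(\gothm^k)$, the partial sums are Cauchy for the $(\gothm,\tilde T)$-adic topology and $X_k\in\rmM_{(n-n_1)\times n_1}(R)$ is well defined.

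Granting this, property~(1) follows by reading off $H(D^iCA^{-i-1},1)\geq i\lambda_{n_1+1}+\alpha-(i+1)\lambda_{n_1}\geq\alpha-\lambda_{n_1}$ term by term, and by rewriting $\tilde T^{\lambda_{n_1}-\alpha}X_k=-\sum_i\tilde T^{\lambda_{n_1}}D^i(\tilde T^{-\alpha}C)A^{-i-1}$ and invoking the hypothesis $\tilde T^{-\alpha}C\equiv0\pmod{\gothm^k}$ together with Lemma~\ref{L:T-divisible property in gothm}, so that every summand, hence the sum, lies in $\rmM(\gothm^k)$. The $\ula^{[n]}$-Hodge boundedness of $M$ upgrades this to the row-by-column estimate $v_{\tilde T}\bigl((X_k)_{p,q}\bigr)\geq\lambda_{n_1+p}-\lambda_q$, which by Lemma~\ref{L:criterion of lambda-stable over noncommutative rings} is exactly property~(2). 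Finally, $X_kA-DX_k+C=0$ yields, after the block computation,
\[
Y_kMY_k^{-1}=\begin{bmatrix}A-BX_k&B\\-X_kBX_k&D+X_kB\end{bmatrix},
\]
so $C'=-X_kBX_k$; the bound $v_{\tilde T}\bigl((X_k)_{p,q}\bigr)\geq\lambda_{n_1+p}-\lambda_q$ and the Hodge boundedness of $B$ give $H(X_kB,1)\geq\lambda_{n_1+1}$, whence $H(C',1)\geq\lambda_{n_1+1}+(\alpha-\lambda_{n_1})\geq\alpha$, while $\tilde T^{-\alpha}C'$ lies in $\rmM(\gothm^{2k})\subseteq\rmM(\gothm^{k+1})$ because $k\geq1$; this is property~(3).

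The main obstacle is the well-definedness step. The valuation separation of the two blocks is a statement over $\Fp\llbracket T\rrbracket$ that rests on the theory of Newton and Hodge polygons, i.e.\ on determinants, whereas $A$ and $D$ live over the noncommutative ring $R$ where those tools are unavailable; so one has to run the convergence estimate on each graded piece $\gothm^j/\gothm^{j+1}$ — a finitely generated $\Fp\llbracket T\rrbracket$-module on which $\bar D$ and $\bar A^{-1}$ act — and keep the $\tilde T$-adic filtration and the $\gothm$-adic filtration synchronised throughout, in particular making sense of the action of $\bar A^{-1}$ (which carries denominators in $\tilde T$) on these modules. Everything else is bookkeeping modelled on the commutative case.
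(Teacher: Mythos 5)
Your plan is the naive transplant of the commutative case, and you correctly flag the well-definedness of $X_k=-\sum_{i\geq0}D^iCA^{-i-1}$ as the obstacle — but you do not close the gap, and the fix you gesture at is not what the paper does, nor is it clear that it can be made to work. The paper's proof does \emph{not} use the exact Sylvester solution built from the genuine powers $D^i$, $A^{-i-1}$ in $R$. Instead it replaces $D^{i}$ by a \emph{special lift} $D(i)$ of $\bar D^{\,i}$ and $A^{-i}$ by $A(-i)=\tilde T^{-t_i}A'(-i)$ with $A'(-i)$ a special lift of $T^{t_i}\bar A^{-i}$, and sets
\[
X_k:=-\Bigl(C+\sum_{i\geq1}DD(i-1)\,C\,A(-i)\Bigr)\tilde A^{-1}.
\]
Because special lifts satisfy $v_{\tilde T}(x)=v_T(\bar x)$ by assumption (3) on $R$, the eigenvalue separation $S_{\bar A}<s_{\bar D}$ supplied by Lemmas~\ref{L:comparison of valuations of eigenvalues of (1,1)-block and (2,2)-block} and~\ref{L:valuations of eigenvalues bound Hodge functions} transfers \emph{verbatim} to an estimate for $H(D(i-1),1)$ and $H(A(-i),1)$ inside $R$, and the series converges $\tilde T$-adically. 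By contrast, for your $X_k$ the only lower bound available for $H(D^i,1)$ over the noncommutative ring $R$ is the trivial $iH(D,1)\geq i\lambda_{n_1+1}$, which combined with $H(A^{-i-1},1)\geq-(i+1)\lambda_{n_1}$ gives the \emph{constant} bound $H(D^iCA^{-i-1},1)\geq\alpha-\lambda_{n_1}$; if $\lambda_{n_1}=\lambda_{n_1+1}$ (which is compatible with the touching-vertex hypothesis) you get no convergence. The eigenvalue-based gain $s_{\bar D}-S_{\bar A}>0$ lives only over $\Fp\llbracket T\rrbracket$ — there are no Newton polygons, Jordan forms, or determinants over $R$ — and you cannot apply Lemma~\ref{L:valuations of eigenvalues bound Hodge functions} to $D$ or $A^{-1}$ directly.

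Your proposed ``layer-by-layer on $\gothm^j/\gothm^{j+1}$'' repair is not worked out, and it faces exactly the pitfall the paper records in the remark after Lemma~\ref{L:conjugating a lambda Hodge bounded matrix to make lower left block closer to 0 over noncommutative rings}: $\tilde T$ is only known to be a nonzerodivisor on $R/\gothm$, not on $R/\gothm^{j+1}$, so the action of $A^{-1}$ (which carries a $\tilde T^{-\lambda_{n_1}}$ denominator) does not descend cleanly to the graded pieces, and the $\tilde T$-adic and $\gothm$-adic filtrations cannot be ``synchronised'' without further input. The price the paper pays for using special lifts rather than genuine powers is that $X_kA-DX_k+C$ is no longer exactly zero, so one cannot simply read off $C'=-X_kBX_k$; establishing $\tilde T^{-\alpha}C'\equiv 0\pmod{\gothm^{k+1}}$ then requires the telescoping chain of congruences \eqref{v8}--\eqref{v9} in place of your one-line computation. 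That telescoping argument, together with the special-lift construction, is the actual content of the proof, and both are absent from your write-up.
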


\begin{proof}
	
	Since $M$ is $\ula^{[n]}$-Hodge bounded, so is its reduction $\bar{M}$. By Lemma~\ref{L:comparison of valuations of eigenvalues of (1,1)-block and (2,2)-block}, we have $S_{\bar{A}}<s_{\bar{D}}$, where $$S_{\bar{A}}:=\max \{v_T(\beta)\;|\;\beta \text{~is an eigenvalue of ~} \bar{A}\}\textrm{\ and\ }s_{\bar{D}}:=\max \{v_T(\beta)\;|\;\beta \text{~is an eigenvalue of ~} \bar{D}\}.$$ By Lemma~\ref{L:valuations of eigenvalues bound Hodge functions}, there exists $b\in\RR$ such that for every $i\geq 1$ we have $$H(\bar{A}^{-i},1)\geq -iS_{\bar{A}}+b\textrm{\ and\ }H(\bar{D}^i,1)\geq is_{\bar{D}}+b. $$
	
	For every integer $i\geq 1$, we set $t_i:=i\lambda_{n_1}$ and construct matrices as follows.
	\begin{itemize}
		\item Since $T^{t_i}\bar{A}^{-i}$ belongs to $\rmM_{n_1}(\Fp\llbracket T\rrbracket)$, we can find a special lift $A'(-i)\in \rmM_{n_1}(R)$ of $T^{t_i}\bar{A}^{-i}$ and set $A(-i):=\tilde{T}^{-t_i}A'(-i)\in \rmM_{n_1}(R[\frac{1}{\tilde{T}}])$.
		\item We fix a special lift  $D(i)\in \rmM_{n-n_1}(R)$   of $\bar{D}^i\in \rmM_{n-n_1}(\FF_p\llbracket T \rrbracket)$.
		\item To simplify our notations, we set $A(0):=I_{n_1}\in\rmM_{n_1}(R)$ and $D(0):=I_{n-n_1}\in\rmM_{n-n_1}(R)$.
		\item Since $A$ is $\underline{\lambda}^{[n_1]}$-Hodge bounded, we can write $A=D_R(\ula^{[n_1]})\cdot A'$ for some $A'\in \rmM_{n_1}(R)$. As the reduction $\bar{A}$ of $A$ by $\gothm$ is strictly $\ula^{[n_1]}$-Hodge bounded, the reduction $\bar{A}'$ of $A'$ by $\gothm$ is invertible. It follows from Lemma~\ref{L:any lift of an invertible matrix is invertible} that $A'\in \GL_{n_1}(R)$. We define $\tilde{A}^{-1}:= A'^{-1}\cdot D_R(\ula^{[n_1]})^{-1}\in \rmM_{n_1}(R[\frac{1}{\tilde{T}}])$. Although $\tilde{A}^{-1}$ satisfies $A\cdot \tilde{A}^{-1}=\tilde{A}^{-1}\cdot A=I_{n_1}$, $A$ is not invertible in $\rmM_{n_1}(R)$ and $\tilde{A}^{-1}$ is the inverse of $A$ in $\rmM_{n_1}(R[\frac{1}{\tilde{T}}])$. To emphasize this fact, we use the notation $\tilde{A}^{-1}$ instead of $A^{-1}$. Finally we remark that $\tilde{T}^{\lambda_{n_1}}\tilde{A}^{-1}\in \rmM_{n_1}(R)$.
	\end{itemize}
	The  construction above give us $$H(\tilde{T}^{t_i}A(-i),1)=H(A'(-i),1)=H(T^{t_i}\bar{A}^{-i},1)=t_i+H(\bar{A}^{-i},1)\textrm{\ and\ }H(D(i),1)=H(\bar{D}^i,1).$$ 
	
	Set
	\begin{multline}\label{E:equation to define X_k in L:conjugating a lambda Hodge bounded matrix to make lower left block closer to 0 over noncommutative rings}
	X_k:=-\left(C\tilde{A}^{-1}+\sum_{i=1}^\infty DD(i-1)CA(-i)\tilde{A}^{-1}\right)
	\\=-\left(C+\sum_{i=1}^\infty DD(i-1)CA(-i)\right)\tilde{A}^{-1}.
	\end{multline}

	We first verify that each term $S$ in the sum of \eqref{E:equation to define X_k in L:conjugating a lambda Hodge bounded matrix to make lower left block closer to 0 over noncommutative rings}  satisfies $H(S,1)\geq \alpha-\lambda_{n_1}$ and in particular belongs to $\rmM_{(n-n_1)\times n_1}(R)$. 
	%
	In fact, for every integer $i\geq 1$ we have 
	\begin{multline}\label{v4}
	H(D(i-1)CA(-i),1)\geq H(D(i-1),1)+H(C,1)+H(A(-i),1)\\
	=H(\bar{D}^{i-1},1)+H(C,1)+H(\bar{A}^{-i},1)
	\geq (i-1)\lambda_{n_1}+\alpha-i\lambda_{n_1}
	=\alpha-\lambda_{n_1},
	\end{multline}
	where the second inequality is from  Lemma~\ref{L:estimation of Hodge functions of negative powers of a strictly Hodge bounded matrix}.
	
	Combined with the fact that $\tilde{T}^{-\lambda_{n_1}}D\in \rmM_{n-n_1}(R)$ and $\tilde{T}^{\lambda_{n_1}}\tilde{A}^{-1}\in\rmM_{n_1}(R)$, this chain of inequality implies  $H(DD(i-1)CA(-i)\tilde{A}^{-1},1)\geq \alpha-\lambda_{n_1}$
	. On the other hand, from $H(C,1)\geq \alpha\geq \lambda_{n_1}$ and $\tilde{T}^{\lambda_{n_1}}\tilde{A}^{-1}\in \rmM_{n_1}(R)$, we have $$C\tilde{A}^{-1}\in \rmM_{(n-n_1)\times n_1}(R)\textrm{\ and\ }H(C\tilde{A}^{-1},1)\geq \alpha-\lambda_{n_1}.  $$
	
	Now we verify the convergence of the infinite series in \eqref{E:equation to define X_k in L:conjugating a lambda Hodge bounded matrix to make lower left block closer to 0 over noncommutative rings}. From $s_{\bar{D}}>S_{\bar{A}}$, we have 
	\begin{multline*}
	H(DD(i-1)CA(-i)\tilde{A}^{-1},1)\geq \lambda_{n_1}+(i-1)s_{\bar{D}}+b+\alpha-iS_{\bar{A}}+b-\lambda_{n_1}
	\\
	=i(s_{\bar{D}}-S_{\bar{A}})-s_{\bar{D}}+2b+\alpha\xrightarrow{i\rightarrow \infty} \infty.
	\end{multline*}
	Therefore, the series in \eqref{E:equation to define X_k in L:conjugating a lambda Hodge bounded matrix to make lower left block closer to 0 over noncommutative rings} converges to a matrix $X_k\in \rmM_{(n-n_1)\times n_1}(R)$, and \begin{equation}\label{v6}
	H(X_k, 1)\geq \alpha-\lambda_{n_1}.
	\end{equation}
	Moreover, from $\tilde{T}^{-\alpha}C\equiv 0\pmod {\gothm^k}$, we have 
	\begin{equation}\label{v10}
	\tilde{T}^{\lambda_{n_1}-\alpha}X_k\equiv 0\pmod {\gothm^k}\textrm{\ and\ }\tilde{T}^{-\alpha}X_kD_R(\ula^{(n_1,n]})\equiv 0\pmod {\gothm^k}.
	\end{equation}	
	Now we set $Y_k:= \begin{bmatrix}
	I_{n_1}&0\\
	X_k&I_{n-n_1}\end{bmatrix}$, and have $$Y_kD_R(\ula^{[n]})=\begin{bmatrix}
	D_R(\ula^{[n_1]})&0\\
	X_kD_R(\ula^{[n_1]})&D_R(\ula^{(n_1,n]})\end{bmatrix}.$$ From $\tilde{A}^{-1}=A'^{-1}D_R(\ula^{[n_1]})^{-1}$, we have $$X_kD_R(\ula^{[n_1]})=-\left(C+\sum\limits_{i=1}^\infty DD(i-1)CA(-i)\right)A'^{-1}.$$ 
	Combined with \eqref{v4} and that both $C$ and $D$ are $\ula^{(n_1,n]}$-Hodge bounded, this equality implies  that $X_kD_R(\ula^{[n_1]})$ is $\ula^{(n_1,n]}$-Hodge bounded and \begin{equation}\label{v7}
	H(X_kD_R(\ula^{[n_1]}),1)\geq \alpha.
	\end{equation} Hence there exists $Y'_k\in \rmM_n(R)$ such that $Y_kD_R(\ula^{[n]})=D_R(\ula^{[n]})Y'_k$. By Lemma~\ref{L:criterion of lambda-stable over noncommutative rings},  $Y_k$ is $\ula^{[n]}$-stable.

	Note that
	\[
	Y_kMY_k^{-1}= \begin{bmatrix}
	A-BX_k&B\\
	X_kA+C-DX_k-X_kBX_k&X_kB+D\end{bmatrix}.
	\]
	We set $C':=X_kA+C-DX_k-X_kBX_k$ and need to prove that $$H(C',1)\geq \alpha\textrm{\ and \ } \tilde{T}^{-\alpha}C'\equiv 0\pmod {\gothm^{k+1}}.$$ The first inequality follows directly from $H(C,1)\geq \alpha$, \eqref{v6} and \eqref{v7}. It remains to prove the second congruence relation. From our construction, for every $i\geq 1$ we have
	\begin{equation*}
	\begin{split}
	&\tilde{A}^{-1}A\equiv I_{n_1}\pmod \gothm\\
	&\tilde{T}^{(i+1)\lambda_{n_1}}A(-i)\tilde{A}^{-1}\equiv \tilde{T}^{(i+1)\lambda_{n_1}}A(-i-1)\tilde{A}^{-1}A \pmod {\gothm},\\
	&\tilde{T}^{-(i+1)\lambda_{n_1}}D^2D(i-1)\equiv \tilde{T}^{-(i+1)\lambda_{n_1}}DD(i)\pmod {\gothm}.
	\end{split}
	\end{equation*}

	Combining these congruence relations above with $\tilde{T}^{-\alpha}C\equiv 0\pmod{\gothm^k}$, we have \begin{equation}\label{v8}
	\tilde{T}^{-\alpha}D^2D(i-1)CA(-i)\tilde{A}^{-1}\equiv \tilde{T}^{-\alpha}DD(i)CA(-i-1)\tilde{A}^{-1}A \pmod {\gothm^{k+1}}
	\end{equation} and \begin{equation}\label{v5}
	\tilde{T}^{-\alpha}C\equiv \tilde{T}^{-\alpha}C\tilde{A}^{-1}A \pmod {\gothm^{k+1}}. 
	\end{equation}
	By $\tilde{T}^{\lambda_{n_1}}A(-1)\tilde{A}^{-1}A\equiv \tilde{T}^{\lambda_{n_1}}\tilde{A}^{-1}\pmod \gothm,$ we have \begin{equation}\label{v9}
	\tilde{T}^{-\alpha}DC\tilde{A}^{-1}\equiv \tilde{T}^{-\alpha}DCA(-1)\tilde{A}^{-1}A\pmod {\gothm^{k+1}}.
	\end{equation}

	By \eqref{v10}, we have $X_k\equiv 0\pmod {\gothm^k}$ and $\tilde T^{-\alpha}X_kB\equiv 0\pmod {\gothm^k}$. Combined with our hypothesis $k\geq 1$, the above two congruence relations imply $$\tilde T^{-\alpha}X_kBX_k\equiv 0\pmod {\gothm^{k+1}}. $$

	Combined with \eqref{E:equation to define X_k in L:conjugating a lambda Hodge bounded matrix to make lower left block closer to 0 over noncommutative rings}, \eqref{v8}, \eqref{v5} and \eqref{v9}, this congruence implies \[\tilde T^{-\alpha}C'\equiv 0\pmod {\gothm^{k+1}}.\qedhere\]
\end{proof}

\begin{remark}
	We emphasize that in the above argument, all the congruence relations modulo powers of $\gothm$ are for matrices with entries in $R$ instead of $R[\frac{1}{\tilde{T}}]$. The reason is because $\gothm^k\cdot R[\frac{1}{\tilde{T}}]\cap R$ and $\gothm^k$ are not necessary to be the same. Equivalently, we do not know whether $\tilde{T}$ is a non-zero divisor in the ring $R/\gothm^k$ in general. However, this is true for $k=1$ by Lemma~\ref{L:T-divisible property in gothm}.
\end{remark}

Now we  prove Proposition~\ref{P:conjugating a 'good' matrix by lower triangular matrix into an upper triangular matrix over noncommutative rings}.
\begin{proof}[Proof of Proposition~\ref{P:conjugating a 'good' matrix by lower triangular matrix into an upper triangular matrix over noncommutative rings}]
	Combining our hypotheses $C\equiv 0\pmod \gothm$ and $H(C,1)\geq \alpha$ with Lemma~\ref{L:T-divisible property in gothm}, we have $\tilde{T}^{-\alpha}C\equiv 0\pmod \gothm$. We can apply Lemma~\ref{L:conjugating a lambda Hodge bounded matrix to make lower left block closer to 0 over noncommutative rings} inductively to get a sequence of matrices $(X_k)_{k\geq 1}$ in $\rmM_{(n-n_1)\times n_1}(R)$  with the following properties:
	\begin{itemize}
		\item $H(X_k,1)\geq \alpha-\lambda_{n_1}$, and  $\tilde{T}^{\lambda_{n_1}-\alpha}X_k\equiv 0\pmod {\gothm^k}$.
		\item The matrix $Y_k=\begin{bmatrix}
		I_{n_1}&0\\
		X_k&I_{n-n_1}\end{bmatrix}$ is $\ula^{[n]}$-stable.
		\item If we set
		\[
		(Y_k\dots Y_1)M(Y_k\dots Y_1)^{-1}:=
		\quad
		\Biggl[\mkern-5mu
		\begin{tikzpicture}[baseline=-.65ex]
		\matrix[
		matrix of math nodes,
		column sep=1ex,
		] (m)
		{
			A_k' & B_k' \\
			C_k' & D_k' \\
		};
		\draw[dotted]
		([xshift=0.5ex]m-1-1.north east) -- ([xshift=0.5ex]m-2-1.south east);
		\draw[dotted]
		(m-1-1.south west) -- (m-1-2.south east);
		\node[above,text depth=1pt] at ([xshift=2.5ex]m-1-1.north) {$\scriptstyle n_1$};  
		\node[left,overlay] at ([xshift=-1.2ex,yshift=-2ex]m-1-1.west) {$\scriptstyle n_1$};
		\end{tikzpicture}\mkern-5mu
		\Biggr],
		\]
		then $H(C_k',1)\geq \alpha$ and $\tilde{T}^{-\alpha}C_k'\equiv 0\pmod {\gothm^{k+1}}$.
	\end{itemize}
	The series $\sum\limits_{i=1}^\infty X_i$ converges to a matrix $X\in \rmM_{(n-n_1)\times n_1}(R)$ with $H(X,1)\geq \alpha-\lambda_{n_1}$. Therefore the products $\{Y_k\dots Y_1|k\geq 1\}$ converge to the matrix $Y=\begin{bmatrix}
	I_{n_1}&0\\
	X&I_{n-n_1}\end{bmatrix}$. By our construction as above, it is straightforward to verify that $Y$ is $\ula^{[n]}$-stable and $YMY^{-1}$ is of the form
	$\quad
	\Biggl[\mkern-5mu
	\begin{tikzpicture}[baseline=-.65ex]
	\matrix[
	matrix of math nodes,
	column sep=1ex,
	] (m)
	{
		A' & B' \\
		0\  & D' \\
	};
	\draw[dotted]
	([xshift=0.5ex]m-1-1.north east) -- ([xshift=0.5ex]m-2-1.south east);
	\draw[dotted]
	(m-1-1.south west) -- (m-1-2.south east);
	\node[above,text depth=1pt] at ([xshift=2.5ex]m-1-1.north) {$\scriptstyle n_1$};  
	\node[left,overlay] at ([xshift=-1.2ex,yshift=-2ex]m-1-1.west) {$\scriptstyle n_1$};
	\end{tikzpicture}\mkern-5mu
	\Biggr].$
\end{proof}

\begin{theorem}\label{T:Newton-Hodge decomposition with Hodge bound restriction for infinite matrices over noncommutative rings}
	Let $M\in \rmM_\infty(R)$ be a $\ula$-Hodge bounded matrix and  $\bar{M}\in \rmM_\infty(\Fp\llbracket T\rrbracket)$ be its reduction by $\gothm$. Suppose that $\Omega=\{s_1<s_2<\dots \}$ is a set of touching vertices of $\bar{M}$ such that $N(\bar{M},s)=\sum\limits_{i=1}^{s}\lambda_i$, for all $s\in \Omega$. Then there exists a $\ula$-stable matrix $Q\in \GL_\infty(R)$ with the following properties:
	\begin{enumerate}
		\item $Q$ is $\ula$-stable, and in particular $QMQ^{-1}$ is $\ula$-Hodge bounded; and
		\item $QMQ^{-1}$ is of the following block upper triangular shape: 
		\[
		QMQ^{-1}=\quad\Biggl[\mkern-5mu
		\begin{tikzpicture}[baseline=-.65ex]
		\matrix[
		matrix of math nodes,
		column sep=1ex,
		] (m)
		{
			M_{11} & M_{12}\\
			\ 0\ \  & M_{22} \\
		};
		\draw[dotted]
		([xshift=0.5ex]m-1-1.north east) -- ([xshift=0.5ex]m-2-1.south east);
		\draw[dotted]
		(m-1-1.south west) -- (m-1-2.south east);
		\node[above,text depth=1pt] at ([xshift=3.5ex]m-1-1.north) {$\scriptstyle s_1$};  
		\node[left,overlay] at ([xshift=-1.2ex,yshift=-2ex]m-1-1.west) {$\scriptstyle s_1$};
		\end{tikzpicture}\mkern-5mu
		\Biggr].
		\]
	\end{enumerate}
\end{theorem}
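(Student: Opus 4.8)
The plan is to run the proof of Theorem~\ref{T:Newton-Hodge decomposition with Hodge bound restriction for infinite matrices} over $R$, feeding all ``spectral'' input through the reduction $\bar M$ and using Proposition~\ref{P:conjugating a 'good' matrix by lower triangular matrix into an upper triangular matrix over noncommutative rings} to correct the conjugating matrices modulo $\gothm$. The constant companions of the argument are: a special lift of a $\ula$-stable matrix is $\ula$-stable (Corollary~\ref{C:special lift of lambda stable is lambda stable}); any lift of a matrix in $\GL_\infty(\Fp\llbracket T\rrbracket)$ lies in $\GL_\infty(R)$ (Lemma~\ref{L:any lift of an invertible matrix is invertible}); a $\ula$-Hodge bounded matrix stays $\ula$-Hodge bounded after right multiplication by any matrix over $R$; and $\ula$-stability is preserved under (convergent) infinite products, as in Remark~\ref{remark:infinite product of infinite matrices}.

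\textbf{Step 1: make the reduction block triangular.} Apply Theorem~\ref{T:Newton-Hodge decomposition with Hodge bound restriction for infinite matrices} to $\bar M$: there is a $\ula$-stable $\bar W_0\in\GL_\infty(\Fp\llbracket T\rrbracket)$ such that $\bar W_0\bar M\bar W_0^{-1}$ is block upper triangular of type $(s_1,\infty)$; since $s_1$ is a touching vertex, $H(\bar M,k)=\sum_{i=1}^k\lambda_i$ for all $k$ by Corollary~\ref{P:Newton function determines Hodge function for infinite matrix}, so (as in the construction behind Lemma~\ref{L:conjugating a lambda Hodge bounded matrix to make lower left block closer to 0}, via Lemma~\ref{L:Newton function determines Hodge function for finite matrix}) the $(1,1)$-block is \emph{strictly} $\ula^{[s_1]}$-Hodge bounded. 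Choose a special lift $W_0\in\GL_\infty(R)$ of $\bar W_0$ and replace $M$ by $W_0MW_0^{-1}$, which is again $\ula$-Hodge bounded. Writing $M=\Matrix{A}{B}{C}{D}$ with blocks of sizes $s_1$ and $\infty$, we may henceforth assume $\bar M=\Matrix{\bar A}{\bar B}{0}{\bar D}$ with $\bar A$ strictly $\ula^{[s_1]}$-Hodge bounded and $s_1$ a touching vertex of $\bar M$; in particular $C\equiv 0\pmod\gothm$ and $H(C,1)\ge\lambda_{s_1+1}$.

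\textbf{Step 2: upgrade to block triangularity over $R$.} As in the proof of Theorem~\ref{T:Newton-Hodge decomposition with Hodge bound restriction for infinite matrices}, after shrinking $\Omega$ we may assume $\lambda_{s_1}<\lambda_{s_2}$; fix a vertex $s\ge\ell_{\bar M}$ of the Newton polygon of $\bar M$ as in Lemma~\ref{L:permute an infinite matrix and compare Newton polygons of infinite matrix and its truncation}, and set $\Delta:=\lambda_s-\lambda_{s_1}>0$. The heart of the matter is the $R$-analogue of Lemmas~\ref{L:conjugating a lambda Hodge bounded matrix to make lower left block closer to 0} and~\ref{L:inductively conjugating a lambda Hodge bounded matrix to make lower left block closer to 0}: by induction on $i\ge 0$ one constructs a $\ula$-stable $Q_i=\Matrix{I_{s_1}}{0}{Q_i'}{I_\infty}\in\GL_\infty(R)$ with $Q_i'\in\tilde T^{(i+1)\Delta}\rmM_\infty(R)$ so that conjugating the current matrix by $Q_i$ preserves block-triangularity of the reduction (equivalently, keeps the $(2,1)$-block $\equiv 0\pmod\gothm$), keeps the $(1,1)$-block strictly $\ula^{[s_1]}$-Hodge bounded, and improves the $(2,1)$-block from $H(C,1)\ge\lambda_s+i\Delta$ to $H(C,1)\ge\lambda_s+(i+1)\Delta$ (with the initial $i=0$ step instead pushing $H(C,1)$ up from $\ge\lambda_{s_1+1}$ to $\ge\lambda_s$). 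Each $Q_i$ is built exactly as in the commutative argument: perform a harmless permutation (a special lift of a permutation matrix, hence $\ula$-stable) making $v_T(\det\bar M_{[s]})=\sum_{i=1}^s\lambda_i$; conjugate by a block-unipotent $P_1\equiv I_\infty\pmod\gothm$ of the shape in Lemma~\ref{L:inductively conjugating a lambda Hodge bounded matrix to make lower left block closer to 0} to improve the lower-left block in rows past $s$; then apply the \emph{finite} Proposition~\ref{P:conjugating a 'good' matrix by lower triangular matrix into an upper triangular matrix over noncommutative rings} to the truncation $M_{[s]}$ (with $n=s$, $n_1=s_1$, $\alpha=\lambda_s+i\Delta$), whose hypotheses hold precisely because $\bar A$ is strictly $\ula^{[s_1]}$-Hodge bounded, $s_1$ is a touching vertex of $\bar M_{[s]}$ by Lemma~\ref{L:permute an infinite matrix and compare Newton polygons of infinite matrix and its truncation}(2), and the relevant $(2,1)$-block is $\equiv 0\pmod\gothm$ by Step~1; finally extend the resulting $\ula^{[s]}$-stable block-unipotent matrix by the identity and compose. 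Since $Q_i-I_\infty\in\tilde T^{(i+1)\Delta}\rmM_\infty(R)\subset(\gothm,\tilde T)^{(i+1)\Delta}\rmM_\infty(R)$ and $(i+1)\Delta\to\infty$, the $R$-version of Remark~\ref{remark:infinite product of infinite matrices} gives that $\cdots Q_1Q_0W_0$ converges to a $\ula$-stable $Q\in\GL_\infty(R)$, $QMQ^{-1}$ is $\ula$-Hodge bounded, and its $(2,1)$-block vanishes (its $H$ exceeds every $\lambda_s+i\Delta$), which is the asserted form.

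\textbf{Main obstacle.} The delicate part is Step~2: one must run two mutually reinforcing bookkeepings at once --- the $v_{\tilde T}$-Hodge bounds, as in the commutative lemmas, together with the congruences modulo powers of $\gothm$ on which Proposition~\ref{P:conjugating a 'good' matrix by lower triangular matrix into an upper triangular matrix over noncommutative rings} and Lemma~\ref{L:conjugating a lambda Hodge bounded matrix to make lower left block closer to 0 over noncommutative rings} rest --- while verifying at each stage that the finite truncation $M_{[s]}$ still satisfies \emph{all} the hypotheses of the finite Proposition (this is exactly why Step~1, which makes the $(2,1)$-block of the reduction vanish and hence stay ``$\gothm$-small'' under the auxiliary $3\times 3$ conjugation, must precede it), and that the infinite products of conjugating matrices converge to genuinely invertible, $\ula$-stable matrices over $R$ (cf. Remark~\ref{remark:infinite product of infinite matrices}). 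By contrast, all spectral input --- Newton polygons, eigenvalue separation of the two blocks --- is supplied cleanly by the commutative theory applied to $\bar M$, since the only pieces that need spectral control, namely $\bar A$ and finite truncations of $\bar D$, are finite-dimensional.
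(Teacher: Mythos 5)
Your proof is correct and uses the same two-phase strategy as the paper's proof: first linearize $\bar M$ over $\Fp\llbracket T\rrbracket$ via the commutative Theorem~\ref{T:Newton-Hodge decomposition with Hodge bound restriction for infinite matrices}, pull back by a special lift (Lemma~\ref{L:any lift of an invertible matrix is invertible} and Corollary~\ref{C:special lift of lambda stable is lambda stable}), and then iteratively apply the finite noncommutative Proposition~\ref{P:conjugating a 'good' matrix by lower triangular matrix into an upper triangular matrix over noncommutative rings}, letting the resulting $\ula$-stable block-unipotent conjugations converge via the $R$-analogue of Remark~\ref{remark:infinite product of infinite matrices}. Where you diverge is in the bookkeeping of Phase II: the paper iterates Proposition~\ref{P:conjugating a 'good' matrix by lower triangular matrix into an upper triangular matrix over noncommutative rings} over increasing truncation sizes $s_1'<s_2'<\cdots$ drawn from $\Omega$, whereas you fix a single vertex $s\geq\ell_{\bar M}$, increment $\alpha=\lambda_s+i\Delta$, and interpose the $3\times3$-block conjugation of Lemma~\ref{L:inductively conjugating a lambda Hodge bounded matrix to make lower left block closer to 0} at each step (verifying, correctly, both the $v_{\tilde T}$-bound and the $\gothm$-congruence on the resulting $(2,1)$-block before truncating). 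This version is more faithful to the commutative proof of Theorem~\ref{T:Newton-Hodge decomposition with Hodge bound restriction for infinite matrices} at the price of one extra conjugation per iteration; the paper's version avoids the auxiliary $3\times3$ step but must work with growing truncation sizes. Both are sound.

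One small imprecision to fix: the initial $i=0$ step, which plays the role of Lemma~\ref{L:conjugating a lambda Hodge bounded matrix to make lower left block closer to 0}, produces a $Q_0$ that is not of block-unipotent shape $\begin{bmatrix}I_{s_1}&0\\ Q_0'&I_\infty\end{bmatrix}$ (it includes the permutation factor making $v_T(\det\bar M_{[s]})=\sum_{i\le s}\lambda_i$), and even its unipotent part is only controlled by Proposition~\ref{P:conjugating a 'good' matrix by lower triangular matrix into an upper triangular matrix over noncommutative rings} with $\alpha=\lambda_{s_1+1}$, giving $H\ge\lambda_{s_1+1}-\lambda_{s_1}$, which can be $0$ rather than $\ge\Delta$. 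This is harmless since the step occurs once and does not enter the convergence estimate, but the uniform statement $Q_i'\in\tilde T^{(i+1)\Delta}\rmM_\infty(R)$ should be asserted only for $i\ge 1$, with $Q_0$ simply recorded as an arbitrary $\ula$-stable element of $\GL_\infty(R)$ congruent to $I_\infty$ modulo $\gothm$.
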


\begin{proof}
	By Theorem~\ref{T:Newton-Hodge decomposition with Hodge bound restriction for infinite matrices}, there exists a $\ula$-stable matrix $\bar{Q}'\in\GL_\infty(\Fp\llbracket T\rrbracket)$ such that $\bar{Q}'\bar{M}\bar{Q}'^{-1}$ is block upper triangular of the shape 
	$\quad\Biggl[\mkern-5mu
	\begin{tikzpicture}[baseline=-.65ex]
	\matrix[
	matrix of math nodes,
	column sep=1ex,
	] (m)
	{
		N_{11} & N_{12}\\
		\ 0\ \  & N_{22} \\
	};
	\draw[dotted]
	([xshift=0.5ex]m-1-1.north east) -- ([xshift=0.5ex]m-2-1.south east);
	\draw[dotted]
	(m-1-1.south west) -- (m-1-2.south east);
	\node[above,text depth=1pt] at ([xshift=3.5ex]m-1-1.north) {$\scriptstyle s_1$};  
	\node[left,overlay] at ([xshift=-1.2ex,yshift=-2ex]m-1-1.west) {$\scriptstyle s_1$};
	\end{tikzpicture}\mkern-5mu
	\Biggr].$
	Let $Q'\in\rmM_\infty(R)$ be a special lift of $\bar{Q}'$. By Lemma~\ref{L:any lift of an invertible matrix is invertible}, $Q'$ is invertible; and by Corollary~\ref{C:special lift of lambda stable is lambda stable}, $Q'$ is $\ula$-stable. Thus we may assume that $\bar{M}$ is block upper triangular with the prescribed shape. We first take an infinite subsequence $\us':=(s_1',\dots)$ of $\Omega$ such that $s_1'>\ell_{\bar M}$ as in Lemma~\ref{L:permute an infinite matrix and compare Newton polygons of infinite matrix and its truncation}.
	
	By Proposition~\ref{P:conjugating a 'good' matrix by lower triangular matrix into an upper triangular matrix over noncommutative rings} with $M:=M_{[s_1']}\in \rmM_{s_1'}(R)$, we get a matrix $X_1\in \rmM_{(s_1'-s_1)\times s_1}(R)$ such that 
	\begin{itemize}
		\item $H(X_1,1)\geq \lambda_{s_1'}-\lambda_{s_1}$.
		\item 	If we set $Y_1:=\begin{bmatrix}
		I_{s_1}&0\\
		X_1&I_{s_1'-s_1}\end{bmatrix}$, then $Y_1$ is $\ula^{[s_1']}$-stable and $Y_1M_{[s_1']}Y_1^{-1}$ is of the form  $\quad
		\Biggl[\mkern-5mu
		\begin{tikzpicture}[baseline=-.65ex]
		\matrix[
		matrix of math nodes,
		column sep=1ex,
		] (m)
		{
			A_1' & B_1' \\
			\ 0 \   & D_1' \\
		};
		\draw[dotted]
		([xshift=0.5ex]m-1-1.north east) -- ([xshift=0.5ex]m-2-1.south east);
		\draw[dotted]
		(m-1-1.south west) -- (m-1-2.south east);
		\node[above,text depth=1pt] at ([xshift=2.5ex]m-1-1.north) {$\scriptstyle s_1$};  
		\node[left,overlay] at ([xshift=-1.2ex,yshift=-2ex]m-1-1.west) {$\scriptstyle s_1$};
		\end{tikzpicture}\mkern-5mu
		\Biggr].$
	\end{itemize}
	Let $P_1:=\begin{bmatrix}
	Y_1&0\\
	0&I_\infty\end{bmatrix}=\begin{bmatrix}
	I_{s_1}&0&0\\
	X_1&I_{s_1'-s_1}&0\\
	0&0&I_\infty\end{bmatrix}.$ Then $P_1$ is $\ula$-stable and $M_1:=P_1MP_1^{-1}$ is of the form $ \begin{bmatrix}
	A_1'&B_1'&\ast\\
	0&D_1'&\ast\\
	\ast&\ast&\ast\end{bmatrix}.$ 
	
	Now we consider the matrix $(M_1)_{[s_2']}\in \rmM_{s_2'}(R)$ which satisfies all the hypotheses in Proposition~\ref{P:conjugating a 'good' matrix by lower triangular matrix into an upper triangular matrix over noncommutative rings} with $\alpha=\lambda_{s_2'}$. Moreover, if we write $(M_1)_{[s_2']}$ in the form $(M_1)_{[s_2']}=\quad
	\Biggl[\mkern-5mu
	\begin{tikzpicture}[baseline=-.65ex]
	\matrix[
	matrix of math nodes,
	column sep=1ex,
	] (m)
	{
		A & B \\
		C & D \\
	};
	\draw[dotted]
	([xshift=0.5ex]m-1-1.north east) -- ([xshift=0.5ex]m-2-1.south east);
	\draw[dotted]
	(m-1-1.south west) -- (m-1-2.south east);
	\node[above,text depth=1pt] at ([xshift=2.5ex]m-1-1.north) {$\scriptstyle s_1$};  
	\node[left,overlay] at ([xshift=-1.2ex,yshift=-2ex]m-1-1.west) {$\scriptstyle s_1$};
	\end{tikzpicture}\mkern-5mu
	\Biggr],$ then $H(C,1)\geq \lambda_{s_2'}$. Hence there exists $X_2\in \rmM_{(s_2'-s_1)\times s_1}(\gothm)$ such that if we set $Y_2:=\begin{bmatrix}
	I_{s_1}&0\\
	X_2&I_{s_2'-s_1}\end{bmatrix}$, then $Y_2$ is $\ula^{[s_2']}$-stable and $Y_2(M_1)_{[s_2']}Y_2^{-1}$ is of the form 
	$\quad
	\Biggl[\mkern-5mu
	\begin{tikzpicture}[baseline=-.65ex]
	\matrix[
	matrix of math nodes,
	column sep=1ex,
	] (m)
	{
		A_2' & B_2' \\
		\ 0 \   & D_2' \\
	};
	\draw[dotted]
	([xshift=0.5ex]m-1-1.north east) -- ([xshift=0.5ex]m-2-1.south east);
	\draw[dotted]
	(m-1-1.south west) -- (m-1-2.south east);
	\node[above,text depth=1pt] at ([xshift=2.5ex]m-1-1.north) {$\scriptstyle s_1$};  
	\node[left,overlay] at ([xshift=-1.2ex,yshift=-2ex]m-1-1.west) {$\scriptstyle s_1$};
	\end{tikzpicture}\mkern-5mu
	\Biggr].$ Moreover, we have $H(X_2,1)\geq \lambda_{s_2'}-\lambda_{s_1}$. 
	Let $P_2:= \begin{bmatrix}
	Y_2&0\\
	0&I_\infty\end{bmatrix} = \begin{bmatrix}
	I_{s_1}&0&0\\
	X_2&I_{s_2'-s_1}&0\\
	0&0&I_\infty\end{bmatrix}.$ Then $P_2$ is $\ula$-stable and $P_2M_1P_2^{-1}=(P_2P_1)M(P_2P_1)^{-1}$ is of the form 
	$\begin{bmatrix}
	A_2'&B_2'&\ast\\
	0&D_2'&\ast\\
	\ast&\ast&\ast\end{bmatrix}$.
	
	We repeat this iteration and get a sequence of matrices $$\left(P_k= \begin{bmatrix}
	I_{s_1}&0\\
	P_k'&I_\infty\end{bmatrix}\;\Big|\;k\in \NN\right)\subset\GL_\infty(R)$$ such that $P_k$ is $\ula$-stable and $H(P_k',1)\geq \lambda_{s_k'}-\lambda_{s_1}$.
	Moreover, $(P_k\dots P_1)M(P_k\dots P_1)^{-1}$ is of the form $ \begin{bmatrix}
	A_k'&B_k'&\ast\\
	0&D_k'&\ast\\
	\ast&\ast&\ast\end{bmatrix}$. 
	The infinite series $\sum\limits_{i=1}^\infty P_i'$ converges to a matrix $P'$ and hence the product $P_k\dots P_1$ converges to $P= \begin{bmatrix}
	I_{s_1}&0\\
	P'&I_\infty\end{bmatrix}$. By our construction, $P$ is $\ula$-stable and $PMP^{-1}$ is of the form $\quad
	\Biggl[\mkern-5mu
	\begin{tikzpicture}[baseline=-.65ex]
	\matrix[
	matrix of math nodes,
	column sep=1ex,
	] (m)
	{
		N_{11} & N_{12} \\
		0 & N_{22} \\
	};
	\draw[dotted]
	([xshift=0.5ex]m-1-1.north east) -- ([xshift=2ex]m-2-1.south east);
	\draw[dotted]
	(m-1-1.south west) -- (m-1-2.south east);
	\node[above,text depth=1pt] at ([xshift=3.5ex]m-1-1.north) {$\scriptstyle s_1$};  
	\node[left,overlay] at ([xshift=-1.2ex,yshift=-2ex]m-1-1.west) {$\scriptstyle s_1$};
	\end{tikzpicture}\mkern-5mu
	\Biggr].$
\end{proof}

\section{Automorphic forms for definite quaternion algebras and completed homology}\label{section:Automorphic forms for definite quaternion algebras and completed homology}

\subsection{Notations}\label{subsection:Notations in Automorphic forms for definite quaternion algebras and completed homology}

\begin{enumerate}
	\item Let $F$ be a totally real field of degree $g$ over $\QQ$ and  $\calO_F$ be its ring of integers. Let $I:=\Hom(F,\bar{\QQ})$ denote the set of embeddings of $F$ into $\bar{\QQ}$. 
	\item Fix an odd prime $p$ which splits completely in $F$. We set $\calO_p:=\calO_F\otimes_{\ZZ}\ZZ_p$ and $F_p:=F\otimes_{\QQ}\QQ_p$. For every $i\in I$, let $\gothp_i$ be the prime of $\calO_F$ induced by the composite $i_p=\iota_p\circ i:F\rightarrow \bar{\QQ}_p$.  Let $F_{\gothp_i}$ (resp. $\calO_{\gothp_i}$) denote the completion of $F$ (resp. $\calO_F$) at $\gothp_i$.
	For $\alpha\in F_p$ and $i\in I$, we denote by $\alpha_i$ the $i$-component of $\alpha$ under the natural isomorphism $F_p\cong\prod\limits_{i\in I}F_{\gothp_i}$. Since $p$ splits in $F$, the embedding $\QQ_p\rightarrow F_{\gothp_i}$ is an isomorphism $ \QQ_p\cong F_{\gothp_i}$ and hence get $\ZZ_p\cong \calO_{\gothp_i}$.  For every $i\in I$, we fix a uniformizer $\pi_i$ of $\calO_{\gothp_i}$ and let $\pi:=\prod\limits_{i\in I}\pi_i\in \calO_p$. 
	\item For every $i\in I$, we use $\Delta_i$ to denote the torsion subgroup of $\calO_{\gothp_i}^\times$. Hence we have a decomposition of the multiplicative group $\calO_{\gothp_i}^\times\cong \Delta_i\times (1+\pi_i\calO_{\gothp_i})$.
	\item Let $A_{F,f}$ (resp. $A_{F,f}^{(p)}$) denote the ring of finite adeles (resp. finite prime-to-$p$ adeles) of $F$.
	\item We use $\Nm_{F/\QQ}:F^\times\rightarrow \QQ^\times$ to denote the norm map. For simplicity, the norm map $F_p^\times\rightarrow \QQ_p^\times$ will also be denoted by $\Nm_{F/\QQ}$.
	\item Following \cite{buzzard320eigenvarieties}, we define $\calN:=\{|x|_p\;|\;x\in\bar{\QQ}_p^\times\}\cap (0,1]=p^\QQ\cap (0,1]$ and $\calN^\times:=\calN\setminus \{1 \}=p^\QQ\cap (0,1)$.
	\item We use $\AAA^1$ (resp. $\GG_m$) to denote the rigid analytification of the affine line (resp. affine line with zero removed). 
	\item For $r=(r_i)_{i\in I}\in \calN^I$ and $\alpha=(\alpha_i)_{i\in I}\in \calO_p$, we define $B(\alpha,r)$ to be the $\QQ_p$-polydisc  such that $B(\alpha,r)(\CC_p):=\{(x_i)_{i\in I}\in \CC_p^I\;|\; |x_i-i_p(\alpha_i)|_p\leq r_i\text{~for all~} i\in I \}$, and $\bbB_r:=\prod\limits_{\alpha\in \calO_p}B(\alpha,r)$. If $r\in(\calN^\times)^I$, we define $\bbB_r^\times:=\prod\limits_{\alpha\in \calO_p^\times}B(\alpha,r)$.
	\item Let $K$ be a complete field extension of $\QQ_p$ and $X:=\mathrm{Sp}(A)$ be a $K$-affinoid space. By \cite[Lemma~8.2(a)]{buzzard320eigenvarieties}, there is a bijection $\iota:\calO(X)^\times\cong \Hom_{\QQ_p\text{-rigid space~}}(X,\GG_m)$. 
By \cite[Proposition~8.3]{buzzard320eigenvarieties},  for any continuous character $\chi:\calO_p^\times\rightarrow A^\times$, there exists at least one $r\in (\calN^\times)^I$ that satisfies the following:
	
	 There is a unique map of $K$-rigid spaces $\beta_r:\bbB_r^\times\times X\rightarrow \GG_m$ such  that for every $\alpha\in \calO_p^\times$ we have 
	$ \iota\circ\chi(\alpha)=\beta_r(\alpha,-)$,
	where 
	  $\chi(\alpha)\in A^\times=\calO(X)^\times$ and $\beta_r(\alpha,-):X\rightarrow \GG_m$ is obtained by evaluating $\beta_r$ at $\alpha\in \bbB_{r}^\times(\QQ_p)$.
	
	We call $\beta_r$ the \emph{$r$-thickening} of $\chi$.
	\item Let $\chi:\calO_p^\times\rightarrow \CC_p^\times$ be a  continuous character. For every $i\in I$, we denote by $\chi_i$ the $i$-component of $\chi$, i.e. $\chi_i$ is the composite $\calO_{\gothp_i}^\times\hookrightarrow \calO_p^\times\xrightarrow{\chi}\CC_p^\times$. 
	We define the coordinates $T:=(T_i)_{i\in I}$ of $\chi$ by $T_i:=T_{\chi_i}= \chi_i(\exp(\pi_i))-1$ for all $i\in I$. For $r=(r_i)_{i\in I}\in (\calN^\times)^I$, the character $\chi$ admits the $r$-thickening if and only if $v_p(T_i)>\frac{pr_i}{p-1}$ for all $i\in I$.
	\item Let $\iota_F:\calO_F^\times \hookrightarrow \calO_p^\times \times \calO_p^\times, x\mapsto (x,x^2)$ be an embedding. We always regard $\calO_F^\times$ as a subgroup of $\calO_p^\times \times \calO_p^\times$ by $\iota_F$.  A weight is defined to be a continuous group homomorphism $\kappa=(n,\nu):\calO_p^\times \times \calO_p^\times\rightarrow A^\times$ for some affinoid $\QQ_p$-space $X=\mathrm{Sp}(A)$, such that $\ker(\kappa)$ contains a subgroup of $\calO_F^\times$ of finite index. For such a weight $\kappa$, we define $r(\kappa)$ to be the largest element $r$ in $(\calN^\times)^I$ (with the obvious partial order), such that the continuous homomorphism $n:\calO_p^\times\rightarrow \calO(X)^\times$ admits the $r$-thickening.
	\item Let $D/F$ be a totally definite quaternion algebra over $F$ with discriminant $\gothd$. Assume that $(p,\gothd)=1$. Let $D_f:=D\otimes_F A_{F,f}$ and $D_f^{(p)}:=D\otimes_F A_{F,f}^{(p)}$. Fix a maximal order $\calO_D$ of $D$. For each finite place $v$ of $F$ prime to $\gothd$, we fix an isomorphism $\calO_D\otimes_{\calO_F}\calO_{F,v}\cong \rmM_2(\calO_{F,v})$. In particular, we have an isomorphism $\calO_D\otimes_{\calO_F}\calO_p\cong \rmM_2(\calO_p)$. For an ideal $\gothn$ of $\calO_F$ prime to $\gothd$, we denote by $U_0^{D,(p)}(\gothn)$ and $U_1^{D,(p)}(\gothn)$ the subgroups of $(\calO_D\otimes \hat{\ZZ}^{(p)})^\times$ consisting of matrices which are congruent to $\begin{bmatrix}
	\ast&\ast\\
	0&\ast\end{bmatrix}$ and $\begin{bmatrix}
	\ast&\ast\\
	0&1\end{bmatrix}$  $\mod \gothn$, respectively. 
	\item For a $\QQ_p$-Banach algebra $A$ with norm $|\cdot|$, we define a subring $A\{\!\{X\}\!\}$ of $A\llbracket X \rrbracket$ by
	\[
	A\{\!\{X\}\!\}:=\left\{\sum_{n\geq 0}c_nX^n\in A\llbracket X \rrbracket \;\Bigg|\; \lim_{n\rightarrow \infty}|c_n|R^n=0, \text{~for any~} R\in \RR_{>0} \right\}.
	\]
\end{enumerate}

\subsection{Subgroups of $\GL_2(F_p)$}\label{section:subgroups of GL2(Fp)}

\begin{enumerate}
	\item For a commutative ring $R$ with unit $1$, we define several subgroups of $\GL_2(R)$ by
	\[
	B(R):=\begin{bmatrix}
	R^\times&R\\
	0&R^\times\end{bmatrix}, \
T(R):=\begin{bmatrix}
	R^\times&0\\
	0&R^\times\end{bmatrix}, \
	N(R):=\begin{bmatrix}
	1&R\\
	0&1\end{bmatrix}\text{~and~}
	D(R):=\left\{ \begin{bmatrix}
	\alpha&0\\
	0&\alpha\end{bmatrix}\Big|\ \alpha\in R^\times \right\}.
	\]
	For a subgroup $H$ of $R^\times$, we set
	\[
	T(H):=\begin{bmatrix}
	H^\times&0\\
	0&H^\times\end{bmatrix}
\text{~and~} D(H):=\left\{ \begin{bmatrix}
	\alpha&0\\
	0&\alpha\end{bmatrix}\ \Big|\ \alpha\in H \right\}.
	\]
	\item Fix $i\in I$ and a positive integer $t_i$. We define the following subgroups of $\GL_2(\calO_{\gothp_i})$:
	\[
	\Iw_{\pi_i^{t_i}}:=\begin{bmatrix}
	\calO_{\gothp_i}^\times&\calO_{\gothp_i}\\
	\pi_i^{t_i}\calO_{\gothp_i}&\calO_{\gothp_i}^\times\end{bmatrix}
,\
	\bar{N}(\pi_i^{t_i}\calO_{\gothp_i}):=\begin{bmatrix}
	1&0\\
	\pi_i^{t_i}\calO_{\gothp_i}&1\end{bmatrix} \text{~and~}
	\bar{B}(\pi_i^{t_i}\calO_{\gothp_i}):=
	\begin{bmatrix}
	\calO_{\gothp_i}^\times&0\\
	\pi_i^{t_i}\calO_{\gothp_i}&\calO_{\gothp_i}^\times\end{bmatrix}.
	\]
	Given $t=(t_i)_{i\in I}\in \NN^I$, we put $\Iw_{\pi^t}:=\prod\limits_{i\in I}\Iw_{\pi_i^{t_i}}$ and similarly define $\bar{N}(\pi^t\calO_p),\ \bar{B}(\pi^t\calO_p)$. Note that every $g\in \Iw_{\pi^t}$ can be represented by $(g_i)_{i\in I}$, with $g_i\in \Iw_{\pi_i^{m_i}}$, and every $g_i\in \Iw_{\pi_i^{t_i}}$ can be viewed as an element in $\Iw_{\pi^t}$ whose $j$-component is the identity matrix for all $j\neq i$. 
	All the groups we define above are profinite and considered as topological groups endowed with the profinite topology. 
	\item For $i\in I$ and $t_i\in\NN$, we will identify $\calO_{\gothp_i}$ with $\bar{N}(\pi_i^{t_i}\calO_{\gothp_i})$ via the isomorphism $\bar{n}_i:\calO_{\gothp_i}\rightarrow  \bar{N}(\pi_i^{t_i}\calO_{\gothp_i}),\ z\mapsto \begin{bmatrix}
	1&0\\
	\\ \pi_i^{t_i} z&1\end{bmatrix}$. For $t\in\NN^I$, we will identify $\calO_p$ with  $\bar{N}(\pi^t\calO_p)$ via the isomorphism $\bar{n}:\calO_p\rightarrow  \bar{N}(\pi^t\calO_p),\ z\mapsto \begin{bmatrix}
	1&0\\
	\\ \pi^t z&1\end{bmatrix}$.
	\item For $i\in I$, we denote by $\Iw_{\pi_i,1}:= \begin{bmatrix}
	1+\pi_i\calO_{\gothp_i} &\calO_{\gothp_i}\\
	\\ \pi_i\calO_{\gothp_i} z&1+\pi_i\calO_{\gothp_i}\end{bmatrix}$ the the pro-$p$-subgroup of $\Iw_{\pi_i}$. Then the map $T(\Delta_i)\times \Iw_{\pi,1}\rightarrow \Iw_{\pi}$, $(t,g)\mapsto tg$ is a bijection. We put $\Iw_{\pi,1}:=\prod\limits_{i\in I}\Iw_{\pi_i,1}$.  
	\item The Iwasawa decomposition is the following bijection:
	\[
	N(\calO_p)\times T(\calO_p)\times \bar{N}(\pi^t\calO_p)\rightarrow \Iw_{\pi^t},\ (N,T,\bar{N})\mapsto NT\bar{N}.
	\]
	\item For $i\in I$ and $t_i\in \NN$, we define an anti-involution $\ast$ on $\Iw_{\pi_i^{t_i}}$ by $$g=\begin{bmatrix}
	a&b\\
	c&d\end{bmatrix}\in \Iw_{\pi_i^{t_i}}\mapsto g^\ast=\begin{bmatrix}
	a&c/\pi_i^{t_i}\\
	\pi_i^{t_i} b&d\end{bmatrix}.$$ Here ``anti-involution'' means that the map $g\mapsto g^\ast$ satisfies $(g^\ast)^\ast=g$ and $(gh)^\ast=h^\ast g^\ast$ for all $g,h\in \Iw_{\pi_i^{t_i}}$. We use similar formula to define anti-involutions on $\Iw_{\pi^t}$ for  $t\in \NN^I$. 
	\item For $t=(t_i)_{i\in I}\in \NN^I$, we set 
	\[
	\bbM_{\pi^t}:=\left\{\gamma=(\gamma_i)_{i\in I}\in \rmM_2(\calO_p)\;\Bigg|\;\text{~if~} \gamma_i= \begin{bmatrix}
	a_i&b_i\\
	c_i&d_i\end{bmatrix} \text{~then~} \det(\gamma_i)\neq 0, \pi_i^{t_i}|c_i,\pi_i\nmid d_i \right\}.
	\]
	Then $\bbM_{\pi^t}$ contains $\Iw_{\pi^t}$ and is a monoid under multiplication. The involution $\ast$ can be extended to $\bbM_{\pi^t}$ by the same formula. For $i\in I$ and $t_i\in \NN$, we define the monoid $\bbM_{\pi_i^{t_i}}\subset \rmM_2(\calO_{\gothp_i})$ in a similar way.
	
\end{enumerate}

\subsection{Induced representations}\label{section:induced representations}

Let $A$ be a topological ring in which $p$ is topologically nilpotent and  $\kappa=(n,\nu):\calO_p^\times\times \calO_p^\times\rightarrow A^\times$ be a continuous character. 
\begin{convention}\label{Convention:extend the character nu}
	Given the character $\kappa$ as above, we always extend $\nu$ to a continuous homomorphism $F_p^\times\rightarrow A^\times$ by requiring $\nu(\pi_i)=1$ for all $i\in I$.
\end{convention}
The character $\kappa$ induces a character $\kappa_T:T(\calO_p)\rightarrow A^\times,\ \begin{bmatrix}
a&0\\
0&d\end{bmatrix}\mapsto n(d)\cdot \nu(ad)$.  Since $T(\calO_p)$ is a quotient of $B(\calO_p)$ (resp. $\bar{B}(\pi\calO_p)$), this  character $\kappa_T$ extends to a character $\kappa_{B}$ (resp. $\kappa_{\bar{B}}$) of $B(\calO_p)$ (resp. $\bar{B}(\pi\calO_p)$).

Fix $t\in \NN^I$. Consider the induced representation
\begin{equation}\label{E:induced representations}
\Ind_{B(\calO_p)}^{\Iw_{\pi^t}}(\kappa_B):= \Big\{f\in \calC(\Iw_{\pi^t},A)\;\Big|\; f(b g)=\kappa_B(b)f(g) \text{~for all~} b\in B(\calO_p),\ g\in \Iw_{\pi^t} \Big\}.
\end{equation}

The group $\Iw_{\pi^t}$ acts right on $\Ind_{B(\calO_p)}^{\Iw_{\pi^t}}(\kappa_B)$ by $f\circ m(g)=f(gm^\ast)$. By Iwasawa decomposition, we obtain the following bijection
\begin{equation*}
\begin{split}
\Ind_{B(\calO_p)}^{\Iw_{\pi^t}}(\kappa_B)&\rightarrow \calC(\calO_p,A),\\
f&\mapsto h(z):= f(\bar{n}(z))=f\left(\begin{bmatrix}
1&0\\
\pi^t z&1\end{bmatrix}\right).
\end{split}
\end{equation*}
The right action of $\Iw_{\pi^t}$ on $\Ind_{B(\calO_p)}^{\Iw_{\pi^t}}(\kappa_B)$ induces a right action on $\calC(\calO_p,A)$, which can be written in the following explicit formula:
\begin{equation}\label{E:equation to define Iwahori action on continuous functions}
h\circ m(z)=n(cz+d)\nu(ad-bc)h\left(\frac{az+b}{cz+d}\right), \text{~for~} m=\begin{bmatrix}
a&b\\
c&d\end{bmatrix}.
\end{equation}
Using the exact formula in \eqref{E:equation to define Iwahori action on continuous functions}, we can extend this action to the monoid $\bbM_{\pi^t}$.

\subsection{Space of $p$-adic automorphic forms}
We will recall Buzzard's construction of overconvergent automorphic forms
in \cite{buzzard320eigenvarieties}.

Let $A$ be a $\QQ_p$-affinoid algebra and $X:=\mathrm{Sp}(A)$ be the corresponding affinoid space. Let $\kappa=(n,\nu):\calO_p^\times\times \calO_p^\times\rightarrow A^\times$ be a weight. We write $r(\kappa):=(p^{-m_{\kappa,i}})_{i\in I}$. Fix $r=(r_i)_{i\in I}=(p^{-m_i})_{i\in I}\in \calN^I$. Define $\calA_{\kappa,r}$ to be the $\QQ_p$-Banach algebra $\calO(\bbB_r\times X)=\calO_{\bbB_r}\hat{\otimes}_{\QQ_p}A$. Since $\calO_p$ is Zariski dense in $\bbB_r$, we obtain an embedding $\calA_{\kappa,r}\hookrightarrow \calC(\calO_p,A)$ and hence can view $\calA_{\kappa,r}$ as a subspace of $\calC(\calO_p,A)$ via this embedding.

\begin{definition}\label{D:good level structure at p}
We call	$t\in\NN^I$ that \emph{is good for $(\kappa,r)$} if $m_i+t_i\geq m_{\kappa,i}$ for all $i\in I$.
\end{definition}

Fix $t\in \NN^I$ that is good for $(\kappa,r)$. It is easy to check that the right action of $\bbM_{\pi^t}$ on $\calC(\calO_p,A)$ leaves the subspace $\calA_{\kappa,r}$ stable, and hence  \eqref{E:equation to define Iwahori action on continuous functions} defines a right action of the monoid $\bbM_{\pi^t}$ on $\calA_{\kappa,r}$.

Fix an open compact subgroup $K^p\subset D_f^{(p),\times}$ and $t\in\NN^I$ which is good for $(\kappa,r)$. As in \cite{buzzard320eigenvarieties}, we define the space of $r$-overconvergent automorphic forms of weight $\kappa$ and level $K^p\Iw_{\pi^t}$ to be the $A$-module
\[
S_\kappa^D(K^p\Iw_{\pi^t},r)=\{\phi: D^\times \setminus D_f^\times /K^p\rightarrow \calA_{\kappa,r}\;|\;\phi(gu)=\phi(g)\circ u,\text{~for all~} g\in D_f^\times,u\in \Iw_{\pi^t} \}.
\]

\subsection{Hecke operators}\label{subsection: Hecke operators}

Let $\kappa=(n,\nu):\calO_p^\times\times \calO_p^\times\rightarrow A^\times$, $r=(r_i)_{i\in I}\in \calN^I$, and $t\in \NN^I$ be the same as the previous section. Fix an open compact subgroup $K^p$ of $D_f^{(p),\times}$. In the rest of this paper, we will make the following convention on the choice of $K^p$.
\begin{convention}
	$K^p$ is of the form $U_0^{(p)}(\gothn)$ or $U_1^{(p)}(\gothn)$ for some ideal $\gothn$ of $\calO_F$ prime to $p\gothd$.
\end{convention}
Let $K=K^p\Iw_{\pi^t}$ be the open compact subgroup of $ D_f^\times$. Now we recall the definition of Hecke operators on the space $S_\kappa^D(K^p\Iw_{\pi^t},r)$.

Let $v$ be a finite place of $F$ where $D$ splits and fix a uniformizer $\pi_v$ of $F_v$. In particular, when $v=i\in I$ is over $p$, we choose $\pi_v=\pi_i$ as before. Let $\eta_v=\left[ \begin{array}{cc}
\pi_v&0\\
0&1\end{array}\right]\in \GL_2(F_v)$ which is viewed as an element in $D_f^\times$ whose components away from $v$ are the identity element. 
We fix a double coset decomposition 
\begin{equation*}\label{E:double coset decomposition to define the tame Hecke operators Uv}
K\eta_v K=\bigsqcup_j Kx_j,
\end{equation*}
with $x_j\in D_f^\times$. We use $x_{j,p}$ to denote the $v$-component of $x_j$. The Hecke operator $U_v$ on  $S_\kappa^D(K^p\Iw_{\pi^t},r)$ is defined by
\begin{equation}\label{E:formula to define tame Hecke operator Uv}
U_{v}(\phi):=\sum_j\phi|_{x_j} 
\end{equation}
where $\phi|_{x_j}(x):=\phi(xx_j^{-1})\circ x_{j,p}$ for $\phi\in S_\kappa^D(K^p\Iw_{\pi^t},r).$


When $v=i\in I$, the operator $U_i$ depends on the choice of the uniformizer $\pi_i$, and we will write $U_{\pi_i}$ for $U_i$. For later computations, we give a more explicit expression of the $U_{\pi_i}$ operator. 
We fix a double coset decomposition 
\begin{equation*}\label{E:double coset decomposition to define Hecke operators}
\Iw_{\pi^t}\eta_i\Iw_{\pi^t}=\bigsqcup_{j=0}^{p-1}\Iw_{\pi^t}v_{i,j},
\end{equation*}
with $v_{i,j}=\begin{bmatrix}
\pi_i&0\\
j\pi_i^{t_i}&1\end{bmatrix}\in \bbM_{\pi_i^{t_i}}\subset \bbM_{\pi^t}$ for  $j=0,\dots,p-1$.  
The $U_{\pi_i}$-operator can be described by 
\begin{equation}\label{E:formula to define Up operator}
U_{\pi_i}(\phi)=\sum_j\phi|_{v_{i,j}} \text{~with~} \phi|_{v_{i,j}}(x)=\phi(xv_{i,j}^{-1})\circ v_{i,j}.
\end{equation}

When $v$ is prime to $\gothn p$, we view $\pi_v$ as a central element in $D_f^\times$. Choose a double coset decomposition 
\begin{equation*}\label{E:double coset decomposition to define the tame Hecke operators Sv}
K\pi_v K=\bigsqcup_j Ky_j,
\end{equation*}
with $y_j\in D_f^\times$. The Hecke operator $S_v$ on  $S_\kappa^D(K^p\Iw_{\pi^t},r)$ is defined by
\begin{equation}\label{E:formula to define tame Hecke operator Sv}
S_{v}(\phi)=\sum_j\phi|_{y_j}.
\end{equation}

 The Hecke operators $U_v$'s and $S_v$'s for all possible $v$'s commute with each other. In particular,  we define $U_{\pi}=\prod\limits_{i\in I}U_{\pi_i}$. 

By \cite[Lemma~12.2]{buzzard320eigenvarieties}, the operator $U_{\pi}$ acting on $S_\kappa^D(K^p\Iw_{\pi^t},r)$ is compact. Thus, its characteristic power series is well-defined by
 \[
 \mathrm{Char}(U_{\pi};S_\kappa^D(K^p\Iw_{\pi^t},r)):=\det(\mathrm{I}-XU_{\pi}|_{S_\kappa^D(K^p\Iw_{\pi^t},r)})\in A\{\!\{X\}\!\}.
 \]

We make two remarks on the characteristic power series $\mathrm{Char}(U_{\pi};S_\kappa^D(K^p\Iw_{\pi^t},r))$.
\begin{remark}
	\begin{enumerate}
		\item By \cite[Lemma~13.1]{buzzard320eigenvarieties}, $\mathrm{Char}(U_{\pi};S_\kappa^D(K^p\Iw_{\pi^t},r))$ is independent of $r$ (as long as $t$ is good for the pair $(\kappa,r)$).
		\item By \cite[Proposition~11.1]{buzzard320eigenvarieties}, there is a canonical Hecke equivariant isomorphism $$S_\kappa^D(K^p\Iw_{\pi^t},r)\cong S_\kappa^D(K^p\Iw_\pi,r')$$ for some explicit $r'\in (\calN^\times)^I$. It is a well-known phenomenon that the characteristic power series does not see the higher $\Iw_{\pi^t}$-structure. For this reason,  we can and will only work over the space $S_\kappa^D(K^p\Iw_\pi,r)$, i.e. $t=(1,\dots,1)$.
	\end{enumerate}
\end{remark}

\subsection{The eigenvariety datum for $D$}\label{subsection:the spectral varieties and eigenvarieties}
In this section, we recall the construction of the spectral varieties and eigenvarieties associated to $D$ as constructed in \cite[Part~\uppercase\expandafter{\romannumeral3}]{buzzard320eigenvarieties}. We will follow \cite[\S4]{hansen2017universal} to define an eigenvariety datum as we will use Hansen's interpolation theorem to translate our results to Hilbert modular eigenvarieties (see \S\ref{section:Application to Hilbert modular eigenvarieties} below).

We start with the definition of the weight space. As in \cite{andreatta2016arithmetique}, the weight space $\calW$ is defined to be the rigid analytic space over $\QQ_p$ associated to the complete group algebra $\ZZ_p\llbracket \calO_p^\times\times \ZZ_p^\times \rrbracket$. A closed point of $\calW$ is a continuous character $\chi=(\nu,\mu):\calO_p^\times\times \ZZ_p^\times\rightarrow \CC_p^\times$. 

\begin{remark}
	It will be helpful to compare the weight space constructed in \cite{buzzard320eigenvarieties} with the weight space constructed above. 
	
	First we recall its construction in \cite{buzzard320eigenvarieties}. Let $G\subset \calO_F^\times$ be a subgroup of finite index. Recall that we regard $G$ as a subgroup of $\calO_p^\times \times \calO_p^\times$ via the embedding $\iota_F:\calO_F^\times\rightarrow \calO_p^\times \times \calO_p^\times$, $x\mapsto (x,x^2)$.  We denote by $\Gamma_G$ the quotient of $\calO_p^\times \times \calO_p^\times$ by the closure of $G$. By \cite[Lemma~8.2]{buzzard320eigenvarieties}, the functor which sends a $\QQ_p$-rigid space $U$ to the group of continuous group homomorphism $\Gamma_G\rightarrow \calO(U)^\times$ is represented by a $\QQ_p$-rigid space $\calX_{\Gamma_G}$ (actually $\calX_{\Gamma_G}$ is isomorphic to the product of an open unit polydisc and a finite rigid space over $\QQ_p$). The weight space $\calW^{\mathrm{full}}$ is the direct limit $\varinjlim\limits_G \calX_{\Gamma_G} $ as $G$ varies over the set of subgroups of finite index of $\calO_F^\times$. 
	If $G_1\subset G_2\subset \calO_F^\times$ are two subgroups of $\calO_F^\times$ of finite indices, the natural homomorphism $\Gamma_{G_1}\rightarrow \Gamma_{G_2}$ is surjective with finite kernel, and hence the corresponding map $\calX_{\Gamma_{G_2}}\rightarrow \calX_{\Gamma_{G_1}}$ is a closed immersion and geometrically identifies $\calX_{\Gamma_{G_2}}$ with a union of components of $\calX_{\Gamma_{G_1}}$. Therefore, $\{\calX_{\Gamma_{G}}|G \text{~is a subgroup of~} \calO_F^\times \text{~of finite index} \}$ forms an admissible cover of the weight space $\calW$. It follows that the dimension of $\calW^{\mathrm{full}}$ is $g+1+\delta$, where $\delta$ is the Leopoldt defect for $(F,p)$.
	
	Now let's explain the relation between the two weight spaces $\calW$ and $\calW^{\mathrm{full}}$. Define a continuous homomorphism $\phi_\eta:\calO_p^\times\times\calO_p^\times\rightarrow \calO_p^\times\times\ZZ_p^\times$, $(\alpha,\beta)\mapsto (\alpha^{-2}\beta,\Nm_{F/\QQ}(\alpha))$. The kernel $\ker(\phi_\eta)$ contains the group $G=\calO_F^{\times,+}$ and hence $\phi_\eta$ induces a map $\eta:\calW\rightarrow \calX_{\Gamma_{G}}$ of rigid analytic spaces over $\QQ_p$ and hence a map $\eta:\calW\rightarrow \calW^{\mathrm{full}}$. Since $\dim\calW=g+1$, if the Leopoldt's conjecture holds for $(F,p)$, the map $\eta:\calW\rightarrow \calW^{\mathrm{full}}$ would be an immersion which identifies $\calW$ with a set of connected components of $\calW^{\mathrm{full}}$.
	
	There are two reasons why we use the weight space $\calW$ instead of $\calW^{\mathrm{full}}$: first, we do not even know the dimension of the weight space $\calW^{\mathrm{full}}$ without assuming the Leopoldt's conjecture for $(F,p)$ and it is rather complicated to describe the connected components of $\calW^{\mathrm{full}}$ and classical weights on every component; second, as an application we will use $p$-adic Jacquet-Langlands correspondence to translate our results to Hilbert modular eigenvarieties as constructed in \cite{andreatta2016arithmetique}, and the weight space $\calW$ is the one considered there.

	Since the action of the monoid $\bbM_{\pi}$ on the spaces $\calA_{\kappa,r}$ depends on the characters of $\calO_p^\times\times\calO_p^\times$, it is useful to have an explicit expression of the map $\calW\rightarrow \calW^{\mathrm{full}}$ in term of characters. Let $A$ be an affinoid $\QQ_p$-algebra and $\chi=(\nu,\mu):\calO_p^\times\times\ZZ_p^\times\rightarrow A^\times$ be a continuous character. Then the composite $\kappa=\chi\circ \phi_\eta:\calO_p^\times\times\calO_p^\times\rightarrow A^\times$ is of the form $\kappa=(n,\nu)$, where $n:\calO_p^\times\rightarrow A^\times$ is the character defined by $\alpha\mapsto \nu(\alpha)^{-2}\cdot (\mu\circ\Nm_{F/\QQ}(\alpha))$. The character $\kappa$ is called the \emph{character of $\calO_p^\times\times\calO_p^\times$ associated to $\chi$}.
\end{remark}

\begin{convention}\label{Convention: a weight is the character associated to a character of the weight space}
	In the rest of this paper, a weight $\kappa:\calO_p^\times\times\calO_p^\times\rightarrow A^\times$ is always the character associated to some character $\chi$ of $\calO_p^\times\times\ZZ_p^\times$ as constructed above.
\end{convention}

 We fix an open compact subgroup $K^p$ of $D_f^{(p),\times}$ as before. For an affinoid subdomain $U=\mathrm{Sp}(A)\subset \calW$, we denote by $\chi_U:\calO_p^\times\times \ZZ_p^\times\rightarrow A^\times$  the universal character and $\kappa_U:\calO_p^\times\times\calO_p^\times\rightarrow A^\times$ the character associated to $\chi_U$. For simplicity, we use $f_U(X)$ to denote $\mathrm{Char}(U_\pi,S_{\kappa_U}^D(K^p\Iw_{\pi},r(\kappa_U)))\in A\{\!\{X\}\!\}$.

The spectral variety $\mathrm{wt}:\calZ_D\rightarrow \calW$  is defined to be the Fredholm hypersurface of $\calW\times \AAA^1$ associated to the compact operator $U_\pi$. More precisely, for every affinoid $U$ of $\calW$, $\mathrm{wt}^{-1}(U)\subset U\times \AAA^1$ is the zero locus of the characteristic power series $f_U(X)$. We use $\mathrm{wt}:\calZ_D\rightarrow \calW$ (resp. $a_\pi^{-1}:\calZ_D\rightarrow \AAA^1$) to denote the first (resp. second) projection.

\begin{definition}
	An affinoid open subdomain $Y\subset \calZ_D$ is called \emph{slope adapted} if there exist $h\in\QQ$ and an affinoid $U=\mathrm{Sp}(A)\subset \calW$, such that $Y=A\langle p^hX \rangle/(f_U(X))$ is an affinoid open subset of $\calZ_D$ and the natural map $Y\rightarrow U$ is finite and flat. 
\end{definition}
For such an slope adapted affinoid $Y$, the characteristic power series $f_U(X)$ admits a slope $\leq h$-factorization $f_U(X)=Q(X)\cdot R(X)$ and $\calO(Y)\cong A[X]/(Q(X))$. More precisely, we have the following characterization of $Q(X)$ and $R(X)$:
\begin{itemize}
	\item $Q(X)$ is a polynomial whose leading coefficient is a unit and the slopes of the Newton polygon of $Q(X)$ are all $\leq h$; and 
	\item $R(X)$ is an entire power series and the slopes of the Newton polygon of $R(X)$ are all $>h$.
\end{itemize}
Moreover, it follows from \cite[Theorem~3.3]{buzzard320eigenvarieties} that the space $S_{\kappa_U}^D(K^p\Iw_{\pi},r(\kappa_U))$ admits a slope $\leq h$-decomposition $S_{\kappa_U}^D(K^p\Iw_{\pi},r(\kappa_U))=S_{\kappa_U}^{D,\leq h}\oplus S_{\kappa_U}^{D,>h}$ in the sense of \cite[Definition~2.3.1]{hansen2017universal}. In particular, $S_{\kappa_U}^{D,\leq h}$ is an $\calO(Y)\cong A[X]/(Q(X))$-module via the map $X\mapsto U_\pi^{-1}$. Recall that by \cite[Proposition~4.1.4]{hansen2017universal}, the collection of slope adapted affinoids forms an admissible cover of $\calZ_D$. Hence the association $Y\mapsto S_{\kappa_U}^{D,\leq h}$ defines a coherent sheaf $\calM_D$ on $\calZ_D$.

Let $\bbT$ be the Hecke algebra over $\QQ_p$ generated by the Hecke operators $U_v$'s and $S_v$'s for all finite places $v$ of $F$ not dividing $p\gothn\gothd$, and all the $U_{\pi_i}$'s for all $i\in I$ defined in \S\ref{subsection: Hecke operators} and $\psi:\bbT\rightarrow \End_{\calZ_D}(\calM_D)$ be the natural homomorphism of $\QQ_p$-algebras. The tuple $\gothD=(\calW,\calZ_D,\calM_D,\bbT,\psi)$ is an eigenvariety datum in the sense of \cite[Definition~4.2.1]{hansen2017universal}. We use $\calX_D$ to denote the associated eigenvariety together with the finite morphism $\pi:\calX_D\rightarrow \calZ_D$ and a morphism $w:\calX_D\rightarrow \calW$. It follows from \cite[Theorem~4.2.2]{hansen2017universal} that every point $x$ of $\calX_D$ lying over $z\in \calZ_D$ corresponds to a generalized eigenspace for the action of $\bbT$ on $\calM_D(z)$. In particular, we use $a_i(x)$ to denote the eigenvalue of the $U_{\pi_i}$-operator for all $i\in I$.

Let $\calW^\ast$ be the rigid analytic space associated to $\ZZ_p\llbracket \calO_p^\times  \rrbracket$. The homomorphism $\phi_\rho:\calO_p^\times\rightarrow\calO_p^\times\times\ZZ_p^\times$, $\alpha\mapsto(\alpha^{-2},\Nm_{F/\QQ}(\alpha))$ induces a map of rigid spaces $\rho:\calW\rightarrow \calW^\ast$. Explicitly $\rho$ maps a weight $(\nu,\mu)$ of $\calO_p^\times\times\ZZ_p^\times$ to the character $n$ of $\calO_p^\times$ defined above. Each closed point of $\calW^\ast$ corresponds to a continuous character $n:\calO_p^\times\rightarrow \CC_p^\times$. The coordinates $T^\ast_i$ for $i\in I$ of $n$ defined in \S\ref{subsection:Notations in Automorphic forms for definite quaternion algebras and completed homology} form a set of parameters on $\calW^\ast$.
Let $T_i:=\phi_\rho(T_i^\ast)$ and $T:=[1,\exp(p)]-1\in \ZZ_p\llbracket \calO_p\times \ZZ_p\rrbracket $. The set $\{(T_i)_{i\in I}, T \}$ forms a full set of parameters on the weight space $\calW$. Let $J$ be a nonempty subset of $I$ and 
$r_J:=(r_j)_{j\in J}\in (0,1)^J$. We  denote by $\calW^{>r_J}$ the subspace of $\calW$ where $|T_j|_p>r_j$ for all $j\in J$. Set  $\calZ_D^{>r_J}:=\mathrm{wt}^{-1}(\calW^{>r_J})$ and $\calX_D^{>r_J}:=w^{-1}(\calW^{>r_J})$.

\begin{remark}
	 We can use a similar trick as in \cite[Remark~2.14]{liu2017eigencurve} to replace the open compact subgroup $K^p$ be a sufficiently small one. For an ideal $\gothn'$ of $\calO_F$ prime to $p\delta_D$,  let $K'^p=K^p\cap U_0^{(p)}(\gothn')$ or $K^p\cap U_1^{(p)}(\gothn')$. Then the spectral variety (resp. eigenvariety) for $K^p$ is a union of connected components of the spectral variety (resp. eigenvariety) for $K'^p$. Hence it suffices to prove the theorem for sufficiently small $K^p$.
\end{remark}

\subsection{Integral model of the space of $p$-adic automorphic forms}\label{S:Integral model of the space of $p$-adic automorphic forms}

Let $A$ be a topological ring in which $p$ is topologically nilpotent and $\chi=(\nu,\mu):\calO_p^\times\times\ZZ_p^\times\rightarrow A^\times$ be a continuous character. Let $\kappa=(n,\nu):\calO_p^\times\times\calO_p^\times\rightarrow A^\times$ be the associated character of $\calO_p^\times\times\calO_p^\times$. Recall that from $\kappa$, we can define a character $\kappa_T$ (resp. $\kappa_B$, $\kappa_{\bar{B}}$) of $T(\calO_p)$ (resp. $B(\calO_p)$, $\bar{B}(\pi\calO_p)$) as in  \S\ref{section:induced representations}.  For any $i\in I$, we use $\kappa_i=(n_i,\nu_i):\calO_{\gothp_i}^\times\times \calO_{\gothp_i}^\times\rightarrow A^\times$ to denote the $i$-component of $\kappa$. We extend the character $\nu$ to a character $\nu:F_p^\times\rightarrow A^\times$ as before.

 Similar with \cite[\S2.7]{liu2017eigencurve}, we make the following definition.
 \begin{definition}\label{D:space of  integral p-adic automorphic forms}
 	We define the space of  integral $p$-adic automorphic forms for $D$ to be 
 	\begin{align*}
 	S_{\kappa,I}^D(K^p,A) &:= \{\phi:D^\times \setminus D_f^\times /K^p\rightarrow \Ind_{B(\calO_p)}^{\Iw_\pi}(\kappa)| \phi(xu)=\phi(x)\circ u, \text{~for all~} x\in D_f^\times,u\in \Iw_\pi \}\\
 	&=\{\phi:D^\times \setminus D_f^\times /K^p\rightarrow \calC(\calO_p,A)| \phi(xu)=\phi(x)\circ u, \text{~for all~} x\in D_f^\times,u\in \Iw_\pi \}.
 	\end{align*}
 \end{definition}

\begin{remark}\label{R:comparison between spaces of integral p-adic automorphic forms of different levels}
	For any $t\in \NN^I$, we define the space of integral $p$-adic automorphic forms of level $K^p\Iw_{\pi^t}$ by
	\[
	S_{\kappa,I}^D(K^p\Iw_{\pi^t},A) := \{\phi:D^\times \setminus D_f^\times /K^p\rightarrow \calC(\calO_p,A)| \phi(xu)=\phi(x)\circ u, \text{~for all~} x\in D_f^\times,u\in \Iw_{\pi^t} \}.
	\]
	This definition gives no generalization of Definition~\ref{D:space of  integral p-adic automorphic forms}. In fact, for $t\in \NN^I$ and $s\in \ZZ_{\geq 0}^I$, we have an Hecke equivariant isomorphism between the spaces $S_{\kappa,I}^D(K^p\Iw_{\pi^t},A)$ and $S_{\kappa,I}^D(K^p\Iw_{\pi^{t+s}},A)$. This fact is well known to experts. For completeness, we give a sketch of the construction of the isomorphisms and refer \cite[\S11]{buzzard320eigenvarieties}, especially Proposition~$11.1$ for more details.
	
	For $\phi\in S_{\kappa,I}^D(K^p\Iw_{\pi^t},A)$, we define $\psi:D^\times \setminus D_f^\times /K^p\rightarrow \calC(\calO_p,A)$ by 
	\[
	\psi(x)=\phi\left(x\begin{bmatrix}
	\pi^{-s}&0\\
	0&1\end{bmatrix}\right)\circ \begin{bmatrix}
	\pi^{s}&0\\
	0&1\end{bmatrix}.
	\]
	One can check $\psi\in S_{\kappa,I}^D(K^p\Iw_{\pi^{t+s}},A)$ and we obtain a map $\iota_1:S_{\kappa,I}^D(K^p\Iw_{\pi^{t}},A)\rightarrow S_{\kappa,I}^D(K^p\Iw_{\pi^{t+s}},A)$. 
	
	Conversely, given $\psi\in S_{\kappa,I}^D(K^p\Iw_{\pi^{t+s}},A)$, we define $\phi:D^\times \setminus D_f^\times /K^p\rightarrow \calC(\calO_p,A)$ by
	\[
	\phi(x)(z'):=\psi\left(x\begin{bmatrix}
	\pi^{s}&\alpha\\
	0&1\end{bmatrix}\right)(z), \text{~with~} z'=\pi^sz+\alpha \text{~for some ~} z\textrm{~and~}\alpha\in \calO_p.
	\]
	One can verify that this definition is independent of the expression of $z'=\pi^sz+\alpha\in \calO_p$ and $\phi\in S_{\kappa,I}^D(K^p\Iw_{\pi^{t}},A)$. So we obtain another map $\iota_2:S_{\kappa,I}^D(K^p\Iw_{\pi^{t+s}},A)\rightarrow S_{\kappa,I}^D(K^p\Iw_{\pi^{t}},A)$. The two maps $\iota_1$ and $\iota_2$ establish the desired Hecke equivariant isomorphism.
\end{remark}

For a better interpretation of Definition~\ref{D:space of generalized integral p-adic automorphic forms} below, it is useful to explain the relation between the space of integral $p$-adic automorphic forms for $D$ and the completed homology group of the Shimura varieties associated to the algebraic group $\Res_{F/\QQ}D^\times$. Recall the completed homology group (of degree $0$) is defined as 
\[
\tilde{\rmH}_0:= \varprojlim_{K_p}\rmH_0(D^\times \setminus D_f^\times /K^pK_p,\ZZ_p)=\varprojlim_{K_p}\ZZ_p[D^\times \setminus D_f^\times /K^pK_p],
\]
where in the inverse limit, $K_p$ ranges over all open compact subgroups of $\GL_2(\calO_p)$. For each $K_p$, the group $\rmH_0(D^\times \setminus D_f^\times /K^pK_p,\ZZ_p)$ is endowed with the $p$-adic topology and $\tilde{\rmH}_0$ is endowed with the inverse limit topology. The complete homology group $\tilde{\rmH}_0$ carries a natural right action of $\GL_2(F_p)$ induced by the right translation of $\GL_2(F_p)$ on $D_f^\times$. It follows from \cite[Theorem~2.2]{emerton2014completed} that $\tilde{\rmH}_0$ is a finitely generated $\ZZ_p\llbracket K_p\rrbracket$-module for any (nonempty) open compact subgroup $K_p$ of $\GL_2(F_p)$.

\begin{proposition}\label{P:integral model of the space of p-adic autormorphic forms and complete homology groups}
	Assume that $A$ is linearly topologized, i.e. $0\in A$ has a fundamental system of neighborhoods consisting of ideals. There is a natural isomorphism of $A$-modules: 
	\[
	S_{\kappa,I}^D(K^p,A)\rightarrow \Hom_{cts,\bar{B}(\pi\calO_p)}(\tilde{\rmH}_0, A(\kappa_{\bar{B}})).
	\]
\end{proposition}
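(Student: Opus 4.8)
The plan is to construct the isomorphism directly from the definitions, unwinding both sides into the same concrete object. First I would recall that, by Definition~\ref{D:space of  integral p-adic automorphic forms}, an element $\phi \in S_{\kappa,I}^D(K^p,A)$ is a function on $D^\times\backslash D_f^\times/K^p$ valued in $\calC(\calO_p,A)$ satisfying $\phi(xu) = \phi(x)\circ u$ for $u \in \Iw_\pi$. Via the Iwasawa-decomposition identification $\Ind_{B(\calO_p)}^{\Iw_\pi}(\kappa) \cong \calC(\calO_p,A)$ of \S\ref{section:induced representations} (using the coordinate $z \mapsto f(\bar n(z))$), such a $\phi$ is the same as an $\Iw_\pi$-equivariant map into $\Ind_{B(\calO_p)}^{\Iw_\pi}(\kappa_B)$; restricting the target functions to $\bar N(\pi\calO_p)\cong\calO_p$ and using that $B(\calO_p)\cdot\bar N(\pi\calO_p)$ is open and dense in $\Iw_\pi$, one can equally well describe $\phi$ as a function $D_f^\times \to \calC(\calO_p,A)$ that is left $D^\times$-invariant, right $K^p$-invariant, and satisfies an equivariance under the $\bar B(\pi\calO_p)$-action via $\kappa_{\bar B}$ together with compatibility under $\bar N(\pi\calO_p)$-translation.

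Next I would unwind the right-hand side. Since $\tilde{\rmH}_0 = \varprojlim_{K_p} \ZZ_p[D^\times\backslash D_f^\times/K^pK_p]$, a continuous $\bar B(\pi\calO_p)$-equivariant homomorphism $\tilde{\rmH}_0 \to A(\kappa_{\bar B})$ is, by the universal property of the inverse limit and the linear topology on $A$, determined by a compatible system of maps $\ZZ_p[D^\times\backslash D_f^\times/K^pK_p] \to A$, i.e. by a function on $\varinjlim_{K_p} (D^\times\backslash D_f^\times/K^pK_p) = D^\times\backslash D_f^\times/K^p$ valued in $A$ (continuity being automatic once one checks locally-constancy along the $K_p$'s), subject to the $\bar B(\pi\calO_p)$-equivariance $\phi(x\cdot b) = \kappa_{\bar B}(b)^{-1}\phi(x)$ (with the appropriate twist). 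The bridge between the two sides is the standard "amplitude" move: a function $\tilde\phi$ on $D^\times\backslash D_f^\times/K^p$ with values in $A$ and with this $\bar B(\pi\calO_p)$-equivariance is the same datum as a function $\phi$ on $D^\times\backslash D_f^\times/K^p$ with values in $\calC(\calO_p,A) = \calC(\bar N(\pi\calO_p),A)$ satisfying the full $\Iw_\pi$-equivariance of $S_{\kappa,I}^D(K^p,A)$ — one recovers $\phi(x)$ from $\tilde\phi$ by the rule $\phi(x)(z) = \tilde\phi\bigl(x\,\bar n(z)\bigr)$ (times the character normalization), and conversely $\tilde\phi(x) = \phi(x)(0)$. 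I would verify that these two assignments are mutually inverse $A$-module maps and that both respect the stated equivariance constraints; this is a formal but slightly fiddly check of matching the $B(\calO_p)$-side action (which governs $\Ind_{B(\calO_p)}^{\Iw_\pi}(\kappa)$) against the $\bar B(\pi\calO_p)$-side action (which governs $A(\kappa_{\bar B})$) through the anti-involution $\ast$ and Iwasawa decomposition.

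I expect the main obstacle to be the bookkeeping of topologies and the precise matching of the two Borel actions. On the one hand one must check that "continuous homomorphism out of $\tilde{\rmH}_0$" really does cut down to a locally constant (hence genuinely $\calC$-valued) function after taking the inverse limit, which uses that $\tilde{\rmH}_0$ is finitely generated over $\ZZ_p\llbracket K_p\rrbracket$ (Emerton's theorem, cited in the excerpt) together with the hypothesis that $A$ is linearly topologized so that $\Hom_{cts}(\varprojlim,-)$ commutes with the relevant limits/colimits. On the other hand, the character $\kappa_{\bar B}$ on $\bar B(\pi\calO_p)$ and the character $\kappa_B$ on $B(\calO_p)$ are related by the anti-involution $\ast$ on $\Iw_\pi$, and one must confirm that the right $\Iw_\pi$-action $f\circ m(g) = f(gm^\ast)$ used to define $S_{\kappa,I}^D(K^p,A)$ translates, under the dictionary above, exactly into the left $\bar B(\pi\calO_p)$-action on $\tilde{\rmH}_0$ twisting into $A(\kappa_{\bar B})$ — including getting the $\nu$-factor (extended by $\nu(\pi_i)=1$) correct. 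Once these compatibilities are in place, bijectivity and $A$-linearity are immediate from the explicit mutually inverse formulas, and the proposition follows.
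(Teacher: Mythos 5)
Your proposal follows the same route as the paper: evaluate $\phi(x)$ at the identity (equivalently at $z=0$) for the forward map, recover $\phi(x)$ from $\tilde\phi$ by translation for the inverse, and use the linear topology on $A$ for the continuity bookkeeping. Two corrections are needed, though. The recovery formula should be $\phi(x)(z)=\tilde\phi\bigl(x\,\bar{n}(z)^\ast\bigr)$, with $\bar{n}(z)^\ast=\Matrix{1}{z}{0}{1}\in N(\calO_p)$ the \emph{upper} unipotent, not $\tilde\phi\bigl(x\,\bar{n}(z)\bigr)$: since the right $\Iw_\pi$-action on $\Ind_{B(\calO_p)}^{\Iw_\pi}(\kappa_B)$ is $f\circ m(g)=f(gm^\ast)$, the translation must also go through $\ast$, and with $\bar{n}(z)$ in place of $\bar{n}(z)^\ast$ the required $\Iw_\pi$-equivariance of $\phi$ already fails for $u\in\bar{N}(\pi\calO_p)$. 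You flag this as a ``fiddly check'' you would carry out; carrying it out catches the slip. Separately, ``universal property of the inverse limit'' is a misattribution here — that property governs maps \emph{into} $\varprojlim$, not out of it; the correct argument, as in the paper, is that for each open ideal $I\subset A$ the reduction $f\bmod I$ is locally constant on $D^\times\backslash D_f^\times/K^p$ because $A/I$ is discrete, and this is precisely what the hypothesis that $A$ be linearly topologized buys you when checking that the unwound $\phi(x)$ lands in $\calC(\Iw_\pi,A)$.
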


\begin{proof}
	This result is well known to experts. We only give a sketch of the construction of the isomorphism here. 
	
	By definition, the induced representation $\Ind_{B(\calO_p)}^{\Iw_{\pi}}(\kappa)$ is a subspace of $\calC(\Iw_{\pi},A)$. The right action of $\Iw_{\pi}$ on $\Ind_{B(\calO_p)}^{\Iw_{\pi}}(\kappa)$ can be extended to $\calC(\Iw_{\pi},A)$ by the same formula: $f\circ u(g)=f(gu^\ast)$. It is worthwhile to remark here that the action of the monoid $\bbM_1$ cannot be extended to $\calC(\Iw_{\pi},A)$.
	
	Given a map $\phi:D^\times \setminus D_f^\times /K^p\rightarrow \calC(\Iw_{\pi},A)$, with the property that $\phi(xu)=\phi(x)\circ u$, for all $x\in D_f^\times$ and $u\in \Iw_{\pi}$. We define a map $\tilde{f}:D^\times \setminus D_f^\times /K^p\rightarrow A$ by $\tilde{f}(x)=\phi(x)(I_2)$, where $I_2\in \Iw_{\pi}$ is the identity matrix. Since $\phi(x)\in \calC(\Iw_{\pi},A)$, for all $x\in D_f^\times$, $\tilde{f}$ induces a continuous map $f:\tilde{\rmH}_0\rightarrow A$. Moreover, when $\phi\in S_{\kappa,I}^D(K^p)$, it is straightforward to check that $f\in \Hom_{cts,\bar{B}(\pi\calO_p)}(\tilde{\rmH}_0, A(\kappa_{\bar{B}}))$. 
	
	Conversely, given a continuous map $f:\tilde{\rmH}_0\rightarrow A$, we view $f$ as a map $D^\times \setminus D_f^\times /K^p\rightarrow A$, such that $f \mod I$ is locally constant on cosets of $\GL_2(F_p)$ for all nonempty open ideals $I$ of $A$ as $A$ is linearly topologized. Define $\phi:D^\times \setminus D_f^\times /K^p\rightarrow \calC(\Iw_{\pi},A)$ by $\phi(x)(g)=f(xg^\ast)$, for $x\in D_f^\times$, $g\in \Iw_{\pi}$. It is easy to check that $\phi(xu)=\phi(x)\circ u$, for $x\in D_f^\times$ and $u\in \Iw_{\pi}$, and if $f\in \Hom_{cts,\bar{B}(\pi\calO_p)}(\tilde{\rmH}_0, A(\kappa_{\bar{B}}))$, then $\phi(x)\in \Ind_{B(\calO_p)}^{\Iw_{\pi}}(\kappa)$, for all $x\in D_f^\times$. 
\end{proof}

\begin{remark}
	Conceptually, Proposition~\ref{P:integral model of the space of p-adic autormorphic forms and complete homology groups} follows from the tautological isomorphism
	\begin{equation}\label{E:isomorphism of p-adic automoprhism in term of completed homology}
	S_{\kappa,I}^D(K^p,A)=\Hom_{\Iw_p}(D^\times\setminus D_f^\times/K^p,\Ind_{B(\calO_p)}^{\Iw_\pi}(\kappa))\cong \Hom_{\ZZ_p\llbracket \Iw_\pi\rrbracket}(\tilde{\rmH}_0,\Ind_{B(\calO_p)}^{\Iw_\pi}(\kappa)))
	\end{equation}
	and Frobenius reciprocity. However, the induced representation $\Ind_{B(\calO_p)}^{\Iw_\pi}(\kappa)$ is slightly different from the standard definition. This is why we give a concrete proof here.
\end{remark}

Inspired by Proposition~\ref{P:integral model of the space of p-adic autormorphic forms and complete homology groups}, if we replace the group $\bar{B}(\pi\calO_p)$ by $\bar{B}(\pi_i\calO_{\gothp_i})$ for some $i\in I$, we obtain the space $\Hom_{\bar{B}(\pi_i\calO_{\gothp_i})}(\tilde{\rmH}_0, A(\kappa_{\bar{B}}))$ that contains $S_{\kappa,I}^D(K^p,A)$. Elements in $\Hom_{\bar{B}(\pi_i\calO_{\gothp_i})}(\tilde{\rmH}_0, A(\kappa_{\bar{B}}))$ looks like automorphic forms at the place $i\in I$, and are continuous functions on the complete homology $\tilde{\rmH}_0$ at all other places $i'\neq i$. For our argument, we need a generalization of the space $\Hom_{\bar{B}(\pi_i\calO_{\gothp_i})}(\tilde{\rmH}_0, A(\kappa_{\bar{B}}))$. First we introduce some notations.

\begin{notation}\label{N:J component of integer rings, matrix groups and characters}
	Let $J$ be a subset of $I$. We denote by $J^c$ the complement of $J$ in $I$. 
	\begin{enumerate}
		\item Let $\calO_{p,J}:=\prod\limits_{j\in J}\calO_{\gothp_j}$, $\pi_J:=\prod\limits_{j\in J}\pi_j\in\calO_{p,J}$ and $\Iw_{\pi,J}:=\prod\limits_{j\in J}\Iw_{\pi_j}$ which is regarded as a subgroup of $\Iw_{\pi}$ whose $j'$-component is the identity matrix for all $j'\notin J$.
		\item Similar to (1), we define subgroup $B(\calO_{p,J})$ (resp. $T(\calO_{p,J})$, $\bar{B}(\pi_J\calO_{p,J})$, $\bar{N}(\pi_J\calO_{p,J})$, $D(\calO_{p,J})$) of $B(\calO_p)$ (resp. $T(\calO_p)$, $\bar{B}(\pi\calO_p)$, $\bar{N}(\pi\calO_p)$, $D(\calO_p)$) and the monoid $\bbM_{\pi,J}\subset \rmM_2(\calO_{p,J})$.
		\item Fix $j\in I$. We set $P_j':=\left\{g\in \Iw_{\pi_j,1}| \det(g)=1 \right\}$ and denote by $P_j$  the subgroup of $\Iw_{\pi_j}$ generated by $\begin{bmatrix}
		1&0\\
		0&\Delta_j\end{bmatrix}\subset T(\calO_{\gothp_j})$ and $P_j'$. The map $D(\calO_{\gothp_j})\times P_j\rightarrow \Iw_{\pi_j}$, $(d,g)\mapsto dg$ is an isomorphism of groups (here we use the assumption that $p>2$). $P_j'$ is the pro-$p$ normal subgroup of $P_j$ and $P_j/P_j'\cong 
		\Delta_j$.
		\item Let $P_J:=\prod\limits_{j\in J}P_j$ and $P_J':=\prod\limits_{j\in J}P_j'$.
		\item We use $\chi_J:=(\nu_J,\mu):\calO_{p,J}^\times\times \ZZ_p^\times\rightarrow A^\times$ to denote the restriction to the $J$-component of a continuous homomorphism $\chi=(\nu,\mu):\calO_p^\times\times\ZZ_p^\times\rightarrow A^\times$.
	\end{enumerate}
\end{notation}

In the rest of this section, we fix a nonempty subset $J$ of $I$ and a continuous character $\chi_J=(\nu_J,\mu):\calO_{p,J}^\times\times\ZZ_p^\times\rightarrow A^\times$. We define a character $\kappa_{B,J}:B(\calO_{p,J})\times D(\calO_{p,J^c})\rightarrow A^\times$ by
\[
\kappa_{B,J}\left( \begin{bmatrix}
a_j&b_j\\
0&d_j\end{bmatrix}\right):=\nu_j(a_j/d_j)\mu(\Nm_{F/\QQ}(d_j)), 
\text{~for~} j\in J,
\]
and
\[
\kappa_{B,J}\left( \begin{bmatrix}
a_j&0\\
0&a_j\end{bmatrix}\right):=\mu(\Nm_{F/\QQ}(a_j)), 
\text{~for~} j\notin J.
\]
We remark that if $\chi_J$ is the restriction of a continuous character $\chi:\calO_p^\times\times\ZZ_p^\times\rightarrow A^\times$, then $\kappa_{B,J}$ is the restriction of the character $\kappa_B:B(\calO_p)\rightarrow A^\times$, where $\kappa:\calO_p^\times\times\calO_p^\times\rightarrow A^\times$ is the character associated to $\chi$ and $\kappa_B$ is the character defined in \S\ref{section:induced representations}. 
\begin{definition}
	Under the above notations, we set $\calC_{\chi_J}(\Iw_{\pi},A):=\Ind_{B(\calO_{p,J})\times D(\calO_{p,J^c})}^{\Iw_{\pi}}(\kappa_{B,J})$. In particular, when $J=I$ and $\chi=\chi_J$ is a character of $\calO_p^\times\times\ZZ_p^\times$,  $\calC_{\chi_J}(\Iw_{\pi},A)$ is the induced representation $\Ind_{B(\calO_p)}^{\Iw_{\pi}}(\kappa_B)$ defined in \S\ref{section:induced representations}.
\end{definition}

\begin{remark}\label{remark:some basic properties of the space C_J(kappa,A)}
	\begin{enumerate}
		\item By definition, $\calC_{\chi_J}(\Iw_{\pi},A)$ is an $A$-submodule of the induced representation  $\Ind_{B(\calO_{p,J})}^{\Iw_{\pi}}(\kappa_J)\subset\calC(\Iw_{\pi},A)$. Recall that we have defined a right action of $\Iw_{\pi}$ on $\calC(\Iw_{\pi},A)$ via the formula $f\circ u(g)=f(gu^\ast)$ for $f\in \calC(\Iw_{\pi},A)$ and $u\in \Iw_{\pi}$. This action leaves the two spaces $\calC_{\chi_J}(\Iw_{\pi},A)$ and $\Ind_{B(\calO_{p,J})}^{\Iw_{\pi}}(\kappa_J)$ stable.
		\item Under the Iwasawa decomposition, $\Ind_{B(\calO_{p,J})}^{\Iw_{\pi}}(\kappa_J)$ can be identified with $\calC(\calO_{p,J}\times \Iw_{\pi,J^c},A)$, and the $\Iw_{\pi}$-action on the latter space is given by the following explicit formula:
		\[
		h(z_J,g_{J^c})\circ u=\big(\prod_{j\in J}n_j(c_jz_j+d_j)v_j(\det(u_j)) \big) h(z_J',g_{J^c}u_{J^c}^\ast),
		\]
		where $z_J=(z_j)_{j\in J}\in\calO_{p,J}$, $g_{J^c}=(g_{j'})_{j'\in J^c}\in \Iw_{\pi,J^c}$ and $z_J'=(\frac{a_jz_j+b_j}{c_jz_j+d_j})_{j\in J}\in\calO_{p,J}$. Using this formula, we can extend this action to the monoid $\bbM_{\pi,J}\times \Iw_{\pi,J^c}$. This action also leaves $\calC_{\chi_J}(\Iw_{\pi},A)$ stable.
		\item Under the Iwasawa decomposition and the bijection $D(\calO_{\gothp_j})\times P_j\rightarrow \Iw_{\pi_j}$ for all $j\in J^c$, we can identify $\calC_{\chi_J}(\Iw_{\pi},A)$ with $\calC(\calO_{p,J}\times P_{J^c},A)$.
		\item For $\alpha\in \calO_p^\times$, the diagonal matrix $\mathrm{Diag}(\alpha):= \begin{bmatrix}
		\alpha&0\\
		0&\alpha\end{bmatrix} \in D(\calO_p)\subset \Iw_{\pi}$ with $\alpha$ on the diagonal acts on $\calC_{\chi_J}(\Iw_{\pi},A)$ via multiplication by $\mu(\Nm_{F/\QQ}(\alpha))$. This fact will be used in \S\ref{subsection:Explicit expression of p-adic automorphic forms}.
	\end{enumerate}
\end{remark}

\begin{definition}\label{D:space of generalized integral p-adic automorphic forms}
	Given a nonempty subset $J$ of $I$ and a continuous character $\chi_J:\calO_{p,J}^\times\times\ZZ_p^\times\rightarrow A^\times$, we define the space of generalized integral $p$-adic automorphic forms for $D$ to be
	\[
	S_{\kappa,J}^D(K^p,A):=\{\phi:D^\times \setminus D_f^\times /K^p\rightarrow \calC_{\chi_J}(\Iw_\pi,A)| \phi(xu)=\phi(x)\circ u, \text{~for all~} x\in D_f^\times,u\in \Iw_\pi \}.
	\]
	For $j\in J$, we can use the exact formula \eqref{E:formula to define Up operator} to define the $U_{\pi_j}$-operator on $S_{\kappa,J}^D(K^p,A)$.
\end{definition}

\begin{remark}
	The reason we use the notation $S_{\kappa,J}^D(K^p,A)$ to denote the space of generalized integral $p$-adic automorphic forms is that we want to make it compatible with Definition~\ref{D:space of  integral p-adic automorphic forms}, i.e. when $J=I$, $S_{\kappa,I}^D(K^p,A)$ defined above coincides with the space defined in Definition~\ref{D:space of  integral p-adic automorphic forms}. But we point out that the definition of $S_{\kappa,J}^D(K^p,A)$ only depends on the character $\chi_J$ of $\calO_{p,J}^\times\times\ZZ_p^\times$, even though it may be the restriction of a character $\chi$ of $\calO_p^\times\times\ZZ_p^\times$.
\end{remark}

\begin{remark}\label{R:remark for integral p-adic automorphic forms}
	Let $J_1\subset J_2$ be two nonempty subsets of $I$ and $\chi_{J_2}:\calO_{p,J_2}^\times\times\ZZ_p^\times\rightarrow A^\times$ be a continuous character. Let $\chi_{J_1}$ be the restriction of $\chi_{J_2}$ to $\calO_{p,J_1}^\times\times\ZZ_p^\times$. For later argument, it is useful to explain the relation between the two spaces $S_{\kappa,J_1}^D(K^p,A)$ and $S_{\kappa,J_2}^D(K^p,A)$.
	\begin{enumerate}
		\item Since $B(\calO_{p,J_1})\times D(\calO_{p,J_1^c})$ is a subgroup of $B(\calO_{p,J_2})\times D(\calO_{p,J_2^c})$, we have a natural injection $\calC_{\chi_{J_2}}(\Iw_\pi,A)\hookrightarrow \calC_{\chi_{J_1}}(\Iw_\pi,A)$, which is equivariant under the action of the monoid $\bbM_{\pi,J_1}\times \Iw_{\pi,J_1^c}$. This induces an injection  $S_{\kappa,J_2}^D(K^p,A)\rightarrow S_{\kappa,J_1}^D(K^p,A)$. This map is compatible with the $U_{\pi_j}$-operator on these spaces for all $j\in J_1$.
		\item We can define a right action of $B(\calO_{p,J^c})$ on $ \calC(\calO_{p,J}\times \Iw_{\pi,J^c}, A)$ by $(h\cdot b_{J^c})(z_J,g_{J^c}):=h(z_J,b_{J^c}g_{J^c})$ for $b_{J^c}\in B(\calO_{p,J^c})$ and $h\in \calC(\calO_{p,J}\times \Iw_{\pi,J^c}, A)$. This action leaves $\calC_{\chi_J}(\Iw_\pi,A)$ stable as $D(\calO_{p,J^c})$ is central in $\Iw_{\pi,J^c}$ and hence  induces a right action of $B(\calO_{p,J^c})$ on $S_{\kappa,J}^D(K^p,A)$. Set $J_3:=J_2\setminus J_1$. Then the injection $\calC_{\chi_{J_2}}(\Iw_\pi,A)\hookrightarrow \calC_{\chi_{J_1}}(\Iw_\pi,A)$ (resp. $S_{\kappa,J_2}^D(K^p,A)\rightarrow S_{\kappa,J_1}^D(K^p,A)$) identifies the first space with the  subspace of the latter space on which the Borel subgroup $B(\calO_{p,J_3})$ acts via the character $\kappa_{B,J_2}$ defined above. 
	\end{enumerate}
\end{remark}


\subsection{Spaces of  classical automorphic forms}\label{section:spaces of  classical automorphic forms} There are three goals in this section. 
\begin{enumerate}
	\item Recall the definitions of classical weights and spaces of classical automorphic forms as defined in \cite{buzzard320eigenvarieties}.
	\item Prove an Atkin-Lehner duality result (see Proposition~\ref{P:Atkin-Lehner map and Up operator} below).
	\item Explain how to realize some spaces of classical automorphic forms as subspaces of the space of generalized integral $p$-adic automorphic forms.
\end{enumerate}

Let $\nu=(\nu_i)_{i\in I}\in \ZZ^I$ and $\mu\in \ZZ$, such that $n=(n_i:= \mu-2\nu_i)_{i\in I}\in \NN^I$. Define a character $\chi:\calO_p^\times\times \ZZ_p^\times\rightarrow \QQ_p^\times$, $(\alpha,\beta)\mapsto \beta^r\prod\limits_{i\in I}i_p(\alpha_i)^{\nu_i}$. The associated character  $\kappa:\calO_p^\times\times \calO_p^\times\rightarrow \QQ_p^\times$ is given by $\kappa(\alpha,\beta)=\prod\limits_{i\in I}i_p(\alpha_i)^{n_i}i_p(\beta_i)^{\nu_i}$.
We call such a weight \emph{algebraic}. Let $L$ be a finite extension of $\QQ_p$. A weight $\kappa:\calO_p^\times\times \calO_p^\times\rightarrow L^\times$ is called locally \emph{algebraic}, or \emph{classical}, if $\kappa$ decomposes as $\kappa=\kappa_{alg}\kappa_{fin}$, where $\kappa_{alg}$ (resp. $\kappa_{fin}=\psi=(\psi_1,\psi_2):\calO_p^\times\times\calO_p^\times\rightarrow L^\times$) is an algebraic weight (resp. a finite character). Hence a locally algebraic weight can be represented by  a triple $((n_i)_{i\in I}\in\NN^I,(\nu_i)_{i\in I}\in \ZZ^I,\psi=(\psi_1,\psi_2))$ with the property that $n+2\nu\in \ZZ$ and the character $\psi_1\cdot \psi_2^2:\calO_p^\times\rightarrow L^\times$ factors through the norm map $\Nm_{F/\QQ}:\calO_p^\times\rightarrow \ZZ_p^\times$ (see Convention~\ref{Convention: a weight is the character associated to a character of the weight space}).

Fix such a locally algebraic weight $\kappa=(n=(n_i)_{i\in I},\nu=(\nu_i)_{i\in I},\psi)$. Let $r(\kappa):=(p^{-m_{\kappa,i}})_{i\in I}$ and $t:=(t_i)_{i\in I}\in \NN^I$ such that $\psi:\calO_p^\times\times \calO_p^\times\rightarrow L^\times$ factors through $(\calO_p/\pi^t)^\times\times(\calO_p/\pi^t)^\times$. It is straightforward to verify that $t_i\geq m_{\kappa,i}$ for all $i\in I$. In particular, $t$ is good for $(\kappa,r)$ for all $r\in \calN^I$. From the integer $\nu_i$ and the character $\psi_{2,i}:\calO_{\gothp_i}^\times\rightarrow L^\times$, we define another character $\tau_i:\calO_{\gothp_i}^\times\rightarrow L^\times$, $\beta_i\mapsto i_p(\beta_i)^{\nu_i}\psi_{2,i}(\beta_i)$ and extend it to a character of $F_{\gothp_i}^\times$ by setting $\tau_i(\pi_i)=1$. 

 We put $L_{\kappa}$ to be the $L$-vector space with basis $\Sigma_\kappa:=\left\{\prod\limits_{i\in I}Z_i^{l_i}\;\Big|\;0\leq l_i\leq n_i, \text{~for all~} i\right\}$, where $Z_i$'s are independent indeterminates.
The space $L_{\kappa}$ carries a right action of $\bbM_{\pi^t}$  by 
\[
\left(\prod\limits_{i\in I}Z_i^{l_i}\right)\circ \gamma=\prod_{i\in I}\psi_{1,i}(d_i)\tau_i(\det(\gamma_i))(a_iZ_i+b_i)^{l_i}(c_iZ_i+d_i)^{n_i-l_i},
\]
for $\gamma=(\gamma_i)_{i\in I}\in M_{\pi^t}$ with $\gamma_i=\left[ \begin{array}{cc}
a_i&b_i\\
c_i&d_i\end{array}\right]$ for $i\in I$. 

Let $\calO_L$ be the ring of integers of $L$. We denote by $L_{\kappa,\calO_L}$ the $\calO_L$-lattice of $L_\kappa$ spanned by the polynomials in $\Sigma_\kappa$. Since the character $\kappa$ takes values in $\calO_L^\times$, the action of the monoid $\bbM_{\pi^t}$ on $L_\kappa$ leaves $L_{\kappa,\calO_L}$ stable.

Fix an open compact subgroup $K^p$ of $D_f^{(p),\times}$ and let $K=K^p\Iw_{\pi^t}$ be the subgroup of $D_f^\times$.

\begin{definition}\label{D:classical automorphic forms}
	Under the above notations, define $k=n+2\in \ZZ_{\geq 2}^I$ and $w=n+\nu+1\in \ZZ^I$. The space of classical automorphic forms of weight $(k,w)$, level $K$ and character $\psi$ for $D$ is defined by
	\[
	S_{k,w}^D(K,\psi):=\{\phi: D^\times \setminus D_f^\times /K^p\rightarrow L_\kappa|\phi(xu)=\phi(x)\circ u,\text{~for all~} x\in D_f^\times,u\in \Iw_{\pi^t} \}.
	\] 
	We also define
	\[
	S_{k,w}^D(K,\psi,\calO_L):=\{\phi: D^\times \setminus D_f^\times /K^p\rightarrow L_{\kappa,\calO_L}|\phi(xu)=\phi(x)\circ u,\text{~for all~} x\in D_f^\times,u\in \Iw_{\pi^t} \}.
	\] 
Note that	$S_{k,w}^D(K,\psi,\calO_L)$ is an $\calO_L$-lattice of $S_{k,w}^D(K,\psi)$ and stable under the $U_{\pi_i}$-operators for all $i\in I$.
\end{definition}

\begin{remark}
	Convention~\ref{Convention:extend the character nu} on the weight $\kappa=(n,\nu):\calO_p^\times\times \calO_p^\times\rightarrow L^\times$ makes our definition of Hecke operators on the spaces of classical automorphic forms slightly different from the `usual' one (see \cite[\S2]{hida1988p} for example). To be more precise, we assume that the character $\psi$ above is trivial for simplicity. We will define a second action of the monoid $\bbM_{\pi^t}$ on $L_\kappa$ below. This action coincides with the one that we defined above when restricting to $\Iw_{\pi^t}$. In particular, it defines the same spaces of classical automorphic forms. However, this two actions differ by a power of $\pi$ as an action of the monoid $\bbM_{\pi^t}$, and hence define different Hecke operators. If we use $U_{\pi_i,cl}$ to denote the Hecke operator defined by the second action $\Vert_\gamma$, then it is related with our Hecke operator $U_{\pi_i}$ defined in \S\ref{subsection: Hecke operators} via the equality $U_{\pi_i,cl}=\pi_i^{\nu_i}U_{\pi_i}$. The reason that we renormalize the Hecke operators is that the quantity $\pi_i^{\nu_i}$ does not vary analytically with $\nu$. 
\end{remark}

We keep the notations as before. We define another right action of the monoid $\bbM_{\pi^t}$ on $L_{\kappa} $ by 
\[
\left(\prod_{i\in I}Z_i^{l_i}\right)\Big\Vert_\gamma:=\prod_{i\in I}(\det(\gamma_i))^{\nu_i}((a_iZ_i+b_i)^{l_i}(c_iZ_i+d_i)^{n_i-l_i})
\]
for $\gamma=(\gamma_i)_{i\in I}\in \bbM_{\pi^t}$ and $\gamma_i=\left[ \begin{array}{cc}
a_i&b_i\\
c_i&d_i\end{array}\right]$ for all $i\in I$.
Then the space of classical automorphic forms can be described by  
	\begin{multline*}	
	S_{k,w}^D(K,\psi)=\Big\{\phi: D^\times \setminus D_f^\times /K^p\rightarrow L_{\kappa}\;\Big|\;\phi(xu)=\psi_1(d)\psi_2(\det(u))\phi(x)\Vert_u,
	\\
	\text{~for~} x\in D_f^\times,u=\left[ \begin{array}{cc}
	a&b\\
	c&d\end{array}\right]\in \Iw_{\pi^t} \Big\}.
	\end{multline*}
	Note that since $\psi_2$ factors through $(\calO_p/\pi^t)^\times$, we have $\psi_2(\det(u))=\psi_2(ad)$.

Fix $j\in I$. Define another finite character $\psi'=(\psi_1',\psi_2'):\calO_p^\times\times\calO_p^\times\rightarrow L^\times$ by setting: 
\begin{equation*}
\psi_{1,i}':=
\begin{cases}
\psi_{1,i}, &\mbox{if }i\neq j\\
\psi_{1,i}^{-1}, &\mbox{if }i=j
\end{cases} \textrm{~and~}
\psi_{2,i}':=
\begin{cases}
\psi_{2,i}, &\mbox{if }i\neq j\\
\psi_{1,i}\psi_{2,i}, &\mbox{if }i=j
\end{cases}.
\end{equation*}
In other words, if we regard $\psi_{1,j}$ as a character of $\calO_p^\times$ via the natural projection $\calO_p^\times \rightarrow \calO_{\gothp_j}^\times$, we have $\psi_1'=\psi_1\psi_{1,j}^{-2}$ and $\psi_2'=\psi_2\psi_{1,j}$.

The Atkin-Lehner map is defined by
\[
\AL_j:S_{k,w}^D(K,\psi)\rightarrow S_{k,w}^D(K,\psi'), \phi\mapsto \phi(\bullet v_j^{-1})\Vert_{v_j},
\]
where $v_j=\left[ \begin{array}{cc}
0&1\\
\pi_j^{t_j}&0\end{array}\right]\in \bbM_{\pi_j^{t_j}}$ and we view $v_j$ as an element of $\bbM_{\pi^t}$ whose $i$-component is the identity, for all $i\neq j$. First we verify that the map $\AL_j$ is well-defined. Given $\phi\in S_{k,w}^D(U,\psi)$, $x\in D_f^\times$ and $u_j=\left[ \begin{array}{cc}
a_j&b_j\\
c_j&d_j\end{array}\right]\in \Iw_{\pi_j^{t_j}}$, we set $\tilde{u}_j:= v_ju_jv_j^{-1}=\left[ \begin{array}{cc}
d_j&\pi_j^{-t_j}c_j\\
\pi_j^{t_j}b_j&a_j\end{array}\right]$. By a direct computation, we have
\begin{equation*}
\begin{split}
\AL_j(\phi)(xu_j)&=\phi(xu_jv_j^{-1})\Vert_{v_j}
=\phi(xv_j^{-1}\tilde{u}_j)\Vert_{v_j}=(\psi_{1,j}(a_j)\psi_{2,j}(\det(\tilde{u}_j))\phi(xv_j^{-1})\Vert_{\tilde{u}_j})\Vert_{v_j}
\\
&=\psi_{1,j}(a_j)\psi_{2,j}(\det(\tilde{u}_j))\phi(xv_j^{-1})\Vert_{\tilde{u}_jv_j}=\psi_{1,j}(a_j)\psi_{2,j}(\det(\tilde{u}_j))(\phi(xv_j^{-1})\Vert_{v_j})\Vert_{u_j}
\\
&=\psi_{1,j}^{-1}(d_j)\psi_{1,j}(a_jd_j)\psi_{2,j}(\det(u_j))\AL_j(\phi)(x)\Vert_{u_j}=\psi_{1,j}'(d_j)\psi_{2,j}'(\det(u_j))\AL_j(\phi)(x)\Vert_{u_j}.
\end{split}
\end{equation*}
Thus, we know that $\AL_j(\phi)\in S_{k,w}^D(K,\psi')$ and $\AL_j$ is well-defined. It is clear that $\AL_j$ is an $L$-linear isomorphism and induces an $\calO_L$-linear isomorphism between $ S_{k,w}^D(K,\psi,\calO_L)$ and $ S_{k,w}^D(K,\psi',\calO_L)$.

\begin{definition}\label{D:j-Atkin-Lehner dual weight of a locally algebraic weight}
	The locally algebraic weight $\kappa'$ corresponding to the triple $(n,v,\psi')$ is called the \emph{$j$-Atkin-Lehner dual weight} of $\kappa$. 
\end{definition}

\begin{proposition}\label{P:Atkin-Lehner map and Up operator} 
	Assume that the finite character $\psi_{1,j}:\calO_{\gothp_j}^\times\rightarrow L^\times$ has conductor $t_j\geq 2$. 
	For $\phi\in S_{k,w}^D(K,\psi)$, we have $U_{\pi_j}\circ \AL_j\circ U_{\pi_j}(\phi)=p\pi_j^{n_j}\AL_j\circ S_{\pi_j}(\phi)$, where $S_{\pi_j}:S_{k,w}^D(K,\psi)\rightarrow S_{k,w}^D(K,\psi)$ is the automorphism defined by $\phi\mapsto \phi\left(\bullet \left[ \begin{array}{cc}
	\pi_j^{-1}&0\\
	0&\pi_j^{-1}\end{array}\right]\right)$ .
\end{proposition}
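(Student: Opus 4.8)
The plan is to unwind the triple composition into an explicit double sum over Hecke coset representatives, to factor the resulting matrices so that exactly one term survives a Gauss–sum cancellation, and then to identify that surviving term with $p\pi_j^{n_j}\,\AL_j\circ S_{\pi_j}$. Throughout I would write $\varpi:=\pi_j$ and $t:=t_j\ (\geq 2)$, and recall the coset representatives $v_{j,a}=\Matrix{\varpi}{0}{a\varpi^{t}}{1}$ ($0\leq a\leq p-1$) of $U_{\pi_j}$ and the element $v_j=\Matrix{0}{1}{\varpi^{t}}{0}$ defining $\AL_j$, both supported at $\gothp_j$. First, expanding by \eqref{E:formula to define Up operator} and the definition of $\AL_j$ gives
\[
(U_{\pi_j}\circ\AL_j\circ U_{\pi_j})(\phi)(x)=\sum_{a,b=0}^{p-1}\Big(\big(\phi(xg_{a,b}^{-1})\circ v_{j,a}\big)\Vert_{v_j}\Big)\circ v_{j,b},\qquad g_{a,b}:=v_{j,a}v_jv_{j,b}.
\]
Since $\tau_j(\det v_{j,a})=\tau_j(\varpi)=1$ and $\psi_{1,j}(1)=1$, one has $P\circ v_{j,a}=\varpi^{-\nu_j}(P\Vert_{v_{j,a}})$ on $L_\kappa$, so using that $\Vert_{\bullet}$ is a right action the sum collapses to $(U_{\pi_j}\circ\AL_j\circ U_{\pi_j})(\phi)(x)=\varpi^{-2\nu_j}\sum_{a,b}\phi(xg_{a,b}^{-1})\Vert_{g_{a,b}}$.

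Next I would record the factorization $g_{a,b}=h_a\,\bar n(b)$, obtained by direct multiplication, where $h_a:=\Matrix{0}{\varpi}{\varpi^{t+1}}{a\varpi^{t}}$ and $\bar n(b):=\Matrix{1}{0}{b\varpi^{t}}{1}\in\Iw_{\pi^t}$; note $h_a$ is independent of $b$. The decisive point — and the only place $t_j\geq 2$ is used — is that the conjugate
\[
u_{a,b}:=h_a\,\bar n(b)^{-1}h_a^{-1}=\Matrix{1+ab\varpi^{t-1}}{-b}{a^2b\,\varpi^{2t-2}}{1-ab\varpi^{t-1}}
\]
lies in $\Iw_{\pi^t}$: the diagonal entries are units because $t-1\geq 1$, and the lower-left entry lies in $\varpi^{t}\calO_{\gothp_j}$ because $2t-2\geq t$. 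Consequently $xg_{a,b}^{-1}=xh_a^{-1}u_{a,b}$, and the $\Iw_{\pi^t}$-equivariance $\phi(yu)=\phi(y)\circ u$, combined with $u_{a,b}g_{a,b}=h_a$ and $\det u_{a,b}=1$, yields $\phi(xg_{a,b}^{-1})\Vert_{g_{a,b}}=\psi_{1,j}\big(1-ab\varpi^{t-1}\big)\,\phi(xh_a^{-1})\Vert_{h_a}$.

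Now I would sum over $b$: for fixed $a$ the element $1-ab\varpi^{t-1}$ runs over a full set of representatives of $(1+\varpi^{t-1}\calO_{\gothp_j})/(1+\varpi^{t}\calO_{\gothp_j})$, which (again using $2t-2\geq t$) is identified with the additive group of $\FF_p$ via $1+c\varpi^{t-1}\mapsto c$, and on which $\psi_{1,j}$ induces an additive character of $\FF_p$ that is nontrivial precisely when $a\neq 0$ since $\psi_{1,j}$ has conductor exactly $t_j$. Hence the $b$-sum equals $p$ for $a=0$ and $0$ otherwise, so only $a=0$ contributes and $(U_{\pi_j}\circ\AL_j\circ U_{\pi_j})(\phi)(x)=\varpi^{-2\nu_j}\cdot p\cdot\phi(xh_0^{-1})\Vert_{h_0}$. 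Finally, since $h_0=\diag(\varpi,\varpi)\,v_j$ one gets $\phi(xh_0^{-1})=\phi\big(xv_j^{-1}\diag(\varpi^{-1},\varpi^{-1})\big)=(S_{\pi_j}\phi)(xv_j^{-1})$, while on $L_\kappa$ one has $\Vert_{h_0}=\varpi^{2\nu_j+n_j}\,\Vert_{v_j}$; combining these gives $\varpi^{-2\nu_j}\cdot p\cdot\varpi^{2\nu_j+n_j}\,(S_{\pi_j}\phi)(xv_j^{-1})\Vert_{v_j}=p\pi_j^{n_j}\,(\AL_j\circ S_{\pi_j})(\phi)(x)$, as asserted.

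The argument is essentially bookkeeping once the factorization $g_{a,b}=h_a\bar n(b)$ and the conjugation identity $u_{a,b}\in\Iw_{\pi^t}$ are found. The parts demanding the most care will be propagating the discrepancy between the two normalizations $\circ$ and $\Vert_{\bullet}$ (the powers of $\pi_j$) consistently through all three operators, and checking that the conjugation step — equivalently the vanishing of the relevant character sum — genuinely requires, and only requires, the hypothesis $t_j\geq 2$.
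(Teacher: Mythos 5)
Your proof is correct and follows essentially the same route as the paper's: your conjugate $u_{a,b}=h_a\,\bar n(b)^{-1}h_a^{-1}$ coincides with the matrix the paper extracts by factoring $v_jv_{j,m}^{-1}v_j^{-1}v_{j,l}^{-1}$ into a lower-triangular piece times an element of $\Iw_{\pi_j^{t_j}}$, the role of $t_j\geq 2$ (diagonal entries units because $t_j-1\geq 1$, lower-left in $\pi_j^{t_j}\calO_{\gothp_j}$ because $2t_j-2\geq t_j$) is used identically, and the Gauss sum collapses to the same surviving term $v_{j,0}$ for the inner Hecke operator. The one place you are more careful than the paper is the normalization bookkeeping: you track the factor $\varpi^{-2\nu_j}$ from converting the $\circ$-action to $\Vert$ and cancel it against the $\varpi^{2\nu_j+n_j}$ coming from $\Vert_{\diag(\varpi,\varpi)}$, whereas the paper's in-proof remark that $h\circ v_{j,l}=h\Vert_{v_{j,l}}$ is off by $\pi_j^{\nu_j}$ (it is inconsistent with the paper's own relation $U_{\pi_j,\mathrm{cl}}=\pi_j^{\nu_j}U_{\pi_j}$) and lands on the correct constant $p\pi_j^{n_j}$ only because a matching $\pi_j^{2\nu_j}$ is silently dropped again when $\Vert_{\diag(\pi_j,\pi_j)}$ is identified with multiplication by $\pi_j^{n_j}$.
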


\begin{proof}
	First we make a remark on notations. In this proposition, we have two locally algebraic weights $\kappa=(n,v,\psi)$ and $\kappa'=(n,v,\psi')$, and hence two different actions of the monoid $\bbM_{\pi^t}$ on the $L$-vector space $L_{\kappa}=L_{\kappa'}$. However, for $v_{j,l}=\left[ \begin{array}{cc}
	\pi_j&0\\
	l\pi_j^{t_j}&1\end{array}\right]$, $l=0,\dots,p-1$, the right action of $v_{j,l}$ on $L_{\kappa}=L_{\kappa'}$ is given by $h\circ v_{j,l}=h\Vert_{v_{j,l}}$, no matter which weights $\kappa$ or $\kappa'$ that we consider. Hence for $\phi\in S_{k,w}^D(K,\psi)$ or $S_{k,w}^D(K,\psi')$, we always have $U_{\pi_j}(\phi)(x)=\sum\limits_{l=0}^{p-1}\phi(xv_{j,l}^{-1})\Vert_{v_{j,l}}$.
	
	For $x\in D_f^\times$ and $\phi\in S_{k,w}^D(K,\psi)$, we have  $$U_{\pi_j}\circ \mathrm{AL}_j\circ U_{\pi_j}(\phi)(x)=\sum\limits_{l,m=0}^{p-1}\phi(xv_{j,m}^{-1}v_j^{-1}v_{j,l}^{-1})\Vert_{v_{j,l}v_jv_{j,m}}=\sum\limits_{l,m=0}^{p-1}\phi(xv_j^{-1}(v_jv_{j,m}^{-1}v_j^{-1}v_{j,l}^{-1}))\Vert_{v_{j,l}v_jv_{j,m}}.$$ 
	
	By a direct computation, we have 
	\begin{equation*}
	\begin{split}
	v_jv_{j,m}^{-1}v_j^{-1}v_{j,l}^{-1}= \begin{bmatrix}
	1&-m\pi_j^{-1}\\
	0 &\pi_j^{-1}\end{bmatrix}\begin{bmatrix}
	\pi_j^{-1}&0\\
	-l\pi_j^{t_j-1} &1\end{bmatrix}
	=\begin{bmatrix}
	\pi_j^{-1}&0\\
	-l\pi_j^{t_j-2} &\pi_j^{-1}\end{bmatrix} \begin{bmatrix}
	1+lm\pi_j^{t_j-1}&-m\\
	ml^2\pi_j^{2t_j-2} &1-lm\pi_j^{t_j-1}\end{bmatrix}.
	\end{split}
	\end{equation*}
	
	From the hypothesis $t_j\geq 2$, the matrix $ \begin{bmatrix}
1+lm\pi_j^{t_j-1}&-m\\
ml^2\pi_j^{2t_j-2} &1-lm\pi_j^{t_j-1}\end{bmatrix}$ belongs to $\Iw_{\pi_j^{t_j}}$.  Hence, we have $$\phi(xv_j^{-1}(v_jv_{j,m}^{-1}v_j^{-1}v_{j,l}^{-1}))=\psi_{1,j}(1-lm\pi_j^{t_j-1})\phi\left( xv_j^{-1} \begin{bmatrix}
\pi_j^{-1}&0\\
-l\pi_j^{t_j-2} &\pi_j^{-1}\end{bmatrix} \right)\Bigg\Vert_{ \left[\begin{smallmatrix}
	1+lm\pi_j^{t_j-1}&-m\\
	ml^2\pi_j^{2t_j-2} &1-lm\pi_j^{t_j-1}\end{smallmatrix}\right]},$$ and consequently 
\begin{multline*}
U_{\pi_j}\circ \mathrm{AL}_j\circ U_{\pi_j}(\phi)(x)\\
=\sum\limits_{l,m=0}^{p-1}\psi_{1,j}(1-lm\pi_j^{t_j-1})\phi\left( xv_j^{-1} \begin{bmatrix}
\pi_j^{-1}&0\\
-l\pi_j^{t_j-2} &\pi_j^{-1}\end{bmatrix} \right)\Bigg\Vert_{\left[\begin{smallmatrix}
	1+lm\pi_j^{t_j-1}&-m\\
	ml^2\pi_j^{2t_j-2} &1-lm\pi_j^{t_ij-1}\end{smallmatrix}\right]v_{j,l}v_jv_{j,m}}.
\end{multline*}
	Since $\psi_{1,j}$ is a finite character of conductor $\pi_j^{t_j}$, we have 
	\begin{equation*}
	\sum_{m=0}^{p-1}\psi_{1,j}(1-lm\pi_j^{t_j-1})=
	\begin{cases}
	p, & \text{~for~} l=0, \\
	0, & \text{otherwise},
	\end{cases}
	\end{equation*}
	and hence \[U_{\pi_j}\circ \mathrm{AL}_j\circ U_{\pi_j}(\phi)(x)=p\phi\left( xv_j^{-1}\left[ \begin{array}{cc}
	\pi_j^{-1}&0\\
	0 &\pi_j^{-1}\end{array}\right] \right)\Bigg\Vert_{\left[ \begin{array}{cc}
		\pi_j&0\\
		0 &\pi_j\end{array}\right]v_j}=p\pi_j^{n_j}\mathrm{AL}_j\circ S_{\pi_j}(\phi)(x).\qedhere\]
\end{proof}

\begin{proposition}\label{P:pairing of Newton slopes by Atkin-Lehner}
	The slopes of the $U_{\pi_j}$ operator on the two spaces $S_{k,w}^D(K,\psi)$ and $S_{k,w}^D(K,\psi')$ can be paired such that the slopes in each pair sum to $n_i+1=k_i-1$.
\end{proposition}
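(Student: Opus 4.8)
The plan is to feed the Atkin--Lehner identity of Proposition~\ref{P:Atkin-Lehner map and Up operator} through two elementary observations: that $S_{\pi_j}$ is a ``unit'' operator, and that the $U_{\pi_j}$ acting on $S_{k,w}^D(K,\psi')$ is conjugate, via $\AL_j$, to $c\,U_{\pi_j}^{-1}S_{\pi_j}$ acting on $S_{k,w}^D(K,\psi)$, where $c=p\pi_j^{n_j}$. Write $V:=S_{k,w}^D(K,\psi)$ and $V':=S_{k,w}^D(K,\psi')$; let $U$, $U'$ denote $U_{\pi_j}$ on $V$, $V'$ respectively, let $S:=S_{\pi_j}$ on $V$, and let $\AL:=\AL_j\colon V\to V'$, which is an $L$-linear isomorphism (so $\dim_L V=\dim_L V'=:d$). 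Since $\pi_j$ is a uniformizer of $\calO_{\gothp_j}\cong\ZZ_p$ and $p$ splits completely in $F$, we have $v_p(\pi_j)=1$, hence $v_p(c)=1+n_j=k_j-1=n_j+1$.

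First I would record that $S$ is an automorphism of the $\calO_L$-lattice $S_{k,w}^D(K,\psi,\calO_L)$: by definition $S(\phi)(x)=\phi\bigl(x\,\diag(\pi_j^{-1})\bigr)$, and right translation by the central element $\diag(\pi_j^{-1})\in D_f^\times$ merely permutes the arguments of $\phi$ without changing its values, so it preserves $\calO_L$-valued forms; the same applies to $S^{-1}$. Consequently $\det(S)\in\calO_L^\times$, so every eigenvalue of $S$ in $\bar{\QQ}_p$ is a $p$-adic unit. Moreover $S$ commutes with $U$ (directly, because $\diag(\pi_j^{-1})$ is central in $D_f^\times$, so it commutes past each $v_{j,l}$ in the formula~\eqref{E:formula to define Up operator} for $U_{\pi_j}$). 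Next, Proposition~\ref{P:Atkin-Lehner map and Up operator} gives $U'\circ\AL\circ U=c\,\AL\circ S$ as maps $V\to V'$; rearranging, $\AL^{-1}\circ U'\circ\AL\circ U=cS$. The right-hand side is invertible since $S$ is, so $U$ is injective, hence (as $\dim_L V<\infty$) invertible, and therefore
\[
\AL^{-1}\circ U'\circ\AL = c\,S\,U^{-1}=c\,U^{-1}S .
\]
Thus $U'$ on $V'$ is conjugate to the operator $c\,U^{-1}S$ on $V$, so they have the same characteristic polynomial, hence the same multiset of eigenvalues and the same Newton polygon.

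To finish I would base change to $\bar{\QQ}_p$: $U$ and $S$ are commuting invertible operators on $V\otimes_L\bar{\QQ}_p$, hence simultaneously upper‑triangularizable. In a common triangularizing basis $U$ has diagonal entries $\alpha_1,\dots,\alpha_d$ (its eigenvalues with multiplicity) and $S$ has diagonal entries $\gamma_1,\dots,\gamma_d$, each a $p$-adic unit by the previous paragraph. Then $c\,U^{-1}S$ is upper triangular with diagonal entries $c\gamma_i\alpha_i^{-1}$, so the eigenvalues of $U_{\pi_j}$ on $V'$ are $c\gamma_1\alpha_1^{-1},\dots,c\gamma_d\alpha_d^{-1}$, with $v_p(c\gamma_i\alpha_i^{-1})=v_p(c)-v_p(\alpha_i)$. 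Pairing the eigenvalue $\alpha_i$ of $U_{\pi_j}$ on $V$ with the eigenvalue $c\gamma_i\alpha_i^{-1}$ of $U_{\pi_j}$ on $V'$ is a bijection of eigenvalue‑multisets in which each pair sums, in $v_p$, to $v_p(c)=n_j+1=k_j-1$; this is precisely the asserted pairing of slopes.

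The only genuine input is Proposition~\ref{P:Atkin-Lehner map and Up operator}, which is already proved, so the remaining work is formal. The two points deserving care are: (i) $U_{\pi_j}$ need not be semisimple, so the pairing of slopes must be phrased through multisets of eigenvalues obtained from a \emph{simultaneous triangularization} of the commuting pair $(U,S)$, not from a diagonalization; and (ii) one should read off the conjugacy on the $\psi$-side, where $U$ and $S$ commute, rather than trying to play the two $U_{\pi_j}$-operators against one another directly on $V'$, where no commutation relation is available a priori. Neither is a serious obstacle.
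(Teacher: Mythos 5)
Your proof is correct and follows essentially the same strategy as the paper's: both invoke Proposition~\ref{P:Atkin-Lehner map and Up operator}, use that $S_{\pi_j}$ is a unit operator commuting with $U_{\pi_j}$, and then pair eigenvalues of the resulting commuting operators. The paper packages the last step as Lemma~\ref{L:a basic linear algebra lemma}, proved via simultaneous generalized-eigenspace decomposition of $A$, $B$, $U$; your inline argument via simultaneous upper-triangularization of $U$ and $S$ over $\bar{\QQ}_p$ is an equivalent way of extracting the same eigenvalue pairing.
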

We will give the proof of this proposition after introducing the following lemma.

	\begin{lemma}\label{L:a basic linear algebra lemma}
		Let $L$ be a field with a valuation $v_L(\cdot)$ and $\calO_L$ be its valuation ring. Consider three matrices $A,B\in \rmM_n(L)$ and $U\in \GL_n(\calO_L)$, such that $U$ commutes with $B$ and $AB=\alpha U$ for some $\alpha\in \calO_L$ with $v_L(\alpha)=k$. Then one can pair the slopes of the Newton polygons of $A$ and $B$, such that the slopes in each pair sum to $k$. 
	\end{lemma}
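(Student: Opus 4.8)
The plan is to pass to an algebraic closure $\bar L$ (with the unique extension of $v_L$) and reduce, via the generalized eigenspace decomposition of $B$, to the case where $B$ is a scalar matrix up to a nilpotent. First I would record the elementary preliminaries: since $AB=\alpha U$ with $\alpha\neq 0$ and $U$ invertible, both $A$ and $B$ lie in $\GL_n(L)$; and since $U\in\GL_n(\calO_L)$, the characteristic polynomial of $U$ is monic with coefficients in $\calO_L$ and unit constant term, so every eigenvalue of $U$ in $\bar L$ has valuation $0$. I would also invoke the classical fact (the analogue of Remark~\ref{R:relation between slopes of Newton polygon and eigenvalues} over a general non-archimedean field) that the slopes of the Newton polygon of a matrix $M$, counted with multiplicity, are exactly the valuations of the eigenvalues of $M$ in $\bar L$. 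Thus it suffices to exhibit a bijection between the multiset of eigenvalue-valuations of $A$ and that of $B$ under which matched valuations sum to $k:=v_L(\alpha)$.

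Next I would carry out the reduction. Let $\lambda_1,\dots,\lambda_r$ be the distinct eigenvalues of $B$ in $\bar L$ and $\bar L^{\,n}=\bigoplus_{t=1}^r V_t$ the decomposition into generalized eigenspaces, $V_t=\ker(B-\lambda_t)^n$. Because $B^{-1}$ is a polynomial in $B$ (Cayley--Hamilton) and $U$ commutes with $B$, both $B^{-1}$ and $U$ preserve each $V_t$; hence $A=\alpha B^{-1}U$ preserves each $V_t$, and restricting $AB=\alpha U$ to $V_t$ gives $(A|_{V_t})(B|_{V_t})=\alpha\,(U|_{V_t})$. The characteristic polynomials of $A$ and of $B$ are the products of those of their restrictions to the $V_t$, so the Newton polygons are the concatenations of the Newton polygons of the restrictions; it therefore suffices to treat one block, i.e. to assume $B-\lambda$ is nilpotent for a single $\lambda\in\bar L$.

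In that situation, write $B=\lambda(I+N)$ with $N=\lambda^{-1}(B-\lambda)$ nilpotent; then $B^{-1}=\lambda^{-1}(I+N)^{-1}=\lambda^{-1}(I+N')$ with $N'=\sum_{j\ge 1}(-N)^j$ nilpotent and a polynomial in $B$. Hence $N'$ commutes with $U$, so $A=\alpha B^{-1}U=\alpha\lambda^{-1}(I+N')U=\alpha\lambda^{-1}\,U(I+N')$ is $\alpha\lambda^{-1}$ times a product of two commuting matrices: $U$, all of whose eigenvalues have valuation $0$, and the unipotent $I+N'$, all of whose eigenvalues equal $1$. Being simultaneously triangularizable over $\bar L$, their product has all eigenvalue-valuations $0$, so every eigenvalue of $A$ has valuation $v_L(\alpha\lambda^{-1})=k-v_L(\lambda)$. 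Since every eigenvalue of $B$ (in this block) has valuation $v_L(\lambda)$ and there are equally many eigenvalues of $A$ and of $B$ on $V_t$, pairing them arbitrarily gives pairs summing to $k$; assembling the pairings over all blocks $V_t$ proves the lemma.

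The one genuinely essential point — and the step I would be most careful to state precisely — is the propagation of commutativity: it is exactly the hypothesis that $U$ commutes with $B$ that forces $U$ to commute with $N'$, and hence that $U(I+N')$ has valuation-$0$ eigenvalues. Without it the statement is simply false (a non-commuting $U$ can shear a unipotent $B$ into an $A$ with nonzero Newton slopes), so I would make sure this commutation is used visibly rather than implicitly. Everything else is routine bookkeeping with Newton polygons of products of polynomials and with block-diagonal decompositions.
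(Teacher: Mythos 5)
Your proof is correct and follows essentially the same route as the paper's: pass to $\bar L$, decompose into generalized eigenspaces of commuting operators, and use that $U\in\GL_n(\calO_L)$ forces all its eigenvalue-valuations to be $0$. The only cosmetic difference is that you decompose along generalized eigenspaces of $B$ and then show $A$ factors as $\alpha\lambda^{-1}U(I+N')$ so that all eigenvalue-valuations of $A|_{V_t}$ equal $k-v_L(\lambda_t)$, whereas the paper decomposes along simultaneous generalized eigenspaces of $U$ and $A$ and computes directly that $B-\lambda_B I_n$ is nilpotent with $\lambda_B=\alpha\lambda\lambda_A^{-1}$; both versions hinge on the same commutativity propagation you correctly flag as the load-bearing hypothesis.
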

	
	\begin{proof}
		Replacing $L$ by its algebraic closure $\bar{L}$ and extending $v_L(\cdot)$ to $\bar{L}$, we can assume that $L$ is algebraically closed. We regard $A,B,U$ as $L$-linear operators on the space $V=L^n$. Then $V$ has the decomposition $V=\bigoplus\limits_{\lambda \text{~eigenvalue of~}U}V_\lambda$, where $V_\lambda=\ker((U-\lambda I_n)^n)$. Since $AB=\alpha U$ and $U$ commutes with $B$, the three matrices $A,B,U$ all commute with the others. Hence $A,B$ stabilizes the spaces $V_\lambda$'s. Hence we may assume that $V=\ker((U-\lambda I_n)^n)$ for some $\lambda\in L$. For the same reason, we can also assume that $V=\ker((A-\lambda_A I_n)^n)$ for some $\lambda_A\in L$. Since $U\in\GL_n(\calO_L)$, we have $\lambda\in \calO_L^\times$ and hence $v_L(\lambda)=0$. Let $\lambda_B=a\lambda\lambda_A^{-1}$. Then $B-\lambda_BI_n=\alpha A^{-1}U-\alpha\lambda\lambda_A^{-1}=\alpha\lambda_A^{-1}A^{-1}(\lambda_AU-\lambda A)=\alpha\lambda_A^{-1}A^{-1}(\lambda_A(U-\lambda I_n)+\lambda(\lambda_AI_n-A))$. Therefore $B-\lambda_BI_n$ is a nilpotent operator on $V$. The eigenvalues of $B$ are $\lambda_B$ with multiplicity $n$. Since $\lambda_A\lambda_B=\alpha\lambda$, we have $v_L(\lambda_A)+v_L(\lambda_B)=v_L(\alpha)+v_L(\lambda)=k$.
	\end{proof}

\begin{proof}[Proof of Proposition~\ref{P:pairing of Newton slopes by Atkin-Lehner}]
	We choose an $\calO_L$-basis $\Omega$ of $S_{k,w}^D(K,\psi,\calO_L)$ which can be viewed as an $L$-basis of $S_{k,w}^D(K,\psi)$. We denote by $A\in \rmM_n(L)$ (resp. $B\in \rmM_n(L)$, resp. $U\in \GL_n(\calO_L)$) the matrix corresponding to the map $\AL_j^{-1}\circ U_{\pi_j}\circ \AL_j: S_{k,w}^D(K,\psi)\rightarrow S_{k,w}^D(K,\psi)$ (resp. $U_{\pi_j}:S_{k,w}^D(K,\psi)\rightarrow S_{k,w}^D(K,\psi)$, resp. $S_{\pi_j}:S_{k,w}^D(K,\psi)\rightarrow S_{k,w}^D(K,\psi)$).

	Notice that the operator $S_{\pi_j}$ preserves $S_{k,w}^D(K,\psi,\calO_L)$ and commutes with $U_{\pi_j}$ as $\left[ \begin{array}{cc}
	\pi_j^{-1}&0\\
	0 &\pi_j^{-1}\end{array}\right]$ is central in $\GL_2(F_{\gothp_j})$. Now the proposition follows from Proposition~\ref{P:Atkin-Lehner map and Up operator} and Lemma~\ref{L:a basic linear algebra lemma}.
\end{proof}

Fix a nonempty subset $J$ of $I$ and a locally algebraic weight $\kappa$ corresponding to the triple $((n_i)_{i\in I}\in\ZZ_{\geq 0}^I,(\nu_i)_{i\in I}\in \ZZ^I,\psi=(\psi_1,\psi_2))$ with the following property: $n_j=\nu_j=0$ and $\psi_{1,j}|_{1+\pi_j\calO_{\gothp_j}}$, $\psi_{2,j}|_{1+\pi_j\calO_{\gothp_j}}$ are trivial, for all $j\notin J$. 
Equivalently, the character $\kappa_{J^c}$ equals to $\psi_{J^c}=(\psi_{1,J^c},\psi_{2,J^c}):\calO_{p,J^c}^\times\times \calO_{p,J^c}^\times\rightarrow L^\times$ and  factors through $(\calO_{p,J^c}/\pi_{J^c})^\times\times(\calO_{p,J^c}/\pi_{J^c})^\times$. The character $\kappa_{J^c,T}:T(\calO_{p,J^c})\rightarrow L^\times$ is trivial on $T(1+\pi_{J^c}\calO_{p,J^c})$ under the decomposition $T(\calO_{p,J^c})=T(\calO_{p,J^c}/\pi_{J^c})\times T(1+\pi_{J^c}\calO_{p,J^c})$.

Since $\Iw_{\pi,1,J^c}$ is a normal subgroup of $\Iw_{\pi,J^c}$ with quotient $\Iw_{\pi,J^c}/\Iw_{\pi,1,J^c}\cong T(\calO_{p,J^c}/\pi_{J^c})$, the character $\kappa_{T,J^c}:T(\calO_{p,J^c})\rightarrow L^\times$ induces a character $\alpha_{J^c}:\Iw_{\pi,J^c}\rightarrow L^\times$. As $p$ splits in $F$, $\alpha_{J^c}$ takes values in $\Delta\subset \ZZ_p^\times$. Moreover, if the locally algebraic weight $\kappa$ is the character associated to a point $\chi\in \calW(L)$, then the character $\alpha_{J^c}$ only depends on $\omega:=\chi|_H$ (or equivalently, the component that $\chi$ belongs to), where $H$ is the torsion subgroup of $\calO_p^\times\times \ZZ_p^\times$ and the set $J$. We choose $t=(t_i)_{i\in I}\in \NN^I$ such that the character $\psi$ factors through $(\calO_p/\pi^t)^\times\times (\calO_p/\pi^t)^\times$. In particular, we take $t_j=1$ for all $j\notin J$.

\begin{definition}
	We call $\alpha_{J^c}$ the character associated to the character $\omega\in H^\vee$ and the set $J$. 
\end{definition}

For the locally algebraic weight $\kappa$ we consider above, 
let $k,w\in \ZZ^I$ be defined as in Definition~\ref{D:classical automorphic forms}. We can regard the $L$-vector space $L_{\kappa}$ as polynomial functions on $\calO_p=\prod\limits_{i\in I}\calO_{\gothp_i}$ and hence obtain an embedding $L_{n,v}\rightarrow \calC(\calO_p,L)$, which is equivariant under the action of the monoid $\bbM_{\pi^t}$. So we have an embedding $S_{k,w}^D(K,\psi)\rightarrow S_{\kappa,I}^D(K^p\Iw_{\pi^t},L)$. Composing with the isomorphism $S_{\kappa,I}^D(K^p\Iw_{\pi^t},L)\cong S_{\kappa,I}^D(K^p,L)$ established in Remark~\ref{R:comparison between spaces of integral p-adic automorphic forms of different levels} and the embedding $S_{\kappa,I}^D(K^p,L)\hookrightarrow S_{\kappa,J}^D(K^p,L)$, we obtain an embedding $S_{k,w}^D(K,\psi)\rightarrow S_{\kappa,J}^D(K^p,L)$. Then we have the following characterization of the image of this embedding.
\begin{proposition}\label{characterization of classical automorphic forms in generalized automorphic forms}
	Under the above assumptions on the weight $\kappa$, the image of the embedding $S_{k,w}^D(K,\psi)\rightarrow S_{\kappa,J}^D(K^p,L)$ lies in the subspace of $S_{\kappa,J}^D(K^p,L)$ on which the group $\Iw_{\pi,J^c}$ acts via the character $\alpha_{J^c}:\Iw_{\pi,J^c}\rightarrow \ZZ_p^\times$ associated to $\omega\in H^\vee$ and the set $J$.
\end{proposition}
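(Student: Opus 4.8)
The plan is to reduce the assertion to a pointwise identity on the coefficient module $L_\kappa$ and then push it through the three maps that constitute the embedding. Recall that $S_{k,w}^D(K,\psi)\hookrightarrow S_{\kappa,J}^D(K^p,L)$ is the composite of: (a) the $\bbM_{\pi^t}$-equivariant inclusion $L_\kappa\hookrightarrow\calC(\calO_p,L)$ of polynomial functions, giving $S_{k,w}^D(K,\psi)\hookrightarrow S_{\kappa,I}^D(K^p\Iw_{\pi^t},L)$; (b) the Hecke-equivariant level-lowering isomorphism $S_{\kappa,I}^D(K^p\Iw_{\pi^t},L)\cong S_{\kappa,I}^D(K^p,L)$ of Remark~\ref{R:comparison between spaces of integral p-adic automorphic forms of different levels}; and (c) the inclusion $S_{\kappa,I}^D(K^p,L)\hookrightarrow S_{\kappa,J}^D(K^p,L)$ of Remark~\ref{R:remark for integral p-adic automorphic forms}(1). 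The action of $\Iw_{\pi,J^c}$ on $S_{\kappa,J}^D(K^p,L)$ meant in the statement is the one extending, by the same formula, the $B(\calO_{p,J^c})$-action of Remark~\ref{R:remark for integral p-adic automorphic forms}(2): for $u_{J^c}\in\Iw_{\pi,J^c}$ and $f\in\calC_{\chi_J}(\Iw_\pi,L)$ one sets $(f\cdot u_{J^c})(g):=f(u_{J^c}g)$; this preserves $\calC_{\chi_J}(\Iw_\pi,L)$, and hence descends to an action on $S_{\kappa,J}^D(K^p,L)$, precisely because $D(\calO_{p,J^c})$ is central in $\Iw_{\pi,J^c}$. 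Since the action is pointwise in $x\in D_f^\times$, it suffices to show that for a classical form $\phi$ with image $\Phi$ one has $\Phi(x)\cdot u_{J^c}=\alpha_{J^c}(u_{J^c})\,\Phi(x)$ for every $x$ and every $u_{J^c}\in\Iw_{\pi,J^c}$.

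First I would carry out the computation on $L_\kappa$. By hypothesis $n_j=\nu_j=0$ for every $j\notin J$, so each monomial $\prod_i Z_i^{l_i}$ in the standard basis of $L_\kappa$ has exponent $l_j=0$ at the places $j\notin J$; consequently, for $u_{J^c}=(u_j)_{j\notin J}\in\Iw_{\pi,J^c}$ — which lies in $\bbM_{\pi^t}$ because $t_j=1$ for $j\notin J$ — the defining formula for the $\bbM_{\pi^t}$-action on $L_\kappa$ collapses to multiplication by the scalar $\prod_{j\notin J}\psi_{1,j}(d_j)\,\tau_j(\det u_j)$, where $u_j=\left[\begin{smallmatrix}a_j&b_j\\c_j&d_j\end{smallmatrix}\right]$. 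Here $\tau_j=\psi_{2,j}$ since $\nu_j=0$; since $c_j\in\pi_j\calO_{\gothp_j}$ and $\psi_{2,j}$ is trivial on $1+\pi_j\calO_{\gothp_j}$ we get $\psi_{2,j}(\det u_j)=\psi_{2,j}(a_jd_j)$; and $n_j=\psi_{1,j}$, $\nu_j=\psi_{2,j}$ both factor through $(\calO_{\gothp_j}/\pi_j)^\times$. Matching this against the definition of $\alpha_{J^c}$ — induced from $\kappa_{T,J^c}$ through $\Iw_{\pi,J^c}/\Iw_{\pi,1,J^c}\cong T(\calO_{p,J^c}/\pi_{J^c})$, so that $\alpha_j(u_j)=n_j(\bar d_j)\nu_j(\bar a_j\bar d_j)$ — shows this scalar is exactly $\alpha_{J^c}(u_{J^c})$. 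Hence the action of $u_{J^c}$ on $L_\kappa$ is multiplication by $\alpha_{J^c}(u_{J^c})$, so $\phi(x)\circ u_{J^c}=\alpha_{J^c}(u_{J^c})\,\phi(x)$ for all $x$, and, by the defining relation $\phi(xu)=\phi(x)\circ u$ for $u\in\Iw_{\pi^t}$, we get $\phi(xu_{J^c})=\alpha_{J^c}(u_{J^c})\,\phi(x)$.

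Next I would transport this relation. Write $(\star)$ for the condition
\[
(\star)\qquad \phi'(xu_{J^c})=\alpha_{J^c}(u_{J^c})\,\phi'(x)\quad\text{for all }x\in D_f^\times,\ u_{J^c}\in\Iw_{\pi,J^c},
\]
imposed on a form $\phi'$ in any of the intermediate spaces. The previous step gives $(\star)$ for $\phi$, hence for its image under (a). The matrices $\left[\begin{smallmatrix}\pi^s&\alpha\\0&1\end{smallmatrix}\right]$ occurring in the level-lowering isomorphism (b) may be taken to be trivial at the places $j\notin J$ (there $s_j=0$, and one may choose $\alpha_j=0$, the formula for $\iota_2$ being independent of that choice), hence to commute with every $u_{J^c}\in\Iw_{\pi,J^c}$; a direct substitution into the formula defining $\iota_2$ then shows $(\star)$ is preserved by (b). It is preserved by (c) trivially. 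Therefore $\Phi$ satisfies $(\star)$, which by the relation $\Phi(xu)=\Phi(x)\circ u$ for $u\in\Iw_\pi$ reads $\Phi(x)\circ u_{J^c}=\alpha_{J^c}(u_{J^c})\,\Phi(x)$ for all $x$. Finally, since $\Phi$ lies in the image of (c), each value $\Phi(x)$ belongs to the subspace $\Ind_{B(\calO_p)}^{\Iw_\pi}(\kappa_B)\subset\calC_{\chi_J}(\Iw_\pi,L)$, and for functions $f$ in this subspace the two conditions $f\circ u_{J^c}=\alpha_{J^c}(u_{J^c})f$ and $f\cdot u_{J^c}=\alpha_{J^c}(u_{J^c})f$ are equivalent: in either direction one Iwasawa-decomposes $g=b\,\bar n_J(z_J)\bar n_{J^c}(z_{J^c})$ and uses that $\Iw_{\pi,J^c}$ commutes with $\bar N(\pi_J\calO_{p,J})$, that $\alpha_{J^c}$ is trivial on $\bar N(\pi_{J^c}\calO_{p,J^c})$, and that $f(bg)=\kappa_B(b)f(g)$ for $b\in B(\calO_p)$. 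Applying this to $f=\Phi(x)$ gives $\Phi(x)\cdot u_{J^c}=\alpha_{J^c}(u_{J^c})\,\Phi(x)$, which is what was needed.

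The bulk of the actual work lies in this last step. The delicate points are: distinguishing the ``pointwise'' incarnation of the $\Iw_{\pi,J^c}$-action (right translation on values, which is the one naturally produced by classicality through the equivariance of automorphic forms) from the ``global'' left-translation incarnation in which the statement is phrased; verifying that the level-lowering isomorphism genuinely involves nothing at the places in $J^c$, which is exactly what the normalization $t_j=1$ for $j\notin J$ provides; and carrying out the Iwasawa-decomposition manipulation that yields the left-versus-right equivalence on $\Ind_{B(\calO_p)}^{\Iw_\pi}(\kappa_B)$. None of these is deep, but all require carefully unwinding the conventions of §\ref{section:induced representations}.
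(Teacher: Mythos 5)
The paper states this proposition without proof, so there is no argument of the authors' to compare against; your proof correctly supplies one. The core reductions are all sound: you isolate the right incarnation of the $\Iw_{\pi,J^c}$-action (the left-translation extension of the $B(\calO_{p,J^c})$-action from Remark~\ref{R:remark for integral p-adic automorphic forms}, which is the only one that descends to $S_{\kappa,J}^D(K^p,L)$); the scalar computation on $L_\kappa$ is exact (with $n_j=\nu_j=0$ forcing $l_j=0$, and $\psi_{2,j}(\det u_j)=\psi_{2,j}(a_jd_j)$ because $\psi_{2,j}$ kills $1+\pi_j\calO_{\gothp_j}$, so the scalar is precisely $\alpha_{J^c}(u_{J^c})$); the level-change map $\iota_2$ is indeed the identity at the places of $J^c$ since $s_j=0$ there; and the conversion of the pointwise $\circ$-equivariance into the left-translation equivariance is valid. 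A small remark on the last step: the equivalence of ``$f\circ u_{J^c}=\alpha_{J^c}(u_{J^c})f$ for all $u_{J^c}$'' and ``$f\cdot u_{J^c}=\alpha_{J^c}(u_{J^c})f$ for all $u_{J^c}$'' holds for any $f\in\calC(\Iw_\pi,L)$, not just on $\Ind_{B(\calO_p)}^{\Iw_\pi}(\kappa_B)$; writing $g=g_Jg_{J^c}$ and fixing $g_J$, each condition is equivalent to $f(g_Jg_{J^c})=\alpha_{J^c}(g_{J^c})f(g_J)$ for all $g_{J^c}$ (using that $*$ preserves diagonals, hence $\alpha_{J^c}(u^*)=\alpha_{J^c}(u)$), so the Iwasawa decomposition and Borel equivariance are not actually needed there.
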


Fix a locally algebraic weight $\kappa:\calO_p^\times\times\calO_p^\times\rightarrow L^\times$ that corresponds to the triple $(n,\nu,\psi)$ and we choose $t=(t_i)_{i\in I}\in \NN^I$ such that $\psi$ factors through $(\calO_p/\pi^t)^\times\times (\calO_p/\pi^t)^\times$ as before. Let $K=K^p\Iw_{\pi^t}$ be an open compact subgroup of $D_f^\times$. Fix $i\in I$. Define $n',\nu'\in \ZZ^I$ as follows:
\begin{equation*}
n_j'=
\begin{cases}
n_j, &\mbox{if }j\neq i\\
n_j, &\mbox{if }j=i
\end{cases} \text{~and~}
\nu_j'=
\begin{cases}
\nu_j, &\mbox{if }j\neq i\\
\nu_j+n_j-1, &\mbox{if }j=i
\end{cases}.
\end{equation*}
Let $\kappa'$ be the locally algebraic weight that corresponds to the triple $(n',\nu',\psi)$. Fix $r\in \calN^I$ and we denote by $z_j$ the coordinate of the $j$-component of the polydisc $\bbB_r$ for all $j\in I$. Under the above notations, the differential operator $(\frac{d}{dz_i})^{n_i+1}:\calA_{\kappa,r}\rightarrow \calA_{\kappa',r}$ induces an operator $\theta_i:S_{\kappa}^D(K,r)\rightarrow S_{\kappa'}^D(K,r)$. Moreover, this map is equivariant for the $U_{\pi_i}$-operator on the source and the $\pi_i^{n_i+1}U_{\pi_i}$-operator on the target. When $F=\QQ$, this is proven in \cite[\S7]{buzzard2004p}. The proof of the general case is similar. At the end of this section, we record the following classicality result that characterizes the image of $S_{k,w}^D(K,\psi)$ in $S_{\kappa}^D(K,r)$.

\begin{proposition}\label{P:classicality of overconvergent automorphic forms on D}
	Fix $\phi\in S_{\kappa}^D(K,r)$. Then
	\begin{enumerate}
		\item $\phi\in S_{k,w}^D(K,\psi)$ if and only if $\theta_i(\phi)=0$ for all $i\in I$.
		\item If $\phi$ is an eigenform for the $U_{\pi_i}$-operator with nonzero eigenvalue $\lambda_i$ such that $v_p(\lambda_i)<n_i+1$ for all $i\in I$, then $\phi\in S_{k,w}^D(K,\psi)$.
	\end{enumerate}
\end{proposition}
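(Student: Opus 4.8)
The strategy is to deduce (2) from (1), and to prove (1) by the usual observation that the kernel of the theta operators is exactly the space of polynomial functions. For (1), the inclusion $S_{k,w}^D(K,\psi)\subseteq\bigcap_{i\in I}\ker\theta_i$ is immediate: an element of $L_\kappa$ is a polynomial of degree $\leq n_i$ in the variable $z_i$ and is therefore annihilated by $\theta_i=(\tfrac{d}{dz_i})^{n_i+1}$. For the converse, I would reduce to the situation where the functions $\phi(x)\in\calA_{\kappa,r}$ are analytic on a \emph{connected} polydisc (equivalently, $r$ is taken as large as possible for the classical weight $\kappa$); this is harmless for the application, since a $U_{\pi_i}$-eigenform with nonzero eigenvalue is automatically analytic in the $i$-th variable on the full disc (the operator $U_{\pi_i}$ contracts the $i$-th coordinate, so $\phi=\lambda_i^{-1}U_{\pi_i}\phi$ improves overconvergence and iteration pushes $\phi$ to radius one). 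On a connected polydisc, an analytic function killed by $(\tfrac{d}{dz_i})^{n_i+1}$ for every $i$ is a polynomial of multidegree $\leq(n_i)_{i\in I}$, hence lies in $L_\kappa$; combining this with the $\Iw_{\pi^t}$-equivariance of a form this yields $\bigcap_i\ker\theta_i\subseteq S_{k,w}^D(K,\psi)$. For $F=\QQ$ this is \cite[\S7]{buzzard2004p}; the only additional point is compatibility with the renormalized Hecke action and the generalized spaces of \S\ref{section:spaces of  classical automorphic forms}.

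For (2), fix $i\in I$ and set $\psi:=\theta_i(\phi)\in S_{\kappa'}^D(K,r)$. Since $\theta_i$ intertwines $U_{\pi_i}$ on the source with $\pi_i^{n_i+1}U_{\pi_i}$ on the target, the identity $U_{\pi_i}\phi=\lambda_i\phi$ gives $U_{\pi_i}\psi=\mu\psi$ with $\mu:=\lambda_i\pi_i^{-(n_i+1)}$; because $p$ splits in $F$ we have $v_p(\pi_i)=1$, so $v_p(\mu)=v_p(\lambda_i)-(n_i+1)<0$ by hypothesis. It therefore suffices to show that a $U_{\pi_i}$-eigenform on $S_{\kappa'}^D(K,r)$ whose eigenvalue has negative $p$-adic valuation must vanish. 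This follows from the fact that $U_{\pi_i}$ acts with operator norm $\leq 1$ for the sup-norm on $S_{\kappa'}^D(K,r)$: by \eqref{E:formula to define Up operator} and \eqref{E:equation to define Iwahori action on continuous functions}, each of the $p$ coset representatives $v_{i,j}=\Matrix{\pi_i}{0}{j\pi_i^{t_i}}{1}$ acts on $h\in\calC(\calO_p,A)$ by
\[
h\circ v_{i,j}(z)=n(j\pi_i^{t_i}z+1)\,\nu(\pi_i)\,h\!\left(\tfrac{\pi_i z}{j\pi_i^{t_i}z+1}\right),
\]
where $\nu(\pi_i)=1$ by Convention~\ref{Convention:extend the character nu}, the factor $n(j\pi_i^{t_i}z+1)$ has absolute value $1$ because $j\pi_i^{t_i}z+1$ lies in the pro-$p$ group $1+\pi_i\calO_{\gothp_i}$, and $z\mapsto\pi_i z/(j\pi_i^{t_i}z+1)$ carries each ball of $\bbB_r$ into a ball of radius $r/p$ centred at a point of $\calO_p$, hence back into $\bbB_r$. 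Consequently $\|U_{\pi_i}h\|\leq\|h\|$, and from $\psi=\mu^{-1}U_{\pi_i}\psi$ we get $\|\psi\|=|\mu|^{-1}\|U_{\pi_i}\psi\|\leq p^{\,v_p(\mu)}\|\psi\|$ with $p^{\,v_p(\mu)}<1$, forcing $\psi=0$. As this holds for every $i$, part (1) gives $\phi\in S_{k,w}^D(K,\psi)$.

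\textbf{Main obstacle.} The delicate point is the converse in (1): pinning down that the kernel of the theta operators is the space of \emph{globally} polynomial forms rather than forms whose values are only piecewise polynomial on the finitely many balls of $\bbB_r$. This is exactly where one uses that the relevant $\phi$ may be taken analytic on a connected polydisc (via the overconvergence-improving property of the $U_{\pi_i}$, or by enlarging $r$ for the classical weight), so that ``locally polynomial'' upgrades to ``polynomial'', after which the $\Iw_{\pi^t}$-equivariance does the bookkeeping. The estimate $\|U_{\pi_i}\|\leq 1$ used in (2) is then routine, but it depends essentially on the normalization $\nu(\pi_i)=1$ of Convention~\ref{Convention:extend the character nu}: with the classical normalization $U_{\pi_i,\mathrm{cl}}=\pi_i^{\nu_i}U_{\pi_i}$ the slopes would be shifted and the contraction argument would fail.
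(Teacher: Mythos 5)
The paper offers no argument here: it defers entirely to \cite[Theorem~4.3.6]{birkbeck2016jacquet}, so there is nothing to compare line-by-line. Your reconstruction via the $\theta$-operator together with a slope estimate is the standard mechanism and is presumably the content of the cited proof. The norm bound $\|U_{\pi_i}\|\le 1$ on $S_{\kappa'}^D(K,r)$ (which rests on $\nu(\pi_i)=1$ from Convention~\ref{Convention:extend the character nu} together with $|n(1+\pi_i\calO_{\gothp_i})|=1$ and the contraction of the balls) is correct, as is the resulting deduction of (2) from (1).

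There is, however, a flaw in your discussion of (1). The parenthetical ``equivalently, $r$ is taken as large as possible for the classical weight $\kappa$'' is wrong: when the $p$-part of $\psi_1$ has conductor $\pi_i^{s}$ with $s\ge 2$ --- which is precisely the situation to which the paper applies the proposition, since the auxiliary weights $\chi_l$ of \S\ref{subsection:A filtration on the space of p-adic automorphic forms with respect to all Upi-operators} have $\psi_2|_{1+\pi_j\calO_{\gothp_j}}$ of conductor $\pi_j^2$ --- the maximal radius $r(\kappa)_i$ is on the order of $p^{-(s-1)}<1$, so $\bbB_{r(\kappa)}$ is still a disjoint union of many polydiscs and the piecewise-polynomial obstruction does not disappear by enlarging $r$. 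Your other proposed mechanism, iterating $\phi=\lambda_i^{-1}U_{\pi_i}\phi$ to obtain analyticity of each $\phi(x)$ on the full unit polydisc in $z_i$ so that ``locally polynomial'' upgrades to ``globally polynomial'', is the right idea and does carry (2); but it only yields the converse in (1) for forms lying in a finite-slope (generalized) eigenspace with nonzero $U_{\pi_i}$-eigenvalue, not for an arbitrary $\phi\in S_\kappa^D(K,r)$ as the literal statement of (1) requires. So the converse in (1), as you have argued it, retains a genuine gap; fortunately the eigenform version is all the paper ever invokes, since (1) is applied to finite-dimensional $U_{\pi_i}$-stable spaces of bounded nonzero slope.
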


\begin{proof}
	Part $(2)$ follows from \cite[Theorem~4.3.6]{birkbeck2016jacquet}. Part $(1)$ follows from the proof given there.
\end{proof}

\subsection{Explicit expression of $p$-adic automorphic forms}\label{subsection:Explicit expression of p-adic automorphic forms}
Let $A$ be a topological ring and $\kappa:\calO_p^\times\times\calO_p^\times\rightarrow A^\times$ as before.
Fix a double coset decomposition $D_f^\times=\bigsqcup\limits_{k=0}^{s-1}D^\times \gamma_kK^p\Iw_{\pi}$ with $\gamma_k\in D_f^\times$. We have an isomorphism of $A$-modules:
\[
S_{\kappa,I}^D(K^p)\xrightarrow{\cong}\bigoplus_{k=0}^{s-1}\calC(\calO_p,A)^{\Gamma_k}, \phi\mapsto (\phi(\gamma_k))_{k=0,\dots,s-1},
\]
where $\Gamma_k=\gamma_k^{-1}D^\times \gamma_k\cap K^p\Iw_{\pi}$ and $\Gamma_k$ acts on $\calC(\calO_p,A)$ via its $\GL_2(F_p)$-component.

We embed $\calO_F^\times$ diagonally to $D_f^\times$ and hence view $\calO_F^\times$ as a subgroup of $D_f^\times$. From \cite[Lemma~7.1]{hida1988p} and the assumption that $D/F$ is totally definite,  we can choose $K^p$ small enough such that $\Gamma_k\subset \calO_F^{\times,+}$, for all $k=0,\dots,s-1$. The subgroup $K^p$ with this property is called \emph{neat}. For a neat $K^p$, we have the isomorphism $S_{\kappa,I}^D(K^p)\xrightarrow{\cong}\bigoplus\limits_{k=0}^{s-1}\calC(\calO_p,A)$. Similarly we have explicit descriptions of the spaces of $p$-adic overconvergent automorphic forms by evaluating the functions at $\gamma_k$'s, for $k=0,\dots,s-1$:  $S_\kappa^D(K^p\Iw_{\pi},r)\xrightarrow{\cong} \bigoplus\limits_{k=0}^{s-1}\calA_{\kappa,r}$, where $A$ a $\QQ_p$-affinoid algebra and $(1,\dots, 1)$ good for $(\kappa,r)$.

\begin{convention}
	In the rest of this paper, we always assume that $K^p$ is neat.
\end{convention}

In general, we fix a nonempty subset $J$ of $I$ and a continuous character $\chi_J=(\nu_J,\mu):\calO_{p,J}^\times\times\ZZ_p^\times\rightarrow A^\times$. As observed in Remark~\ref{remark:some basic properties of the space C_J(kappa,A)}, for any $\alpha\in \calO_F^\times$, the matrix $\mathrm{Diag}(\alpha)\in T(\calO_p)$ acts on $\calC_{\chi_J}(\Iw_\pi,A)$ via multiplication by $\mu(\Nm_{F/\QQ}(\alpha))$. So we have an isomorphism of $A$-modules: $S_{\kappa,J}^D(K^p,A)\xrightarrow{\cong}\bigoplus\limits_{k=0}^{s-1}\calC_{\chi_J}(\Iw_\pi,A)$.

\section{A filtration on the space of integral $p$-adic automorphic forms}\label{section:A filtration on the space of integral $p$-adic automorphic forms}

\subsection{Notations}\label{S:notations in filtration on the space of p-adic automorphic forms}

\begin{itemize}
	\item We label the elements in $I=\Hom(F,\bar{\QQ})$ by $I=\{i_1,\dots, i_g\}$. For $1\leq l\leq g-1$, let $J_l=\{i_1,\dots, i_l \}\subset I$.
	\item Let $H$ be the torsion subgroup of $\calO_p^\times\times \ZZ_p^\times$. Hence $\calO_p^\times\times \ZZ_p^\times\cong H\times ((1+\pi\calO_p)\times (1+p\ZZ_p))$ and $\calW\cong \prod\limits_{\omega\in H^\vee}\calW_\omega$, where $H^\vee$ is the character group of $H$, and $\calW_\omega$ is isomorphic to the $(g+1)$-dimensional open unit polydisc.
	\item The homomorphism $\phi_\rho:\calO_p^\times\rightarrow \calO_p^\times\times \ZZ_p^\times$ induces a continuous homomorphism $\ZZ_p\llbracket \calO_p^\times \rrbracket\rightarrow\ZZ_p\llbracket \calO_p^\times\times \ZZ_p^\times \rrbracket$, which is still denoted by $\phi_\rho$. For each $i\in I$, define $T_i:=\phi_\rho([\exp(\pi_i)]-1)\in \ZZ_p\llbracket \calO_p^\times\times \ZZ_p^\times \rrbracket$ and $T:=[1,\exp(p)]-1$. Then $\{(T_i)_{i\in I},T \}$ forms a full set of parameters of the weight space $\calW$.
	\item We denote by $\Lambda$ the complete group ring $\ZZ_p\llbracket \calO_p^\times\times \ZZ_p^\times \rrbracket$, and put $\gothm_\Lambda:=(p,(T_i)_{i\in I})\subset \Lambda$. For every character $\omega\in H^\vee$, let $\Lambda_\omega:=\Lambda\otimes_{\ZZ_p[H],\omega}\ZZ_p$. Under the above notations, we have $\Lambda_\omega=\ZZ_p\llbracket (T_i)_{i\in I},T \rrbracket$.
	\item For a nonempty subset $J$ of $I$, we set $\Lambda_J:=\ZZ_p\llbracket \calO_{p,J}^\times\times\ZZ_p^\times \rrbracket$. We use $H_J\subset H$ to denote the torsion subgroup of $\calO_{p,J}^\times\times \ZZ_p^\times$. Under the isomorphism $\Lambda_J\cong \ZZ_p[H_J]\otimes_{\ZZ_p}\ZZ_p\llbracket (T_j)_{j\in J}, T \rrbracket$, we set $\Lambda_J^{>1/p}:=\ZZ_p[H_J]\otimes_{\ZZ_p}\ZZ_p\llbracket (T_j,\frac{p}{T_j})_{j\in J},T \rrbracket$ and $\gothm_{\Lambda_J^{>1/p}}$ be the ideal of $\Lambda_J^{>1/p}$ generated by the elements $(T_j)_{j\in J}$. Note that since $p=T_j\cdot \frac{p}{T_j}$ in $\Lambda_J^{>1/p}$, we have $p\in \gothm_{\Lambda_J^{>1/p}}$.
	\item When $J=I$, we write $\Lambda^{>1/p}$ (resp. $\gothm_{\Lambda^{>1/p}}$) for $\Lambda_J^{>1/p}$ (resp. $\gothm_{\Lambda_J^{>1/p}}$) for simplicity. In particular, we have $\gothm_{\Lambda^{>1/p}}=\gothm_\Lambda\cdot \Lambda^{>1/p}$, and $\gothm_{\Lambda^{>1/p}}$ is generated by $(T_i)_{i\in I}$ in $\Lambda^{>1/p}$. We also have $p\in \gothm_{\Lambda^{>1/p}}$.
\end{itemize}

\subsection{Explicit expression of $U_{\pi_j}$-operators on the space of $p$-adic automorphic forms}\label{subsection: explicit expression of Up operator on the space of automorphic forms}

First we give an explicit expression of the $U_{\pi_j}$-operator on the space of (generalized) integral $p$-adic automorphic forms. This is a generalization of \cite[Proposition~3.1]{liu2017eigencurve}. 


\begin{proposition}\label{P:explicit expression of Up operators on integral model of p-adic automorphic forms}
	Let $J$ be a nonempty subset of $I$ and $\chi_J:\calO_{p,J}^\times\times\ZZ_p^\times\rightarrow A^\times$ be a continuous character. Fix $j\in J$. 
	Under the isomorphism $S_{\kappa,J}^D(K^p,A)\xrightarrow{\cong}\bigoplus\limits_{k=0}^{s-1}\calC_{\chi_J}(\Iw_\pi,A)$, the $U_{\pi_j}$-operator on this space can be described by the following commutative diagram:
	$$
	\xymatrix@=1.3cm{
		S_{\kappa,J}^D(K^p,A) \ar[r]^{\cong} \ar[d]^{U_{\pi_j}} & \bigoplus\limits_{k=0}^{s-1}\calC_{\chi_J}(\Iw_\pi,A)\ar[d]^{\calU_j} \\
		S_{\kappa,J}^D(K^p,A) \ar[r]^{\cong} & \bigoplus\limits_{k=0}^{s-1}\calC_{\chi_J}(\Iw_\pi,A).
	}
	$$
	Here the right vertical map $\calU_j$ in the above diagram is given by an $s\times s$ matrix with the following descriptions:
	\begin{enumerate}
		\item Each entry of $\calU_j$ is a sum of operators of the form $\circ\delta_p$, where $\delta_p=(\delta_i)_{i\in I}\in \rmM_2(\calO_p)$ have the property that $\delta_i\in \Iw_{\pi_i}$ for all $i\neq j$, and $\delta_j$ belongs to $\left( \begin{array}{cc}
		\pi_j\calO_{\gothp_j}&\calO_{\gothp_j}\\
		\pi_j\calO_{\gothp_j}&\calO_{\gothp_j}^\times\end{array}\right)
		\subset M_{\pi,j}$, where $\circ\delta_p$ is the right action of the monoid $\bbM_{\pi,J}\times \Iw_{\pi,J^c}$ on the spaces $\calC_{\chi_J}(\Iw_\pi,A)$ defined in \S\ref{S:Integral model of the space of $p$-adic automorphic forms}.
		\item There are exactly $p$ such operators appearing in each row and column of $\calU_j$.
	\end{enumerate}
\end{proposition}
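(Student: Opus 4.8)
The strategy is to trace the $U_{\pi_j}$-operator through the explicit identification $S_{\kappa,J}^D(K^p,A)\xrightarrow{\cong}\bigoplus_{k=0}^{s-1}\calC_{\chi_J}(\Iw_\pi,A)$ coming from the double coset decomposition $D_f^\times=\bigsqcup_{k=0}^{s-1}D^\times\gamma_kK^p\Iw_\pi$, using the explicit double coset decomposition $\Iw_{\pi^t}\eta_j\Iw_{\pi^t}=\bigsqcup_{l=0}^{p-1}\Iw_{\pi^t}v_{j,l}$ with $v_{j,l}=\Matrix{\pi_j}{0}{l\pi_j^{t_j}}{1}$ recalled in \S\ref{subsection: Hecke operators}. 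First I would fix, for each $k$ and each $l\in\{0,\dots,p-1\}$, the decomposition of $\gamma_k v_{j,l}^{-1}$ in $D^\times\backslash D_f^\times/K^p\Iw_\pi$: there exist a unique index $\sigma_j(k,l)\in\{0,\dots,s-1\}$, an element $d_{k,l}\in D^\times$, and an element $u_{k,l}\in K^p\Iw_\pi$ such that $\gamma_k v_{j,l}^{-1}=d_{k,l}\gamma_{\sigma_j(k,l)}u_{k,l}$. Then the defining formula $U_{\pi_j}(\phi)(x)=\sum_l\phi(xv_{j,l}^{-1})\circ v_{j,l}$ together with the left $D^\times$-invariance and the right $\Iw_\pi$-equivariance of $\phi$ yields
\[
U_{\pi_j}(\phi)(\gamma_k)=\sum_{l=0}^{p-1}\phi(\gamma_{\sigma_j(k,l)})\circ\big(u_{k,l,p}\,v_{j,l}\big),
\]
where $u_{k,l,p}\in\Iw_\pi$ is the $\GL_2(F_p)$-component of $u_{k,l}$. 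This shows $\calU_j$ is an $s\times s$ matrix whose $(k,\sigma_j(k,l))$-entry contains the summand $\circ(u_{k,l,p}v_{j,l})$; so $\delta_p:=u_{k,l,p}v_{j,l}$ is the relevant element, and its entries need to be analyzed place by place.

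For the place-by-place analysis, write $u_{k,l,p}=(u_i)_{i\in I}$ with each $u_i\in\Iw_{\pi_i}$. For $i\neq j$ the $v_{j,l}$-factor is trivial, so $\delta_i=u_i\in\Iw_{\pi_i}$, which is exactly what part (1) requires at the places $i\neq j$. At the place $j$ we have $\delta_j=u_jv_{j,l}$ with $u_j\in\Iw_{\pi_j}=\Matrix{\calO_{\gothp_j}^\times}{\calO_{\gothp_j}}{\pi_j\calO_{\gothp_j}}{\calO_{\gothp_j}^\times}$ and $v_{j,l}=\Matrix{\pi_j}{0}{l\pi_j}{1}$ (recall $t_j=1$ after reduction to $t=(1,\dots,1)$, cf.\ the remark following \S\ref{subsection: Hecke operators}). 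A direct matrix multiplication gives
\[
\Matrix{a}{b}{c}{d}\Matrix{\pi_j}{0}{l\pi_j}{1}=\Matrix{a\pi_j+bl\pi_j}{b}{c\pi_j+dl\pi_j}{d},
\]
and since $a\in\calO_{\gothp_j}^\times$, $b\in\calO_{\gothp_j}$, $c\in\pi_j\calO_{\gothp_j}$, $d\in\calO_{\gothp_j}^\times$, all four entries lie in $\calO_{\gothp_j}$, the $(1,1)$- and $(2,1)$-entries lie in $\pi_j\calO_{\gothp_j}$, the $(1,2)$-entry lies in $\calO_{\gothp_j}$, and the $(2,2)$-entry $d$ is a unit, hence $\pi_j\nmid d$. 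Together with $\det(\delta_j)=\pi_j\det(u_j)\neq 0$ this places $\delta_j$ in $\Matrix{\pi_j\calO_{\gothp_j}}{\calO_{\gothp_j}}{\pi_j\calO_{\gothp_j}}{\calO_{\gothp_j}^\times}\subset\bbM_{\pi,j}$, proving part (1). For part (2), I would observe that for fixed $k$ the map $l\mapsto\sigma_j(k,l)$ sends $\{0,\dots,p-1\}$ into $\{0,\dots,s-1\}$, and each value is attained with a well-defined multiplicity recorded as the number of summands; summing over $l$ gives exactly $p$ operators in the $k$-th row of $\calU_j$. The column count follows by the analogous argument applied to the adjoint (or inverse) coset decomposition $\Iw_\pi\eta_j^{-1}\Iw_\pi$, or more cleanly by a counting/duality argument: the total number of operator-summands in $\calU_j$ is $ps$, they are distributed one index $\sigma_j(k,l)$ per pair $(k,l)$, and the fact that $D^\times$ acts freely (after shrinking $K^p$ to be neat, $\Gamma_k\subset\calO_F^{\times,+}$) ensures the transpose count is also $p$ per column — this is where the neatness hypothesis on $K^p$ is used.

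The only genuinely delicate point is the column-count half of part (2): showing that each column of $\calU_j$ also contains exactly $p$ operators. The row count is immediate from the $p$-fold coset decomposition, but the column count amounts to the statement that the adjoint double coset $\Iw_\pi\eta_j^{-1}\Iw_\pi$ also splits into $p$ left cosets, equivalently that the index $[\Iw_\pi:\Iw_\pi\cap\eta_j\Iw_\pi\eta_j^{-1}]$ equals $[\Iw_\pi:\Iw_\pi\cap\eta_j^{-1}\Iw_\pi\eta_j]$; this is a standard Hecke-pair fact for $\GL_2$ over a local field (both equal $p$), but I would spell it out, perhaps by exhibiting the explicit coset representatives $\Matrix{1}{0}{m}{\pi_j}$, $m\in\{0,\dots,p-1\}$, for the transposed operator and checking directly that they are inequivalent mod $\Iw_\pi$ on the left. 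Everything else is bookkeeping with the identification $S_{\kappa,J}^D(K^p,A)\cong\bigoplus_k\calC_{\chi_J}(\Iw_\pi,A)$ and the monoid action of $\bbM_{\pi,J}\times\Iw_{\pi,J^c}$ already defined in \S\ref{section:induced representations} and extended in Remark~\ref{remark:some basic properties of the space C_J(kappa,A)}, so no new ideas are needed there.
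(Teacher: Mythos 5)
Your plan follows the same route as the paper's (sketched) proof: use the decomposition $\gamma_k v_{j,l}^{-1}=d_{k,l}\gamma_{\alpha_{k,l}}u_{k,l}$ in $D^\times\backslash D_f^\times/K^p\Iw_\pi$, push through $U_{\pi_j}(\phi)(\gamma_k)=\sum_l\phi(\gamma_{\alpha_{k,l}})\circ(u_{k,l,p}v_{j,l})$, and verify place by place that $\delta_p=u_{k,l,p}v_{j,l}$ has the stated shape. Your explicit $2\times 2$ computation at the place $j$ is exactly the content that the paper leaves as ``straightforward to check,'' and it is correct. The row count in part (2) is immediate, as you say.

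However, your treatment of the column count is not yet correct as written. First, the proposed right-coset representatives $\Matrix{1}{0}{m}{\pi_j}$ do not belong to the double coset $\Iw_{\pi_j}\eta_j\Iw_{\pi_j}$: a direct computation shows that every element of $\Iw_{\pi_j}\eta_j\Iw_{\pi_j}$ has its $(1,1)$-entry in $\pi_j\calO_{\gothp_j}$, whereas the $(1,1)$-entry of your representative is $1$. The correct right-coset representatives are $\Matrix{\pi_j}{m}{0}{1}$ for $m\in\{0,\dots,p-1\}$, which one checks are pairwise inequivalent modulo right multiplication by $\Iw_{\pi_j}$. Second, the ``counting/duality'' fallback does not close the gap: knowing that there are $p$ summands per row and $ps$ in total does not imply $p$ per column (the same totals are compatible with unequal column sums), and the neatness of $K^p$ is not what rescues this. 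The clean way to finish, consistent with what you began, is to rewrite $U_{\pi_j}(\phi)(\gamma_{k'})$ using the decomposition into right cosets $\Iw_\pi\eta_j\Iw_\pi=\bigsqcup_m w_m\Iw_\pi$ with $w_m=\Matrix{\pi_j}{m}{0}{1}$ (placed at the $j$-component), and observe that the resulting correspondence $(k,l)\leftrightarrow(k',m)$ is a bijection of $\{0,\dots,s-1\}\times\{0,\dots,p-1\}$ with itself, whence each column also carries exactly $p$ operator-summands.
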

\begin{proof}
	The proof is almost identical with that of \cite[Proposition~3.1]{liu2017eigencurve} and we only give a sketch here.
	For every $l=0,\dots,p-1$ and $k=0,\dots,s-1$, we can write $\gamma_kv_{j,l}^{-1}$ uniquely as $\delta_{l,k}\gamma_{\alpha_{l,k}}u_{l,k}$， for $\delta_{l,k}\in D^\times$, $\alpha_{l,k}\in \{0,\dots,s-1 \}$ and $u_{l,k}\in K^p\Iw_\pi$. Then
	\begin{equation*}
	\begin{split}
	U_{\pi_j}(\phi)(\gamma_k)&=\sum_{l=0}^{p-1}\phi(\gamma_{\alpha_{l,k}}u_{l,k})\circ v_{j,l}=\sum_{l=0}^{p-1}(\phi(\gamma_{\alpha_{l,k}})\circ u_{l,k,p})\circ v_{j,l}
	\\
	&=\sum_{l=0}^{p-1}\phi(\gamma_{\alpha_{l,k}})\circ(u_{l,k,p}v_{j,l}),
	\end{split}
	\end{equation*}
	where $u_{l,k,p}\in \Iw_{\pi}$ is the $\GL_2(F_p)$-component of $u_{l,k}$. 
	
	Let $\delta_{l,k,p}=u_{l,k,p}v_{j,l}$ for all $l$'s and $k$'s. It is straightforward to check that $\delta_{l,k,p}$'s satisfy the stated property. 
\end{proof}

\subsection{p-adic analysis}
It follows from Proposition~\ref{P:explicit expression of Up operators on integral model of p-adic automorphic forms} that to study the $U_{\pi_j}$-operators for $j\in I$, it is crucial to understand the action of the monoid $\bbM_\pi\subset\GL_2(F_p)$ on the space $\calC(\calO_p,A)$. When $F=\QQ$ (and hence $\calO_p=\ZZ_p$) and $\kappa:\ZZ_p^\times\rightarrow \Lambda^\times=(\ZZ_p\llbracket\ZZ_p^\times\rrbracket)^\times$ is the universal character, this question has been studied carefully by Liu-Wan-Xiao in \cite[\S3]{liu2017eigencurve}. We will recall their results in this section and temporarily adopt their notations.

For $F=\QQ$, the monoid $\bbM_\pi$ becomes $\left\{ \left[ \begin{array}{cc}
a& b\\
c&d\end{array}\right]\in \rmM_2(\ZZ_p)\;\Big|\; p|c, p\nmid d \text{~and~} ad-bc\neq 0 \right\}$.  The Iwasawa algebra $\Lambda$ decomposes as  $\ZZ_p\llbracket \ZZ_p^\times \rrbracket\cong \ZZ_p[\Delta]\otimes_{\ZZ_p}\ZZ_p\llbracket T \rrbracket$, where $\Delta\subset \ZZ_p^\times$ is the torsion subgroup, $T=[\exp(p)]-1$. Denote $\gothm_{\Lambda}:=(p,T)\subset \Lambda$. The space $\calC(\ZZ_p,\Lambda)$ admits an orthonormal basis $\{e_0=1,e_1=z,e_2=\binom{z}{2},\dots \}$, which is called t\emph{he Mahler basis} (we refer \cite[\S2.16]{liu2017eigencurve} for more details). It also carries a right action of the monoid $\bbM_{\pi}$ defined by
\[
h\circ\delta(z):=\chi(cz+d)h(\frac{az+b}{cz+d}) \text{~for~} h\in \calC(\ZZ_p,\Lambda) \text{~and~} \delta\in \bbM_{\pi}.
\]

For $\delta_p= \left[\begin{array}{cc}
a& b\\
c&d\end{array}\right]\in \bbM_\pi$, we use $P(\delta_p)=(P_{m,n}(\delta_p))_{m,n\geq 0}$ to denote the infinite matrix for the action of $\delta_p$ on $\calC(\ZZ_p,\Lambda)$ with respect to the Mahler basis, i.e. $P_{m,n}(\delta_p)$ is the coefficient of $\binom{z}{m}$ of the function $\binom{z}{n}\circ \delta_p$. Then we have the following estimation.
\begin{proposition}[\cite{liu2017eigencurve}, Proposition~3.24]\label{P:Liu-Wan-Xiao's computation on matrix coefficients of Mahler basis}
\noindent	\begin{enumerate}
		\item When $\delta_p= \left[ \begin{array}{cc}
		a& b\\
		c&d\end{array}\right]\in \left[ \begin{array}{cc}
		p\ZZ_p& \ZZ_p\\
		p\ZZ_p& \ZZ_p^\times\end{array}\right]$, the coefficient $P_{m,n}(\delta_p)$ belongs to $\gothm_{\Lambda}^{\max\{m-\lfloor \frac{n}{p} \rfloor,0  \}}$.
		\item When $\delta_p= \left[ \begin{array}{cc}
		a& b\\
		c&d\end{array}\right]\in \bbM_\pi$, the coefficient $P_{m,n}(\delta_p)$ belongs to $\gothm_{\Lambda}^{\max\{m-n,0  \}}$.
	\end{enumerate}
\end{proposition}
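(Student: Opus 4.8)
The plan is to reduce the estimate on the Mahler coefficient $P_{m,n}(\delta_p)$ to a short list of elementary matrices and to propagate the bounds through matrix multiplication, following the strategy of \cite[\S3]{liu2017eigencurve}. Throughout one works with the explicit formula $e_n\circ\delta_p(z)=\chi(cz+d)\binom{\frac{az+b}{cz+d}}{n}$ in the Mahler basis, and the point is to control two features: the fractional–linear substitution $z\mapsto\tfrac{az+b}{cz+d}$ and the twist by $\chi(cz+d)$. For a general $\delta_p\in\bbM_\pi$ one has $p\mid c$ and $d\in\ZZ_p^\times$, so the substitution is a bijection of $\ZZ_p$; for $\delta_p$ in the subclass $\left[\begin{smallmatrix}p\ZZ_p&\ZZ_p\\p\ZZ_p&\ZZ_p^\times\end{smallmatrix}\right]$ one has in addition $p\mid a$, and then the substitution maps $\ZZ_p$ into the subdisc $bd^{-1}+p\ZZ_p$ — this contraction is exactly what upgrades the exponent $\max\{m-n,0\}$ to the sharper $\max\{m-\lfloor n/p\rfloor,0\}$. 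The twist contributes only powers of $T\in\gothm_\Lambda$: writing $\chi(cz+d)=\chi(d)\chi\!\big(1+(c/d)z\big)$ with $c/d\in p\ZZ_p$ and using that $\chi$ on $1+p\ZZ_p$ is of the form $(1+T)^{(\cdot)}$, multiplication by $\chi(cz+d)$ can only improve $\gothm_\Lambda$-divisibility.

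Concretely I would first record the building blocks and their matrices in the Mahler basis: a translation $\left[\begin{smallmatrix}1&b\\0&1\end{smallmatrix}\right]$ gives, by the Vandermonde identity, $e_n\circ\left[\begin{smallmatrix}1&b\\0&1\end{smallmatrix}\right]=\sum_m\binom{b}{n-m}e_m$, an exactly upper triangular matrix over $\ZZ_p$; a diagonal unit $\left[\begin{smallmatrix}u_1&0\\0&u_2\end{smallmatrix}\right]$ with $u_i\in\ZZ_p^\times$ gives an upper triangular matrix with entries in $\Lambda^\times\cdot\ZZ_p$; a lower unipotent block $\left[\begin{smallmatrix}1&0\\ \gamma&1\end{smallmatrix}\right]$ with $\gamma\in p\ZZ_p$ (a bijection of $\ZZ_p$ congruent to the identity mod $p$) satisfies $P_{m,n}\in\gothm_\Lambda^{\max\{m-n,0\}}$; and the contracting cases need a genuine valuation count — for $\left[\begin{smallmatrix}p&0\\0&1\end{smallmatrix}\right]$ one has $e_n\circ\left[\begin{smallmatrix}p&0\\0&1\end{smallmatrix}\right]=\binom{pz}{n}$ and estimates its Mahler coefficients by balancing the powers of $p$ extracted from the factors $pz-j$ with $p\mid j$ against $v_p(n!)=\tfrac{n-s_p(n)}{p-1}$, using that the finite differences defining the Mahler coefficients are more $p$-divisible than the individual values $\binom{pk}{n}$; for the full subclass $\left[\begin{smallmatrix}p\ZZ_p&\ZZ_p\\p\ZZ_p&\ZZ_p^\times\end{smallmatrix}\right]$ one runs the analogous count directly on $z\mapsto\chi(cz+d)\binom{\frac{az+b}{cz+d}}{n}$, now exploiting that the argument ranges over a disc of radius $p^{-1}$.

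Finally I would propagate: any $\delta_p\in\bbM_\pi$ factors over $\ZZ_p$ into blocks of the types above, arranged so that the contracting estimate enters exactly once precisely when $\delta_p$ lies in $\left[\begin{smallmatrix}p\ZZ_p&\ZZ_p\\p\ZZ_p&\ZZ_p^\times\end{smallmatrix}\right]$; since $P(\delta\delta')=P(\delta)P(\delta')$, it then remains to check that the two weight functions $g_1(m,n)=\max\{m-n,0\}$ and $g_2(m,n)=\max\{m-\lfloor n/p\rfloor,0\}$ are subadditive under composition of matrices, i.e. $\min_k\{g(m,k)+g'(k,n)\}\ge g''(m,n)$ for each relevant triple, which follows from the triangle inequality for $\max\{\,\cdot-\cdot\,,0\}$ and the monotonicity of $x\mapsto x-\lfloor x/p\rfloor$. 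A product of $g_1$-blocks then yields~(2), and a product with one $g_2$-block yields~(1). I expect the main obstacle to be the two contracting estimates — the sharp $p$-adic bound for the Mahler coefficients of $\binom{pz}{n}$ and of $z\mapsto\chi(cz+d)\binom{\frac{az+b}{cz+d}}{n}$ in the subclass — which is where the balance of $p$'s against $v_p(n!)$, and hence the hypotheses that one is over $\ZZ_p$ and that $p$ is odd, genuinely enter; everything else is formal bookkeeping with $g_1$, $g_2$ and the harmless $\chi$-twist. For the present paper it is enough to quote \cite[Proposition~3.24]{liu2017eigencurve}.
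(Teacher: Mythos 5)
The paper does not give its own proof of this proposition; it simply cites \cite[Proposition~3.24]{liu2017eigencurve}. So the comparison is really against your reconstruction, and on the whole the decomposition strategy you outline is the right one: factor $\delta_p$ via Iwasawa as
\[
\delta_p=\begin{bmatrix}1&b/d\\0&1\end{bmatrix}\begin{bmatrix}\det(\delta_p)/d&0\\0&d\end{bmatrix}\begin{bmatrix}1&0\\c/d&1\end{bmatrix},
\]
estimate each block, and propagate the weight functions $g_1(m,n)=\max\{m-n,0\}$ and $g_2(m,n)=\max\{m-\lfloor n/p\rfloor,0\}$ through the matrix product. Your observation that $g_1\ast g_1\ge g_1$, $g_1\ast g_2\ge g_2$ and $g_2\ast g_1\ge g_2$ (the last one needing the monotonicity of $k\mapsto k-\lfloor k/p\rfloor$) is exactly the bookkeeping that makes the reduction work, and the contracting block is correctly identified as the only place where real $p$-adic analysis is needed. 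Two small corrections. First, you write that the contracting estimate ``enters exactly once precisely when'' $\delta_p$ is in the subclass; what the factorization actually gives is a block $\left[\begin{smallmatrix}p^k&0\\0&1\end{smallmatrix}\right]$ with $k=v_p(\det\delta_p)$, which is $\ge 1$ exactly when $p\mid a$, but can be $\ge 2$ — this is harmless (stronger than $g_2$) but the phrasing should say ``at least once.'' Second, and more substantively, the assertion that the oddness of $p$ enters in the estimate for $\binom{pz}{n}$ is not correct: nothing in the bound on the Mahler coefficients of $\binom{pz}{n}$, nor in the balance against $v_p(n!)=\tfrac{n-s_p(n)}{p-1}$, requires $p$ odd, and indeed \cite[Proposition~3.24]{liu2017eigencurve} is stated without any parity assumption. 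The place where $p>2$ genuinely matters in this paper is the splitting $\Iw_{\pi_j}\cong D(\calO_{\gothp_j})\times P_j$ in Notation~\ref{N:J component of integer rings, matrix groups and characters}, which is a different part of the argument. Finally, the crux — the contracting estimate itself — is only gestured at; the cleanest route there is not a global count of $p$'s against $v_p(n!)$ but the finite-difference expression $c_m=\sum_{j=0}^m(-1)^{m-j}\binom{m}{j}\binom{pj}{n}$ together with the observation that $\binom{pz}{n}$ vanishes at $z=0,\dots,\lfloor(n-1)/p\rfloor$, which kills the first $\lceil n/p\rceil$ coefficients outright and starts the induction. Since the paper quotes the result, none of this needs to be reproduced here, but these are the points to tighten if you wanted to present a self-contained proof.
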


\subsection{Orthonormalizable spaces and compact operators}\label{S:Orthonormalizable spaces and compact operators}

In this section we recall the notions of orthonormal basis and compact operator defined in \cite[\S5]{liu2017eigencurve} and apply the theory to the spaces of (generalized) integral $p$-adic automorphic forms.

\begin{definition}\label{D:orthonormalizable module and compact operator}
	Let $R$ be a complete noetherian ring with ideal of definition $\gothm_R$.
	\begin{enumerate}
		\item A topological $R$-module $M$ is called \emph{orthonormalizable} if it is isomorphic to the topological $R$-module:
		\[
		\hat{\oplus}_{i\in \ZZ_{\geq 0}} Re_i:= \varprojlim_{n}(\oplus_{i\in \ZZ_{\geq 0}}(R/\gothm_R^n)e_i).
		\]
		More precisely, $R$ is orthonormalizable if there exists $\{e_i|i\in \ZZ_{\geq 0} \}\subset M$, such that every element $m$ in $M$ can be written uniquely as $m=\sum\limits_{i\in\ZZ_{\geq 0}}a_ie_i$, with $a_i\in R$ and $\lim\limits_{i\rightarrow \infty}a_i=0$ in $R$. The set $\{e_i|i\in \ZZ_{\geq 0} \}$ is called an \emph{orthonormal basis} of $M$.
		\item Let $M$ be an orthonormalizable $R$-module. Let $U:M\rightarrow M$ be a continuous $R$-linear operator on $M$. $U$ is called \emph{compact} if the induced operator on $M/\gothm_R^nM$ has finitely generated image for all $n\in \ZZ_{\geq 0}$.
	\end{enumerate}
\end{definition}

Keep the notations as in Definition~\ref{D:orthonormalizable module and compact operator}. We fix an orthonormal basis $\{e_m|m\in \ZZ_{\geq 0} \}$ of $M$, and let $P\in \rmM_\infty(R)$ be the matrix associated to $U$ under this basis. We define the characteristic power series of the $U$-operator by $\mathrm{char}(U,M):= \det(\mathrm{I}_\infty-XP)=\lim\limits_{n\rightarrow \infty}\det(\mathrm{I}_\infty-X(P\mod \gothm_R^n))\in R\llbracket X \rrbracket$. The formal power series $\mathrm{char}(U,M)$ is well defined and it does not depend on the choice of the orthonormal basis. We will refer to \cite[Definition~5.1]{liu2017eigencurve} for more details.

We apply the above notions to the spaces of integral $p$-adic automorphic forms and their Hecke operators. 
Let $\chi':\calO_p^\times\times \ZZ_p^\times\rightarrow (\Lambda^{>1/p})^\times$ be the universal character and $\kappa':\calO_p^\times\times\calO_p^\times\rightarrow (\Lambda^{>1/p})^\times$ be the associated character. Recall that we can identify the induced representation $\Ind_{B(\calO_p)}^{\Iw_{\pi}}(\kappa'_B)$ with the space $\calC(\calO_p,\Lambda^{>1/p})$. The latter space admits an orthonormal basis (as a topological $\Lambda^{>1/p}$-module) $\{e_m=\prod\limits_{i\in I}\binom{z_i}{m_i}| m=(m_i)_{i\in I}\in \ZZ_{\geq 0}^I \}$, where $z_i$ is the coordinate of the $i$th component of $\calO_p=\prod\limits_{i\in I}\calO_{\gothp_i}$. Similar to \cite{liu2017eigencurve}, we consider a closed subspace $
\calC(\calO_p,\Lambda^{>1/p})^{mod}=\hat{\oplus}_{m\in \ZZ_{\geq 0}^I}\Lambda^{>1/p}e_m',$
where $e_m'=\prod\limits_{i\in I}T_i^{m_i}\binom{z_i}{m_i}$.

We claim that the space $\calC(\calO_p,\Lambda^{>1/p})^{mod}$ is stable under the action of the monoid $\bbM_\pi$. In fact, for any $i\in I$ and $\delta_i\in \bbM_{\pi_i}$, by Proposition~\ref{P:Liu-Wan-Xiao's computation on matrix coefficients of Mahler basis}$(2)$ and $p=T_i\cdot\frac{p}{T_i}$ in $\Lambda^{>1/p}$, we have $\binom{z_i}{m_i}\circ\delta_i=\sum\limits_{n_i\geq 0}a_{n_i,m_i}\binom{z_i}{n_i}$ with $a_{n_i,m_i}\in T_i^{\max\{n_i-m_i,0\}}\Lambda^{>1/p}$, and hence $(T_i^{m_i}\binom{z_i}{m_i})\circ\delta_i=\sum\limits_{n_i\geq 0}b_{n_i,m_i}T_i^{n_i}\binom{z_i}{n_i}$ with $b_{n_i,m_i}=T_i^{m_i-n_i}a_{n_i,m_i}\in \Lambda^{>1/p}$. This implies that $\calC(\calO_p,\Lambda^{>1/p})^{mod}$ is stable under the action of $\bbM_{\pi_i}$. Combined with $\bbM_\pi=\prod\limits_{i\in I}\bbM_{\pi_i}$, this proves our claim. 

The space of integral $1$-convergent automorphic forms are defined by
\[
S_{\kappa,I}^{D,I}(K^p,\Lambda^{>1/p}):= \{\phi: D^\times \setminus D_f^\times /K^p\rightarrow \calC(\calO_p,\Lambda^{>1/p})^{mod}|\phi(xu)=\phi(x)\circ u, \text{~for all~} x\in D_f^\times,u\in \Iw_{\pi} \}
\]
We have an explicit expression of $S_{\kappa,I}^{D,I}(K^p)$ as before: $S_{\kappa,I}^{D,I}(K^p)\xrightarrow{\cong}\bigoplus\limits_{k=0}^{s-1}\calC(\calO_p,\Lambda^{>1/p})^{mod}$. Hence the topological $\Lambda^{>1/p}$-module $S_{\kappa,I}^{D,I}(K^p)$ has an orthonormal basis $\{e_{k,m}'|k=0,\dots,s-1,m\in\ZZ_{\geq 0}^I \}$, such that $\{e_{k,m}'| m\in \ZZ_{\geq 0}^I \}$ is the orthonormal basis of the $k$-th direct summand in $\bigoplus\limits_{k=0}^{s-1}\calC(\calO_p,\Lambda^{>1/p})^{mod}$ defined above, for every $k=0,\dots,s-1$. 

I claim that the $U_{\pi}$-operator on the space $S_{\kappa,I}^{D,I}(K^p)$ is compact. In fact, by Proposition~\ref{P:Liu-Wan-Xiao's computation on matrix coefficients of Mahler basis} $(1)$, for $\delta_i\in \left[ \begin{array}{cc}
\pi_i\calO_{\gothp_i}& \calO_{\gothp_i}\\
\pi_i\calO_{\gothp_i}& \calO_{\gothp_i}^\times\end{array}\right]$ and $m_i\in \ZZ_{\geq 0}$, we have $\binom{z_i}{m_i}\circ\delta_i=\sum\limits_{n_i\geq 0}a_{n_i,m_i}\binom{z_i}{n_i}$ with $a_{n_i,m_i}\in T_i^{\max\{n_i-\lfloor \frac{m_i}{p} \rfloor,0\}}\Lambda^{>1/p}$. Therefore $(T_i^{m_i}\binom{z_i}{m_i})\circ\delta_i=\sum\limits_{n_i\geq 0}b_{n_i,m_i}T_i^{n_i}\binom{z_i}{n_i}$, with $b_{n_i,m_i}=T_i^{m_i-n_i}a_{n_i,m_i}\in T_i^{m_i-\lfloor \frac{m_i}{p} \rfloor}\Lambda^{>1/p}\subset \gothm_{\Lambda^{>1/p}}^{m_i-\lfloor \frac{m_i}{p} \rfloor}$. Hence for $\delta_p=(\delta_i)\in \left[ \begin{array}{cc}
\pi\calO_p& \calO_p\\
\pi\calO_p& \calO_p^\times\end{array}\right]$ and $m=(m_i)\in \ZZ_{\geq 0}^I$, we have 
\[
e_m'\circ \delta_p=\left(\prod_{i\in I}T_i^{m_i}\binom{z_i}{m_i}\right)\circ \delta_p=
\prod_{i\in I}\left(T_i^{m_i}\binom{z_i}{m_i}\right)\circ\delta_i
=\sum_{n\in\ZZ_{\geq 0}^I}b_{n,m}e_n',
\]
with $b_{n,m}\in \gothm_{\Lambda^{>1/p}}^{\lambda_m}$, where $\lambda_m=\sum\limits_{i\in I}(m_i-\lfloor \frac{m_i}{p} \rfloor)$. It follows from the above estimation and the explicit expression of the $U_{\pi}$-operator on $S_{\kappa,I}^{D,I}(K^p)$ that $U_\pi$ is compact. 

\begin{remark}
	Fix a point $x\in \calW^{>1/p}(\CC_p)$ which corresponds to a continuous homomorphism $\chi:\Lambda^{>1/p}\rightarrow \CC_p$. Let $V:= \calC(\calO_p,\Lambda^{>1/p})^{mod}\hat{\otimes}_{\Lambda^{>1/p},\chi}\CC_p$ be the specialization of $\calC(\calO_p,\Lambda^{>1/p})^{mod}$ at $x$. By \cite[Theorem~I.4.7]{colmez2010fonctions}, there exist $r<r'$ in $\calN^I$ (with the obvious partial order) that depend on the valuations $v_p(\chi(T_i))$'s for all $i\in I$ such that $\calA_{\kappa,r}\subset V\subset \calA_{\kappa,r'}$, and hence $S_{\kappa}^D(K^p\Iw_{\pi},r)\subset S_{\kappa,I}^{D,I}(K^p,\Lambda^{>1/p})\hat{\otimes}_{\Lambda^{>1/p},\chi}\CC_p\subset S_{\kappa}^D(K^p\Iw_{\pi},r')$. Therefore the space $S_{\kappa,I}^{D,I}(K^p,\Lambda^{>1/p})\hat{\otimes}_{\Lambda^{>1/p},\chi}\CC_p$ contains all the finite $U_{\pi}$-slope systems of Hecke eigenvalues.
\end{remark}

\subsection{Continuous functions and distribution algebras}\label{section:continuous functions and distribution algebras}

Let $G$ be a profinite group endowed with the profinite topology. Let $A$ be a $\ZZ_p$-algebra and $I$ be an ideal of $A$. We assume that $A$ is endowed with the $I$-adic topology and is complete under this topology. The typical example we are interested is that $A=\Lambda_J^{>1/p}$ equipped with the $\gothm_{\Lambda_J^{>1/p}}$-adic topology for a nonempty subset $J$ of $I$. On the set $\calC(G,A)$, we give it the uniform topology, i.e. for any $f\in \calC(G,A)$, it has a basis $\{U_n \}$ of open neighborhoods as $U_n=\{g\in \calC(G,A)\;|\;f(x)-g(x)\in I^n\text{~for all~} x\in G \}$. Let $A\llbracket G \rrbracket :=\varprojlim\limits_{U\subset G}A[G/U]$ be the complete group ring of $G$ over $A$, where $U$ ranges over all the open normal subgroups of $G$. For each $U$, the free $A$-module  $A[G/U]$ is endowed with the product topology and $A\llbracket G \rrbracket$ is endowed with the inverse limit topology. We use $\calD(G,A)$ to denote the set $\Hom_{A}(\calC(G,A),A)$ of continuous $A$-linear maps from $\calC(G,A)$ to $A$ (here continuity follows from $A$-linearity). Then we have the following.
\begin{proposition}\label{P:duality between continuous functions and distribution algebras}
	There is a natural isomorphism of $A$-modules $A\llbracket G \rrbracket\cong \calD(G,A)$. Under this isomorphism the topology on $A\llbracket G \rrbracket$corresponds to the weak topology on $\calD(G,A)$.
\end{proposition}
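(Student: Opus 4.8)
The plan is to reduce the statement to the well-known topological duality between the space of continuous $A$-valued functions on a profinite group and the completed group algebra, being careful about the topologies involved since $A$ here is not a field but a complete $I$-adically topologized ring. First I would write $G = \varprojlim_U G/U$ over the open normal subgroups $U$, and correspondingly $\calC(G,A) = \varinjlim_U \calC(G/U,A)$, where $\calC(G/U,A) = \mathrm{Map}(G/U,A)$ is a finite free $A$-module. Dualizing, a continuous $A$-linear map $\calC(G,A)\to A$ is the same as a compatible system of $A$-linear maps $\calC(G/U,A)\to A$, i.e. an element of $\varprojlim_U \Hom_A(\calC(G/U,A),A) = \varprojlim_U A[G/U] = A\llbracket G\rrbracket$. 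The key point making this work is that every continuous $A$-linear functional on $\calC(G,A)$ factors through some $\calC(G/U,A)$ modulo each $I^n$: because $A$ is $I$-adically complete and $\calC(G,A)$ carries the uniform topology, a continuous functional $\mu$ induces for each $n$ a map $\calC(G,A)\to A/I^n$ which, by continuity, kills some open neighborhood of $0$, hence factors through $\calC(G/U_n, A/I^n)$ for some open $U_n$; assembling these gives the compatible system.

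Concretely, the main steps I would carry out in order are: (1) construct the pairing $A\llbracket G\rrbracket \times \calC(G,A)\to A$ sending $([g],f)\mapsto f(g)$ and extending $A$-linearly and by continuity, and check it lands in $A$ and is continuous in each variable — at the finite level $A[G/U]\times \calC(G/U,A)\to A$ this is just the obvious perfect pairing of finite free $A$-modules; (2) define the map $\Phi: A\llbracket G\rrbracket \to \calD(G,A)$ from this pairing and check it is $A$-linear; (3) construct the inverse: given $\mu\in\calD(G,A)$, use the factorization argument above to produce $\mu_n\in A/I^n[G/U_n]$, verify compatibility as $n$ and $U_n$ vary, and hence an element of $\varprojlim = A\llbracket G\rrbracket$; (4) check $\Phi$ and this construction are mutually inverse, which is a diagram-chase at each finite level; (5) identify the topologies: the inverse-limit topology on $A\llbracket G\rrbracket = \varprojlim_U A[G/U]$ (each $A[G/U]$ having the product, i.e. $I$-adic, topology) corresponds under $\Phi$ to the weak topology on $\calD(G,A)$, whose basic open sets are $\{\mu : \mu(f_1),\dots,\mu(f_k)\in I^n\}$ for finitely many $f_j\in\calC(G,A)$ and $n\geq 1$.

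For step (5) I would argue both inclusions of the topologies: a weak-topology basic open set, after replacing the $f_j$ by their images in some common $\calC(G/U,A)$ modulo $I^n$ (possible by uniform continuity), pulls back to a basic open set of the inverse-limit topology at level $(U,n)$; conversely a standard inverse-limit basic open set — preimage of $0$ in $A/I^n[G/U]$ — is cut out by the finitely many "coefficient" functionals, i.e. evaluation against the indicator functions $\mathbf{1}_{gU}\in\calC(G,A)$, so it is weakly open. The main obstacle, and the place deserving the most care, is the continuity/factorization claim in step (3): one must genuinely use that $A$ is $I$-adically complete and that $\calC(G,A)$ carries the $I$-adic uniform topology so that "continuous $A$-linear" forces factorization through a finite quotient modulo each $I^n$ — this is exactly the subtlety the paper flags by saying "here continuity follows from $A$-linearity" is \emph{not} automatic, unlike over a field, and it is what distinguishes $\calD(G,A) = \Hom_A^{\mathrm{cts}}$ from the full algebraic dual. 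Everything else is bookkeeping at the level of finite free $A$-modules and passage to (co)limits.
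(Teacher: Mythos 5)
Your overall strategy—reduce to the perfect pairing $A[G/U]\times\calC(G/U,A)\to A$ at each finite level, then pass to the (co)limit—is essentially the paper's. The paper phrases the key step as the restriction map $\Hom_A\bigl(\calC(G,A),A\bigr)\to\Hom_A\bigl(\calC_{sm}(G,A),A\bigr)$ being an isomorphism (and homeomorphism), where $\calC_{sm}(G,A)=\varinjlim_U\calC(G/U,A)$ is the dense subspace of locally constant functions, and then identifies the target with $\varprojlim_U A[G/U]$. Your step (3) is trying to do the same thing directly.

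However, step (3) contains a false claim. You write that $\bar\mu_n:\calC(G,A)\to A/I^n$ ``kills some open neighborhood of $0$, hence factors through $\calC(G/U_n,A/I^n)$ for some open $U_n$.'' What continuity actually gives you is $\ker(\bar\mu_n)\supset\calC(G,I^m)$ for some $m$, so that $\bar\mu_n$ factors through $\calC(G,A/I^m)$, and since $A/I^m$ is discrete this equals $\calC_{sm}(G,A/I^m)=\varinjlim_U\calC(G/U,A/I^m)$. That is still a colimit over all $U$; it does not factor through a single finite level $\calC(G/U_n,A/I^n)$. (Concretely, with $G=\ZZ_p$, $A=\ZZ_p$, $I=(p)$, $n=1$, the functional $\mu=$ ``evaluation at $0$'' restricts to a nonzero map on $\calC(\ZZ/p^k,\FF_p)$ for every $k$, so it does not live at any single finite level.) What you should say instead is that $\bar\mu_n$ \emph{restricts} to a map $\calC(G/U,A)\to A/I^n$ for each $U$, that these restrictions form a compatible system as $U$ and $n$ vary, and that this compatible system is an element of $\varprojlim_{U}\varprojlim_n (A/I^n)[G/U]=\varprojlim_U A[G/U]=A\llbracket G\rrbracket$. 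Equivalently, restrict $\mu$ to $\calC_{sm}(G,A)$, as the paper does. With that replacement, step (3) (and hence steps (4)--(5)) goes through as you describe.

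Two smaller points. First, the opening identity ``$\calC(G,A)=\varinjlim_U\calC(G/U,A)$'' is false as stated: the colimit is $\calC_{sm}(G,A)$, a proper dense subspace, and $\calC(G,A)$ is its completion in the uniform $I$-adic topology. Your subsequent mod-$I^n$ reasoning implicitly compensates for this, but you should state the identity correctly. Second, you misread the paper's parenthetical: the paper asserts that for these linearly topologized $A$, continuity \emph{does} follow from $A$-linearity (this is the content of the parenthetical ``here continuity follows from $A$-linearity''), not that it fails. Your factorization argument, once corrected as above, is in effect a proof of that automatic-continuity claim, so no harm done—but the attribution is reversed.
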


\begin{proof}
	The proof is almost identical to the proof of \cite[Lemma~3.1.3]{johansson2019extended}, which handles the case when $A$ is the unit ball of a Banach-Tate $\ZZ_p$-algebra. For completeness, we give a sketch of proof here. 
	
	For any nonempty open normal subgroup $U$ of $G$, we have an isomorphism of $A$-modules $A[G/U]\xrightarrow{\cong}\Hom_A(\calC(G/U,A),A)$, which is also a homeomorphism. The projection $G\rightarrow G/U$ induces a natural $A$-module homomorphism $\calD(G,A)\rightarrow \calD(G/U,A)$ and hence a map $\calD(G,A)\rightarrow \varprojlim\limits_U \calD(G/U,A)$. If we use $\calC_{sm}(G,A)$ to denote the $A$-submodule of $\calC(G,A)$ consisting of locally constant functions, the map $\calD(G,A)\rightarrow \varprojlim\limits_U \calD(G/U,A)\cong A\llbracket G\rrbracket$ is the natural homomorphism $\Hom_A(\calC(G,A),A)\rightarrow \Hom_A(\calC_{sm}(G,A),A)$ induced by the inclusion map $\calC_{sm}(G,A)\rightarrow \calC(G,A)$. Since $\calC_{sm}(G,A)$ is dense in $\calC(G,A)$, it is clear that the map $\Hom_A(\calC(G,A),A)\rightarrow \Hom_A(\calC_{sm}(G,A),A)$ is an isomorphism of $A$-modules as well as a homeomorphism. 
\end{proof}

\begin{remark}
	\begin{enumerate}
	\item We can also define a convolution product on $\calD(G,A)$ so that the isomorphism in Proposition~\ref{P:duality between continuous functions and distribution algebras} is an isomorphism between $A$-algebras. But we do not need this fact and refer to \cite[\S3.1]{johansson2019extended} to details (in a slightly different setting).
	\item For latter discussion, we recall a similar result as that of Proposition~\ref{P:duality between continuous functions and distribution algebras} discussed in \cite[Proposition~3.1.4]{johansson2019extended}. Let $A$ be a Banach-Tate $\ZZ_p$-algebra with unit ball $A_0$ and a multiplicative pseudo-uniformizer $\varpi$ in the sense of \cite[Definition~2.1.2]{johansson2019extended}. For a profinite group $G$, we define $A_0\llbracket G\rrbracket=\varprojlim_{U\subset G}A_0[G/U]$ and $A\llbracket G\rrbracket =A_0\llbracket G\rrbracket [\frac{1}{\varpi}]$. It follows from \cite[Proposition~3.1.4]{johansson2019extended} that there is a natural $A$-Banach algebra isomorphism $A\llbracket G\rrbracket \xrightarrow{\cong}\calD(G,A)$. It restricts to an $A_0$-algebra isomorphism $A_0\llbracket G\rrbracket \xrightarrow{\cong}\calD(G,A_0)$ that identifies the inverse star topology on the source with the weak star topology on the target. In particular, when $L$ is a closed subfield of $\CC_p$, the $L$-Banach space $L\llbracket G\rrbracket$ is defined and isomorphic to $\calD(G,L)$.
\end{enumerate}
\end{remark}

Let $H $ be a group or more generally  a monoid. Suppose that we have a right action of $H$ on $\calC(G,A)$. The above isomorphism induces a left action of $H$ on $A\llbracket G \rrbracket$ by requiring that 
$h\circ \mu(f)=\mu(f\circ h)$, for $h\in H$, $\mu\in \calD(G,A)$ and $f\in \calC(G,A)$.

We apply the above results to the spaces of (generalized) integral $p$-adic automorphic forms. Fix a nonempty subset $J$ of $I$. Recall that in \S\ref{S:Integral model of the space of $p$-adic automorphic forms}, we have an isomorphism $\calC_{\chi_J}(\Iw_\pi,A)\cong \calC(\calO_{p,J}\times P_{J^c},A)$. Combined with Proposition~\ref{P:duality between continuous functions and distribution algebras}, it gives us an $A$-linear isomorphism $A\llbracket \calO_{p,J}\times P_{J^c} \rrbracket \cong \Hom_A(\calC_{\chi_J}(\Iw_\pi,A),A)$. In  \S\ref{S:Integral model of the space of $p$-adic automorphic forms}, we have defined a right action of the monoid $\bbM_{\pi,J}\times\Iw_{\pi,J^c}$ on $\calC_{\chi_J}(\Iw_\pi,A)$. Therefore, we get a left action of $\bbM_{\pi,J}\times\Iw_{\pi,J^c}$ on $A\llbracket \calO_{p,J}\times P_{J^c} \rrbracket$. Under the isomorphism $A\llbracket \calO_{p,J}\times P_{J^c} \rrbracket \cong A\llbracket \calO_{p,J} \rrbracket\hat{\otimes}_A A\llbracket P_{J^c} \rrbracket$, the monoid $\bbM_{\pi,J}$ acts on $A\llbracket \calO_{p,J} \rrbracket$ and the group $\Iw_{\pi,J^c}$ acts on $A\llbracket P_{J^c}  \rrbracket$. We remark that the latter action has an explicit expression: for $u\in \Iw_{\pi,J^c}$ and $x\in A\llbracket P_{J^c}  \rrbracket$, the left action of $u$ on $x$ is given by $u\circ x=\kappa_{B,J}(u^\ast_2)x\cdot u^\ast_1$, where
$u_1^\ast$ (resp. $u_2^\ast$) is the $P_{J^c}$-component (resp. $D(\calO_{p,J^c})$-component) of $u^\ast\in \Iw_{\pi,J^c}$ under the decomposition $\Iw_{\pi,J^c}=P_{J^c}\times D(\calO_{p,J^c})$, $\kappa_{B,J}$ is the character defined in \S\ref{S:Integral model of the space of $p$-adic automorphic forms}, and  
 $x\cdot u_1^\ast$ is the product in the complete group algebra $A\llbracket P_{J^c}  \rrbracket$.

\begin{convention}\label{convention:left A[P'_J^c]-module structure on the group algebras}
	We always view the complete group algebra $A\llbracket P_{J^c}\rrbracket$ as a left $A\llbracket P'_{J^c}\rrbracket$-module induced by the left multiplication of $P'_{J^c}$ on $P_{J^c}$. As $P'_{J^c}$ is a normal subgroup of $P_{J^c}$ with quotient $P_{J^c}/P'_{J^c}\cong \Delta_{J^c}$, $A\llbracket P_{J^c}\rrbracket$ is a free $A\llbracket P'_{J^c}\rrbracket$ of rank $|\Delta_{J^c}|$. Under the isomorphism $A\llbracket \calO_{p,J}\times P_{J^c} \rrbracket \cong A\llbracket \calO_{p,J} \rrbracket\hat{\otimes}_A A\llbracket P_{J^c} \rrbracket$, the group algebra $A\llbracket \calO_{p,J}\times P_{J^c} \rrbracket$ is also endowed with a left $A\llbracket P'_{J^c}\rrbracket$-module structure. The left action of $\Iw_{\pi,J^c}$ on $A\llbracket P_{J^c}'\rrbracket$ and  $A\llbracket \calO_{p,J}\times P_{J^c} \rrbracket$ defined above are $A\llbracket P'_{J^c}\rrbracket$-linear. On the other hand, although $A\llbracket \calO_{p,J}\rrbracket$ has a (commutative) ring structure, the left action of $\Iw_{\pi,J}$ on $A\llbracket \calO_{p,J}\rrbracket$ defined above is not compatible with this ring structure. On the dual side, this is equivalent to the fact that the right action of $\Iw_{\pi,J}$ on the space $\calC(\calO_{p,J},A)$ does not commute with the obvious translation action of $\calO_{p,J}$ on $\calC(\calO_{p,J},A)$. For this reason we only view $A\llbracket \calO_{p,J}\times P_{J^c} \rrbracket$ as a left $A\llbracket P'_{J^c} \rrbracket$-module in this paper.	
\end{convention}

\begin{lemma}\label{L:isomorphism concerning tensor product and A-linear duals}
	Let $M$ be a finitely generated right $\ZZ_p\llbracket \Iw_\pi\rrbracket$-module and $N$ be a right $A\llbracket \Iw_\pi\rrbracket$-module. Let $\langle \cdot ,\cdot \rangle_N:N\times \Hom_A(N,A)\rightarrow A$ be the natural bilinear map. We endow $\Hom_A(N,A)$ with a left action of $\Iw_{\pi}$ as before, i.e. we have $\langle n\cdot g , l \rangle_N=\langle n, g\cdot l \rangle_N$ for all $n\in N$, $l\in \Hom_A(N,A)$ and $g\in \Iw_{\pi}$. Under the above notations, the map 
	\begin{equation*}
	\begin{split}
	M\times \Hom_A(N,A) &\rightarrow \Hom_A(\Hom_{\ZZ_p\llbracket \Iw_{\pi}\rrbracket}(M,N),A)\\
	(m,l)&\mapsto F_{(m,l)},
	\end{split}
	\end{equation*}
	where $F_{(m,l)}:\Hom_{\ZZ_p\llbracket \Iw_{\pi}\rrbracket}(M,N)\rightarrow A$ is defined by $F_{(m,l)}(\varphi):=\langle \varphi(m) , l \rangle_N$ for any $\varphi\in \Hom_{\ZZ_p\llbracket \Iw_{\pi}\rrbracket}(M,N)$, induces an $A$-linear isomorphism 
	\[
	\iota_{M,N}:M\hat{\otimes}_{\ZZ_p\llbracket \Iw_{\pi}\rrbracket}\Hom_A(N,A)\xrightarrow{\cong}\Hom_A(\Hom_{\ZZ_p\llbracket \Iw_{\pi}\rrbracket}(M,N),A).
	\]
\end{lemma}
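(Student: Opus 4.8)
\textbf{Proof plan for Lemma~\ref{L:isomorphism concerning tensor product and A-linear duals}.}
The plan is to reduce to the case where $M$ is free of finite rank over $\ZZ_p\llbracket \Iw_\pi\rrbracket$ and then to a one-generator case, using the fact that both sides of the asserted isomorphism are additive (send finite direct sums to finite direct sums, respectively finite products, which agree here) and behave well under right exactness. First I would check that the pairing $(m,l)\mapsto F_{(m,l)}$ is well-defined, i.e. that $F_{(m,l)}$ is a continuous $A$-linear functional on $\Hom_{\ZZ_p\llbracket \Iw_\pi\rrbracket}(M,N)$, and that it is $\ZZ_p\llbracket \Iw_\pi\rrbracket$-balanced: for $g\in \Iw_\pi$ we have $F_{(mg,l)}(\varphi)=\langle\varphi(mg),l\rangle_N=\langle\varphi(m)g,l\rangle_N=\langle\varphi(m),g\cdot l\rangle_N=F_{(m,g\cdot l)}(\varphi)$, using that $\varphi$ is $\ZZ_p\llbracket\Iw_\pi\rrbracket$-linear and the defining adjunction for the $\Iw_\pi$-action on $\Hom_A(N,A)$. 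This yields the map $\iota_{M,N}$ by the universal property of the completed tensor product; continuity in the weak topologies needs a brief check.

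Next I would establish functoriality of $\iota_{M,N}$ in $M$: given a morphism $M'\to M$ of finitely generated right $\ZZ_p\llbracket\Iw_\pi\rrbracket$-modules, the obvious square commutes, because everything is defined by composition and evaluation. Since $\ZZ_p\llbracket\Iw_\pi\rrbracket$ is a (noncommutative) noetherian local ring and $M$ is finitely generated, I would choose a finite presentation $\ZZ_p\llbracket\Iw_\pi\rrbracket^{\oplus b}\to \ZZ_p\llbracket\Iw_\pi\rrbracket^{\oplus a}\to M\to 0$. Applying $\Hom_{\ZZ_p\llbracket\Iw_\pi\rrbracket}(-,N)$ gives a left-exact sequence $0\to \Hom_{\ZZ_p\llbracket\Iw_\pi\rrbracket}(M,N)\to N^{\oplus a}\to N^{\oplus b}$, and then applying $\Hom_A(-,A)$ (which is left exact, or rather turns this into a right-exact sequence read backwards) gives $N^{\vee,\oplus b}\to N^{\vee,\oplus a}\to \Hom_A(\Hom_{\ZZ_p\llbracket\Iw_\pi\rrbracket}(M,N),A)\to 0$ where $N^\vee:=\Hom_A(N,A)$; here I must be careful that the relevant cokernel is computed correctly in the topological category, which is where some care with completions is needed. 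On the other side, $-\hat\otimes_{\ZZ_p\llbracket\Iw_\pi\rrbracket}N^\vee$ applied to the presentation gives $N^{\vee,\oplus b}\to N^{\vee,\oplus a}\to M\hat\otimes_{\ZZ_p\llbracket\Iw_\pi\rrbracket}N^\vee\to 0$. The maps $\iota$ assemble into a morphism of these two right-exact sequences, and $\iota_{\ZZ_p\llbracket\Iw_\pi\rrbracket,N}$ is manifestly an isomorphism (it is the canonical identification $N^\vee\cong\Hom_A(\Hom_{\ZZ_p\llbracket\Iw_\pi\rrbracket}(\ZZ_p\llbracket\Iw_\pi\rrbracket,N),A)=\Hom_A(N,A)$, using $\Hom_{\ZZ_p\llbracket\Iw_\pi\rrbracket}(\ZZ_p\llbracket\Iw_\pi\rrbracket,N)\cong N$), hence so is $\iota_{\ZZ_p\llbracket\Iw_\pi\rrbracket^{\oplus a},N}$ and $\iota_{\ZZ_p\llbracket\Iw_\pi\rrbracket^{\oplus b},N}$. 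The five lemma (in the topological setting, using that the outer vertical maps are isomorphisms and the rows exact) then forces $\iota_{M,N}$ to be an isomorphism.

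I expect the main obstacle to be the topological bookkeeping: ensuring that $\Hom_A(-,A)$ and $-\hat\otimes_{\ZZ_p\llbracket\Iw_\pi\rrbracket}N^\vee$ are exact in the precise sense needed (e.g. that applying $\Hom_A(-,A)$ to a topologically exact sequence of the relevant $A$-modules, with $A=\Lambda_J^{>1/p}$ or a specialization, yields an exact sequence, and that the completed tensor product is right exact and commutes with finite direct sums), and that the weak topologies match up so that the five lemma applies on the nose rather than merely up to dense image. A clean way to handle this, which I would adopt, is to work modulo $\gothm_R^n$ first — where all modules are discrete, $\Hom$ over the discrete group ring is literal $\Hom$, and the statement is a standard piece of (noncommutative) homological algebra — and then pass to the inverse limit over $n$, invoking that both sides are $\gothm_R$-adically complete and that the transition maps are surjective so that $\varprojlim^1$ vanishes. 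Once the finite-level statement and the completeness/Mittag-Leffler inputs are in place, the limit argument is routine.
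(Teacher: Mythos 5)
Your proposal takes essentially the same approach as the paper: check the map is a well-defined balanced pairing, verify the free-module case directly, take a finite presentation $F_1\to F_0\to M\to 0$, and conclude by a five-lemma diagram chase. The paper is terser (it simply says ``a simple diagram chasing shows that $\iota_{M,N}$ is an isomorphism'') and does not spell out the topological right-exactness concerns you raise; your suggestion to reduce modulo $\gothm_R^n$ and pass to the inverse limit is a reasonable way to make that step rigorous.
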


\begin{proof}
	First we remark that since $M$ is a right $\ZZ_p\llbracket \Iw_{\pi}\rrbracket$ and $\Hom_A(N,A)$ carries a left $\Iw_{\pi}$-action, the completed tensor product $M\hat{\otimes}_{\ZZ_p\llbracket \Iw_{\pi}\rrbracket}\Hom_A(N,A)$ is meaningful. It is straightforward to verify that the map $M\times \Hom_A(N,A) \rightarrow \Hom_A(\Hom_{\ZZ_p\llbracket \Iw_{\pi}\rrbracket}(M,N),A)$ induces an $A$-linear homomorphism $\iota_{M,N}:M\hat{\otimes}_{\ZZ_p\llbracket \Iw_{\pi}\rrbracket}\Hom_A(N,A)\rightarrow\Hom_A(\Hom_{\ZZ_p\llbracket \Iw_{\pi}\rrbracket}(M,N),A)$ and $\iota_{M,N}$ is an isomorphism when $M$ is free. In the general case, we choose a resolution $F_1\rightarrow F_0\rightarrow M\rightarrow 0$ of $M$ with $F_0$, $F_1$ finite free $\ZZ_p\llbracket \Iw_{\pi}\rrbracket$-module. We apply the functors $M\rightarrow M\hat{\otimes}_{\ZZ_p\llbracket \Iw_{\pi}\rrbracket}\Hom_A(N,A)$ and $M\rightarrow \Hom_A(\Hom_{\ZZ_p\llbracket \Iw_{\pi}\rrbracket}(M,N),A)$ (with $N$ fixed) to the resolution $F_1\rightarrow F_0\rightarrow M\rightarrow 0$. A simple diagram chasing shows that $\iota_{M,N}$ is an isomorphism.
\end{proof}

We denote by $S_{\kappa,I}^D(K^p,A)^\vee$ the $A$-linear dual of the space of $p$-adic automorphic forms $S_{\kappa,I}^D(K^p,A)$, and for any subset $J\subset I$ and a continuous character $\chi_J:\calO_{p,J}^\times\times\ZZ_p^\times\rightarrow A^\times$, we define $S_{\kappa,J}^D(K^p,A)^\vee$ to be the $A$-linear dual of the space $S_{\kappa,J}^D(K^p,A)$. From the tautological isomorphism~\eqref{E:isomorphism of p-adic automoprhism in term of completed homology} and Lemma~\ref{L:isomorphism concerning tensor product and A-linear duals}, we have the following expression of these spaces:
\begin{equation}\label{E:explicit expression of A-linear duals of generalized p-adic automorphic forms}
S_{\kappa,I}^D(K^p,A)^\vee=\tilde{\rmH}_0\hat{\otimes}_{\ZZ_p\llbracket \Iw_{\pi}\rrbracket} A\llbracket \calO_p\rrbracket \text{~and~} S_{\kappa,J}^D(K^p,A)^\vee= \tilde{\rmH}_0\hat{\otimes}_{\ZZ_p\llbracket \Iw_{\pi}\rrbracket} A\llbracket \calO_{p,J}\times P_{J^c} \rrbracket,
\end{equation}
where $\tilde{\rmH}_0$ is the completed homology group defined in \S\ref{S:Integral model of the space of $p$-adic automorphic forms}.

We can translate the results we developed for the spaces $S_{\kappa,I}^D(K^p,A)$ and $S_{\kappa,J}^D(K^p,A)$ to their $A$-linear duals. We summarize these results as follows:
\begin{enumerate}
	\item For $j\in J$, we can define the $U_{\pi_j}$-operator on $S_{\kappa,I}^D(K^p,A)^\vee$ and $S_{\kappa,J}^D(K^p,A)^\vee$ by 
	\[
	U_{\pi_j}(h\hat{\otimes} \mu):=\sum_{l=0}^{p-1} (h\cdot v_{j,l}^{-1})\hat{\otimes} (v_{j,l}\cdot \mu), \text{~for~} h\hat{\otimes}\mu \in \tilde{\rmH}_0\hat{\otimes}_{\ZZ_p\llbracket \Iw_{\pi}\rrbracket} A\llbracket \calO_p\rrbracket  \text{~or ~} \tilde{\rmH}_0\hat{\otimes}_{\ZZ_p\llbracket \Iw_{\pi}\rrbracket} A\llbracket \calO_{p,J}\times P_{J^c} \rrbracket.
	\]
	Under the natural $A$-bilinear pairing $\langle \cdot ,\cdot \rangle:S_{\kappa,I}^D(K^p,A)\times S_{\kappa,I}^D(K^p,A)^\vee\rightarrow A$ (resp. $\langle \cdot , \cdot\rangle_J:S_{\kappa,J}^D(K^p,A)\times S_{\kappa,J}^D(K^p,A)^\vee\rightarrow A$), we have $\langle U_{\pi_j}(\phi),\psi \rangle=\langle \phi,U_{\pi_j}(\psi) \rangle$ (resp. $\langle U_{\pi_j}(\phi),\psi \rangle_J=\langle \phi,U_{\pi_j}(\psi) \rangle_J$).
	\item The right action  of $B(\calO_{p,J^c})$ on $\calC_{\chi_J}(\Iw_{\pi},A)$ defined in Remark~\ref{R:remark for integral p-adic automorphic forms} induces a left action of $B(\calO_{p,J^c})$ on $A\llbracket \calO_{p,J}\times P_{J^c} \rrbracket$. Moreover $A\llbracket \calO_p \rrbracket$ can be identified with the coinvariant of $A\llbracket \calO_{p,J}\times P_{J^c} \rrbracket$ under this action. For any two subsets $J_1\subset J_2$ of $I$, let $J_3=J_2\setminus J_1$. We have a natural surjective $A$-linear map 
	 $S_{\kappa,J_1}^D(K^p,A)^\vee\rightarrow S_{\kappa,J_2}^D(K^p,A)^\vee$, which identifies the latter space with the $B(\calO_{p,J_3})$-coinvariants of the first space. Moreover, it is compatible with the $U_{\pi_j}$-operators on these two spaces for all $j\in J_1$.
	\item Let $\gamma_k$ for $k=0,\dots, s-1$ be the elements of $D_f^\times$ defined in \S\ref{subsection:Explicit expression of p-adic automorphic forms}, which are viewed as elements in $\tilde{\rmH}_0$. By \eqref{E:explicit expression of A-linear duals of generalized p-adic automorphic forms}, we write $S_{\kappa,I}^D(K^p,A)^\vee$ and $S_{\kappa,J}^D(K^p,A)^\vee$ explicitly by
	$$\bigoplus\limits_{k=0}^{s-1}A\llbracket \calO_p \rrbracket \xrightarrow{\cong}S_{\kappa,I}^D(K^p,A)^\vee, \
	(\mu_k)_{k=0,\dots,s-1}\mapsto \sum\limits_{k=0}^{k-1}\gamma_k\hat{\otimes} \mu_k$$ and 	$$\bigoplus\limits_{k=0}^{s-1}A\llbracket \calO_{p,J}\times P_{J^c} \rrbracket\xrightarrow{\cong} S_{\kappa,J}^D(K^p,A)^\vee,\ (\mu_k)_{k=0,\dots,s-1}\mapsto \sum\limits_{k=0}^{k-1}\gamma_k\hat{\otimes} \mu_k.$$
	\item The left $A\llbracket P_{J^c}'\rrbracket$-module structure on $A\llbracket \calO_{p,J}\times P_{J^c}\rrbracket$ defined in Convention~\ref{convention:left A[P'_J^c]-module structure on the group algebras} induces a left $A\llbracket P_{J^c}'\rrbracket$-module structure on the space $S_{\kappa,J}^{D}(K^p,A)^\vee$. The $U_{\pi_j}$-operator on $S_{\kappa,J}^{D}(K^p,A)^\vee$ is $A\llbracket P_{J^c}'\rrbracket$-linear for all $j\in J$.
\end{enumerate}

Recall that in \S\ref{S:Orthonormalizable spaces and compact operators}, we define a closed subspace $\calC(\calO_p,\Lambda^{>1/p})^{mod}$ of $\calC(\calO_p,\Lambda^{>1/p})$, which is closed under the action of the monoid $\bbM_\pi$. We will generalize this construction to define a closed subspace $\calC(\calO_{p,J}\times P_{J^c},\Lambda_J^{>1/p})^{J'-mod}$ of $\calC(\calO_{p,J}\times P_{J^c},\Lambda_J^{>1/p})$ for any nonempty subsets $J'\subset J$ of $I$. We cannot define this space directly as there is no Mahler basis for the space $\calC(\calO_{p,J}\times P_{J^c},\Lambda_J^{>1/p})$. We will pass to the dual side to make the definition and then dual back. 

First we explain the construction in the case $F=\QQ$. Then $\calO_p=\ZZ_p$ and we assume $\Lambda^{>1/p}=\ZZ_p\llbracket T,\frac{p}{T} \rrbracket$. It follows from Amice transformation that the $\Lambda^{>1/p}$-dual space of $\calO(\ZZ_p,\Lambda^{>1/p})$ is isomorphic to $\Lambda^{>1/p}\llbracket X \rrbracket$ and under this isomorphism, the dual basis of the Mahler basis $\{e_m=\binom{z}{m}\;|\;m\geq 0 \}$ is given by $\{X^m\;|\;m\geq 0 \}$. Combining it with Proposition~\ref{P:duality between continuous functions and distribution algebras}, we have an isomorphism $\Lambda^{>1/p}\llbracket \ZZ_p \rrbracket\xrightarrow{\cong}\Lambda^{>1/p}\llbracket X \rrbracket$, under which $[1]-1$ corresponds to $X$. We use $\Lambda^{>1/p}\llbracket X\rrbracket^{mod}$ to denote the $\Lambda^{>1/p}$-dual space of the closed subspace $\calC(\ZZ_p,\Lambda^{>1/p})^{mod}$.  Then as a  $\Lambda^{>1/p}$-module, it is isomorphic to $\Lambda^{>1/p}\llbracket X'\rrbracket$, and under this isomorphism the dual basis of the modified Mahler basis $\{e_m'=T^m\binom{z}{m}\;|\;m\geq 0 \}$ corresponds to $\{X'^m\;|\;m\geq 0 \}$. The $\Lambda^{>1/p}$-dual of the inclusion $\calC(\ZZ_p,\Lambda^{>1/p})^{mod}\rightarrow \calC(\ZZ_p,\Lambda^{>1/p})$ is the $\Lambda^{>1/p}$-algebra homomorphism $\Lambda^{>1/p}\llbracket X\rrbracket\rightarrow \Lambda^{>1/p}\llbracket X' \rrbracket$ which sends $X$ to $TX'$. Moreover, as the right action of the monoid $\bbM_\pi$ on $\calC(\ZZ_p,\Lambda^{>1/p})$ keeps the subspace $\calC(\ZZ_p,\Lambda^{>1/p})^{mod}$ stable, it induces a left action of $\bbM_\pi$ on $\Lambda^{>1/p}\llbracket X' \rrbracket$, and the map $\Lambda^{>1/p}\llbracket X \rrbracket\rightarrow \Lambda^{>1/p}\llbracket X' \rrbracket $ defined above is compatible with the $\bbM_\pi$-actions on these two spaces.

Now we consider the general case. Fix two nonempty subsets $J'\subset J$ of $I$. We define an isomorphism of $\Lambda_J^{>1/p}$-algebras $$\iota_J:\Lambda_J^{>1/p}\llbracket \calO_{p,J} \rrbracket\rightarrow \Lambda_J^{>1/p}\llbracket (X_j)_{j\in J}\rrbracket,\ [1_j]-1\mapsto X_j$$ for all $j\in J$, where $1_j\in\calO_{p,J}=\prod\limits_{j\in J}\calO_{\gothp_j}$ is the element with $j$-component $1$ and $i$-component $0$ for all $i\neq j$. We identify $\Lambda_J^{>1/p}\llbracket \calO_{p,J}\rrbracket$ with $\Lambda_J^{>1/p}\llbracket (X_j)_{j\in J}\rrbracket$ via $\iota_J$. We put $\Lambda_J^{>1/p}\llbracket \calO_{p,J} \rrbracket^{J'-mod}:=\Lambda_J^{>1/p}\llbracket (X_j')_{j\in J'}, (X_j)_{j\in J\setminus J'} \rrbracket$ and define an embedding of $\Lambda_J^{>1/p}$-algebras $\Lambda_J^{>1/p}\llbracket \calO_{p,J}\rrbracket \rightarrow \Lambda_J^{>1/p}\llbracket \calO_{p,J}\rrbracket^{J'-mod}$ by $X_j\mapsto T_jX_j'$ for  $j\in J'$ and $X_j\mapsto X_j$ for $j\in J\setminus J'$. Then we put $\Lambda_J^{>1/p}\llbracket \calO_{p,J}\times P_{J^c}\rrbracket^{J'-mod}:=\Lambda_J^{>1/p}\llbracket \calO_{p,J}\rrbracket^{J'-mod}\hat{\otimes}_{\Lambda_J^{>1/p}}\Lambda_J^{>1/p}\llbracket P_{J^c}\rrbracket$. Under the isomorphism $ \calO_{\gothp_j}\cong \ZZ_p$, we have $\Lambda_J^{>1/p}\llbracket \calO_{\gothp_j}\rrbracket^{j-mod}\cong \Lambda_J^{>1/p}\llbracket \ZZ_p\rrbracket^{mod}$ defined above and hence we can endow $\Lambda_J^{>1/p}\llbracket \calO_{\gothp_j}\rrbracket^{j-mod}$ a left action of the monoid $\bbM_{\pi_j}$. Under the isomorphism $\Lambda_J^{>1/p}\llbracket \calO_{p,J}\rrbracket^{J'-mod}\cong \left(\hat{\otimes}_{j\in J'}\Lambda_J^{>1/p}\llbracket \calO_{\gothp_j}\rrbracket^{j-mod} \right)\hat{\otimes}_{\Lambda_J^{>1/p}}\Lambda_J^{>1/p}\llbracket \calO_{p,J\setminus J'}\rrbracket$, we get a left action of the monoid $\bbM_{\pi,J}$ on $\Lambda_J^{>1/p}\llbracket \calO_{p,J}\rrbracket^{J'-mod}$. Then the embedding $\Lambda_J^{>1/p}\llbracket \calO_{p,J}\rrbracket \hookrightarrow \Lambda_J^{>1/p}\llbracket \calO_{p,J}\rrbracket^{J'-mod}$ (resp. $\Lambda_J^{>1/p}\llbracket \calO_{p,J}\times P_{J^c} \rrbracket\rightarrow \Lambda_J^{>1/p}\llbracket \calO_{p,J}\times P_{J^c}\rrbracket^{J'-mod}$) is equivariant under the actions of the monoid $\bbM_{\pi,J}$ (resp. $\bbM_{\pi,J}\times \Iw_{\pi,J^c}$) on these two spaces.

Define $\calC_{\chi_J}(\Iw_\pi,\Lambda_J^{>1/p})^{J'-mod}$ to be the $\Lambda_J^{>1/p}$-dual space of $\Lambda_J^{>1/p}\llbracket \calO_{p,J}\times P_{J^c}\rrbracket^{J'-mod}$, which carries a right action of the monoid $\bbM_{\pi,J}\times \Iw_{\pi,J^c}$. Similar with the definition of $S_{\kappa,I}^{D,I}(K^p,\Lambda_J^{>1/p})$, we set 
\begin{multline*}	
S_{\kappa,J}^{D,J'}(K^p,\Lambda_J^{>1/p})\\
:=\{\phi: D^\times \setminus D_f^\times /K^p\rightarrow \calC_{\chi_J}(\Iw_\pi,\Lambda_J^{>1/p})^{J'-mod}|\phi(xu)= \phi(x)\circ u \text{~for all~} x\in D_f^\times,u\in \Iw_{\pi} \},
\end{multline*}
and $S_{\kappa,J}^{D,J'}(K^p,\Lambda_J^{>1/p})^\vee$ to be its $\Lambda_J^{>1/p}$-linear dual. Hence, we have 
$$
S_{\kappa,J}^{D,J'}(K^p,\Lambda_J^{>1/p})^\vee=\tilde{\rmH}_0\hat{\otimes}_{\ZZ_p\llbracket \Iw_{\pi}\rrbracket} \Lambda_J^{>1/p}\llbracket \calO_{p,J}\times P_{J^c}\rrbracket^{J'-mod}.
$$

\begin{remark}
	\begin{enumerate}
		\item Since our notation above is somewhat complicated, we make a remark to explain its meaning. In the notation $S_{\kappa,J}^{D,J'}(K^p,\Lambda_J^{>1/p})$, the subscript $J$ under $S$ means that we consider generalized $p$-adic automorphic forms which behave like automorphic forms at places in $J$ and behave like dual of completed homology groups at places in $I\setminus J$. The superscript $J'$ means that we `modify' the basis at places in $J'$, or more precisely, we consider the closed subspace $\calC(\calO_{p,J}\times P_{J^c},\Lambda_J^{>1/p})^{J'-mod}$ of $\calC(\calO_{p,J}\times P_{J^c},\Lambda_J^{>1/p})$ defined  before. In particular, we always have $J'\subset J$ when writing $S_{\kappa,I}^{D,J'}(K^p,\Lambda_J^{>1/p})$, and if $J''\subset J'\subset J$ are three nonempty subsets of $I$, we have a natural map $S_{\kappa,J}^{D,J'}(K^p,\Lambda_J^{>1/p})\rightarrow S_{\kappa,J}^{D,J''}(K^p,\Lambda_J^{>1/p})$, which is equivariant under the $U_{\pi_j}$-operator, for all $j\in J$.
		\item Fix a nonempty subset $J$ of $I$ and let $A=\Lambda_J$ or $\Lambda_J^{>1/p}$. For $j\in J$, under the isomorphism $\ZZ_p\cong \calO_{\gothp_j}$, we can identify $\calC(\ZZ_p,A)$ with $\calC(\calO_{\gothp_j},A)$. So it is meaningful to talk about the Mahler basis (resp. modified Mahler basis) of the space $\calC(\calO_{\gothp_j},A)$ (resp. $\calC(\calO_{\gothp_j},A)^{mod}$). We define the Mahler basis of $A\llbracket \calO_{\gothp_j}\rrbracket$ to be the dual basis of the Mahler basis of $\calC(\calO_{\gothp_j},A)$. Under the isomorphism $A\llbracket \calO_{\gothp_j}\rrbracket \cong A\llbracket X_j\rrbracket$ constructed as above, the Mahler basis of $A\llbracket \calO_{\gothp_j}\rrbracket$ is given by $\{X_j^m|m\geq 0 \}$. In general, from the expression $A\llbracket \calO_{p,J}\rrbracket=\hat{\otimes}_{j\in J}A\llbracket\calO_{\gothp_j}\rrbracket$, we can talk about the Mahler basis of $A\llbracket \calO_{p,J}\rrbracket$. Under the isomorphism $A\llbracket \calO_{p,J}\rrbracket\cong A\llbracket (X_j)_{j\in J}\rrbracket$, the Mahler basis is given by $\left\{\prod\limits_{j\in J} X_j^{n_j}|n_j\geq 0 \right\}$. Since $A\llbracket \calO_{p,J}\times P'_{J^c}\rrbracket =A\llbracket \calO_{p,J}\rrbracket\hat{\otimes}_A A\llbracket P'_{J^c}\rrbracket$, the Mahler basis of $A\llbracket \calO_{p,J}\rrbracket$ becomes a basis of the left $A\llbracket P'_{J^c}\rrbracket$-module $A\llbracket \calO_{p,J}\times P'_{J^c}\rrbracket$, which is called the Mahler basis of $A\llbracket \calO_{p,J}\times P'_{J^c}\rrbracket$. We make similar definitions for the modified Mahler basis of the spaces $A\llbracket \calO_{\gothp_j}\rrbracket^{mod}$, $A\llbracket \calO_{p,J}\rrbracket^{J'-mod}$ and $A\llbracket \calO_{p,J}\times P'_{J^c}\rrbracket^{J'-mod}$.
		\item 	Strictly speaking, it is crucial for us to work with the $\Lambda_J^{>1/p}$-dual spaces of $\calC(\calO_{p,J}\times P_{J^c}, \Lambda_J^{>1/p})$ and $\calC(\calO_{p,J}\times P_{J^c}, \Lambda_J^{>1/p})^{J'-mod}$. To explain the problem, we assume $J=I$ for simplicity. Recall that $\calC(\calO_p,\Lambda^{>1/p})^{mod}$ is the subspace of $\calC(\calO_p,\Lambda^{>1/p})$ spanned by the orthonormal basis $\{e_m'|m\in \ZZ_{\geq 0}^I \}$, i.e. any element in $\calC(\calO_p,\Lambda^{>1/p})^{mod}$ is of the form $\sum\limits_{m\in \ZZ_{\geq 0}^I}a_me_m'$ with $a_m\in \Lambda^{>1/p}$ and $a_m\rightarrow 0$ as $|m|:= \sum\limits_{i\in I}m_i\rightarrow \infty$ under the $\gothm_{\Lambda^{>1/p}}$-adic topology. In particular, $\calC(\calO_p,\Lambda^{>1/p})^{mod}$ is not isomorphic to the direct product $\prod\limits_{m\in \ZZ_{\geq 0}^I}\Lambda^{>1/p}$, while its $\Lambda^{>1/p}$-dual is. The computation in \S\ref{S:Orthonormalizable spaces and compact operators} actually shows that there is a well-defined action of the monoid $\bbM_{\pi}$ on $\Lambda^{>1/p}\llbracket \calO_p\rrbracket^{I-mod}$, such that the inclusion $\Lambda^{>1/p}\llbracket \calO_p\rrbracket\rightarrow \Lambda^{>1/p}\llbracket \calO_p\rrbracket^{I-mod}$ is $\bbM_{\pi}$-equivariant. Hence the spaces $\Lambda^{>1/p}\llbracket \calO_{p,J}\times P_{J^c}\rrbracket^{J'-mod}$ for various subsets $J'\subset J\subset I$ are the correct objects to consider. Since $S_{\kappa,I}^D(K^p,\Lambda^{>1/p})$ and its dual space $S_{\kappa,I}^D(K^p,\Lambda^{>1/p})^\vee$ share the same Hecke eigensystem, we can work with the space $S_{\kappa,I}^D(K^p,\Lambda^{>1/p})^\vee$ and embedded into the space $S_{\kappa,I}^{D,I}(K^p,\Lambda^{>1/p})^\vee$  to study the $U_{\pi_i}$-eigenvalues. In the rest of our proof of Theorem~\ref{T:spectral halo for eigenvarieties for D}, we will insist to work with the dual spaces $S_{\kappa,J}^{D,J'}(K^p,\Lambda_J^{>1/p})^\vee$  of generalized $p$-adic automorphic forms.
	\end{enumerate}	
\end{remark}

\subsection{A filtration on the space of $p$-adic automorphic forms with respect to a $U_{\pi_i}$-operator}\label{subsection:a filtration on the space of p-adic automorphic forms with respect to a Up operator}

Throughout this section, we take $J=\{j \}\subset I$ consisting of a single place $j$ in $I$, and we use $R$ to denote the ring $\Lambda_J^{>1/p}=\ZZ_p[H_J]\otimes_{\ZZ_p}\ZZ_p\llbracket T_j,\frac{p}{T_j},T\rrbracket$. In particular we have $\calO_{p,J}=\calO_{\gothp_j}$. 

Let $\chi_J:\calO_{p,J}^\times\times\ZZ_p^\times=\calO_{\gothp_j}^\times\times\ZZ_p^\times\rightarrow R^\times$ be the universal character.
Recall that the space $S_{\kappa,J}^D(K^p,R)^\vee$ is endowed with a left $R\llbracket P'_{J^c} \rrbracket$-module structure. Moreover, the $U_{\pi_j}$-operator and the explicit expression $S_{\kappa,J}^D(K^p,R)^\vee\xrightarrow{\cong}\bigoplus\limits_{k=0}^{s-1}R\llbracket \calO_{\gothp_j}\times P_{J^c} \rrbracket$ are both $R\llbracket  P'_{J^c} \rrbracket$-linear. We have a `dual' result of Proposition~\ref{P:explicit expression of Up operators on integral model of p-adic automorphic forms}: under the isomorphism $S_{\kappa,J}^D(K^p,R)^\vee\xrightarrow{\cong}\bigoplus\limits_{k=0}^{s-1}R\llbracket \calO_{\gothp_j}\times  P_{J^c} \rrbracket$, the $U_{\pi_j}$-operator can be described by the following commutative diagram:
\[
	\xymatrix@=1.3cm{
	S_{\kappa,J}^D(K^p,R)^\vee \ar[r]^{\cong} \ar[d]^{U_{\pi_j}} & \bigoplus\limits_{k=0}^{s-1}R\llbracket \calO_{\gothp_j}\times P_{J^c} \rrbracket\ar[d]^{\calU_j^\vee} \\
	S_{\kappa,J}^D(K^p,R)^\vee \ar[r]^{\cong} & \bigoplus\limits_{k=0}^{s-1}R\llbracket \calO_{\gothp_j}\times P_{J^c} \rrbracket.
}
\]
	Here the right vertical maps $\calU_j^\vee$ is the `transpose' of the map $\calU_j$ in Proposition~\ref{P:explicit expression of Up operators on integral model of p-adic automorphic forms}. More explicitly,  $\calU_j^\vee$ is given by a  $t\times t$ matrix with the following descriptions:
\begin{enumerate}
	\item Each entry of $\calU_j^\vee$ is a sum of operators of the form $\delta_p\circ$, where $\delta_p=(\delta_i)_{i\in I}\in \rmM_2(\calO_p)$ have the same property as in Proposition~\ref{P:explicit expression of Up operators on integral model of p-adic automorphic forms}.
	\item There are exactly $p$ such operators appearing in each row and column of $\calU_j^\vee$.
\end{enumerate}

Notice that $R\llbracket P_{J^c}\rrbracket$ is a free left $R\llbracket P'_{J^c}\rrbracket$-module of rank $|\Delta_{J^c}|$. If we fix a set of representatives in $P_{J^c}$ of the quotient $P_{J^c}/P'_{J^c}\cong \Delta_{J^c}$, we obtain an $R\llbracket P'_{J^c}\rrbracket$-linear isomorphism $\llbracket P_{J^c}\rrbracket\cong \bigoplus\limits_{k=0}^{|\Delta_{J^c}|}R\llbracket P'_{J^c}\rrbracket$. This induces an isomorphism $R\llbracket \calO_{\gothp_j}\times P_{J^c}\rrbracket \cong \bigoplus\limits_{k=1}^{|\Delta_{J^c}|}R\llbracket \calO_{\gothp_j}\times P'_{J^c}\rrbracket$ and we have an isomorphism $S_{\kappa, J}^D(K^p,R)^\vee\xrightarrow{\cong}\bigoplus\limits_{k=1}^SR\llbracket \calO_{\gothp_j}\times P'_{J^c}\rrbracket$ of left $R\llbracket P'_{J^c}\rrbracket$-modules, where $S=s\cdot |\Delta_{J^c}|$. Similarly, we have an $R\llbracket P'_{J^c}\rrbracket$-linear isomorphism $S_{\kappa, J}^{D,J}(K^p,R)^\vee\xrightarrow{\cong}\bigoplus\limits_{k=1}^SR\llbracket \calO_{\gothp_j}\times P'_{J^c}\rrbracket^{J-mod}$.

Under the above isomorphisms, we choose a basis $\left\{f_{k,m}|1\leq k\leq S,m\in \ZZ_{\geq 0}\right\}$ of $S_{\kappa, J}^{D}(K^p,R)^\vee$ as a left $R\llbracket P'_{J^c}\rrbracket$-module, such that $\{f_{k,m}|m\in \ZZ_{\geq 0}\}$ is the Mahler basis of the $k$-th direct factor of $\bigoplus\limits_{k=0}^{s-1}R\llbracket \calO_{\gothp_j}\times P'_{J^c}\rrbracket$ for all $1\leq k\leq S$. We choose a basis $\{f_{k,m}^{mod}|1\leq k\leq S,m\in \ZZ_{\geq 0} \}$ of $S_{\kappa, J}^{D,J}(K^p,R)^\vee$ in a similar way.

Notice that for any $\delta_i\in \bbM_{\pi_i}$, if we use $P(\delta_i)=(P_{m,n}(\delta_i))_{m,n\geq 0}$ to denote the infinite matrix for the action of $\delta_i$ on $\calC(\calO_{\gothp_i}, A)$ under the Mahler basis, then the matrix for the action of $\delta_i$ on $A\llbracket \calO_{\gothp_i}\rrbracket$ under the Mahler basis is the transpose of the matrix $P(\delta_i)$.

We use $N=(N_{m,n})_{m,n\geq 0}$ (resp. $M=(M_{m,n})_{m,n\geq 0}$) to denote the matrix in $\rmM_\infty(R\llbracket P'_{J^c}\rrbracket)$ which corresponds to the $U_{\pi_j}$-operator on $S_{\kappa,J}^D(K^p,R)^\vee$ (resp. $S_{\kappa,J}^{D,J}(K^p,R)^\vee$) under the basis we choose above. It follows from Propositions~\ref{P:explicit expression of Up operators on integral model of p-adic automorphic forms}, ~\ref{P:Liu-Wan-Xiao's computation on matrix coefficients of Mahler basis}  and the above remark that we have $N_{m,n}\in (T_j)^{\max\{ \lfloor \frac{n}{S} \rfloor-\lfloor \frac{m}{pS} \rfloor,0 \}}$. On the other hand, from the construction of the Mahler basis, the matrix $M$ is the conjugation of $N$ by the infinite diagonal matrix with diagonal entries $\underbrace{1,1,\dots,1}_S,\underbrace{T_j,T_j,\dots,T_j}_{S}, \underbrace{T^2_j,T^2_j,\dots,T^2_j}_{S},\dots$ Define two sequences $\underline{\lambda}$, $\underline{\mu}$ of integers as $\lambda_n=\lfloor \frac{n}{S} \rfloor-\lfloor \frac{n}{pS} \rfloor$ and $\mu_0=0$, $\mu_{n+1}-\mu_n=\lambda
_n$ for all $n\in \ZZ_{\geq 0}$. The above computation implies that the matrix $M\in \rmM_\infty(R\llbracket P'_{J^c}\rrbracket)$ is $\underline{\lambda}$-Hodge bounded with respect to the element $T_j\in R\subset R\llbracket P'_{J^c}\rrbracket$.

In the following discussion, we fix a character $\omega=(\eta_j,\eta):H_J=\Delta_j \times \Delta \rightarrow \ZZ_p^\times$. Denote $R_{\omega_J}=R\otimes_{\ZZ_p[H_J],\omega_J}\ZZ_p$, which is naturally isomorphic to $\ZZ_p\llbracket T_j,\frac{p}{T_j},T\rrbracket$. We construct a chain of ring homomorphisms as follows:
\begin{itemize}
	\item Let $R\rightarrow R_{\omega_J}$ and $R\llbracket P'_{J^c}\rrbracket\rightarrow R_{\omega_J}\llbracket P'_{J^c}\rrbracket$ be the natural homomorphisms.
	\item Let $R_{\omega_J}\llbracket P'_{J^c}\rrbracket\rightarrow R_{\omega_J}$ be the reduction map modulo the augmentation ideal of the complete group algebra $R_{\omega_J}\llbracket P'_{J^c}\rrbracket$.
	\item Under the isomorphism $R_{\omega_J}\cong \ZZ_p\llbracket T_j,\frac{p}{T_j},T\rrbracket$, let $R_{\omega_J}\rightarrow R_j:= \ZZ_p\llbracket T_j,\frac{p}{T_j}\rrbracket$ be the reduction map modulo the ideal generated by $T$.
	\item Let $R_j\rightarrow \FF_p\llbracket T_j\rrbracket$ be the reduction map modulo the ideal generated by $\frac{p}{T_j}$.
\end{itemize}
We apply the above homomorphisms $R\llbracket P'_{J^c}\rrbracket  \rightarrow R_{\omega_J}\llbracket P'_{J^c}\rrbracket\rightarrow R_{\omega_J}\rightarrow R_j\rightarrow \FF_p\llbracket T_j\rrbracket$ to entries of the matrix $M$, and obtain matrices $M_{R_{\omega_J}\llbracket P'_{J^c}\rrbracket}\in \rmM_\infty(R_{\omega_J}\llbracket P'_{J^c}\rrbracket)$, $M_{R_{\omega_J}}\in \rmM_\infty(R_{\omega_J})$, $M_{R_j}\in \rmM_\infty(R_j)$ and $\bar{M}\in \rmM_\infty(\FF_p\llbracket T_j\rrbracket)$. All these matrices are $\underline{\lambda}$-Hodge bounded with respect to the element $T_j$ as the matrix $M$ is so.

Fix $l\in 2\ZZ_{\geq 0}$. For every character $\omega_{J^c}:\Delta_{J^c}\rightarrow \ZZ_p^\times$, we obtain a character $\omega=(\omega_J,\omega_{J^c}):H\rightarrow \ZZ_p^\times$. We construct a point $\chi_l\in \calW(\CC_p)$ whose associated character $\kappa_l:\calO_p^\times\times\calO_p^\times\rightarrow\CC_p^\times$ is locally algebraic and corresponds to the triple $(n\in\ZZ_{\geq 0}^I,\nu\in \ZZ^I,\psi=(\psi_1,\psi_2))$ defined as follows.
\begin{itemize}
	\item Define $\nu:=(\nu_i)_{i\in I}$ by $\nu_j:=-\frac{l}{2}$ and $\nu_i:=0$ for all $i\neq j$, and set $n:=-2\nu\in\ZZ_{\geq 0}^I$.
	\item Define two finite characters $\psi_1,\psi_2:\calO_p^\times\rightarrow \CC_p^\times$ with the following properties: \begin{itemize}
		\item $\psi_1|_{1+\pi_i\calO_{\gothp_i}}$ and $\psi_2|_{1+\pi_i\calO_{\gothp_i}}$ are trivial for all $i\neq j$;
		\item $\psi_2|_{1+\pi_j\calO_{\gothp_j}}$ is a nontrivial character which factors through $(1+\pi_j\calO_{\gothp_j})/(1+\pi_j^2\calO_{\gothp_j})$; 
		\item $\psi_1|_{1+\pi_j\calO_{\gothp_j}}=(\psi_2|_{1+\pi_j\calO_{\gothp_j}})^{-2}$; and
		\item the characters $\psi_1|_{\Delta_p}$ and $\psi_2|_{\Delta_p}$ are uniquely determined by the condition that the point $\chi_l$ belongs to the component $\calW_\omega$ of $\calW$. 
	\end{itemize} 

\end{itemize}
It is straightforward to verify that $T_{i,\chi_l}=0$ for all $i\neq j$, and $v_p(T_{j,\chi_l})=\frac{1}{p-1}\in (0,1)$. 

Note that the $\Iw_{\pi,1,J^c}$-coinvariant space of $S_{\kappa,J}^D(K^p,R_{\omega_J})^\vee$ (resp. $S_{\kappa,J}^{D,J}(K^p,R_{\omega_J})^\vee$) is given by $S_{\kappa,J}^D(K^p,R_{\omega_J})^\vee\otimes_{R_{\omega_J}\llbracket P'_{J^c}\rrbracket}R_j$ (resp. $S_{\kappa,J}^{D,J}(K^p,R_{\omega_J})^\vee\otimes_{R_{\omega_J}\llbracket P'_{J^c}\rrbracket}R_j$), and this space admits an action of $\Iw_{\pi,J^c}/\Iw_{\pi,1,J^c}\cong \Delta_{J^c}$. 

Define a homomorphism $\tau_l:R_j\rightarrow \CC_p$ of $\ZZ_p$-algebras with $\tau_l(T_j)=T_{j,\chi_l}$ and we use $M_{\tau_l}\in \rmM_\infty (\CC_p)$ to denote the infinite matrix obtained by applying $\tau_l$ to entries of $M_{R_j}$. 

Now we consider another character $\omega_J'=(\omega_1^{-1},\omega_2):H_J\rightarrow \ZZ_p^\times$, and obtain another character $\omega'=(\omega_J',\omega_{J^c}):H\rightarrow \ZZ_p^\times$. We construct another point $\chi_l'\in \calW(\CC_p)$ whose associated character $\kappa_l':\calO_p^\times\times\calO_p^\times\rightarrow \CC_p^\times$ corresponds to the triple $(n,\nu,\psi'=(\psi_1^{-1},\psi_2))$. It belongs to the weight disc $\calW_{\omega'}$. Applying the homomorphisms $R\llbracket P'_{J^c}\rrbracket\rightarrow R_{\omega'_J}\llbracket P'_{J^c}\rrbracket\rightarrow R_{\omega'_J}\rightarrow R_j$, we get matrices $M'_{R_{\omega'_J}\llbracket P'_{J^c}\rrbracket}$, $M'_{R_{\omega'_J}}$ and $M'_{R_j}$ as before. Applying the homomorphism $\tau_l$ to entries of $M'_{R_j}$, we get a matrix $M'_{\tau_l}\in \rmM_\infty(\CC_p)$. 

Define $t\in\NN^I$ by $t_j=2$ and $t_i=1$ for all $i\neq j$. Since we have $|\Delta_{J^c}|$ choices of the characters $\omega_{J^c}:\Delta_{J^c}\rightarrow \ZZ_p^\times$, and $\dim S_{k,w}^D(K^p\Iw_{\pi^t},\psi)=sp(l+1)$, it follows from Proposition~\ref{P:pairing of Newton slopes by Atkin-Lehner} that there are $sp(l+1)|\Delta_{J^c}|=Sp(l+1)$ pairs of the slopes of the Newton polygons of $M_{\tau_l}$ and $M'_{\tau_l}$, such that the slopes in each pair sum to $l+1$. Hence the total sum of these slopes is $Sp(l+1)^2$. Since the matrix $M_{R_j}\in \rmM_\infty(R_j)$ is $\underline{\lambda}$-Hodge bounded, it follows from the construction of the matrix $M_{\tau_l}$ that the sum of the first $Sp(l+1)$ slopes of $M_{\tau_l}$ is at least $v_p(T_{j,\chi_l})\sum\limits_{k=0}^{Sp(l+1)-1}\lambda_k$. For the same reason, the first $Sp(l+1)$ slopes of $M_{\tau'_l}$ is at least $v_p(T_{j,\chi'_l})\sum\limits_{k=0}^{Sp(l+1)-1}\lambda_k$. As $v_p(T_{j,\chi_l})=v_p(T_{j,\chi'_l})=\frac{1}{p-1}$, we have $Sp(l+1)^2=v_p(T_{j,\chi_l})\sum\limits_{k=0}^{Sp(l+1)-1}\lambda_k+v_p(T_{j,\chi'_l})\sum\limits_{k=0}^{Sp(l+1)-1}\lambda_k$. Therefore the first $Sp(l+1)$ slopes of the matrix $M_{\tau_l}$ (resp. $M_{\tau'_l}$) sum to exactly $v_p(T_{j,\chi_l})\sum\limits_{k=0}^{Sp(l+1)-1}\lambda_k=\frac{1}{2}Sp(l+1)^2$ (resp. $v_p(T_{j,\chi'_l})\sum\limits_{k=0}^{Sp(l+1)-1}\lambda_k=\frac{1}{2}Sp(l+1)^2$). Let $$\mathrm{char}(M_{R_j}):=\sum\limits_{n\geq 0}c_nX^n\in \ZZ_p\llbracket T_j\rrbracket \llbracket X \rrbracket\subset R_j\llbracket X\rrbracket$$ be the characteristic power series of $M_{R_j}$, with $c_n\in \ZZ_p\llbracket T_j \rrbracket$. 
The above discussion implies that the Newton polygon of $\sum\limits_{n\geq 0}c_n(T_{j,\chi_l})X^n$ passes through $(Sp(l+1),\mu_{Sp(l+1)}v_p(T_{j,\chi_l}))$ for all $l\in 2\ZZ_{\geq 0}$. For simplicity, we denote $n_{l+1}=Sp(l+1)$ for $l\in 2\ZZ_{\geq 0}$. 

Now we can run the exact argument in the proof of \cite[Theorem~$1.3$]{liu2017eigencurve}. We give a summary of their results which will be used in our argument:
\begin{enumerate}
	\item Write $c_n(T_j)=\sum\limits_{m\geq 0}b_{n,m}T_j^m$, with $b_{n,m}\in \ZZ_p$. Then $v_p(b_{n,m})\geq \max\{\mu_n-m,0 \}$. In particular, for $t_j\in \CC_p$ with $v_p(t_j)\in (0,1)$, we have $v_p(c_n(t_j))\geq \mu_nv_p(t_j)$ for all $n\in\ZZ_{\geq 0}$. The equality holds if and only if $b_{n,\mu_n}\in \ZZ_p^\times$.
	\item For any $l\in2\ZZ_{\geq 0}$, there exist two integers $n_{l+1}^-\in [n_{l+1}-S,n_{l+1}]$ and $n_{l+1}^+\in [n_{l+1},n_{l+1}+S]$, such that for any $t_j\in \CC_p$ with $v_p(t_j)\in (0,1)$, we have the following.
	\begin{itemize}
		\item If $n_{l+1}^-\neq n_{l+1}^+$, then the points $(n_{l+1}^-,\mu_{n_{l+1}^-}v_p(t_j))$ and $(n_{l+1}^+,\mu_{n_{l+1}^+}v_p(t_j))$ are two consecutive vertices of the Newton polygon of $\sum\limits_{n\geq 0}c_n(t_j)X^n$. Moreover, the line segment connecting these two vertices has slope $(l+1)(p-1)v_p(t_j)$;
		\item If $n_{l+1}^-=n_{l+1}=n_{l+1}^+$, then $(n_{l+1},\mu_{n_{l+1}v_p(t_j)})$ is a vertex of the Newton polygon of $\sum\limits_{n\geq 0}c_n(t_j)X^n$.
	\end{itemize}
\end{enumerate}

Now we consider the matrix $\bar{M}\in \rmM_{\infty}(\FF_p\llbracket T_j \rrbracket)$. Its characteristic power series $\mathrm{char}(\bar{M})=\sum\limits_{n\geq 0}d_n(T_j)X^n$ can be obtained by applying the homomorphism $R_j\rightarrow \FF_p\llbracket T_j \rrbracket$ to the coefficients of the power series $\mathrm{char}(M_{R_j})=\sum\limits_{n\geq 0}c_n(T_j)X^n$. We define the lower bound polygon of $\mathrm{char}(\bar{M})$ to be the lower convex hull of the points $(n,\mu_n)$, for all $n\in\ZZ_{\geq 0}$. Since $\bar{M}$ is $\underline{\lambda}$-Hodge bounded, the Newton polygon of $\mathrm{char}(\bar{M})$ always lies on or above its lower bound polygon. We observe the following two facts:
\begin{itemize}
	\item The kernel of the composite $\ZZ_p\llbracket T_j \rrbracket\rightarrow R_j\rightarrow \FF_p\llbracket T_j \rrbracket$ is the principal ideal generated by $p$.
	\item The $x$-coordinates of the line segment of the lower bound polygon with slope $(l+1)(p-1)$ belong to the interval $[n_{l+1}-S,n_{l+1}+S]$.
\end{itemize}
Combined the results from \cite{liu2017eigencurve}, these facts imply the following.

If $n_{l+1}^-\neq n_{l+1}^+$, the points $(n_{l+1}^-,v_{T_j}(d_{n_{l+1}^-}(T_j)))$ and $(n_{l+1}^+,v_{T_i}(d_{n_{l+1}^+}(T_j)))$ lie on the lower bound polygon and the slope of the line segment connecting these two vertices has slope $\lambda_{n_{l+1}}=(l+1)(p-1)$; if $n_{l+1}^-=n_{l+1}=n_{l+1}^+$, $(n_{l+1},v_{T_j}(d_{n_{l+1}}(T_j)))$ lies on the lower bound polygon. In both cases, the slopes of the Newton polygon of $\mathrm{char}(\bar{M})$ on the left of the point $(n_{l+1}^-,v_{T_j}(d_{n_{l+1}^-}(T_j)))$ belong to $[0,\lambda_{n_{l+1}})$, and the slopes of the Newton polygon of $\mathrm{char}(\bar{M})$ on the right of the point $(n_{l+1}^+,v_{T_j}(d_{n_{l+1}^+}(T_j)))$ belong to $(\lambda_{n_{l+1}},\infty)$. In particular, $(n_{l+1}^-,v_{T_j}(d_{n_{l+1}^-}(T_j)))$ and $(n_{l+1}^+,v_{T_j}(d_{n_{l+1}^+}(T_j)))$ are touching vertices of the Newton polygon of $\mathrm{char}(\bar{M})=\sum\limits_{n\geq 0}d_n(T_j)X^n$.

Notice that $(0,0)$ is always a touching vertex of $\bar{M}$. The above construction of the numbers $n_{l+1}^+$'s also applies to the point $(0,0)$. It follows that we can find an integer $n_0^+\in [0,S]$, such that $(n_0^+,v_{T_j}(d_{n_0^+}(T_j)))$ is a touching vertex of the Newton polygon of $\mathrm{char}(\bar{M})$, and the slopes of this Newton polygon on the left of the point $(n_0^+,v_{T_j}(d_{n_0^+}(T_j)))$ are all $0$. We denote $n_0^-=0$ for simplicity.

Let $\Omega_{\omega_J}:=\{n_0^-,n_0^+ \}\cup\{n_l^-,n_l^+|l\in 1+2\ZZ_{\geq 0} \}=\{ n_0^-\leq n_0^+\leq n_1^-\leq n_1^+\leq n_3^-\leq \dots  \}$. We can apply Theorem~\ref{T:Newton-Hodge decomposition with Hodge bound restriction for infinite matrices over noncommutative rings} to the matrix $M\in \rmM_{\infty}(R_{\omega_J}\llbracket P_{J^c}'\rrbracket)$ and the set $\Omega_{\omega_J}$, and get the following.
\begin{proposition}\label{P:filtration on the space of generalized automoprhic forms}
	For every character $\omega_J:H_J\rightarrow \ZZ_p^\times$, there exist a basis $\{v_m|m\in \NN \}$ and a filtration $\{\tilde{V}_\alpha|\alpha\in\Omega_{\omega_J} \}$ of the left $R_{\omega_J}\llbracket P_{J^c}'\rrbracket$-module $S_{\kappa,J}^{D,J}(K^p,R_{\omega_J})^\vee$ with the following properties.
	\begin{enumerate}
		\item If we use $N\in \rmM_\infty(R_{\omega_J}\llbracket P_{J^c}'\rrbracket)$ to denote the matrix corresponding to the $U_{\pi_j}$-operator on $S_{\kappa,J}^{D,J}(K^p,R_{\omega_J})^\vee$ under the basis $\{v_m|m\in \NN \}$, the matrix $N$ is $\underline{\lambda}$-Hodge bounded with respect to the element $T_j\in R_{\omega_J}$.
		\item The set $\{\tilde{V}_\alpha|\alpha\in\Omega_{\omega_J} \}$ is an increasing filtration of $S_{\kappa,J}^{D,J}(K^p,R_{\omega_J})^\vee$; more precisely, we have $(0)=\tilde{V}_{n_0^-}\subset \tilde{V}_{n_0^+}\subset \tilde{V}_{n_1^-}\subset \tilde{V}_{n_1^+}\subset \dots$.
		\item For every $\alpha\in\Omega_{\omega_J}$, $\tilde{V}_\alpha$ is a free  $R_{\omega_J}\llbracket P_{J^c}'\rrbracket$-submodule of $S_{\kappa,J}^{D,J}(K^p,R_{\omega_J})^\vee$ of finite rank $d_\alpha\geq 0$, and $\{ v_m|m=1,\dots, d_\alpha \}$ is a basis of $\tilde{V}_\alpha$.
		\item For every $\alpha\in\Omega_{\omega_J}$, $\tilde{V}_\alpha$ is stable under the $U_{\pi_j}$-operator. We use $R_\alpha\in \rmM_{d_\alpha}(R_{\omega_J}\llbracket P_{J^c}'\rrbracket)$ to denote the matrix corresponding to the $U_{\pi_j}$-operator on $\tilde{V}_\alpha$ under the basis $\{v_m|m=1,\dots,d_\alpha \}$. In particular, the matrix $N$ can be written in the blockwise form:
		\[
		N=
		\quad
		\Biggl[\mkern-5mu
		\begin{tikzpicture}[baseline=-.65ex]
		\matrix[
		matrix of math nodes,
		column sep=1ex,
		] (m)
		{
			A_\alpha & B_\alpha \\
			0\ \   & D_\alpha \\
		};
		\draw[dotted]
		([xshift=0.5ex]m-1-1.north east) -- ([xshift=0.5ex]m-2-1.south east);
		\draw[dotted]
		(m-1-1.south west) -- (m-1-2.south east);
		\node[above,text depth=1pt] at ([xshift=2.5ex]m-1-1.north) {$\scriptstyle d_\alpha$};  
		\node[left,overlay] at ([xshift=-1.2ex,yshift=-2ex]m-1-1.west) {$\scriptstyle d_\alpha$};
		\end{tikzpicture}\mkern-5mu
		\Biggr]\in \rmM_{\infty}(R_{\omega_J}\llbracket P_{J^c}'\rrbracket).
		\] 
		\item Let $\alpha\leq \beta$ be two consecutive elements in $\Omega_{\omega_J}$. The matrix $A_\beta$ can be written in the block uppertriangular forms: $\quad
		\Biggl[\mkern-5mu
		\begin{tikzpicture}[baseline=-.65ex]
		\matrix[
		matrix of math nodes,
		column sep=1ex,
		] (m)
		{
			A_{11} & A_{12} \\
			\ 0\ \   & A_{22} \\
		};
		\draw[dotted]
		([xshift=0.5ex]m-1-1.north east) -- ([xshift=0.5ex]m-2-1.south east);
		\draw[dotted]
		(m-1-1.south west) -- (m-1-2.south east);
		\node[above,text depth=1pt] at ([xshift=2.5ex]m-1-1.north) {$\scriptstyle d_\alpha$};  
		\node[left,overlay] at ([xshift=-1.2ex,yshift=-2ex]m-1-1.west) {$\scriptstyle d_\alpha$};
		\end{tikzpicture}\mkern-5mu
		\Biggr]$ with $A_{11}=A_\alpha$. We use $\bar{A}_{22}\in \rmM_d(\FF_p\llbracket T_j \rrbracket)$ to denote the matrix obtained by applying the homomorphism  $R_{\omega_J}\llbracket P_{J^c}'\rrbracket\rightarrow \FF_p\llbracket T_j \rrbracket$ to entries of $A_{22}$, where $d=d_\beta-d_\alpha$. Then the slopes of the Newton polygon of $\bar{A}_{22}$ (with respect to the $T_j$-adic valuation) belong to $\{\lambda_l \}$ if $\alpha=n_l^-$, $\beta=n_l^+$, and belong to $(\lambda_l,\lambda_{l'})$ if $\alpha=n_l^+$, $\beta=n_{l'}^-$. 
	\end{enumerate}
\end{proposition}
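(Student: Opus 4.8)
The plan is to obtain Proposition~\ref{P:filtration on the space of generalized automoprhic forms} as a direct corollary of the noncommutative Newton--Hodge decomposition Theorem~\ref{T:Newton-Hodge decomposition with Hodge bound restriction for infinite matrices over noncommutative rings}, applied iteratively. First I would set up the ring-theoretic input: take $R:=R_{\omega_J}\llbracket P_{J^c}'\rrbracket$, the two-sided ideal $\gothm$ generated by the augmentation ideal of $P_{J^c}'$ together with $p$ and $T$ (via $R_{\omega_J}\cong \ZZ_p\llbracket T_j,\tfrac{p}{T_j},T\rrbracket$), and the central element $\tilde T:=T_j$. One must check the three axioms preceding Lemma~\ref{L:T-divisible property in gothm}: completeness under the $(\gothm,T_j)$-adic topology (clear from completeness of the group algebra), the existence of the surjection $\pi:R\twoheadrightarrow \FF_p\llbracket T_j\rrbracket$ with kernel $\gothm$ realized by the composite $R_{\omega_J}\llbracket P_{J^c}'\rrbracket\to R_{\omega_J}\to R_j\to \FF_p\llbracket T_j\rrbracket$ constructed in the text, and the existence of special lifts, which holds because $T_j$ is a non-zero-divisor in the group algebra and monomials in the Mahler basis provide lifts of prescribed $T_j$-valuation. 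This reduction $\pi$ sends the matrix $M$ (equivalently $N$ in the statement, these being conjugate by the diagonal Mahler-scaling matrix) to $\bar M$, and by the preceding paragraphs $M$ is $\ula$-Hodge bounded with respect to $T_j$ and $\bar M$ has $\Omega_{\omega_J}$ as a set of touching vertices satisfying $N(\bar M,s)=\sum_{i=1}^s\lambda_i$.

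Next I would run the induction. Apply Theorem~\ref{T:Newton-Hodge decomposition with Hodge bound restriction for infinite matrices over noncommutative rings} to $M\in\rmM_\infty(R)$ with $\Omega=\Omega_{\omega_J}$ and its first element $s_1=n_0^+$: this yields a $\ula$-stable $Q_0\in\GL_\infty(R)$ with $Q_0MQ_0^{-1}$ block upper triangular with top-left block of size $d_{n_0^+}$. Set $v_m$ to be the images of the standard basis vectors under $Q_0^{-1}$ composed with the inverse of the Mahler-scaling conjugation, so that $\{v_m\}$ is a left $R$-module basis of $S_{\kappa,J}^{D,J}(K^p,R_{\omega_J})^\vee$ for which the $U_{\pi_j}$-matrix is $Q_0MQ_0^{-1}$, which I rename $N$; this gives (1). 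Now I would iterate: having produced the flag up to $\tilde V_\alpha=\mathrm{span}\{v_1,\dots,v_{d_\alpha}\}$ with $N$ block upper triangular splitting off $A_\alpha$, I apply the theorem again to the lower-right infinite block $D_\alpha$ (which inherits $\ula^{(d_\alpha,\infty]}$-Hodge boundedness after the relevant shift of $\underline\lambda$, using that $\underline\lambda$ is increasing and Remark~\ref{remark1}(2) to discard finitely many vertices) and the next touching vertex $\beta$ of its reduction $\bar D_\alpha$ --- which is $\beta$ shifted, with the touching-vertex and Newton-value hypotheses on $\bar D_\alpha$ following from those on $\bar M$ by the Newton--Hodge splitting already applied. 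The composite conjugations converge by the same argument as in the proof of Theorem~\ref{T:Newton-Hodge decomposition with Hodge bound restriction for infinite matrices over noncommutative rings} (the $\ula$-stable factors approach the identity $\gothm$-adically), and the basis refines to a flag indexed by all of $\Omega_{\omega_J}$; freeness of each $\tilde V_\alpha$ over $R$ of the stated rank $d_\alpha$ is immediate since it is spanned by a sub-collection of the basis. This gives (2), (3), (4).

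For part (5), the block-upper-triangular shape of $A_\beta$ with $A_{11}=A_\alpha$ is exactly what the iterative construction produces. To identify the slopes of $\bar A_{22}$, note that $\bar A_{22}$ is the $(d_\beta-d_\alpha)\times(d_\beta-d_\alpha)$ diagonal block of $\bar N=\pi(N)$ between columns $d_\alpha+1$ and $d_\beta$; since the whole construction over $R$ reduces mod $\gothm$ to the commutative Newton--Hodge decomposition of $\bar M$ performed in Theorem~\ref{T:Newton-Hodge decomposition with Hodge bound restriction for infinite matrices} at the successive touching vertices, $\bar A_{22}$ accounts for precisely the Newton slopes of $\bar M$ lying strictly between the $x$-coordinates $d_\alpha$ and $d_\beta$. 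By the structure of the touching vertices established just before the Proposition --- namely that the Newton polygon of $\mathrm{char}(\bar M)$ has slope exactly $\lambda_l=(l+1)(p-1)$ on the segment from $n_l^-$ to $n_l^+$ and slopes strictly inside $(\lambda_l,\lambda_{l'})$ on the segment from $n_l^+$ to $n_{l'}^-$ --- these are exactly the asserted slope constraints, recalling that the $x$-coordinate ranges $[n_l^-,n_l^+]$ are controlled to lie within $[n_{l+1}-S,n_{l+1}+S]$ so that no spurious slopes are absorbed. I expect the main obstacle to be bookkeeping rather than conceptual: verifying carefully that the reductions $\bar D_\alpha$ satisfy the touching-vertex and Newton-function hypotheses at each stage (so that the theorem can be re-applied), and that the shifted sequence $\underline\lambda^{(d_\alpha,\infty]}$ indeed realizes the Hodge bound for $D_\alpha$ after the previous splitting, since the commutative Lemma~\ref{L:Newton function determines Hodge function for finite matrix} must be invoked to guarantee strict Hodge boundedness of the split-off top blocks; handling the interaction between the noncommutative $\gothm$-adic congruences and the $T_j$-divisibility (the subtlety flagged in the Remark after Proposition~\ref{P:conjugating a 'good' matrix by lower triangular matrix into an upper triangular matrix over noncommutative rings}) is already packaged inside Theorem~\ref{T:Newton-Hodge decomposition with Hodge bound restriction for infinite matrices over noncommutative rings}, so it does not re-emerge here.
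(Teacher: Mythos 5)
Your approach matches the paper's exactly: the paper proves Proposition~\ref{P:filtration on the space of generalized automoprhic forms} simply by invoking Theorem~\ref{T:Newton-Hodge decomposition with Hodge bound restriction for infinite matrices over noncommutative rings} for $M\in \rmM_{\infty}(R_{\omega_J}\llbracket P_{J^c}'\rrbracket)$ with the set $\Omega_{\omega_J}$, and your plan carries out the required hypothesis checks and the iteration that the paper leaves implicit, so it is essentially the intended argument fleshed out.

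There is one genuine error in the setup. You take the two-sided ideal $\gothm\subset R=R_{\omega_J}\llbracket P'_{J^c}\rrbracket$ to be generated by the augmentation ideal of $P'_{J^c}$ together with $p$ and $T$, but this is not the kernel of the reduction $\pi:R\twoheadrightarrow \FF_p\llbracket T_j\rrbracket$. Writing $R_{\omega_J}\cong \ZZ_p\llbracket T_j,\tfrac p{T_j},T\rrbracket$, the quotient $R_{\omega_J}/(p,T)$ is $\FF_p\llbracket T_j,U\rrbracket/(T_jU)$ where $U$ denotes the image of $\tfrac p{T_j}$; this properly contains $\FF_p\llbracket T_j\rrbracket$, and $\tfrac p{T_j}$ is not in the ideal $(p,T)$. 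So with your $\gothm$, axiom~(2) preceding Lemma~\ref{L:T-divisible property in gothm} fails and Theorem~\ref{T:Newton-Hodge decomposition with Hodge bound restriction for infinite matrices over noncommutative rings} is not applicable. The correct choice is the ideal generated by the augmentation ideal together with $T$ and $\tfrac p{T_j}$ (from which $p=T_j\cdot\tfrac p{T_j}\in\gothm$ follows automatically). This is a local fix; once made, the rest of your argument --- the iteration by splitting off successive top-left blocks, the convergence of the $\gothm$-adically small $\underline\lambda$-stable conjugating factors to a limit change of basis, and the identification of the slopes of $\bar A_{22}$ with the Newton segments of $\bar M$ between consecutive touching vertices established just before the proposition --- is sound.
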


\section{Proof of the main theorem}\label{section:Proof of the main theorem}

\begin{notation}
	In this section, we write $I=\{j_1,j_2,\dots, j_g \}$. For $1\leq l\leq g$, we denote by $J_l$ the subset $\{j_1,\dots, j_l\}$ of $I$. We will drop the letter $J$ or $j$ in the previous notations for simplicity. More precisely, for $1\leq l\leq g$, we make the following.
	\begin{enumerate}
		\item Let $\gothp_l$ be the prime of $F$ corresponding to the embedding $j_l\in I$ and $\pi_l=\pi_{j_l}$ be the uniformizer of $\calO_{\gothp_l}$ we fixed before.
		\item Let $H_l:=H_{J_l}$ denote the torsion subgroup of $\calO_{p,J}^\times\times\ZZ_p^\times$ and $\delta_l$ be the torsion subgroup of $\calO_{\gothp_l}^\times$. 
		\item Let $\Lambda_l$ and $R_l$ denote the ring $\Lambda_{J_l}=\ZZ_p\llbracket \calO_{p,J}^\times\times\ZZ_p^\times\rrbracket$ and $\Lambda_{J_l}^{>1/p}$, respectively.
		\item Let $T_l\in \Lambda_l$ be the element $T_{j_l}$ defined in \S\ref{subsection:Notations in Automorphic forms for definite quaternion algebras and completed homology}. In particular, we can write $\Lambda_l=\ZZ_p[H_l]\otimes_{\ZZ_p}\ZZ_p\llbracket (T_m)_{1\leq m\leq l}, T\rrbracket$ and $R_l=\ZZ_p[H_l]\otimes_{\ZZ_p}\ZZ_p\llbracket (T_m,\frac{p}{T_m})_{1\leq m\leq l}, T\rrbracket$.
	\end{enumerate} 
\end{notation}

\subsection{Properties of the filtration on the space of integral $p$-adic automorphic forms}

We apply the construction in \S\ref{subsection:a filtration on the space of p-adic automorphic forms with respect to a Up operator} to the set $J_1$ consisting of a single element $j_1$.
To be more precise, we fix a character $\omega_{1}=(\eta_1,\eta):H_{1}=\Delta_1\times\Delta\rightarrow \ZZ_p^\times$. Let $R_{1,\omega_{1}}=R_1\otimes_{\ZZ_p[H_{1}],\omega_{1}}\ZZ_p$, which is isomorphic to $\ZZ_p\llbracket T_{1},\frac{p}{T_{1}},T\rrbracket$ and $R_{2,\omega_2}=R_2\otimes_{\ZZ_p[H_{1}],\omega_{1}}\ZZ_p$. 

We apply Proposition~\ref{P:filtration on the space of generalized automoprhic forms} to the space $S_{\kappa,J_1}^{D,J_1}(K^p,R_{1,\omega_{1}})^\vee$ and the $U_{\pi_{1}}$-operator on it. Then we get a set $\Omega_{\omega_{1}}=\{n_l^-,n_l^+|l\in \{0\}\cup  1+2\ZZ_{\geq 0} \}$, a basis $\{v_m|m\in \NN\}$ and a filtration $\{\tilde{V}_\alpha|\alpha\in \Omega_{\omega_{1}}\}$ of the space $S_{\kappa,J_1}^{D,J_1}(K^p,R_{1,\omega_{1}})^\vee$. By Remark~\ref{remark:some basic properties of the space C_J(kappa,A)}, we have a surjective map $S_{\kappa,J_1}^{D,J_1}(K^p,R_{2,\omega_{1}})^\vee\rightarrow S_{\kappa,J_2}^{D,J_1}(K^p,R_{2,\omega_{1}})^\vee$, which identifies the latter space as the $B(\calO_{\gothp_{2}})$-coinvariants of the first space. For $\alpha\in \Omega_{\omega_{1}}$, we use $V_\alpha$ to denote the image of $\tilde{V}_\alpha\otimes_{R_{1,\omega_{1}}}R_{2,\omega_{2}}$ under this map.

\begin{proposition}\label{P:filtration with respect to one Up operator is stable under the other Up operators}
	For all  $\alpha\in\Omega_{\omega_{1}}$, the space $V_{\alpha}$ is stable under the $U_{\pi_{2}}$-operator on $S_{\kappa,J_2}^{D,J_1}(K^p,R_{2,\omega_{1}})^\vee$.
\end{proposition}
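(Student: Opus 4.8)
The plan is to exploit two facts about $\mathcal{S}:=S_{\kappa,J_2}^{D,J_1}(K^p,R_{2,\omega_1})^\vee$: the operators $U_{\pi_1}$ and $U_{\pi_2}$ commute on it (they are Hecke operators attached to the distinct places $\gothp_1,\gothp_2$, cf. \S\ref{subsection: Hecke operators} and the list of properties following \eqref{E:explicit expression of A-linear duals of generalized p-adic automorphic forms}), and the filtration $\{V_\alpha\}$ is, in effect, a slope filtration for $U_{\pi_1}$. Write $q\colon S_{\kappa,J_1}^{D,J_1}(K^p,R_{2,\omega_1})^\vee\twoheadrightarrow\mathcal{S}$ for the natural surjection, so that $V_\alpha=q(\tilde V_\alpha\otimes R_{2,\omega_1})$. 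Since $U_{\pi_1}$ commutes with the $B(\calO_{\gothp_2})$-action whose coinvariants define $q$, the map $q$ is $U_{\pi_1}$-equivariant (Remark~\ref{R:remark for integral p-adic automorphic forms}(2)), so $V_\alpha$ and $\mathcal{S}/V_\alpha$ are $U_{\pi_1}$-stable. Moreover $U_{\pi_2}$ is $R_{2,\omega_1}\llbracket P'_{J_2^c}\rrbracket$-linear. The assertion $U_{\pi_2}(V_\alpha)\subseteq V_\alpha$ is thus equivalent to the vanishing of the composite
\[
g\colon\ V_\alpha\ \hookrightarrow\ \mathcal{S}\ \xrightarrow{\ U_{\pi_2}\ }\ \mathcal{S}\ \twoheadrightarrow\ \mathcal{S}/V_\alpha,
\]
which is $R_{2,\omega_1}\llbracket P'_{J_2^c}\rrbracket$-linear and $U_{\pi_1}$-equivariant. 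So the whole point is a rigidity statement: $U_{\pi_1}$-equivariant maps between the ``low-slope'' and ``high-slope'' parts of $\mathcal{S}$ vanish.

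First I would record the slope dichotomy. By Proposition~\ref{P:filtration on the space of generalized automoprhic forms}(5) there is a rational number $h_\alpha$ strictly separating the slopes of $U_{\pi_1}$ on $\tilde V_\alpha$ from those on $S_{\kappa,J_1}^{D,J_1}(K^p,R_{1,\omega_1})^\vee/\tilde V_\alpha$ (take $h_\alpha$ between $\lambda_l$ and the minimal slope of the first graded piece above $\alpha$ when $\alpha=n_l^+$, and between the maximal slope of $\tilde V_\alpha$ and $\lambda_l$ when $\alpha=n_l^-$; this is possible because $\tilde V_\alpha$ and that first graded piece both have finite rank). All slopes here are computed over $\FF_p\llbracket T_1\rrbracket$, after reduction of the coefficient ring. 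After base change $R_{1,\omega_1}\to R_{2,\omega_1}$ and pushforward along $q$, the module $V_\alpha$ is a $U_{\pi_1}$-equivariant quotient of $\tilde V_\alpha\otimes R_{2,\omega_1}$ and $\mathcal{S}/V_\alpha$ is a $U_{\pi_1}$-equivariant quotient of $(S_{\kappa,J_1}^{D,J_1}(K^p,R_{2,\omega_1})^\vee)/(\tilde V_\alpha\otimes R_{2,\omega_1})$. All the $U_{\pi_1}$-matrices involved remain $\ula$-Hodge bounded with respect to $T_1$ by Propositions~\ref{P:explicit expression of Up operators on integral model of p-adic automorphic forms} and \ref{P:Liu-Wan-Xiao's computation on matrix coefficients of Mahler basis}, and $U_{\pi_1}$ is compact with respect to $T_1$; hence the characteristic power series of $U_{\pi_1}$ is multiplicative along $U_{\pi_1}$-stable short exact sequences, and the reduced Newton slopes of $U_{\pi_1}$ on $V_\alpha$ are all $<h_\alpha$ while those on $\mathcal{S}/V_\alpha$ are all $>h_\alpha$.

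The heart of the proof is then the following rigidity lemma, to be proved by the machinery of \S\ref{section: Newton-Hodge decomposition over certain noncommutative rings}: let $\mathcal{R}$ be the completed group ring serving as the coefficient ring of $\mathcal{S}$ (a group-ring extension of $R_{2,\omega_1}$ by $\calO_{\gothp_2}\times P'_{J_2^c}$), equipped with $\tilde T=T_1$ and the surjection $\mathcal{R}\twoheadrightarrow\FF_p\llbracket T_1\rrbracket$, so that $\mathcal{R}$ satisfies the axioms of \S\ref{section: Newton-Hodge decomposition over certain noncommutative rings}; then any $R_{2,\omega_1}\llbracket P'_{J_2^c}\rrbracket$-linear, $U_{\pi_1}$-equivariant map $M'\to M''$ between $\ula$-Hodge bounded $\mathcal{R}$-modules with all reduced $U_{\pi_1}$-slopes of $M'$ strictly below $h_\alpha$ and all those of $M''$ strictly above $h_\alpha$ must be zero. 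To see this, reduce modulo the defining ideal of $\mathcal{R}$: over $\FF_p\llbracket T_1\rrbracket$ the eigenvalues in $\bbC$ of the two induced actions of $\bar U_{\pi_1}$ have disjoint $T_1$-adic valuations (Remark~\ref{R:relation between slopes of Newton polygon and eigenvalues}), so the Sylvester operator $X\mapsto\bar M''_{U}X-X\bar M'_{U}$ is injective and the reduced map vanishes; then lift through successive powers of the defining ideal by re-running the successive-approximation estimate in the proof of Lemma~\ref{L:conjugating a lambda Hodge bounded matrix to make lower left block closer to 0 over noncommutative rings} (Proposition~\ref{P:conjugating a 'good' matrix by lower triangular matrix into an upper triangular matrix over noncommutative rings}), where Hodge-boundedness together with the slope gap make the Sylvester operator invertible modulo each $\gothm^k$ up to a controlled loss. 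Applied to the map $g$, this gives $g=0$, i.e.\ $U_{\pi_2}(V_\alpha)\subseteq V_\alpha$, proving the proposition. The main obstacle I anticipate is precisely this rigidity lemma over the noncommutative, non-noetherian ring $\mathcal{R}$: one must construct $\mathcal{R}$ and verify it meets the axioms of \S\ref{section: Newton-Hodge decomposition over certain noncommutative rings}, check $\ula$-Hodge-boundedness of all the $U_{\pi_1}$-matrices in sight over it, and propagate the ``disjoint reduced slopes $\Rightarrow$ Sylvester invertible mod $\gothm^k$'' estimate through the tower of reductions $\mathcal{R}\to R_{2,\omega_1}\llbracket P'_{J_2^c}\rrbracket\to R_{\omega_J}\to R_j\to\FF_p\llbracket T_1\rrbracket$, i.e.\ adapt the arguments of \S\ref{section: Newton-Hodge decomposition over certain noncommutative rings} to this equivariant setting.
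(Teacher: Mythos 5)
Your reduction of the proposition to a rigidity statement (the composite $g\colon V_\alpha\hookrightarrow\mathcal S\xrightarrow{U_{\pi_2}}\mathcal S\twoheadrightarrow\mathcal S/V_\alpha$ vanishes because $U_{\pi_1}$-slopes are separated across the short exact sequence and $U_{\pi_2}$ commutes with $U_{\pi_1}$) is the right conceptual picture, and it matches the mechanism the paper uses. But the paper implements it differently, and there are real gaps in your sketch of the ``rigidity lemma'' that the paper's implementation is designed precisely to avoid.

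The paper does not argue through a Sylvester operator at all. It fixes a $\beta\gg\alpha$ so that the tail block $D_\beta$ of $N$ has entries in $T_1^{n_\alpha+1}R_1'\llbracket P'_{J_1^c}\rrbracket$, takes $f$ to be the characteristic polynomial of the reduction $A_{\alpha,R_1'}$, and uses three facts: (i) $f(A_{11})$ lies in $\rmM_{d_\alpha}(\calI_1)$ by Cayley--Hamilton, hence is nilpotent mod each $\calI_1^k$; (ii) since the residual characteristic polynomials $\bar f$ and $\bar g$ of the two graded blocks of $A_\beta$ are coprime over $\FF_p(\!(T_1)\!)$, a B\'ezout identity $fu+gv=T_1^{m_\alpha}$ holds over $R_1'$, so $f(A_{22})$ has an inverse mod $\calI_1^k$ up to $T_1^{m_\alpha k}$; (iii) the constant term of $f$ is $T_1^{n_\alpha}$ times a unit, so $f(D_\beta)=T_1^{n_\alpha}D'_\beta$ with $D'_\beta$ invertible. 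Together these give an intrinsic characterization of $\tilde V_\alpha/\calI_1^k$ as the generalized kernel of $f(U_{\pi_1})$ on $S_{\kappa,J_1}^{D,J_1}(K^p,R_1')^\vee\otimes\gothR_k$, which is then manifestly $U_{\pi_2}$-stable because $U_{\pi_2}$ commutes with $f(U_{\pi_1})$; passing to the limit over $k$ finishes. Your Sylvester formulation would need exactly these inputs to go through: injectivity of $X\mapsto\bar M''_U X-X\bar M'_U$ for an \emph{infinite} $\bar M''_U$ cannot be inferred merely from disjointness of eigenvalue valuations but requires showing $\bar f(\bar M''_U)$ is injective, which in turn requires the Hodge-boundedness estimate and the B\'ezout identity that the paper supplies; and the lift through $\calI_1^k$ is not a re-run of the successive-approximation argument of \S\ref{section: Newton-Hodge decomposition over certain noncommutative rings} (that machinery is used to \emph{construct} the filtration in Proposition~\ref{P:filtration on the space of generalized automoprhic forms}, not here), but instead follows for free once one has the generalized-kernel description. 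So your plan is directionally sound and you have correctly identified the obstacle, but as written it asserts the hard part (``Sylvester invertible mod $\gothm^k$ up to a controlled loss'') without the mechanism --- the choice of $\beta$, the explicit polynomial $f$, and the B\'ezout bound $T_1^{m_\alpha}$ --- that actually makes the estimate uniform in $k$ and valid in the infinite-rank, noncommutative setting.
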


\begin{proof}
	Throughout the proof, we use $R_1'$ and $R_2'$ to denote the rings $R_{1,\omega_{1}}$ and $R_{2,\omega_{2}}$, respectively.
	
	Let $N=\quad
	\Biggl[\mkern-5mu
	\begin{tikzpicture}[baseline=-.65ex]
	\matrix[
	matrix of math nodes,
	column sep=1ex,
	] (m)
	{
		A_\alpha & B_\alpha \\
		0\ \   & D_\alpha \\
	};
	\draw[dotted]
	([xshift=0.5ex]m-1-1.north east) -- ([xshift=0.5ex]m-2-1.south east);
	\draw[dotted]
	(m-1-1.south west) -- (m-1-2.south east);
	\node[above,text depth=1pt] at ([xshift=2.5ex]m-1-1.north) {$\scriptstyle d_\alpha$};  
	\node[left,overlay] at ([xshift=-1.2ex,yshift=-2ex]m-1-1.west) {$\scriptstyle d_\alpha$};
	\end{tikzpicture}\mkern-5mu
	\Biggr]\in \rmM_{\infty}(R_1'\llbracket P'_{J_1^c} \rrbracket)$ be the infinite matrix as in Proposition~\ref{P:filtration on the space of generalized automoprhic forms}. 
	
	We apply the homomorphisms $R'_1\llbracket P'_{J_1^c}\rrbracket\rightarrow R_1'\rightarrow \FF_p\llbracket T_1\rrbracket$ to the matrix $A_\alpha\in \rmM_{d_\alpha}(R_1'\llbracket P'_{J_1^c}\rrbracket)$, and obtain matrices $A_{\alpha,R_1'}\rmM_{d_\alpha}(R_1')$ and $\bar{A}_{\alpha}\in \rmM_{d_\alpha}(\FF_p\llbracket T_1\rrbracket)$. We use $f(X)=\det(X\cdot I_{d_{\alpha}}-A_{\alpha,R_1'})=X^{d_\alpha}+a_{d_\alpha-1}X^{d_\alpha-1}+\cdot+a_0\in R_1'[X]$ to denote the characteristic polynomial of $A_{\alpha,R_1'}$. Since the matrix $\bar{A}_\alpha$ is strictly $\underline{\lambda}^{[d_\alpha]}$-Hodge bounded, we can write $a_0=T_1^{n_\alpha}\cdot a_0'$, where $n_\alpha=\sum\limits_{k=0}^{d_\alpha-1}\lambda_k$ and $a_0'$ is a unit in $R_1'$.  
	
	As $N$ is $\underline{\lambda}$-Hodge bounded and $\lim\limits_{k\rightarrow \infty}\lambda_k=\infty$, we can find an element $\beta\geq \alpha$ in $\Omega_{\omega_1}$ with the following property: if we write the matrix $N$ in the block uppertriangular form $N=\quad
	\Biggl[\mkern-5mu
	\begin{tikzpicture}[baseline=-.65ex]
	\matrix[
	matrix of math nodes,
	column sep=1ex,
	] (m)
	{
		A_\beta & B_\beta\\
		0\ \   & D_\beta \\
	};
	\draw[dotted]
	([xshift=0.5ex]m-1-1.north east) -- ([xshift=0.5ex]m-2-1.south east);
	\draw[dotted]
	(m-1-1.south west) -- (m-1-2.south east);
	\node[above,text depth=1pt] at ([xshift=2.5ex]m-1-1.north) {$\scriptstyle d_\beta$};  
	\node[left,overlay] at ([xshift=-1.2ex,yshift=-2ex]m-1-1.west) {$\scriptstyle d_\beta$};
	\end{tikzpicture}\mkern-5mu
	\Biggr]$, then $D_\beta\in \rmM_\infty(T_1^{n_\alpha+1}R_1'\llbracket P'_{J_1^c} \rrbracket)$. We write $f(D_\beta)=a_0I_\infty+a_1D_\beta+\dots+D_\beta^{d_\alpha}=T_1^{n_\alpha}(a_0'I_\infty+(T_1^{-n_\alpha}D_\beta)(a_1+\dots+D_\beta^{d_\alpha-1}))$. Since $T_1^{-n_\alpha}D_\beta\in \rmM_\infty( T_1 R'_1\llbracket P'_{J_1^c}\rrbracket)$ and  $a_0'$  is a unit in $R_1'$,  the matrix $D_\beta'=a_0'I_\infty+(T_1^{-n_\alpha}D_\beta)(a_1+\dots+D_\beta^{d_\alpha-1})$ belongs to $\GL_\infty(R'_1\llbracket P'_{J_1^c}\rrbracket)$ by Lemma~\ref{L:any lift of an invertible matrix is invertible}. 
	
	By Proposition~\ref{P:filtration on the space of generalized automoprhic forms}, the matrix $A_\beta\in \rmM_{d_\beta}(R_1'\llbracket P'_{J_1^c}\rrbracket)$ is of the block uppertriangular form $\quad
	\Biggl[\mkern-5mu
	\begin{tikzpicture}[baseline=-.65ex]
	\matrix[
	matrix of math nodes,
	column sep=1ex,
	] (m)
	{
		A_{11} & A_{12} \\
		\ 0 \ \    & A_{22} \\
	};
	\draw[dotted]
	([xshift=0.5ex]m-1-1.north east) -- ([xshift=0.5ex]m-2-1.south east);
	\draw[dotted]
	(m-1-1.south west) -- (m-1-2.south east);
	\node[above,text depth=1pt] at ([xshift=2.5ex]m-1-1.north) {$\scriptstyle d_\alpha$};  
	\node[left,overlay] at ([xshift=-1.2ex,yshift=-2ex]m-1-1.west) {$\scriptstyle d_\alpha$};
	\end{tikzpicture}\mkern-5mu
	\Biggr]$ with $A_{11}=A_\alpha$. 
	
	Denote $d=d_\beta-d_\alpha$ and $g(X)=\det(X\cdot I_d-A_{22,R_1'})\in R_1'[X]$ for the characteristic polynomial of $A_{22,R_1'}$. It follows from Proposition~\ref{P:filtration on the space of generalized automoprhic forms} that the slopes of the Newton polygon of $\bar{A}_{11}\in \rmM_{d_{\alpha}}(\FF_p\llbracket T_1 \rrbracket)$ are strictly less than the slopes of the Newton polygon of $\bar{A}_{22}\in \rmM_{d'}(\FF_p\llbracket T_1 \rrbracket)$ (under the $T_1$-adic valuation). We use $\bar{f}(X)$ (resp. $\bar{g}(X)$) to denote the image of $f(X)$ (resp. $g(X)$) under the homomorphism $R_1'\rightarrow \FF_p\llbracket T_1 \rrbracket$. Then $\bar{f}(X)$ and $\bar{g}(X)$ are coprime in $\FF_p(\!(T_1)\!)[X]$. By \cite[Chapter~1 \S4]{milne2016etale}, $f$ and $g$ are strictly coprime in $R_1'[\frac{1}{T_1}][X]$. So we can find $u(X)$,$v(X)\in R_1'[X]$ and $m_\alpha\in\ZZ_{\geq 0}$, such that $$f(X)u(X)+g(X)v(X)=T_1^{m_\alpha}$$ in $R_1'[X].$ By Hamilton-Cayley's Theorem (over the commutative ring $R_1'$), we have $f(A_{11,R_1'})=0$ and $g(A_{22,R_1'})=0$. Hence we have $f(A_{22,R_1'})u(A_{22,R_1'})=T_1^{m_\alpha}I_{d}$ in $\rmM_d(R_1')$.
	
	We use $\calI_1\subset R_1'\llbracket P'_{J_1^c}\rrbracket$ and $\calI_2\subset R_2'\llbracket P'_{J_1^c}\rrbracket$ to denote the augmentation ideal of the corresponding complete group algebras, respectively. In summary of the above discussion, the matrix $N\in \rmM_\infty(R_1'\llbracket P'_{J_1^c}\rrbracket)$ can be written in the following block uppertriangular form:
	\[
	N=
	\quad\Biggl[\mkern-5mu
	\begin{tikzpicture}[baseline=-.65ex]
	\matrix[
	matrix of math nodes,
	column sep=1ex,
	] (m)
	{
		A_{11} & \ \ast\  & \ast\\
		0 & A_{22} & \ast \\
		0 & \ 0\ \  & D_\beta\\
	};
	\draw[dotted]
	([xshift=0.5ex]m-1-1.north east) -- ([xshift=2ex]m-3-1.south east);
	\draw[dotted]
	([xshift=0.5ex]m-1-2.north east) -- ([xshift=0.5ex]m-3-2.south east);
	\draw[dotted]
	(m-1-1.south west) -- (m-1-3.south east);
	\draw[dotted]
	(m-2-1.south west) -- (m-2-3.south east);
	\node[above,text depth=1pt] at ([xshift=3.5ex]m-1-1.north) {$\scriptstyle d_\alpha$};  
	\node[above,text depth=1pt] at ([xshift=3ex]m-1-2.north) {$\scriptstyle d_\beta$};
	\node[left,overlay] at ([xshift=-1.2ex,yshift=-2ex]m-1-1.west) {$\scriptstyle d_\alpha$};
	\node[left,overlay] at ([xshift=-3ex,yshift=-2ex]m-2-1.west) {$\scriptstyle d_\beta$};
	\end{tikzpicture}\mkern-5mu
	\Biggr],
	\]
	such that the matrix $f(N)=\left[ \begin{array}{ccc}
	f(A_{11})& \ast &\ast\\
	0&f(A_{22})&\ast\\
	0&0&f(D_\beta)\end{array} \right]$ satisfies the following properties:
	\begin{itemize}
		\item $f(A_{11})\in \rmM_{d_\alpha}(\calI_1)$，
		\item $f(A_{22})u(A_{22})-T_1^{m_\alpha}\in \rmM_d(\calI_1)$，
		\item $f(D_\beta)=T_1^{n_\alpha}D'_\beta$, for some $D'_\beta\in\GL_\infty(R_1'\llbracket P'_{J_1^c}\rrbracket)$.
	\end{itemize}

	Under the basis $\{v_m|m\in\NN \}$, we have an $R_1'\llbracket P'_{J_1^c}\rrbracket$-linear isomorphism $S_{\kappa_1,J_1}^{D,J_1}(K^p,R_1')^\vee\cong \prod\limits_{m=1}^\infty R_1'\llbracket P'_{J_1^c}\rrbracket$. 
	
	For any positive integer $k$, we denote by $\gothR_k$ the quotient $R_1'\llbracket P'_{J_1^c}\rrbracket/\calI_1^k$. The above isomorphism induces an isomorphism $$S_{\kappa_1,J_1}^{D,J_1}(K^p,R_1')^\vee\otimes_{R_1'\llbracket P'_{J_1^c}\rrbracket} \gothR_k \cong \prod\limits_{m=1}^\infty \gothR_k.$$ Since the $U_{\pi_1}$-operator on $S_{\kappa_1,J_1}^{D,J_1}(K^p,R_1')^\vee$ is $R_1'\llbracket P'_{J_1^c}\rrbracket$-linear, it induces an $\gothR_k$-linear map on $S_{\kappa_1,J_1}^{D,J_1}(K^p,R_1')^\vee\otimes_{R_1'\llbracket P'_{J_1^c}\rrbracket} \gothR_k$. Under the above isomorphism, this operator corresponds to the infinite matrix $N_{\gothR_k}$, and hence $f(U_{\pi_1})$ corresponds to the matrix $f(N)_{\gothR_k}$. From the above discussion, we have the following facts for $f(N)_{\gothR_k}$.
	\begin{itemize}
		\item The submatrix $f(A_{11})_{\gothR_k}$ is nilpotent as $f(A_{11})\in \rmM_{d_\alpha}(\calI_1)$; more precisely, we have $f(A_{11})_{\gothR_k}^k=0$.
		\item There exists a matrix $B_k\in \rmM_d(\gothR_k)$, such that $f(A_{22})_{\gothR_k}\cdot B_k=B_k\cdot f(A_{22})_{\gothR_k}=T_1^{m_\alpha k}\cdot I_d$; in fact, if we write $f(A_{22})u(A_{22})=T_1^{m_\alpha}\cdot I_d+E$, with $E\in \rmM_d(\calI_1)$, we can choose the matrix $B_k$ to be $u(A_{22})\left(\sum\limits_{l=0}^{k-1} T_1^{m_\alpha l}E^{k-1-l}\right)$ (viewed as a matrix in $\rmM_d(\gothR_k)$ in the obvious way).
		\item $f(D_\beta)_{\gothR_k}=T_1^{n_\alpha}D'_{\beta,\gothR_k}$ with $D'_{\beta,\gothR_k}\in \GL_\infty(\gothR_k)$.
	\end{itemize}
	So we have 
	\[
	\tilde{V}_\alpha/\calI_1^k=\varinjlim\limits_{m} \ker\left(f(U_{\pi_1})^m: S_{\kappa,J_1}^{D,J_1}(K^p,R_1')^\vee\otimes_{R_1'\llbracket P'_{J_1^c}\rrbracket} \gothR_k\rightarrow S_{\kappa,J_1}^{D,J_1}(K^p,R_1')^\vee\otimes_{R_1'\llbracket P'_{J_1^c}\rrbracket} \gothR_k\right).
	\]
	Since we have a surjection $S_{\kappa,J_1}^{D,J_1}(K^p,R_2')^\vee\rightarrow S_{\kappa,J_2}^{D,J_1}(K^p,R_2')^\vee$, and $T_1$ is not a zero divisor in $R_2'\llbracket P'_{J_1^c}\rrbracket/\calI_2^k$, for all $k>0$, we have 
	\[
	V_\alpha/\calI_2^k=\varinjlim\limits_{m} \ker(f(U_{\pi_1})^m: S_{\kappa,J_2}^{D,J_1}(K^p,R_2')^\vee/\calI_2^k\rightarrow S_{\kappa,J_2}^{D,J_1}(K^p,R_2')^\vee/\calI_2^k.
	\]
	Since the operator $U_{\pi_2}$ commutes with $U_{\pi_1}$, it also commutes with $f(U_{\pi_1})$. So it stabilizes $V_\alpha/\calI_2^k$
	for all $k\geq 0$. Since $V_\alpha=\varprojlim\limits_k V_\alpha/\calI_2^k$, we see that $U_{\pi_2}$ stabilizes $V_\alpha$.  
	\end{proof}

\begin{remark}
	The proof of Proposition~\ref{P:filtration with respect to one Up operator is stable under the other Up operators} is a little lengthy and technical. So it would be helpful to explain the intuition of our argument. We fix a continuous homomorphism $\chi:R_{2,\omega_{2}}\rightarrow \CC_p$. We will show in \S5.3 below that $V_\alpha\otimes_{R_{2,\omega_2},\chi}\CC_p$ is the subspace of $S_{\kappa,J_2}^{D,J_1}(K^p,R_{2,\omega_{2}})^\vee\otimes_{R_{2,\omega_{2}},\chi}\CC_p$ on which the slopes of the $U_{\pi_1}$-operator belong to the interval $[0,\lambda_l\cdot v_p(\chi(T_1)))$ (resp. $[0,\lambda_l\cdot v_p(\chi(T_1))]$) if $\alpha=n_l^-$ (resp. $\alpha=n_l^+$). Proposition~\ref{P:filtration with respect to one Up operator is stable under the other Up operators} follows essentially from this characterization of the spaces $V_\alpha$'s and the fact that the operators $U_{\pi_1}$ and $U_{\pi_2}$ commutes with each other. 
\end{remark}
\subsection{A filtration on the space of $p$-adic automorphic forms with respect to all $U_{\pi_i}$-operators}\label{subsection:A filtration on the space of p-adic automorphic forms with respect to all Upi-operators}

In this section, we fix a character $\eta_{2}:\Delta_{2}\rightarrow \ZZ_p^\times$. Together with $\omega_{1}:H_{1}\rightarrow \ZZ_p^\times$, it gives a character $\omega_{2}:H_{2}\rightarrow \ZZ_p^\times$. Denote $R_{2,\omega_{2}}=R_2\otimes_{\ZZ_p[H_{2}],\omega_{2}}\ZZ_p$ and $V_{\alpha,\omega_{2}}=V_\alpha\otimes_{\ZZ_p[H_{2}],\omega_{2}}\ZZ_p$ for all $\alpha\in \Omega_{\omega_{1}}$. The set $\{V_{\alpha,\omega_{2}}|\alpha\in \Omega_{\omega_{1}} \}$ is a filtration of $S_{\kappa,J_2}^{D,J_1}(K^p,R_{2,\omega_{2}})^\vee$, and by Proposition~\ref{P:filtration with respect to one Up operator is stable under the other Up operators}, this filtration is stable under the $U_{\pi_2}$-operator. This filtration induces a filtration $\{V^{mod}_{\alpha,\omega_{2}}|\alpha\in \Omega_{\omega_{1}} \}$ of $S_{\kappa,J_2}^{D,J_2}(K^p, R_{2,\omega_{2}})^
\vee$. 
If $\gamma\leq \alpha$ are two consecutive elements in $\Omega_{\omega_{1}}$, we set $W_{\alpha,\Omega_{\omega_{2}}}:=V_{\alpha,\omega_{2}}/V_{\gamma,\omega_{2}}$ and $W^{mod}_{\alpha,\Omega_{\omega_{2}}}:=V^{mod}_{\alpha,\omega_{2}}/V^{mod}_{\gamma,\omega_{2}}$. Hence， $W_{\alpha,\Omega_{\omega_{2}}}$ also carries an action of the $U_{\pi_2}$-operator. Under the basis $\{v_m|m=1,\dots, d_\alpha \}$ of $\tilde{V}_\alpha$, we have an isomorphism $\tilde{V}_\alpha\cong \prod\limits_{k=1}^{d_\alpha}R_{1,\omega_{1}}\llbracket P'_{J_1^c}\rrbracket$, which induces an isomorphism $V_{\alpha,\omega_{2}}\cong \prod\limits_{k=1}^{d_\alpha}R_{2,\omega_{2}}\llbracket \calO_{\gothp_2}\times P'_{J_2^c}\rrbracket$. So we obtain an isomorphism $W_{\alpha,\omega_{2}}\cong \prod\limits_{k=1}^{r_\alpha}R_{2,\omega_{2}}\llbracket \calO_{\gothp_2}\times P'_{J_2^c}\rrbracket$, where $r_\alpha=d_\alpha-d_\gamma$ under the above notations. We also have isomorphisms $V_{\alpha,\omega_{2}}^{mod}\cong \prod\limits_{k=1}^{d_\alpha}R_{2,\omega_{2}}\llbracket \calO_{\gothp_2}\times P'_{J_2^c}\rrbracket^{j_2-mod}$ and $W_{\alpha,\omega_{2}}^{mod}\cong \prod\limits_{k=1}^{r_\alpha}R_{2,\omega_{2}}\llbracket \calO_{\gothp_2}\times P'_{J_2^c}\rrbracket^{j_2-mod}$. Since $V_{\alpha,\omega_{2}}$ is stable under the $U_{\pi_1}$ and $U_{\pi_2}$-operators, a similar computation in \S\ref{section:continuous functions and distribution algebras} shows that $V_{\alpha,\omega_{2}}^{mod}$ is also stable under the $U_{\pi_1}$ and $U_{\pi_2}$-operators. Hence we have well defined $U_{\pi_1}$ and $U_{\pi_2}$-operators on $W_{\alpha,\omega_{2}}^{mod}$ for all $\alpha$.

Fix $\alpha\in \Omega_{\omega_{1}}$ in the following discussion. We will construct a filtration of the graded piece $W_{\alpha,\omega_{2}}^{mod}$ adapted to the $U_{\pi_2}$-operator based on a similar idea we used in \S\ref{subsection:a filtration on the space of p-adic automorphic forms with respect to a Up operator}.

Under the above isomorphisms, we choose a basis $\{f_{k,m}|1\leq k\leq r_\alpha, m\in \ZZ_{\geq 0} \}$ of $W_{\alpha,\omega_{2}}$ as a left $R_{2,\omega_{2}}\llbracket P'_{J_2^c} \rrbracket$-module, such that $\{f_{k,m}|m\in \ZZ_{\geq 0} \}$ is the Mahler basis of the $k$-th direct factor of $\prod\limits_{k=1}^{r_\alpha}R_{2,\omega_{2}}\llbracket \calO_{\gothp_2}\times P'_{J_2^c}\rrbracket$ for all $1\leq k\leq r_\alpha$. We choose a basis $\{f_{k,m}^{mod}|1\leq k\leq r_\alpha,m\in \ZZ_{\geq 0} \}$ of $W_{\alpha,\omega_{2}}^{mod}$ in a similar way. 

We use $N=(N_{m,n})_{m,n\geq 0}$ (resp. $M=(M_{m,n})_{m,n\geq 0}$) to denote the matrix in $\rmM_\infty(R_{2,\omega_{2}}\llbracket P'_{J_2^c}\rrbracket)$ which corresponds to the $U_{\pi_2}$-operator on $W_{\alpha,\omega_{2}}$ (resp. $W_{\alpha,\omega_{2}}^{mod}$) under the basis we choose above. It follows from Propositions~\ref{P:explicit expression of Up operators on integral model of p-adic automorphic forms} and ~\ref{P:Liu-Wan-Xiao's computation on matrix coefficients of Mahler basis} that $N_{m,n}\in (T_{2})^{\max\{\lfloor \frac{n}{r_\alpha}\rfloor-\lfloor \frac{m}{pr_\alpha}\rfloor,0 \}}$ for all $m,n\in \ZZ_{\geq 0}$. Notice that $M$ is the conjugation of $N$ by the diagonal matrix with diagonal entries $\underbrace{1,1,\dots,1}_{r_\alpha},\underbrace{T_{2},T_{2},\dots,T_{2}}_{r_\alpha}, \underbrace{T^2_{2},T^2_{2},\dots,T^2_{2}}_{r_\alpha},\dots$ Define two sequence of integers $\underline{\lambda}, \underline{\mu}$ as $\lambda_n=\lfloor\frac{n}{r_\alpha}\rfloor-\lfloor \frac{n}{pr_\alpha}\rfloor$, $\mu_0=0$, $\mu_{n+1}-\mu_n=\lambda_n$ for all $n\in \ZZ_{\geq 0}$. Then the matrix $M$ is $\underline{\lambda}$-Hodge bounded with respect to $T_{j_2}\in R_{2,\omega_{J_2}}\llbracket P'_{J_2^c}\rrbracket$.

Let $R_{2,\omega_{2}}\llbracket P'_{J_2^c}\rrbracket\rightarrow R_{2,\omega_{2}}$ be the reduction map modulo the augmentation ideal of the group ring $R_{2,\omega_{J_2}}\llbracket P'_{J_2^c}\rrbracket$. Under the isomorphism $R_{2,\omega_{2}}\cong \ZZ_p\llbracket T_{1},T_{2},\frac{p}{T_{1}},\frac{p}{T_{2}},T\rrbracket$, let $R_{2,\omega_{2}}\rightarrow \FF_p\llbracket T_{2}\rrbracket$ be the reduction map modulo the ideal generated by $T_{1},\frac{p}{T_{1}}, \frac{p}{T_{2}}$ and $T$. Applying the above two homomorphisms to the entries of the matrix $M$, we obtain two matrices $M_{R_{2,\omega_{2}}}\in \rmM_\infty(R_{2,\omega_{2}})$ and $\bar{M}\in \rmM_\infty(\FF_p\llbracket T_2\rrbracket)$. Both of them are $\underline{\lambda}$-Hodge bounded with respect to $T_2$.

Fix an integer $l_1\in 2\ZZ_{\geq 0}$ and a finite character $\varepsilon_1:1+\pi_1\calO_{\gothp_1}\rightarrow \CC_p^\times$ with the following properties:
\begin{itemize}
	\item $l_1+1>\frac{\lambda_l}{p-1}$, where $\alpha=n_l^-$ or $n_l^+$; and 
	\item $\varepsilon_1$ is nontrivial and factors through $(1+\pi_1\calO_{\gothp_1})/(1+\pi^2_1\calO_{\gothp_1})$.
\end{itemize}

For every character $\omega_{J_2^c}:\Delta_{J_2^c}\rightarrow \ZZ_p^\times$, we obtain a character $\omega=(\omega_{2},\omega_{J_2^c}):H\rightarrow \ZZ_p^\times$. 
 
Given the above datum, for any integer $l_2\in 2\ZZ_{\geq 0}$ and any nontrivial character $\varepsilon_2:1+\pi_2\calO_{\gothp_2}\rightarrow \CC_p^\times$ that factors through $1+\pi^2_2\calO_{\gothp_2}$, we construct a point $\chi_2\in\calW(\CC_p)$ such that its associated character $\kappa_{l_2}:\calO_p^\times\times \calO_p^\times\rightarrow \CC_p^\times$ is locally algebraic and corresponds to the triple $(n\in\ZZ_{\geq 0}^I,\nu\in \ZZ^I,\psi=(\psi_1,\psi_2))$ defined as follows.
\begin{itemize}
	\item Define $\nu:=(\nu_i)_{i\in I}$ by \begin{equation*}
	\nu_i=
	\begin{cases}
	-l_1/2,  &\textrm{for~} i=j_1, \\
	-l_2/2,  &\textrm{for~} i=j_2, \\
	0,  & \text{otherwise.}
	\end{cases}.
	\end{equation*}
	and  $n:=-2\nu\in \ZZ_{\geq 0}^I$.
	\item Let $\psi_1,\psi_2:\calO_p^\times \rightarrow \CC_p^\times$ be two finite characters with the following properties: $\psi_1|_{1+\pi_i\calO_{\gothp_i}}$ and $\psi_2|_{1+\pi_i\calO_{\gothp_i}}$ are trivial for all $i\neq j_1,j_2$, $\psi_2|_{1+\pi_1\calO_{\gothp_1}}=\varepsilon_{1}$, $\psi_2|_{1+\pi_2\calO_{\gothp_2}}=\varepsilon_{2}$, and $\psi_1|_{1+\pi_1\calO_{\gothp_1}}=\varepsilon_{1}^{-2}$, $\psi_1|_{1+\pi_2\calO_{\gothp_2}}=\varepsilon_{2}^{-2}$; the characters $\psi_1|_{\Delta_p}$ and $\psi_2|_{\Delta_p}$ are determined by the condition that the point $\chi_{l_2}$ belongs to the component $\calW_\omega$ of $\calW$.
\end{itemize}
Under the above construction, we have $T_{i,\chi_{l_2}}=0$ for all $i\neq j_1,j_2$ and $v_p(T_{i,\chi_{l_2}})=\frac{1}{p-1}$ for $i=j_1,j_2$.

\begin{remark}
	The construction of the point $\chi_2\in\calW(\CC_p)$ depends on two even integers $l_1,l_2$ and two finite characters $\varepsilon_1,\varepsilon_2$. In the following paragraphs, we will construct a filtration of each graded piece $W_{\alpha,\omega_2}^{mod}$ adapted to the $U_{\pi_2}$-operator. Therefore the integer $l_2$ and the finite character $\varepsilon_2$ play similar roles as those of $l$ and $\psi_2$ appearing in the construction in \S$4.6$. On the other hand, since we work over the ring $R_{2,\omega_2}\cong \ZZ_p\llbracket T_1,\frac p{T_1},T_2,\frac p{T_2},T\rrbracket$, we do have a restriction on the $T_1$-parameter of the locally algebraic weight $\chi_2\in \calW(\CC_p)$, that is, $v_p(T_{1,\chi_2})\in (0,1)$. The additional assumption $l_1+1>\frac{\lambda_l}{p-1}$ is to guarantee that the space $V_{\alpha,\omega_2}^{mod}$ lands into the space of classical automorphic forms after specializing to the point $\chi_2\in\calW(\CC_p)$. As we will see in the argument below, the choices of $l_1$ and $\varepsilon_{1}$ do not affect the filtrations adapted to the $U_{\pi_2}$-operator as long as they satisfy the above assumptions.
\end{remark}

The point $\chi_{l_2}\in \calW(\CC_p)$ defines a homomorphism $\tau_{l_2}:R_{2,\omega_{2}}\rightarrow \CC_p$. We apply this homomorphism to entries of the matrix $M_{R_{2,\omega_{2}}}$ and get a matrix $M_{\tau_{l_2}}\in \rmM_\infty(\CC_p)$. Let $R_{2,\omega_{2}}\llbracket P'_{J_2^c}\rrbracket\rightarrow \CC_p$ be the composite of the homomorphisms $R_{2,\omega_{2}}\llbracket P'_{J_2^c}\rrbracket\rightarrow R_{2,\omega_{2}}$ and $\tau_{l_2}:R_{2,\omega_{2}}\rightarrow \CC_p$.  Denote $\CC_p\llbracket \calO_{\gothp_2}\rrbracket^{j_2-mod}=R_{2,\omega_{2}}\llbracket \calO_{\gothp_2}\rrbracket^{j_2-mod}\otimes_{R_{2,\omega_{2}},\tau_{l_2}}\CC_p$. From the explicit expression $V_{\alpha,\omega_{2}}^{mod}\cong \prod\limits_{k=1}^{d_\alpha} R_{2,\omega_{2}}\llbracket \calO_{\gothp_2}\times P'_{J_2^c}\rrbracket^{j_2-mod}$, we have an isomorphism $V_{\alpha,\omega_{2}}^{mod}\otimes_{R_{2,\omega_{2}}\llbracket P'_{J_2^c}\rrbracket}\CC_p\cong \prod\limits_{k=1}^{d_\alpha}\CC_p\llbracket \calO_{\gothp_2}\rrbracket^{j_2-mod}$.

Under the isomorphism $R_{2,\omega_{2}}\llbracket \calO_{\gothp_2}\rrbracket^{j_2-mod}\cong R_{2,\omega_{2}}\llbracket X_2'\rrbracket$, the space $\CC_p\llbracket \calO_{\gothp_2}\rrbracket^{j_2-mod}$ admits a quotient $\CC_p\llbracket X_2'\rrbracket^{\deg\leq l_2}$ consisting of polynomials of $X_2'$ of degree $\leq l_2$. This quotient is stable under the action of the monoid $\bbM_{\pi_2}$. 

Recall that $V_\alpha^{mod}$ is a $R_{2,\omega_{2}}\llbracket P'_{J_2^c}\rrbracket$-submodule of $S_{\kappa,J_2}^{D,J_2}(K^p,R_{2,\omega_{2}})^\vee$, and we have an $R_{2,\omega_{2}}\llbracket P'_{J_2^c}\rrbracket$-linear  isomorphism $V_\alpha^{mod}\cong \prod\limits_{k=1}^{d_\alpha}R_{2,\omega_{2}}\llbracket \calO_{\gothp_2}\times P'_{J_2^c}\rrbracket^{j_2-mod}$. We put $V_{\alpha,\CC_p}^{mod}:=V_{\alpha}^{mod}\otimes_{R_{2,\omega_{2}}\llbracket P'_{J_2^c}\rrbracket}\CC_p$ and obtain an isomorphism $V_{\alpha,\CC_p}^{mod}\cong \prod\limits_{k=1}^{d_\alpha}\CC_p\llbracket \calO_{\gothp_2}\rrbracket^{j_2-mod}$.

Given the locally algebraic weight $\kappa_{l_2}$ defined above, we define a subspace $\calC(\calO_{\gothp_2},\CC_p)^{cl}$ of $\calC(\calO_{\gothp_2},\CC_p)$ consisting of functions $f$ with the property that $f|_{a+\pi_2\calO_{\gothp_2}}$ is a polynomial function of degree less or equal to $l_2$, for all $a\in \calO_{\gothp_2}$. Then we have $\dim_{\CC_p}\calC(\calO_{\gothp_2},\CC_p)^{cl}=p(l_2+1)$. We use $\CC_p\llbracket \calO_{\gothp_2}\rrbracket^{cl}$ to denote the $\CC_p$-dual of $\calC(\calO_{\gothp_2},\CC_p)^{cl}$. Then $\CC_p\llbracket \calO_{\gothp_2}\rrbracket^{cl}$ is a quotient of $\CC_p\llbracket \calO_{\gothp_2}\rrbracket^{j_2-mod}$ and hence $V_{\alpha,\CC_p}^{mod}$ admits a quotient $V_{\alpha,\CC_p}^{cl}:=\prod\limits_{k=1}^{d_\alpha}\CC_p\llbracket \calO_{\gothp_2}\rrbracket^{cl}$. Moreover, this quotient also carries an action of the $U_{\pi_1}$ and $U_{\pi_2}$-operators induced from these operators on $V_{\alpha,\CC_p}^{mod}$. We define $W_{\alpha,\CC_p}^{cl}=V_{\alpha,\CC_p}^{cl}/V_{\gamma,\CC_p}^{cl}$, which is isomorphic to $\prod\limits_{k=1}^{r_\alpha}\CC_p\llbracket \calO_{\gothp_2}\rrbracket^{cl}$. 

We use $V_{\alpha,\CC_p}^{cl,\vee}$ to denote the $\CC_p$-dual of $V_{\alpha,\CC_p}^{cl}$. From the above construction, we see that there exists $r\in \calN^I$, such that $V_{\alpha,\CC_p}^{cl,\vee}$ lies in the following subspace
\[
\{\phi:D^\times\setminus D_f^\times/K^p\rightarrow \calC^{r,an}(\calO_{\gothp_1}\times\calO_{\gothp_2},\CC_p)|\phi(xu)=\phi(x)\circ u, \text{~for all~} x\in D_f^\times, u\in \Iw_{\pi^t} \}
\]
of $S_{\kappa_l,I}^D(K^p,\CC_p)$ with the additional properties:
\begin{itemize}
	\item The slopes of $U_{\pi_1}$-operator on $V_{\alpha,\CC_p}^{cl,\vee}$ is less or equal to $\lambda_l\cdot v_p(\chi_{l_2}(T_j))=\frac{\lambda_l}{p-1}< l_2+1$; and
	\item $\phi(x)|_{\{x_1\}\times \calO_{\gothp_2}}$ is a polynomial function of degree less or equal to $l_2$, for all $x_1\in \calO_{\gothp_1}$.
\end{itemize}

Fix a locally algebraic weight $\kappa_{l_2}$ as above. For $1\leq l\leq g$, we have defined an operator  $\theta_{i_l}:S_{\kappa_{l_2}}^D(K^p,r)\rightarrow S_{\kappa'_{l_2}}^D(K^p,r)$ in \S\ref{section:spaces of  classical automorphic forms}. It follows from the first property and the fact that the $U_{\pi_1}$-slopes on the space $S_{\kappa'_{l_2}}^D(K^p,r)$ are all nonnegative that $V_{\alpha,\CC_p}^{cl,\vee}\subset \ker(\theta_{i_1})$. It follows from the second property and the definition of $\theta_{i_l}$'s that $V_{\alpha,\CC_p}^{cl,\vee}\subset \ker(\theta_{i_l})$ for $l=2,\dots,g$. 
By Proposition~\ref{P:classicality of overconvergent automorphic forms on D}, we conclude that $V_{\alpha,\CC_p}^{cl,\vee}\subset \bigoplus\limits_{\omega_{J_2^c}:\Delta_{J_2^c}\rightarrow \ZZ_p^\times}S_{k,w}^D(K^p,\psi)$.

We make the same construction starting with another character $\omega_{2}'=(\eta_{j_1}^{-1},\eta_{j_2}^{-1},\eta):H_{2}\rightarrow\ZZ_p^\times$. We obtain $\underline{\lambda}$-Hodge bounded matrices $M'_{R_{2,\omega_{J_2}'}}\in \rmM_\infty(R_{2,\omega_{J_2}'})$, $\bar{M}'\in \rmM_\infty(\FF_p\llbracket T_j\rrbracket)$, and for every $l_2\in 2\ZZ_{\geq 0}$, we get a matrix $M'_{\tau_{l_2}}\in\rmM_\infty(\CC_p)$. We also have spaces $(V_{\alpha,\CC_p}^{mod})'$, $(V_{\alpha,\CC_p}^{cl})'$ and $(W_{\alpha,\CC_p}^{cl})'$. It follows from Proposition~\ref{P:pairing of Newton slopes by Atkin-Lehner} that the $U_{\pi_2}$-slopes on the spaces $W_{\alpha,\CC_p}^{cl}$ and $(W_{\alpha,\CC_p}^{cl})'$ can be paired such that the slopes in each pair sum to $l_2-1$. In other words, there exist $r_\alpha p(l_2+1)$ pairs of the slopes of the Newton polygons of the matrices $M_{\tau_{l_2}}$ and $M'_{\tau_{l_2}}$, such that the slopes in each pair sum to $l_2-1$. It follows from a similar argument in \S $4.6$ that the first $r_\alpha p(l_2+1)$ slopes of the matrix $M_{\tau_{l_2}}$ sum to $\frac{1}{2}r_\alpha p(l_2+1)$.

We use $\mathrm{char}(M_{R_{2,\omega_{2}}})=\sum\limits_{n\geq 0}c_nX^n\in R_{2,\omega_{2}}\llbracket X\rrbracket$ to denote the characteristic power series of the matrix $M_{R_{2,\omega_{2}}}$ with $c_n(\underline{T})\in R_{1,\omega_{1}}\llbracket T_2\rrbracket \subset R_{2,\omega_{2}}$ and $\underline{T}=(T_1,T_2,T)$. For all $l_2\in 2\ZZ_{\geq 0}$, the Newton polygon of $\sum\limits_{n\geq 0}c_n(\chi_{l_2}(\underline{T}))X^n$ passes through the point $(r_\alpha p(l_2+1), \mu_{r_\alpha p (l_2+1)}v_p(\chi_{l_2}(T_2)))$. Write $c_n(\underline{T})=\sum\limits_{m\geq 0}b_{n,m}T_2^{m}$ with $b_{n,m}\in R_{1,\omega_{1}}$. Recall that $R_{1,\omega_{1}}$ is isomorphic to $\ZZ_p\llbracket T_1,\frac{p}{T_1},T\rrbracket$. In particular, $R_{1,\omega_{1}}$ is a local ring and we use $\gothm_{1,\omega_{1}}$ to denote its maximal ideal. For $b(T_1,T)\in R_{1,\omega_{1}}$, the following statements are equivalent:
\begin{enumerate}
	\item $b(T_1,T)$ is a unit in $R_{1,\omega_{1}}$;
	\item $b(T_1,T)\notin \gothm_{1,\omega_{1}}$;
	\item there exist $t_1,t\in \CC_p$ with $|t_1|_p\in (\frac{1}{p},1)$ and $|t|_p<1$, such that $|b(t_1,t)|_p=1$, i.e. $b(t_1,t)$ is a $p$-adic unit.
\end{enumerate}

Now we can run the same argument in \S\ref{subsection:a filtration on the space of p-adic automorphic forms with respect to a Up operator} to the matrix $M\in \rmM_\infty(R_{2,\omega_{2}}\llbracket P'_{J_2^c}\rrbracket)$, $M_{R_{2,\omega_{2}}}\in \rmM_\infty(R_{2,\omega_{2}})$ and $\bar{M}\in \rmM_\infty(\FF_p\llbracket T_2\rrbracket)$. We obtain a set \[\Omega_{\omega_{2},\alpha_1}=\left\{n_{\omega_{2},\alpha_1,l}^-, n_{\omega_{2},\alpha_1,l}^+\Big| l\in \{ 0\}\bigcup 1+2\ZZ_{\geq 0} \right\},
\]
and a filtration $\{\tilde{V}_{\alpha_1,\alpha_2}|\alpha_2\in \Omega_{\omega_{2},\alpha_1} \}$ of the graded piece $W_{\alpha_1,\omega_{2}}^{mod}$ as a left $R_{2,\omega_{2}}\llbracket P'_{J_2^c}\rrbracket$-module.

Fix a character $\omega:H\rightarrow \ZZ_p^\times$. For $1\leq l\leq g$, we use $\omega_{l}:H_{l}\rightarrow \ZZ_p^\times$ to denote the restriction of $\omega$ to $H_{l}$.

Now we apply the above construction inductively to the places $j_3,\dots, j_g$. In summary, we get the following datum:
\begin{enumerate}
	\item There exists a set $\Omega_{\omega_1}=\{n_{\omega_{1},l}^-, n_{\omega_{1},l}^+|l\in \{ 0\}\cup 1+2\ZZ_{\geq 0} \}$ and a filtration $\{F_{\alpha_1}|\alpha_1\in \Omega_{\omega_1} \}$ of $S_{\kappa,I}^{D,I}(K^p,\Lambda_\omega^{>1/p})^\vee$ consisting of free $\Lambda_\omega^{>1/p}$-modules. We use $\{G_{\alpha_1}|\alpha_1\in \Omega_{\omega_1} \}$ to denote the graded pieces of this filtration, i.e. $G_{\alpha_1}=F_{\alpha_1}/F_{\gamma_1}$, if $\gamma_1\leq \alpha_1$ are two consecutive elements in $\Omega_{1}$ (we denote $G_{n_{\omega_{1},0}^-}=F_{n_{\omega_{1},0}^-}=(0)$).
	\item For any $\alpha_1\in \Omega_{\omega_{1}}$, there exist a set $\Omega_{\omega_{2},\alpha_1}=\{n_{\omega_{2},\alpha_1,l}^-, n_{\omega_{2},\alpha_1,l}^+|l\in \{ 0\}\cup 1+2\ZZ_{\geq 0} \}$ and a filtration $\{F_{\alpha_1,\alpha_2}|\alpha_2\in \Omega_{\omega_{2},\alpha_1} \}$ of $G_{\alpha_1}$ consisting of free $\Lambda_\omega^{>1/p}$-modules; define $G_{\alpha_1,\alpha_2}=F_{\alpha_1,\alpha_2}/F_{\alpha_1,\gamma_2}$ if $\gamma_2\leq \alpha_2$ are two consecutive elements in $\Omega_{\omega_{2},\alpha_1}$.
	\item For $2\leq k\leq g-1$, given the set $\Omega_{\omega_{k},\alpha_1,\dots,\alpha_{k-1}}$ and the space $G_{\alpha_1,\dots, \alpha_k}$, there exist a set $\Omega_{\omega_{k+1},\alpha_1,\dots,\alpha_k}=\{n_{\omega_{k+1},\alpha_1,\dots,\alpha_k,l}^-, n_{\omega_{k+1},\alpha_1,\dots,\alpha_k,l}^+|l\in \{ 0\}\cup 1+2\ZZ_{\geq 0}  \}$ and a filtration $\{F_{\alpha_1,\dots,\alpha_{k+1}}|\alpha_{k+1}\in \Omega_{\omega_{k+1},\alpha_1,\dots,\alpha_k} \}$ of $G_{\alpha_1,\dots,\alpha_k}$ consisting of free $\Lambda_\omega^{>1/p}$-modules; define $G_{\alpha_1,\dots,\alpha_{k+1}}=F_{\alpha_1,\dots,\alpha_{k+1}}/F_{\alpha_1,\dots,\gamma_{k+1}}$ if $\gamma_{k+1}\leq \alpha_{k+1}$ are two consecutive elements in $\Omega_{\omega_{k+1},\alpha_1,\dots,\alpha_k}$.
\end{enumerate}
Moreover, we have the following properties of the above filtrations:
\begin{enumerate}
	\item all the filtrations are stable under the $U_{\pi_i}$-operators, for all $i\in I$; and
	\item the graded pieces $G_{\alpha_1,\dots,\alpha_g}$ obtained above are free $\Lambda_\omega^{>1/p}$-modules of finite rank (possibly $0$).
\end{enumerate}

\subsection{Conclusion}

For any $x\in \calW^{>1/p}_\omega(\CC_p)$, we use $\chi_x:\Lambda_\omega^{>1/p}\rightarrow \CC_p$ to denote the corresponding homomorphism. For any free $\Lambda_\omega^{>1/p}$-module $G_{\alpha_1,\dots, \alpha_g}$ we obtained in the previous section, we denote $G_{\alpha_1,\dots,\alpha_g}(x)=G_{\alpha_1,\dots, \alpha_g}\otimes_{\Lambda_\omega^{>1/p},\chi_x}\CC_p$. 

\begin{proposition}\label{P:characterization of the filtration of the space of integral p-adic automorphic forms}
	For all $1\leq k\leq g$, if $\alpha_k=n_{\omega_{J_{k}},\alpha_1\dots,\alpha_{k-1},l}^-$ for some $l\in \{ 0\}\cup 1+2\ZZ_{\geq 0} $, the slopes of the $U_{\pi_k}$-operator on $G_{\alpha_1,\dots,\alpha_g}(x)$ are all equal to $(p-1)v_p(\chi_x(T_{i_k}))\cdot l$; if $\alpha_k=n_{\omega_{J_{k}},\alpha_1\dots,\alpha_{k-1},l}^+$ for some $l\in \{ 0\}\cup 1+2\ZZ_{\geq 0} $, the slopes of the $U_{\pi_k}$-operator on $G_{\alpha_1,\dots,\alpha_g}(x)$ all belong to the interval $(p-1)v_p(\chi_x(T_{i_k}))\cdot (l,l+2)$ if $l\neq 0$, and the interval $(p-1)v_p(\chi_x(T_{i_k}))\cdot (0,1)$ if $l=0$. 
\end{proposition}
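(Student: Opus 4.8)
The statement is the specialization-at-$x$ version of the structural result accumulated in \S\ref{subsection:A filtration on the space of p-adic automorphic forms with respect to all Upi-operators}. The plan is to prove it by descending on $k$, reducing in each step to an assertion about a single $U_{\pi_k}$-operator acting on a graded piece, where we can invoke the already-proven Proposition~\ref{P:filtration on the space of generalized automoprhic forms}(5) together with the base-change properties of the Newton--Hodge filtration over $\Lambda_\omega^{>1/p}$.

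First I would set up the inductive bookkeeping. For $0\leq k\leq g$ write $G^{(k)}:=G_{\alpha_1,\dots,\alpha_k}$ (with $G^{(0)}:=S_{\kappa,I}^{D,I}(K^p,\Lambda_\omega^{>1/p})^\vee$), and recall from the construction that $G^{(k)}$ is a free $\Lambda_\omega^{>1/p}$-module equipped with commuting $U_{\pi_1},\dots,U_{\pi_g}$-operators, that the filtration $\{F_{\alpha_1,\dots,\alpha_{k}}\}$ refining $G^{(k-1)}$ into the $G^{(k)}$'s is stable under all the $U_{\pi_i}$'s, and — this is the key input — that the $U_{\pi_k}$-operator on $G^{(k)}$ (when the ambient data are reduced modulo the augmentation ideal of the relevant $P'_{J_k^c}$-group algebra and modulo the other $T$-parameters, landing in $\FF_p\llbracket T_k\rrbracket$) has Newton polygon whose slopes lie in $\{\lambda_l\}$ if $\alpha_k=n^-_{\dots,l}$ and in $(\lambda_l,\lambda_{l'})$ if $\alpha_k=n^+_{\dots,l}$, where $\lambda_l=(l+1)(p-1)$ for $l\geq 1$ and $\lambda_0=0$; this is exactly Proposition~\ref{P:filtration on the space of generalized automoprhic forms}(5) applied at the $k$-th stage, with $r_\alpha$ the appropriate block size. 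The desired statement for the $U_{\pi_k}$-slopes on $G_{\alpha_1,\dots,\alpha_g}(x)$ follows once I know (a) the $U_{\pi_k}$-slopes on $G_{\alpha_1,\dots,\alpha_g}(x)$ coincide with the $U_{\pi_k}$-slopes on the graded piece $G^{(k)}(x)$ of the appropriate sub-filtration — because passing from $G^{(k)}$ to $G^{(g)}$ only involves $U_{\pi_j}$-stable sub/quotients for $j>k$, hence does not change the $U_{\pi_k}$-spectrum up to regrouping — and (b) the Newton polygon of $U_{\pi_k}$ on $G^{(k)}(x)$, computed over $\CC_p$ via $\chi_x$, has slopes obtained from the $\FF_p\llbracket T_k\rrbracket$-Newton polygon of Proposition~\ref{P:filtration on the space of generalized automoprhic forms}(5) by scaling by $v_p(\chi_x(T_k))$.

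For step (b) I would argue exactly as in the scalar case of \cite{liu2017eigencurve}: the matrix $M_{R_{k,\omega_k}}$ representing $U_{\pi_k}$ on the relevant modified space is $\underline\lambda$-Hodge bounded with respect to $T_k$, its coefficients $c_n(\underline T)=\sum_m b_{n,m}T_k^m$ have $v_p(b_{n,m})\geq\max\{\mu_n-m,0\}$ (by the already-recorded estimate), so for $t_k=\chi_x(T_k)$ with $v_p(t_k)\in(0,1)$ one gets $v_p(c_n(t_k))\geq\mu_n v_p(t_k)$ with equality precisely when the leading coefficient $b_{n,\mu_n}$ is a $p$-adic unit — and the latter is detected by the mod-$p$ reduction, i.e. by whether $(n,\mu_n)$ is a vertex of the lower bound polygon that is achieved by $\bar M$. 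The touching vertices $n^{\pm}_{\dots,l}$ were chosen precisely so that these equalities hold there; hence the Newton polygon of $\sum c_n(t_k)X^n$ has the same vertices as $v_p(t_k)$ times the lower bound polygon between consecutive touching vertices, which forces the $U_{\pi_k}$-slopes of the graded piece cut out by $n^-_{\dots,l}\le\bullet\le n^+_{\dots,l}$ (resp. $n^+_{\dots,l}\le\bullet\le n^-_{\dots,l'}$) to equal $(p-1)v_p(t_k)\cdot l$ (resp. to lie in $(p-1)v_p(t_k)\cdot(l,l')$), where $l'$ is the next odd integer; since the graded pieces of the full filtration $\Omega$ interleave the $n^-$'s and $n^+$'s with $l'=l+2$, this is the bound $(p-1)v_p(\chi_x(T_{i_k}))\cdot(l,l+2)$ stated, and the special treatment of $l=0$ (interval $(0,1)$) reflects that $\lambda_0=0$ while $\lambda_1=2(p-1)$, yet the first genuine slope segment after the ``slope-$0$ part'' has slope $1\cdot(p-1)$ rather than $2(p-1)$ — this last point I would double-check against the construction of $n_0^+$ and the ``slope $(l+1)(p-1)$'' normalization, as it is the only place the indexing convention $\Sigma=\{0\}\cup\{1+2k\}$ visibly interacts with the $\lambda$'s.

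For step (a), the point is a compatibility of Newton--Hodge decompositions with further decomposition along commuting operators: if a finite free module over a complete local ring carries commuting $U_{\pi_k}$ and $U_{\pi_j}$ and is filtered by $U_{\pi_k}$- and $U_{\pi_j}$-stable free submodules with free graded pieces, then the multiset of $U_{\pi_k}$-slopes on a fixed $U_{\pi_k}$-graded piece is the disjoint union, over the induced filtration refining that piece by $U_{\pi_j}$, of the $U_{\pi_k}$-slopes on the sub-pieces — because over $\CC_p$ after specialization the $U_{\pi_k}$-slope decomposition of a $U_{\pi_k}$-stable subspace/quotient is the restriction/quotient of the ambient one (slope decompositions are unique and functorial for equivariant maps), which I would phrase using \cite[Definition~2.3.1]{hansen2017universal}. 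Iterating from $k$ up to $g$ collapses the $U_{\pi_k}$-spectrum of $G_{\alpha_1,\dots,\alpha_g}(x)$ onto that of $G^{(k)}(x)$.

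\textbf{Main obstacle.} The genuinely delicate part is \emph{not} the single-place Newton polygon computation (that is a faithful transcription of \cite{liu2017eigencurve}), but making rigorous the transition in which, at stage $k$, we pass from the ``modified'' distribution space $S^{D,J_k}_{\kappa,J_k}(K^p,\cdot)^\vee$ and its $R_{k,\omega_k}\llbracket P'_{J_k^c}\rrbracket$-module structure to the honest integral space $S^{D,I}_{\kappa,I}(K^p,\Lambda_\omega^{>1/p})^\vee$ — i.e. verifying that the graded pieces $G_{\alpha_1,\dots,\alpha_k}$, obtained as $B(\calO_{\gothp_{k+1}})$-coinvariants and then tensored down along $\omega_{k+1}$, remain \emph{free} over $\Lambda_\omega^{>1/p}$, remain stable under $U_{\pi_{k+1}},\dots,U_{\pi_g}$, and that $T_{k+1},\dots,T_g$ are non-zero-divisors on the relevant quotients so that the Hodge-boundedness and the touching-vertex estimates are preserved under the reduction maps (the subtlety flagged in the Remark after Proposition~\ref{P:conjugating a 'good' matrix by lower triangular matrix into an upper triangular matrix over noncommutative rings} about $\gothm^k$ versus $\gothm^k R[1/\tilde T]\cap R$). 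I expect this to be handled by the same coinvariance-plus-flatness arguments already used in the proof of Proposition~\ref{P:filtration with respect to one Up operator is stable under the other Up operators}, but the multi-place iteration requires checking that these properties propagate through all $g$ stages simultaneously, which is where most of the real work of assembling the conclusion lies.
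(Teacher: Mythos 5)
Your plan takes a genuinely different route from the paper, and the route has a real gap at the heart of it.

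The paper's proof does \emph{not} compute the $\CC_p$-Newton polygon of $U_{\pi_k}$ on $G_{\alpha_1,\dots,\alpha_g}(x)$ by ``scaling'' the $\FF_p\llbracket T_k\rrbracket$-polygon by $v_p(\chi_x(T_k))$. What it does instead is factor the operator: because the $(\alpha_1,\gamma_1]$-block $U_{\alpha_1}$ is strictly $\underline\lambda^{(\alpha_1,\gamma_1]}$-Hodge bounded, one writes $U_{\pi_k}=T_k^{(p-1)l}\,U'_{\pi_k}$ with $U'_{\pi_k}$ a $\Lambda_\omega^{>1/p}$-linear endomorphism of the graded piece that preserves the lattice $L_{\alpha_1,\dots,\alpha_g}(x)$. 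In the $n^-$ case $U'_{\pi_k}$ is \emph{invertible} on the lattice (strict Hodge boundedness gives a unit matrix $M'$ with $U_{\alpha_1}=D(\underline\lambda)M'$), so all its slopes are exactly $0$, giving the claimed equality. In the $n^+$ case one shows $U'_{\pi_k}$ is \emph{nilpotent} mod $p$ (its $\FF_p$-reduction has characteristic polynomial $X^{r_{\alpha_1}}$ since the $T_k$-adic slopes of $\bar A_{22}$ are strictly positive, and the augmentation ideal acts nilpotently on the finite $\bar\FF_p$-space $\bar L_{\alpha_1,\dots,\alpha_g}(x)$), giving the \emph{strict} lower bound; and for the strict upper bound one constructs $V_{\pi_k}$ with $U_{\pi_k}V_{\pi_k}=T_k^{(p-1)(l+2)}\mathrm{Id}$ and invokes Lemma~\ref{L:a basic linear algebra lemma} (the Atkin--Lehner pairing lemma) to pair $U$-slopes with $V$-slopes summing to $(p-1)(l+2)v_p(\chi_x(T_k))$, then shows the $V$-slopes are also strictly positive. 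Your proposal mentions none of these three mechanisms; instead it relies on ``the Newton polygon of $\sum c_n(\chi_x(T_k))X^n$ has the same vertices as $v_p(t_k)$ times the lower bound polygon between consecutive touching vertices.''

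This scaling claim is precisely the point that needs proof, and your argument for it does not close. The touching-vertex computation in \S\ref{subsection:a filtration on the space of p-adic automorphic forms with respect to a Up operator} establishes equality of Newton and Hodge polygons for $\bar M\in\rmM_\infty(\FF_p\llbracket T_k\rrbracket)$ and for specific classical test weights $\chi_{l}$; it does \emph{not} by itself show that for an arbitrary $\chi_x\in\calW^{>1/p}(\CC_p)$ the specialization of the characteristic series of $U_{\pi_k}$ acting on the graded piece $G_{\alpha_1,\dots,\alpha_g}$ (a finite free $\Lambda_\omega^{>1/p}$-module that has already passed through $g-1$ further Newton--Hodge decompositions, coinvariants, and $\omega$-isotypic reductions) has its polygon at the lower bound. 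Your step~(a), which asserts that passing from $G^{(k)}$ to $G^{(g)}$ ``does not change the $U_{\pi_k}$-spectrum up to regrouping,'' conflates the Newton--Hodge filtration constructed in Theorem~\ref{T:Newton-Hodge decomposition with Hodge bound restriction for infinite matrices over noncommutative rings} with a slope decomposition over $\CC_p$ after base change; but whether the slopes on $G_{\alpha_1,\dots,\alpha_g}(x)$ land in the right interval is exactly the content of the proposition — you cannot assume it in order to prove it. The factoring approach circumvents this because the identity $U_{\pi_k}=T_k^{(p-1)l}U'_{\pi_k}$ is an identity over $\Lambda_\omega^{>1/p}$ and therefore survives \emph{all} the subsequent reductions and specializations automatically, whereas polygon-touching is not a statement that specializes well.

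Two further concrete warning signs in your write-up: (i) your normalization $\lambda_l=(l+1)(p-1)$ is an artifact of \S\ref{subsection:a filtration on the space of p-adic automorphic forms with respect to a Up operator}'s even index $l\in 2\ZZ_{\geq 0}$ before re-indexing; after re-indexing the touching-vertex labels to $\Sigma=\{0\}\cup(1+2\ZZ_{\geq 0})$ the correct identity is $\lambda_{n_l}=l(p-1)$, which is what the paper uses in the line ``$\lambda_{\alpha_1+1}=\lambda_{\gamma_1}=(p-1)l$''; your stated $\lambda_1=2(p-1)$ is wrong and is why you cannot reconcile the $(0,1)$ interval for $l=0$ without an ad hoc patch; (ii) you yourself flag uncertainty about exactly this point (``I would double-check against the construction of $n_0^+$''), and you never produce the strict inequalities at the right-hand endpoint of the $n^+$ interval — those require the $V_{\pi_k}$-and-pairing argument, which your proof has no replacement for.
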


\begin{proof}
	We give a proof for $l=1$. The argument for other $l$'s is similar. 
	
	Recall that by Proposition~\ref{P:filtration on the space of generalized automoprhic forms}, we have a filtration $\{\tilde{V}_{\alpha_1}|\alpha_1\in \Omega_{\omega_{1}} \}$ of the space $S_{\kappa_1,J_1}^{D,J_1}(K^p,R_{1,\omega_{1}})^\vee$. We have an isomorphism $\tilde{V}_{\alpha_1}\cong \prod\limits_{k=1}^{d_{\alpha_1}}R_{1,\omega_{1}}\llbracket P'_{J_1^c}\rrbracket$ of left $R_{1,\omega_{1}}\llbracket P'_{J_1^c}\rrbracket$-modules for all $\alpha_1\in \Omega_{\omega_{1}}$. These isomorphisms induces isomorphisms $\tilde{W}_{\alpha_1}:= \tilde{V}_{\gamma_1}/\tilde{V}_{\alpha_1}\cong \prod\limits_{k=1}^{r_{\alpha_1}}R_{1,\omega_{1}}\llbracket P'_{J_1^c}\rrbracket$. 
	
	Fix $\alpha_1\in \Omega_{\omega_{1}}$ and let $\gamma_1\in \Omega_{\omega_{1}}$ such that $\alpha_1\leq \gamma_1$ are two consecutive elements in $\Omega_{\omega_{1}}$. Under the above isomorphisms, the $U_{\pi_1}$-operator on $\tilde{W}_{\alpha_1}$ corresponds to a matrix $U_{\alpha_1}\in \rmM_{r_{\alpha_1}}(R_{1,\omega_{1}}\llbracket P'_{J_1^c}\rrbracket)$. This matrix is strictly $\lambda^{(\alpha_1,\gamma_1]}$-Hodge bounded with respect to the element $T_1\in R_{1,\omega_{1}}\llbracket P'_{J_1^c}\rrbracket$. 
	
	First we assume that $\alpha_1=n_{\omega_{1},l}^-$ for some $l\in \{0\}\cup 1+2\ZZ_{\geq 0}$. We can assume that $\alpha_1\neq \gamma_1$, or otherwise $G_{\alpha_1,\dots,\alpha_g}=0$ and the proposition is trivial in this case. From the construction in Proposition~\ref{P:filtration on the space of generalized automoprhic forms}, we see that $\lambda_{\alpha_1+1}=\lambda_{\gamma_1}=(p-1)l$. Hence there exists an invertible matrix $U'_{\alpha_1}\in \GL_{r_{\alpha_1}}(R_{1,\omega_{1}}\llbracket P'_{J_1^c}\rrbracket)$, such that $U_{\alpha_1}=T_1^{(p-1)l}U'_{\alpha_1}$. Under the isomorphism $\tilde{W}_{\alpha_1}\cong \prod\limits_{k=1}^{r_{\alpha_1}}R_{1,\omega_{J_1}}\llbracket P'_{J_1^c}\rrbracket$, the matrix $U'_{\alpha_1}$ defines an invertible $R_{1,\omega_{J_1}}\llbracket P'_{J_1^c}\rrbracket$-linear operator $U'_{\pi_1}$ on $\tilde{W}_{\alpha_1}$.  Recall that $V_{\alpha_1}$ is the image of $\tilde{V}\otimes_{R_{1,\omega_{1}}}R_{2,\omega_{2}}$ under the map $S_{\kappa,J_1}^{D,J_1}(K^p,R_{2,\omega_{2}})^\vee\rightarrow S_{\kappa,J_2}^{D,J_1}(K^p,R_{2,\omega_{2}})^\vee$. Hence we have isomorphisms $V_{\alpha_1}\cong \prod\limits_{k=1}^{d_{\alpha_1}}R_{2,\omega_{2}}\llbracket \calO_{\gothp_2}\times P'_{J_2^c}\rrbracket$ and $V^{mod}_{\alpha_1}\cong \prod\limits_{k=1}^{d_{\alpha_1}}R_{2,\omega_{2}}\llbracket \calO_{\gothp_2}\times P'_{J_2^c}\rrbracket^{j_2-mod}$. It induces isomorphisms $W_{\alpha_1}\cong \prod\limits_{k=1}^{r_{\alpha_1}}R_{2,\omega_{2}}\llbracket \calO_{\gothp_2}\times P'_{J_2^c}\rrbracket$ and $W^{mod}_{\alpha_1}\cong \prod\limits_{k=1}^{r_{\alpha_1}}R_{2,\omega_{2}}\llbracket \calO_{\gothp_2}\times P'_{J_2^c}\rrbracket^{j_2-mod}$. Under these isomorphisms, the matrix $U'_{\alpha_1}$ defines invertible operators $U'_{\pi_1}$ on $W_{\alpha_1}$ and $W_{\alpha_1}^{mod}$, but notice that these operators are only $R_{2,\omega_{2}}\llbracket  P'_{J_2^c}\rrbracket$-linear. Moreover, the operator $U_{\pi_1}'$ preserves the filtrations $\{W_{\alpha_1,\alpha_2}^{mod}|\alpha_2\in \Omega_{\omega_{2},\alpha_1} \}$ as the operator $U_{\pi_1}$ does. We apply the above argument inductively to the places $i_2,\dots, i_g$ and conclude that there is an operator $U_{\pi_1}'$ on the graded pieces $G_{\alpha_1,\dots, \alpha_g}$ such that $U_{\pi_1}'$ is invertible and $\Lambda_\omega^{>1/p}$-linear, and satisfies $U_{\pi_1}=T_1^{(p-1)l}U'_{\pi_1}$. 
	
	Notice that the homomorphism $\chi_x:\Lambda_\omega^{>1/p}\rightarrow \CC_p$ factors through $\calO_{\CC_p}$ and hence $G_{\alpha_1,\dots, \alpha_g}(x)$ admits a lattice $L_{\alpha_1,\dots,\alpha_g}(x)=G_{\alpha_1,\dots,\alpha_g}\otimes_{\Lambda_\omega^{>1/p},\chi_x}\calO_{\CC_p}$. Both the operators $U_{\pi_1}$ and $U_{\pi_1}'$ preserve this lattice. Since $U_{\pi_1}'$ is invertible on $L_{\alpha_1,\dots, \alpha_g}(x)$, the slopes of $U'_{\pi_1}$ are all $0$. So the slopes of $U_{\pi_1}$ on $G_{\alpha_1,\dots, \alpha_g}(x)$ are all equal to $(p-1)v_p(\chi_x((T_1)))\cdot l$. 
	
	Now we assume that $\alpha_1=n_{\omega_{1},l}^+$ for some $l\in \{0 \}\cup 1+2\ZZ_{\geq 0}$. As in the first case, we can find a matrix $U'_{\alpha_1}\in \rmM_{r_{\alpha_1}}(R_{1,\omega_{1}}\llbracket P'_{J_1^c}\rrbracket)$, such that $U_{\alpha_1}=T_1^{(p-1)l}U_{\alpha_1}'$, but this matrix is not invertible in general. It follows that there exists a $\Lambda_\omega^{>1/p}$-linear operator $U'_{\pi_1}$ on $G_{\alpha_1,\dots,\alpha_g}$, such that $U_{\pi_1}=T_1^{(p-1)l}U_{\pi_1}'$. Since $U'_{\pi_1}$ preserves the lattice $L_{\alpha_1,\dots,\alpha_g}(x)$, the slopes of $U_{\pi_1}'$ on $G_{\alpha_1,\dots,\alpha_g}(x)$ are all nonnegative. So the slopes of $U_{\pi_1}$ on $G_{\alpha_1,\dots,\alpha_g}(x)$ are all no less than $(p-1)v_p(\chi_x((T_1)))\cdot l$. 
	
	Recall that $R_{1,\omega_{1}}$ is isomorphic to $\ZZ_p\llbracket T_1,\frac{p}{T_1},T\rrbracket$. In particular, it is a local ring with residue field $\FF_p$. Hence we have a natural surjective map $R_{1,\omega_{1}}\rightarrow \FF_p$. We apply the homomorphisms $R_{1,\omega_{1}}\llbracket P'_{J_1^c}\rrbracket\rightarrow \FF_p\llbracket P'_{J_1^c}\rrbracket$ and $\FF_p\llbracket P'_{J_1^c}\rrbracket\rightarrow \FF_p$ to entries of the matrix $U'_{\alpha_1}\in \rmM_{r_{\alpha_1}}(R_{1,\omega_{1}}\llbracket P'_{J_1^c}\rrbracket)$, and get matrices $U'_{\alpha_1,\FF_p\llbracket P'_{J_1^c}\rrbracket}\in \rmM_{r_{\alpha_1}}(\FF_p\llbracket P'_{J_1^c}\rrbracket)$ and $U'_{\alpha_1,\FF_p}\in \rmM_{r_{\alpha_1}}(\FF_p)$. By Proposition~\ref{P:filtration on the space of generalized automoprhic forms}, the slopes of the matrix $\bar{U}_{\alpha_1}\in \rmM_{r_{\alpha_1}}(\FF_p\llbracket T_1\rrbracket)$ (with respect to the $T_1$-adic valuation) all belong to the interval $(\lambda_{\alpha_1},\lambda_{\gamma_1})=((p-1)l,(p-1)(l+2))$. So the slopes of $\bar{U}'_{\alpha_1}\in \rmM_{r_{\alpha_1}}(\FF_p\llbracket T_1\rrbracket)$ are all positive. If we use $f(X)\in \FF_p[X]$ to denote the characteristic polynomial of the matrix $U'_{\alpha_1,\FF_p}$, then we have $f(X)=X^{r_{\alpha_1}}$. 
	
	We use $\calI\subset \FF_p\llbracket P'_{J_1^c}\rrbracket$ to denote the augmentation ideal of this complete group ring. It follows from Hamilton-Cayley Theorem that $f(U'_{\alpha_1,\FF_p})=(U'_{\alpha_1,\FF_p})^{r_{\alpha_1}}=0$, and hence $(U'_{\alpha_1,\FF_p\llbracket P'_{J_1^c}\rrbracket})^{r_{\alpha_1}}\in \rmM_{r_{\alpha_1}}(\calI)$.
	
	Define $\bar{L}_{\alpha_1,\dots,\alpha_g}(x)=L_{\alpha_1,\dots,\alpha_g}(x)\otimes_{\calO_{\CC_p}}\bar{\FF}_p$. It is a finite $\bar{\FF}_p$-vector space and $\Iw_{\pi}$ acts on it continuously. It follows that the action of $\calI$ on $\bar{L}_{\alpha_1,\dots,\alpha_g}(x)$ is nilpotent and hence $U'_{\alpha_1,\FF_p\llbracket P'_{J_1^c}\rrbracket}$ is a nilpotent operator on $\bar{L}_{\alpha_1,\dots,\alpha_g}(x)$. So the slopes of the $U'_{\pi_1}$-operator on $G_{\alpha_1,\dots,\alpha_g}(x)$ are all positive, and we conclude that the slopes of the $U'_{\pi_1}$-operator on $G_{\alpha_1,\dots,\alpha_g}(x)$ are all strictly larger than $(p-1)v_p(\chi_x((T_1)))\cdot l$. 
	
	Since the matrix $U_{\alpha_1}$ is strictly $\lambda^{(\alpha_1,\gamma_1]}$-Hodge bounded, we can find a matrix $V_{\alpha_1}\in \rmM_{r_{\alpha_1}}(R_{1,\omega_{J_1}}\llbracket P'_{J_1^c}\rrbracket)$, such that $U_{\alpha_1}V_{\alpha_1}=V_{\alpha_1}U_{\alpha_1}=T_1^{(p-1)(l+2)}\cdot I_{r_{\alpha_1}}$.
	
	The matrix $V_{\alpha_1}$ induces a $\Lambda_\omega^{>1/p}$-linear operator $V_{\pi_1}$ on $G_{\alpha_1,\dots, \alpha_g}$ and $G_{\alpha_1,\dots, \alpha_g}(x)$. It preserves the lattice $L_{\alpha_1,\dots, \alpha_g}(x)$ and satisfies $U_{\pi_1}V_{\pi_1}=V_{\pi_1}U_{\pi_1}=T_1^{(p-1)(l+2)}\cdot \mathrm{Id}$. It follows from Lemma~\ref{L:a basic linear algebra lemma} that the slopes of $U_{\pi_1}$ and $V_{\pi_1}$ on $G_{\alpha_1,\dots, \alpha_g}(x)$ can be paired such that the slopes in each pair sum to $(p-1)v_p(\chi_x((T_1)))\cdot (l+2)$. We have seen that the slopes of $\bar{U}_{\alpha_1}\in \rmM_{r_{\alpha_1}}(\FF_p\llbracket T_1\rrbracket)$ all belong to the interval $((p-1)l,(p-1)(l+2))$. Hence the slopes of $\bar{V}_{\alpha_1}$ are all positive. By a similar argument as above, we can show that the slopes of $V_{\pi_1}$ on  $G_{\alpha_1,\dots, \alpha_g}(x)$ are all positive. It follows that the slopes of $U_{\pi_1}$ on $G_{\alpha_1,\dots, \alpha_g}(x)$ are all strictly less than $(p-1)v_p(\chi_x((T_1)))\cdot (l+2)$.
\end{proof}

Now we are ready to prove our main theorem~\ref{T:spectral halo for eigenvarieties for D}. Let $X=\mathrm{Sp}(A)\subset \calW_\omega^{>1/p}$ be an affinoid subdomain that corresponds to the continuous homomorphism $\chi:\Lambda_\omega^{>1/p}\rightarrow A$. For $l\in \{1,\dots, g \}$ and a set of indices $\{\alpha_k| k=1,\dots, l \}$ with $\alpha_1\in \Omega_{\omega_{1}}$, $\alpha_{k+1}\in \Omega_{\omega_{k+1},\alpha_1,\dots, \alpha_k}$ for $k=1,\dots, l-1$, we define $F_{\alpha_1,\dots,\alpha_l,A}:= F_{\alpha_1,\dots,\alpha_l}\hat{\otimes}_{\Lambda_\omega^{>1/p},\chi}A$ and $G_{\alpha_1,\dots,\alpha_l,A}:= G_{\alpha_1,\dots,\alpha_l}\hat{\otimes}_{\Lambda_\omega^{>1/p},\chi}A$. Let $S_A:= S_{\kappa,I}^{D,I}(K^p,\Lambda_\omega^{>1/p})\hat{\otimes}_{\Lambda_\omega^{>1/p},\chi}A$ and $\psi_A:\bbT\rightarrow \End_A(S_A)$ be the natural homomorphism, where $\bbT$ is the (abstract) $\QQ_p$-Hecke algebra defined in \S\ref{subsection:the spectral varieties and eigenvarieties}. Let $\bbT_A$ be the image of $\psi_A$. Under the above notations and by our previous construction in \S\ref{subsection:A filtration on the space of p-adic automorphic forms with respect to all Upi-operators}, we obtain a filtration $\{F_{\alpha_1}|\alpha_1\in \Omega_{\omega_{1}}\}$ of the $A$-Banach module $S_A$ and the Hecke operators on $S_A$ all preserve this filtration. It follows from Proposition~\ref{P:characterization of the filtration of the space of integral p-adic automorphic forms} that the induced homomorphism $\bbT_A\rightarrow \prod\limits_{\alpha_1\in \Omega_{\omega_{1}}}\End_A(G_{\alpha_1})$ is injective. We inductively run the above argument, and obtain an injective homomorphism $\bbT_A\rightarrow \prod\limits_{\alpha_1,\dots,\alpha_g}\End_A(G_{\alpha_1,\dots,\alpha_g})$. The main theorem \ref{T:spectral halo for eigenvarieties for D} follows from the injectivity of this map, the construction of the eigenvariety $\calX_D$, and Proposition~\ref{P:characterization of the filtration of the space of integral p-adic automorphic forms}.

\section{Application to Hilbert modular eigenvarieties}\label{section:Application to Hilbert modular eigenvarieties}

\subsection{p-adic Langlands Functoriality}

\begin{definition}\label{D:eigenvariety datum}
	An eigenvariety datum is a tuple $\gothD=(\calW,\calZ,\calM,\bbT,\psi)$, where $\calW$ is a separated reduced equidimensional, relatively factorial (see \cite[\S4.1]{hansen2017universal} for the precise definition) rigid analytic space, $\calZ\subset \calW\times\AAA^1$ is a Fredholm hypersurface, $\calM$ is a coherent analytic sheaf over $\calZ$, $\bbT$ is a commutative $\QQ_p$-algebra and $\psi$ is a $\QQ_p$-algebra homomorphism $\psi:\bbT\rightarrow \End_{\calO_\calZ}(\calM)$.
\end{definition}

\begin{remark}
	In \S\ref{subsection:the spectral varieties and eigenvarieties}, we construct a quasi-coherent sheaf $\calM$ on $\calW$ coming from the spaces of of overconvergent automorphic forms for $D$. We can also construct a coherent sheaf $\calM^\ast$ over the spectral variety $\calZ_D$ coming from the spaces $S_{\kappa}^D(K^p\Iw_{\pi},r)$'s. The construction need a special admissible cover of $\calZ_D$ which consists of slope adapted affinoids of $\calZ_D$. We refer to \cite[Chapter $4$]{buzzard320eigenvarieties}  and \cite[$\S4$]{hansen2017universal}  for the details of the construction. 
\end{remark}

Given an eigenvariety datum $\gothD$ as above, we use $\calX=\calX(\gothD)$ to denote the eigenvariety associated to $\gothD$, and we have a finite morphism $\pi:\calX\rightarrow \calZ$ and a morphism $w:\calX\rightarrow \calW$. Recall that in \cite[Definition~$5.1.1$]{hansen2017universal} , the core $\calX^\circ$ of $\calW$ is defined to be the union of $\dim\calW$-dimensional irreducible components of the nilreduction $\calX^{\mathrm{red}}$, and $\calX$ is unmixed if $\calX^\circ\cong \calX$.

\begin{remark}
	The eigenvarieties we will consider in this section are the eigenvarieties $\calX_D$ constructed in \S\ref{subsection:the spectral varieties and eigenvarieties} and Hilbert modular eigenvarieties $\calX_{\GL_{2/F}}$ constructed in \cite[$\S5$]{andreatta2016arithmetique}. These eigenvarieties are unmixed by \cite[Theorem~2]{birkbeck2016jacquet}. Hence we assume that all the eigenvarieties are unmixed in the rest of this paper.
\end{remark}
Given a point $z\in \calZ$ and any $T\in \bbT$, we write $D(T,X)(z)\in k(z)[X]$ for the characteristic polynomial $\det(1-\psi(T)X)|_{\calM(z)}$. 

Now we can state Hansen's interpolation theorem (\cite[Theorem~$5.1.6$]{hansen2017universal}), which is the main tool to translate our results to Hilbert modular eigenvarieties.

\begin{theorem}\label{T:p-adic interpolation theorem}
	Given two eigenvariety data $\gothD_i=(\calW_i,\calZ_i,\calM_i,\bbT_i,\psi_i)$, let $\calX_i=\calX(\gothD_i)$ be the associated eigenvariety for $i=1,2$. Suppose that we are given the following additional data:
	\begin{enumerate}
		\item a closed immersion $\jmath:\calW_1\rightarrow \calW_2$ and we use $j$ to denote the closed immersion $\jmath\times \mathrm{id}:\calW_1\times \AAA^1\rightarrow \calW_2\times\AAA^1$;
		\item a  homomorphism of $\QQ_p$-algebras $\sigma:\bbT_2\rightarrow \bbT_1$;
		\item a very Zariski dense set $\calZ_1^{\mathrm{cl}}\subset \calZ_1$ with $j(\calZ_1^{\mathrm{cl}})\subset \calZ_2$, such that $D(\sigma(T),X)(z)$ divides $D(T,X)(j(z))$ in $k(z)[X]$ for all $z\in \calZ_1^{\mathrm{cl}}$ and all $T\in \bbT_2$.
	\end{enumerate}
	Then there exists a morphism $i:\calX_1\rightarrow \calX_2$ with the commutative diagrams
	$$
	\xymatrix{
		\calX_1 \ar[r]^{i} \ar[d]^{w_1} & \calX_2 \ar[d]^{w_2} \\
		\calW_1 \ar[r]^{\jmath} & \calW_2
	} ,
	\xymatrix{
		\calO(\calX_2) \ar[r]^{i^\ast}  & \calO(\calX_1)  \\
		\bbT_2 \ar[u]^{\phi_2} \ar[r]^{\sigma} & \bbT_1 \ar[u]^{\phi_1}
	}.
	$$
	Moreover, $i$ is a composite of a finite morphism followed by a closed immersion.
\end{theorem}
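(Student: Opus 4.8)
The plan is to deduce Theorem~\ref{T:p-adic interpolation theorem} directly from Hansen's general interpolation theorem \cite[Theorem~5.1.6]{hansen2017universal}, essentially by checking that the eigenvariety data $\gothD_1,\gothD_2$ and the auxiliary data $(\jmath,\sigma,\calZ_1^{\mathrm{cl}})$ satisfy the hypotheses of that theorem verbatim. In other words, the statement as phrased here \emph{is} (a mild repackaging of) Hansen's theorem; the work is in making sure our running conventions match his.

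First I would recall from \S\ref{section:Application to Hilbert modular eigenvarieties} (the remarks following Definition~\ref{D:eigenvariety datum}) that both eigenvarieties in play are unmixed, and that the weight spaces we use -- $\calW$, the rigid analytic space attached to $\ZZ_p\llbracket\calO_p^\times\times\ZZ_p^\times\rrbracket$, and its analogue for the larger totally real field -- are separated, reduced, equidimensional and relatively factorial in the sense of \cite[\S4.1]{hansen2017universal}; this is what makes Definition~\ref{D:eigenvariety datum} applicable. Next I would note that the closed immersion $\jmath:\calW_1\hookrightarrow\calW_2$ (coming either from the identification of weight spaces when $[F:\QQ]$ is even, or from the map induced by restriction of characters along the norm in the quadratic base change setting) pulls the Fredholm hypersurface $\calZ_2$ back compatibly, so that $j=\jmath\times\mathrm{id}$ restricts to a map on spectral varieties; the divisibility condition $D(\sigma(T),X)(z)\mid D(T,X)(j(z))$ on the very Zariski-dense set $\calZ_1^{\mathrm{cl}}$ of classical points is exactly the ``interpolation of a functoriality on classical points'' input. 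With these checked, Hansen's theorem produces the morphism $i:\calX_1\to\calX_2$, the two commutative diagrams relating $w_1,w_2$ and $\phi_1,\phi_2,\sigma$, and the factorization of $i$ as a finite morphism followed by a closed immersion. I would write this out as: ``All hypotheses of \cite[Theorem~5.1.6]{hansen2017universal} are met, and the conclusion is precisely the assertion of the theorem.''

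I do not anticipate a genuine obstacle in the present statement itself, since it is a black-box citation; the real content -- and the place where care is genuinely needed -- is deferred to the later application (the construction of $\calZ_1^{\mathrm{cl}}$ and the verification of the divisibility $D(\sigma(T),X)(z)\mid D(T,X)(j(z))$ from classical Jacquet--Langlands and base-change compatibilities, including the correct normalization of the $U_{\pi_i}$-operators fixed in Convention~\ref{Convention:extend the character nu}). Thus the main thing to get right in this proof is bookkeeping: confirming that the Hecke algebra $\bbT$ of \S\ref{subsection:the spectral varieties and eigenvarieties}, generated by the $U_v$, $S_v$ and the $U_{\pi_i}$, is the same abstract $\QQ_p$-algebra on both sides up to the homomorphism $\sigma$, and that ``very Zariski dense'' is used in Hansen's precise sense. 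I would keep the proof short -- a paragraph citing \cite[Theorem~5.1.6]{hansen2017universal} and listing which of our earlier observations supply each hypothesis -- and defer the substantive verifications to the subsequent subsection where the two concrete functorialities (even-degree isomorphism and odd-degree quadratic base change) are treated.
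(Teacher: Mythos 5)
Your proposal matches the paper exactly: the theorem is stated as a verbatim restatement of Hansen's interpolation theorem, introduced in the text with ``Now we can state Hansen's interpolation theorem (\cite[Theorem~5.1.6]{hansen2017universal})'', and no proof is given beyond the citation. Your identification that the genuine content is deferred to the subsequent verifications (very Zariski density of $\calZ_D^{\mathrm{cl}}$ via \cite{chenevier2005correspondance} and \cite{birkbeck2016jacquet}, the divisibility from classicality theorems, and the explicit $\sigma$ for base change) is also consistent with how the paper proceeds in \S\ref{section:Application to Hilbert modular eigenvarieties}.
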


\begin{remark}
	When $\calW_1=\calW_2=\calW$, $\bbT_1=\bbT_2=\bbT$ and $\jmath:\calW_1\rightarrow \calW_2$ and $\sigma:\bbT_2\rightarrow\bbT_1$ are the identity maps, the map $i:\calX_1\rightarrow \calX_2$ in the above theorem is a closed immersion.
\end{remark}

\subsection{Application to Hilbert modular eigenvarieties}

Recall that $F$ is a totally real field in which $p$ splits and $D$ is a totally definite quaternion algebra over $F$ with discriminant $\gothd$. We assume that $(p,\gothd)=1$. 

Fix a prime ideal $\gothn$ of $F$p prime to $\gothd p$. Set
\begin{equation*}
U_1(\gothn):=\left \{ \gamma\in\GL_2(\calO_F\otimes\hat{\ZZ})|\gamma\equiv  \left( \begin{array}{cc}
	\ast&\ast\\
	0&1\end{array}\right)\mod \gothn \right \}.
\end{equation*}
When $\gothn$ is prime to $\gothd$, we set 
\begin{equation*}
U^D_1(\gothn):=\left \{ \gamma\in(\calO_D\otimes\hat{\ZZ})^\times|\gamma\equiv  \left( \begin{array}{cc}
\ast&\ast\\
0&1\end{array}\right)\mod \gothn \right \}.
\end{equation*}

Let $\gothD_1=(\calW,\calZ_D,\calM_D,\bbT,\psi_D)$ be the eigenvariety datum associated to the spaces of overconvergent automorphic forms of tame level $U^D_1(\gothn)$ as constructed in \S\ref{subsection:the spectral varieties and eigenvarieties}. We use $\calX_D(U_1(\gothn))$ to denote the corresponding eigenvariety. Let $\gothD_2=(\calW,\calZ,\calM,\bbT,\psi)$ be the eigenvariety datum associated to the spaces of overconvergent cuspidal Hilbert modular forms of tame level $U_1(\gothn)$ as constructed in \cite{andreatta2016arithmetique} $\S5$ and let $\calX_{\GL_{2/F}}(U_1(\gothn))$ be the corresponding eigenvariety.  

We define $\calZ_D^{cl}\subset \calZ_D(\CC_p)\subset (\calW\times \AAA^1)(\CC_p)$ to be the set of points $z=(\chi,\alpha^{-1})$ consisting of a classical weight $\chi\in \calW(\CC_p)$ corresponding to $(v=(v_i)\in \ZZ^I,r\in \ZZ)$ and $\alpha\in \CC_p^\ast$ with $v_p(\alpha)<\min\limits_{i\in I}\{n_i+1 \}$. Then it follows from the proof of \cite[Proposition~$3.5$]{chenevier2005correspondance} or the proof of \cite[Proposition~$6.7$]{birkbeck2016jacquet}  that the set $\calZ_D^{cl}$ is very Zariski dense in $\calZ_D$. From the classicality theorem for overconvergent automorphic forms for $D$ (\cite[Theorem~2.3]{yamagami2007p}) and for overconvergent Hilbert modular forms (\cite[Theorem~1]{tian2016arithmetique}), we have
\[
\calM_D(z)=S_{k,w}^D(U_1^D(\gothn\pi))^{U_{\pi}=\alpha}, \text{~and~} \calM(z)=S_{k,w}(U_1(\gothn\gothd\pi))^{U_\pi=\alpha}. 
\]
Now we can apply Theorem~\ref{T:p-adic interpolation theorem} to the eigenvariety data $\gothD_1$ and $\gothD_2$ together with the additional data $\mathrm{id}:\calW\rightarrow \calW$ and $\mathrm{id}:\bbT\rightarrow \bbT$,  and get the following theorem.
\begin{theorem}\label{T:p-adic Jacquet-Langlands correspondence}
	(\cite[Theorem~$1$]{birkbeck2016jacquet}) There is a closed immersion $i_D:\calX_D(\gothn)\rightarrow \calX_{\GL_{2/F}}(\gothn\gothd)$ interpolating the Jacquet-Langlands correspondence on non-critical classical points. Moreover, when $g=[F:\QQ]$ is even, one can choose $D$ with $\gothd=1$ so that $i_D$ is an isomorphism. 
\end{theorem}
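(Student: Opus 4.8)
\textbf{Proof proposal for Theorem~\ref{T:p-adic Jacquet-Langlands correspondence}.}
The plan is to apply Hansen's interpolation theorem (Theorem~\ref{T:p-adic interpolation theorem}) to the two eigenvariety data $\gothD_1=(\calW,\calZ_D,\calM_D,\bbT,\psi_D)$ and $\gothD_2=(\calW,\calZ,\calM,\bbT,\psi)$, with both the map on weight spaces and the map on Hecke algebras taken to be the identity. The three pieces of additional data required are then immediate: $\jmath=\mathrm{id}:\calW\to\calW$ is a closed immersion, $\sigma=\mathrm{id}:\bbT\to\bbT$ is a $\QQ_p$-algebra homomorphism, and one must exhibit a very Zariski dense set $\calZ_D^{\mathrm{cl}}\subset\calZ_D$ over which the characteristic polynomials match. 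I would take $\calZ_D^{\mathrm{cl}}$ to be the set of points $z=(\chi,\alpha^{-1})$ with $\chi\in\calW(\CC_p)$ a classical weight corresponding to a pair $(v,r)$ and $\alpha\in\CC_p^\times$ satisfying $v_p(\alpha)<\min_{i\in I}\{n_i+1\}$, exactly as in the excerpt. Its very Zariski density in $\calZ_D$ is the statement cited from \cite[Proposition~3.5]{chenevier2005correspondance} (or \cite[Proposition~6.7]{birkbeck2016jacquet}), so I would simply invoke it.

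The substantive step is the divisibility condition: for every $z\in\calZ_D^{\mathrm{cl}}$ and every $T\in\bbT$, one needs $D(\sigma(T),X)(z)=D(T,X)(z)$ to divide $D(T,X)(j(z))$ in $k(z)[X]$. Since $\sigma=\mathrm{id}$ and $j(z)=z$ (same weight space), this amounts to comparing the characteristic polynomial of $T$ acting on $\calM_D(z)$ with that of $T$ acting on $\calM(z)$. Using the classicality theorems for overconvergent automorphic forms for $D$ (\cite[Theorem~2.3]{yamagami2007p}) and for overconvergent Hilbert modular forms (\cite[Theorem~1]{tian2016arithmetique}), the slope condition $v_p(\alpha)<\min_i\{n_i+1\}$ identifies these fibres with the finite-slope classical spaces
\[
\calM_D(z)=S_{k,w}^D(U_1^D(\gothn\pi))^{U_\pi=\alpha},\qquad \calM(z)=S_{k,w}(U_1(\gothn\gothd\pi))^{U_\pi=\alpha}.
\]
The classical Jacquet--Langlands correspondence for $\GL_2$ over $F$ gives a Hecke-equivariant injection of the first space into the second (identifying $S_{k,w}^D$ with the $\gothd$-new part of $S_{k,w}$), whence $D(T,X)(z)\mid D(T,X)(z)$ for the Hilbert side; more precisely the characteristic polynomial on $\calM_D(z)$ divides the one on $\calM(z)$, which is exactly the required divisibility. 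I would organize this as: (i) recall the classical JL correspondence as a Hecke-module embedding; (ii) pass to the $U_\pi=\alpha$ generalized eigenspaces, which is compatible with the embedding since the Hecke operators commute; (iii) read off the divisibility of characteristic polynomials.

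Granting these inputs, Theorem~\ref{T:p-adic interpolation theorem} produces a morphism $i_D:\calX_D(\gothn)\to\calX_{\GL_{2/F}}(\gothn\gothd)$ fitting into the stated commutative diagrams and interpolating JL on the dense classical set; since $\jmath$ and $\sigma$ are the identity, the remark following Theorem~\ref{T:p-adic interpolation theorem} gives that $i_D$ is a closed immersion. For the last assertion, when $g=[F:\QQ]$ is even one may choose a totally definite quaternion algebra $D/F$ with discriminant $\gothd=1$ (such $D$ exists precisely because the number of ramified places, all infinite, is even), and then the classical JL correspondence is a Hecke-equivariant \emph{isomorphism} $S_{k,w}^D\cong S_{k,w}$ onto the full cuspidal space at every classical point; running the interpolation argument symmetrically in both directions — or observing that $i_D$ is then a closed immersion that is an isomorphism on a Zariski-dense set of points and between reduced equidimensional spaces of the same dimension — forces $i_D$ to be an isomorphism. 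The main obstacle I anticipate is bookkeeping around the tame level and the place $\pi$ above $p$: one must be careful that the classicality theorems are applied with the correct levels $U_1^D(\gothn\pi)$ and $U_1(\gothn\gothd\pi)$ and that the JL correspondence is stated with matching local conditions at the ramified primes of $D$, so that the $\gothd$-new subspace is exactly the image; this is routine but is where an error would most easily creep in. All other steps are direct citations of results already available in the literature or in the excerpt.
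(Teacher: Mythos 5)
Your proposal matches the paper's approach essentially verbatim: the paper sets up exactly the eigenvariety data $\gothD_1$, $\gothD_2$ you describe, takes $\jmath=\sigma=\mathrm{id}$, uses the same very Zariski dense set $\calZ_D^{\mathrm{cl}}$ (citing \cite{chenevier2005correspondance} and \cite{birkbeck2016jacquet} for density), invokes the same classicality theorems of Yamagami and Tian--Xiao to identify fibers, and then applies Hansen's interpolation theorem; the paper itself defers to \cite[Theorem~1]{birkbeck2016jacquet} rather than spelling out the divisibility via classical Jacquet--Langlands and the isomorphism argument when $g$ is even, both of which you supply correctly.
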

When $g=[F:\QQ]$ is even, we can choose $D$ with $\gothd=1$ and use the above theorem to translate our main theorem \ref{T:spectral halo for eigenvarieties for D} to the Hilbert modular eigenvarieties. When $g$ is odd, the discriminant $\gothd$ cannot be $1$ and the immersion $i_D$ in Theorem~\ref{T:p-adic Jacquet-Langlands correspondence} is not surjective. We need more work to get the desired results for Hilbert modular eigenvarieties.

We choose a quadratic extension $F'/F$, such that $F'$ is totally real and $p$ splits completely in $F'$. Let $\gothD_3=(\calW',\calZ',\calM',\bbT',\psi')$ be the eigenvariety datum associated to the spaces of overconvergent Hilbert modular forms of tame level $U'_1(\gothn')$, where $\gothn'=\gothn\calO_{F'}$ and 
\begin{equation*}
U'_1(\gothn')=\left \{ \gamma\in\GL_2(\calO_{G'}\otimes\hat{\ZZ})|\gamma\equiv  \left( \begin{array}{cc}
\ast&\ast\\
0&1\end{array}\right)\mod \gothn' \right \}.
\end{equation*}

Let $\calO_p':=\calO_{F'}\otimes \ZZ_p$. The norm map $\Nm_{F'/F}:\calO_p'^\times\rightarrow \calO_p^\times$ induces a continuous homomorphism of the completed group rings: $\phi:\ZZ_p\llbracket \calO_p'^\times\times \ZZ_p^\times \rrbracket\rightarrow \ZZ_p\llbracket \calO_p^\times\times \ZZ_p^\times \rrbracket$, and hence gives a closed immersion $\jmath:\calW\rightarrow \calW'$. Define a homomorphism $\sigma:\bbT'\rightarrow \bbT$ of $\QQ_p$-algebras as follows: for a prime $\gothl$ of $F$, define
\begin{enumerate}
	\item $\sigma(T_{\gothl_1})=\sigma(T_{\gothl_2})=T_\gothl$, if $\gothl$ splits as $\gothl\calO_{F'}=\gothl_1\cdot\gothl_2$ in $F'$;
	\item $\sigma(T_{\gothl'})=T_\gothl^2-2lS_\gothl$, if $\gothl$ is inert in $\calO_{F'}$ and set $\gothl'=\gothl\calO_{F'}$;
	\item $\sigma(T_{\gothl'})=T_\gothl$, if $\gothl$ is ramified in $F'$, and let $\gothl'$ be the unique prime of $F'$ over $\gothl$. 
\end{enumerate}

Applying Theorem~\ref{T:p-adic interpolation theorem} to the above datum, we have the following theorem.
\begin{theorem}\label{T:p-adic base change}
	There is a morphism $i_{F'/F}:\calX_{\GL_{2/F}}(\gothn)\rightarrow \calX_{\GL_{2/F'}}(\gothn')$ interpolating the quadratic base change from $F$ to $F'$ on non-critical classical points.
\end{theorem}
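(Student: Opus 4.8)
The plan is to apply Hansen's interpolation theorem (Theorem~\ref{T:p-adic interpolation theorem}) to the two eigenvariety data $\gothD_2 = (\calW,\calZ,\calM,\bbT,\psi)$ attached to overconvergent Hilbert cuspforms over $F$ of tame level $U_1(\gothn)$ and $\gothD_3 = (\calW',\calZ',\calM',\bbT',\psi')$ attached to overconvergent Hilbert cuspforms over $F'$ of tame level $U'_1(\gothn')$, together with the closed immersion $\jmath:\calW\rightarrow\calW'$ coming from the norm map $\Nm_{F'/F}:\calO_p'^\times\rightarrow\calO_p^\times$ and the Hecke algebra homomorphism $\sigma:\bbT'\rightarrow\bbT$ defined by the explicit formulas (1)--(3) above. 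The only thing left to verify is hypothesis~(3) of Theorem~\ref{T:p-adic interpolation theorem}: that there is a very Zariski dense subset $\calZ^{\mathrm{cl}}\subset\calZ$ with $j(\calZ^{\mathrm{cl}})\subset\calZ'$, such that for every $z\in\calZ^{\mathrm{cl}}$ and every $T\in\bbT'$ the characteristic polynomial $D(\sigma(T),X)(z)$ divides $D(T,X)(j(z))$ in $k(z)[X]$.

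First I would take $\calZ^{\mathrm{cl}}$ to be the set of points $z=(\chi,\alpha^{-1})$ of $\calZ$ with $\chi\in\calW(\CC_p)$ a classical weight with sufficiently regular algebraic part and $v_p(\alpha)$ strictly less than the smallest critical slope, i.e. the analogue of the set $\calZ_D^{cl}$ used in the proof of Theorem~\ref{T:p-adic Jacquet-Langlands correspondence}; density of this set in $\calZ$ follows exactly as in the proof of \cite[Proposition~6.7]{birkbeck2016jacquet}. At such a point, the classicality theorem for overconvergent Hilbert modular forms (\cite[Theorem~1]{tian2016arithmetique}) identifies $\calM(z)$ with the $U_\pi=\alpha$-eigenspace in the space of classical cuspforms $S_{k,w}(U_1(\gothn\pi))$, so its points correspond to classical cuspidal Hilbert eigenforms $f$ over $F$ of weight $(k,w)$ and level $U_1(\gothn)$ together with a $U_\pi$-refinement. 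One checks that $\jmath(\chi)$ is again a classical weight for $F'$ and that the quadratic base change $\mathrm{BC}_{F'/F}(\pi_f)$ of the automorphic representation $\pi_f$ is cuspidal or is an isobaric sum whose constituents still contribute to overconvergent cuspforms at the relevant slope — in any case, by strong multiplicity one and the compatibility of base change with the local spherical Hecke action at every place $\gothl$ of $F$ (which is exactly what dictates the formulas defining $\sigma$: at a split place the two primes of $F'$ carry the same Satake parameters as $\gothl$, at an inert place the parameter is the square relation $T_{\gothl}^2-2\,l\,S_{\gothl}$, and at a ramified place it is unchanged), the Hecke eigensystem of the base-changed form is $\sigma$-pullback of that of $f$. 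Since $\jmath(\chi)$-classical cuspforms over $F'$ are identified with $\calM'(j(z))$ by \cite[Theorem~1]{tian2016arithmetique} again, the eigenform $\mathrm{BC}_{F'/F}(f)$ (with the induced refinement) lies in $\calM'(j(z))$, and hence $D(\sigma(T),X)(z)$, which computes the characteristic polynomial of $\psi(T)$ on the $f$-isotypic piece, divides $D(T,X)(j(z))$.

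With hypothesis~(3) in place, Theorem~\ref{T:p-adic interpolation theorem} produces the desired morphism $i_{F'/F}:\calX_{\GL_{2/F}}(\gothn)\rightarrow\calX_{\GL_{2/F'}}(\gothn')$ lying over $\jmath:\calW\rightarrow\calW'$ and compatible with $\sigma$ on Hecke algebras; by the last assertion of that theorem it is a composite of a finite morphism followed by a closed immersion. That it interpolates quadratic base change on non-critical classical points is immediate from the construction, since on the dense set $\calZ^{\mathrm{cl}}$ the induced map on the fibres of $\calM$ and $\calM'$ is exactly the base-change map on classical forms.

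The main obstacle I anticipate is the bookkeeping at the bad places and the care needed when the base change is non-cuspidal. At primes $\gothl\mid\gothn$ (or dividing the discriminant, or ramified in $F'/F$) one must match the refinements and the precise normalization of the $U_{\gothl}$ and $S_{\gothl}$ operators under $\sigma$, and at a classical point where $\mathrm{BC}_{F'/F}(\pi_f)$ fails to be cuspidal — e.g. when $\pi_f$ is itself a base change or a theta lift — one needs to know that the corresponding eigensystem still appears in the space of overconvergent cuspforms over $F'$ at the slope in question, so that the divisibility of characteristic polynomials still holds; this is a standard but somewhat delicate point about the image of the classical-to-overconvergent inclusion, and I would handle it by restricting $\calZ^{\mathrm{cl}}$ further to cohomologically generic weights where these degeneracies are controlled, which suffices since the restricted set remains very Zariski dense.
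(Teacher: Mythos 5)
Your proposal matches the paper's approach: the paper establishes $\jmath$ via the norm map and defines $\sigma$ by the three formulas, then simply asserts that Hansen's interpolation theorem applies; you are supplying the verification of hypothesis~(3) that the paper leaves implicit, using the same $\calZ^{\mathrm{cl}}$ (small-slope classical points) and the same appeal to classicality and the compatibility of base change with the spherical Hecke action.

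One remark on your flagged ``obstacle'': for $F'/F$ a totally real quadratic extension, non-cuspidality of $\mathrm{BC}_{F'/F}(\pi_f)$ cannot occur for a cohomological cuspform $f$. Non-cuspidality would force $\pi_f\cong\pi_f\otimes\chi_{F'/F}$, i.e.\ $\pi_f$ is automorphically induced from a Hecke character of $F'$; but at each archimedean place of $F$ the two real places of $F'$ above it give a principal-series induction, which is incompatible with $\pi_{f,\infty}$ being discrete series. So your further restriction of $\calZ^{\mathrm{cl}}$ to ``cohomologically generic'' weights, while harmless, is not actually needed; every classical small-slope point already has cuspidal base change. (Your parenthetical ``e.g.\ when $\pi_f$ is itself a base change or a theta lift'' is also slightly off: the relevant degeneracy is $\pi_f$ being \emph{induced from} $F'$, not being a base change from elsewhere.) The genuine care needed, which you correctly emphasize, is in matching refinements at $p$ and Hecke normalizations at ramified and level-dividing primes.
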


To apply the above theorem to Hilbert modular eigenvarieties, we need to make some explicit computations. Let $I'=\Hom(F,\bar{\QQ})=\Hom(F,\bar{\QQ}_p)$. We identify the set $I$ (resp. $I'$) with the set of primes of $F$ (resp. $F'$) over $p$. Since $p$ splits completely in $F'$, every prime $i$ in $I$ splits into two primes in $I'$, and we use $i_1'$ and $i_2'$ to denote the two primes. Under the natural map $\calO_{\gothp_i}\rightarrow \calO'_{\gothp_{i'_j}}$, the image $\pi_{i'_j}$ of $\pi_i$ is a uniformizer of $\calO'_{\gothp_{i'_j}}$, for $j=1,2$. As in \S\ref{S:notations in filtration on the space of p-adic automorphic forms}, we can use these $\pi_{i'_j}$'s for all $i\in I$ and $j=1,2$ to define a full set $\{(T_{i'_j})_{i\in I,j=1,2}, T \}$ of parameters on the weight space $\calW'$. Under these notations, the closed immersion $\jmath:\calW\rightarrow \calW'$ defined above can be described in term of parameters: if $x\in \calW(\CC_p)$ has parameters $((w_i)_{i\in I},w_0)$, and $\jmath(x)\in\calW'(\CC_p)$ has parameters $((w_{i'})_{i'\in I'},w_p')$, then $w_{i_1'}=w_{i_2'}=w_i$ for all $i\in I$ and $w_0=w_0'$. In particular, for any $r=(r_i)_{i\in I}\in \calN^I$, if we define $r'=(r_{i'})_{i'\in I'}\in\calN^{I'}$ by $r_{i_1'}=r_{i_2'}=r_i$ for all $i\in I$, then $\jmath^{-1}(\calW'^{>r'})=\calW^{>r}$. 

For any closed point $x'$ of $\calX_{\GL_{2/F'}}(\gothn')$, recall that we use $a_{i'}(x)$ to denote its corresponding $U_{\pi_{i'}}$-eigenvalue for all $i'\in I'$. From the explicit description of the homomorphism $\sigma:\bbT'\rightarrow \bbT$, we have $a_{i_1'}(i_{F'/F}(x))=a_{i_2'}(i_{F'/F}(x))=a_i(x)$ for any closed point $x$ of $\calX_{\GL_{2/F}}(\gothn)$. Combining Theorems~\ref{T:spectral halo for eigenvarieties for D}, ~\ref{T:p-adic Jacquet-Langlands correspondence} and ~\ref{T:p-adic base change}, we have the following description of the boundary behavior of Hilbert modular eigenvarieties:

\begin{theorem}\label{T:spectral halo for Hilbert modular eigenvarieties}
	We use $\Sigma$ to denote the subset $\{0 \}\bigcup \{1+2k|k\in \ZZ_{\geq 0} \}$ of $\ZZ$. The eigenvariety $\calX_{\GL_{2/F}}(\gothn)^{>1/p}$ is a disjoint union 
	\[
	\calX_{\GL_{2/F}}(\gothn)^{>1/p}=\bigsqcup_{l\in \Sigma^I, \sigma\in \{\pm\}^I}\calX_{l,\sigma}
	\]
	of (possibly empty) rigid analytic spaces which are finite over $\calW^{>1/p}$ via $w$, such that for each closed point $x\in \calX_{l,\sigma}(\CC_p)$ with $l=(l_i)_{i\in I}\in \Sigma^I$ and $\sigma=(\sigma_i)_{i\in I}\in\{\pm \}^I$, we have
	\[\begin{cases}
	v_p(a_i(x)) = (p-1)v_p(T_{i,w(x)})\cdot l_i,&\textrm{if}\ \sigma_i=-;\\
	v_p(a_i(x)) \in (p-1)v_p(T_{i,w(x)})\cdot (l_i,l_i+2),&\textrm{if}\ \sigma_i=+ \textrm{~and~} l_i\neq 0;\\
	v_p(a_i(x)) \in (p-1)v_p(T_{i,w(x)})\cdot (0,1),&\textrm{if}\ \sigma_i=+ \textrm{~and~} l_i=0,
	\end{cases}\]
	for all $i\in I$. 
\end{theorem}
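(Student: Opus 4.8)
The plan is to deduce Theorem~\ref{T:spectral halo for Hilbert modular eigenvarieties} from Theorem~\ref{T:spectral halo for eigenvarieties for D} by the standard two-case dichotomy on the parity of $g=[F:\QQ]$, transporting the decomposition along the $p$-adic functoriality maps of Theorems~\ref{T:p-adic Jacquet-Langlands correspondence} and~\ref{T:p-adic base change}. First I would dispose of the easy case: when $g$ is even, choose a totally definite quaternion algebra $D/F$ with discriminant $\gothd=1$ split at all places over $p$; then Theorem~\ref{T:p-adic Jacquet-Langlands correspondence} gives an isomorphism $i_D\colon\calX_D(\gothn)\xrightarrow{\sim}\calX_{\GL_{2/F}}(\gothn)$ which is compatible with the weight maps and satisfies $a_i(i_D(x))=a_i(x)$ for all $i\in I$ (since $i_D$ interpolates Jacquet-Langlands, hence identifies all $U_{\pi_i}$-eigenvalues). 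Pulling back the disjoint decomposition $\calX_D^{>1/p}=\bigsqcup_{l,\sigma}\calX_{l,\sigma}$ of Theorem~\ref{T:spectral halo for eigenvarieties for D} along $i_D$ and noting $i_D$ identifies the two weight spaces over $\calW^{>1/p}$, finiteness over $\calW^{>1/p}$ and the slope conditions are all preserved verbatim.

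For $g$ odd, the plan is to bootstrap from an auxiliary even-degree totally real field. I would fix a quadratic totally real extension $F'/F$ in which $p$ splits completely (such $F'$ exists by weak approximation / choosing a totally positive generator with suitable local conditions at the primes above $p$), so $[F':\QQ]=2g$ is even and the previous case already gives the theorem for $\calX_{\GL_{2/F'}}(\gothn')$ with $\gothn'=\gothn\calO_{F'}$. By Theorem~\ref{T:p-adic base change} there is a morphism $i_{F'/F}\colon\calX_{\GL_{2/F}}(\gothn)\to\calX_{\GL_{2/F'}}(\gothn')$ interpolating quadratic base change on non-critical classical points, which by Theorem~\ref{T:p-adic interpolation theorem} is a composite of a finite morphism followed by a closed immersion, and which sits over the closed immersion $\jmath\colon\calW\hookrightarrow\calW'$. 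The key computational input, recorded in the discussion preceding Theorem~\ref{T:spectral halo for Hilbert modular eigenvarieties}, is that each $i\in I$ splits as $\{i_1',i_2'\}$ in $I'$, that $\jmath$ is given on parameters by $w_{i_1'}=w_{i_2'}=w_i$ and $w_0=w_0'$ (so $\jmath^{-1}(\calW'^{>r'})=\calW^{>r}$ for the doubled radius $r'$, in particular $\jmath^{-1}(\calW'^{>1/p})=\calW^{>1/p}$), and that $a_{i_1'}(i_{F'/F}(x))=a_{i_2'}(i_{F'/F}(x))=a_i(x)$ for every closed point $x$ of $\calX_{\GL_{2/F}}(\gothn)$, which follows from the explicit formula for $\sigma\colon\bbT'\to\bbT$ at the split primes above $p$.

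Granting these, I would then define $\calX_{l,\sigma}\subset\calX_{\GL_{2/F}}(\gothn)^{>1/p}$ to be the preimage under $i_{F'/F}$ of the component $\calX'_{l',\sigma'}$ of $\calX_{\GL_{2/F'}}(\gothn')^{>1/p}$ indexed by the doubled data $l'_{i_1'}=l'_{i_2'}=l_i$, $\sigma'_{i_1'}=\sigma'_{i_2'}=\sigma_i$; components of $\calX_{\GL_{2/F'}}(\gothn')^{>1/p}$ whose index is not of this doubled shape meet the image of $i_{F'/F}$ trivially, so the preimages $\calX_{l,\sigma}$ are pairwise disjoint open-and-closed subspaces exhausting $\calX_{\GL_{2/F}}(\gothn)^{>1/p}$. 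Finiteness of $\calX_{l,\sigma}$ over $\calW^{>1/p}$ follows because $i_{F'/F}$ restricts to a composite of a finite morphism and a closed immersion over $\jmath$, $\calX'_{l',\sigma'}$ is finite over $\calW'^{>1/p}$, and $\jmath$ identifies $\calW^{>1/p}$ with its image; the slope inequalities for $x\in\calX_{l,\sigma}(\CC_p)$ transfer directly from those for $i_{F'/F}(x)\in\calX'_{l',\sigma'}(\CC_p)$ using $a_i(x)=a_{i_1'}(i_{F'/F}(x))$ and $v_p(T_{i,w(x)})=v_p(T_{i_1',w'(i_{F'/F}(x))})$, which is exactly the parameter identity from $\jmath$. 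The main obstacle I anticipate is bookkeeping at the boundary of the image: one must verify that the image of $i_{F'/F}$ avoids all ``non-doubled'' components of $\calX_{\GL_{2/F'}}(\gothn')^{>1/p}$ — equivalently that every point of $\calX_{\GL_{2/F}}(\gothn)^{>1/p}$ has $U_{\pi_{i_1'}}$- and $U_{\pi_{i_2'}}$-slopes (and the associated $\sigma$-type) agreeing for $j=1,2$ — and that $\jmath$ is genuinely a closed immersion onto a union of connected components over the boundary annulus, so that pulling back an open-and-closed decomposition stays open-and-closed; both points reduce to the explicit description of $\sigma$ and $\jmath$ on parameters together with very Zariski density of the classical locus used in Theorem~\ref{T:p-adic base change}, but they need to be stated carefully rather than waved through.
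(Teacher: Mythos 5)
Your proposal follows the paper's proof strategy exactly: Jacquet--Langlands to a discriminant-$1$ definite quaternion algebra for $g$ even, and quadratic base change to an even-degree totally real $F'$ (split at $p$) for $g$ odd, transported along the interpolation morphisms of Theorems~\ref{T:p-adic Jacquet-Langlands correspondence} and~\ref{T:p-adic base change}. The paper compresses the final deduction into a single sentence ("Combining Theorems\dots"), and your more careful version — including the observation that the slope dichotomies are mutually exclusive, so the parameter identities $a_{i_1'}=a_{i_2'}=a_i$ and $v_p(T_{i_1'})=v_p(T_{i_2'})=v_p(T_i)$ automatically force the image of $i_{F'/F}$ into the doubled components — fills in precisely the bookkeeping the authors left implicit.
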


\bibliographystyle{plain}
\bibliography{bibliography.bib}
\end{CJK}
\end{document}